  \def\corref#1{}%
  \def\fnref#1{}%
\numberwithin{equation}{section}
\newtheorem{definition}{Definition}[section]
\newtheorem{remark}[definition]{Remark}
\newtheorem{theorem}[definition]{Theorem}
\newtheorem{proposition}[definition]{Proposition}
\newtheorem{corollary}[definition]{Corollary}
\newtheorem{conjecture}[definition]{Conjecture}
\newtheorem{lemma}[definition]{Lemma}
\newcommand{\rmath}{\mathbb{R}}
\newcommand{\nmath}{\mathbb{N}}
\newcommand{\rtwo}{\mathbb{R}^{2}}
\newcommand{\proba}{\mathbb{P}}
\newcommand{\esp}{\mathbb{E}}
\newcommand{\rd}{\rmath^{d}}
\newcommand{\ml}{\mcal_{\lambda}}
\newcommand{\verybigsum}{\mathlarger{\mathlarger{\sum}}}
\newcommand{\verybigint}{\mathlarger{\mathlarger{\int}}}
\newcommand{\pbold}{\mathbf{P}}
\newcommand{\ebold}{\mathbf{E}}
\newcommand{\fcal}{\mathcal{F}}
\newcommand{\mcal}{\mathcal{M}}
\newcommand{\lcal}{\mathcal{L}}
\newcommand{\rcal}{\mathcal{R}}
\newcommand{\bcal}{\mathcal{B}}
\newcommand{\gcal}{\mathcal{G}}
\newcommand{\ncal}{\mathcal{N}}
\newcommand{\pcal}{\mathcal{P}}
\newcommand{\vcal}{\mathcal{V}}
\newcommand{\gyv}{\mathcal{G}^{(\gamma,\nu)}}
\newcommand{\leftgyv}{\overleftarrow{\mathcal{G}}^{(\gamma,\nu)}}
\def\ps@pprintTitle{%
  \let\@oddhead\@empty
  \let\@evenhead\@empty
  \let\@oddfoot\@empty
  \let\@evenfoot\@oddfoot
}
\begin{document}

\begin{frontmatter}
\title{A new stochastic SIS-type modelling framework for analysing epidemic dynamics in continuous space}

\author[label1,label2]{Apolline Louvet\corref{cor1}\fnref{label5}}
\author[label3,label4]{Bastian Wiederhold\fnref{label6}}

\cortext[cor1]{Corresponding author}

\affiliation[label1]{organization={Department of Life Science Systems},
            city={Technical University of Munich},
            country={Germany}}

\affiliation[label2]{organization={BioSP},
            addressline={INRAE},
            city={Avignon},
            postcode={84914},
            country={France}}

\affiliation[label3]{organization={Department of Statistics},
            city={University of Oxford},
            country={United Kingdom}}

\affiliation[label4]{organization={Faculty for Biology},
            addressline={Ludwigs-Maximilians-Universit\"at},
            city={Munich},
            country={Germany}}

\fntext[label5]{Email: \texttt{apolline.louvet@inrae.fr}}
\fntext[label6]{Email: \texttt{bastian.wiederhold@lmu.de}}

\begin{abstract}
We propose a new stochastic epidemiological model defined in a continuous space of arbitrary dimension, based on SIS dynamics implemented in a spatial $\Lambda$-Fleming-Viot (SLFV) process. 
The model can be described by as little as three parameters, and is dual to a spatial branching process with competition linked to genealogies of infected individuals.  Therefore, it is a possible modelling framework to develop computationally tractable inference tools for epidemics in a continuous space using demographic and genetic data. 

We provide mathematical constructions of the process based on well-posed martingale problems as well as driving space-time Poisson point processes. With these devices and the duality relation in hand, we unveil some of the drivers of the transition between extinction and survival of the epidemic. In particular, we show that extinction is in large parts independent of the initial condition, and identify a strong candidate for the reproduction number~$\mathrm{R}_{0}$ of the epidemic in such a model. 
\end{abstract}

\begin{keyword}
spatial epidemiology \sep measure-valued processes \sep spatial Lambda-Fleming Viot processes \sep SIS-model \sep continuous space \sep duality

\MSC 92D30 \sep 60J25 \sep 60J85 \sep 60G55 \sep 60G57
\end{keyword}

\end{frontmatter}

\section{Introduction}\label{sec:general_introduction}
Dynamics of infectious diseases are inherently spatial: transmissions can only occur if susceptible individuals interact with pathogens, which in most cases originate from other infected individuals in relative spatial proximity. As a result, even from the very advent of modern epidemiology, spatial analysis has played a crucial role. Indeed, during the 1854~London cholera epidemic, John Snow was able to identify the water-bourne transmission of the disease through distinguishing infected individuals based on a map of their home locations~\cite{JohnSnow}. 
Most modern epidemiological models belong to the family of \textit{compartmental models}, of which the SIS and SIRS models are probably the most well-known examples (see \textit{e.g.}, \cite{britton2019stochastic}). Spatially-explicit versions of these models often represent the spatial contact structure by networks, lattices or demes, with vertices representing either individuals or certain well-mixed subsets of the population (see \textit{e.g.}, \cite{Network,montagnon2019stochastic,qi2019neighbourhood,lattice,delmas2023individual}, and see also \cite{bansaye2024branching} for another example of an epidemiological model with a discrete spatial structure). This modelling approach comes with certain challenges \cite{fivechallenges}, the most prominent one being that the network structure incorporates explicit and/or implicit assumptions about the dynamics of the epidemic. However, accounting for both the spatial structure and the intrinsic stochasticity in reproduction in a realistic yet mathematically tractable way is a notoriously difficult modelling challenge. The corresponding stochastic PDEs are generally ill-posed in dimension~$2$ or larger \cite{hairer2012triviality,ryser2012well}, and individual-based models without local regulation lead to locally-exploding population densities ("pain in the torus" phenomenon, \cite{felsenstein1975pain}), while introducing local regulation generally renders models intractable. Therefore, compartmental models in continuous space often model the dynamics of the epidemic by means of deterministic PDEs (see \textit{e.g.}, \cite{PDE1,PDE2}). Not taking into account stochasticity neglects the effect of minute, random events during an epidemic, such as a single infected individual attending a crowded place having a disproportionate impact on the outbreak dynamics. In this article, our goal is to introduce a well-defined stochastic epidemiological model in a continuous space of arbitrary dimension, with a structure minimalistic enough as to keep it computationally and mathematically tractable, and which could be used in epidemiological contexts when randomness in reproduction cannot be neglected and when space cannot be discretized. 

\paragraph{Spatial \texorpdfstring{$\Lambda$}{}-Fleming Viot processes} The model we introduce in this paper belongs to the family of spatial $\Lambda$-Fleming-Viot processes (or SLFVs), which were initially introduced in \cite{Eth08,anewmodel} to model the stochastic evolution of the genetic composition of a population with a spatial structure. The main characteristic of SLFV processes is that their reproduction dynamics are driven by a Poisson point process of locally-occurring \textit{reproduction events}, providing a straightforward way to control local reproduction rates and model competition. Moreover, as this Poisson point process is time-reversible, SLFV processes satisfy a \textit{duality relation} with a dual process encoding genealogies of samples of individuals. This makes them a particularly useful modelling framework for population genetics, and has allowed to explore the spatio-temporal dynamics of genetic diversity in a variety of settings: to name a few, fluctuating selection~\cite{fluctuatingselection}, selection against heterozygotes in populations of diploid individuals~\cite{hybridzones}, or long-range dispersal~\cite{FW22}. 

While SLFV processes have initially been limited to the study of populations uniformly spread everywhere, they have recently been extended in~\cite{louvet2023stochastic} to model spatially expanding populations. All these examples illustrate the potential of SLFV processes to model the spread of an epidemic in a spatial continuum of arbitrary dimension, and in the long run to develop inference tools combining demographic and genetic data. 

\paragraph{A new epidemiological model} In this paper, we model the evolution of the local densities of \textit{susceptible} (or \textit{healthy}) and \textit{infected} individuals in~$\rd$. To do so, at each instant~$t \geq 0$, we associate a proportion~$\omega_{t}(z) \in [0,1]$ of healthy individuals to each location~$z \in \rd$. Following the SLFV modelling framework, we assume that a large number of individuals are present everywhere (and hence that~$z \to \omega_{t}(z)$ is well-defined over~$\rd$). The model that we introduce, which will be called the \textit{epidemiological spatial $\Lambda$-Fleming-Viot process} (or \textit{EpiSLFV process} for short), is characterized by a \textit{recovery rate}~$\gamma > 0$ as well as a measure~$\nu$ on~$(0,\infty) \times (0,1]$ describing the spatial scale and impact of potential spreading events. Said spreading events correspond to \textit{reproduction events} under the terminology of SLFV processes, and are driven by a space-time Poisson point process with intensity depending on~$\nu$. Whenever a reproduction event occurs, we sample an individual uniformly at random in the affected area. If this individual is healthy, we ignore the event, but if it is infected, it infects a certain proportion of the individuals in the affected area. Between reproduction events, infected individuals recover at rate~$\gamma$, which at the level of the complete population, corresponds to an exponential decay of the proportion~$1 - \omega_{t}(z)$ of infected individuals. 

In other words, the EpiSLFV process can be seen as a space-continuous version of the SIS model, with possible superspreading events whose characteristics and frequency are encoded by the measure~$\nu$. For instance, a possible minimalistic version of the \mbox{EpiSLFV} process could include frequent events affecting small areas with radius~$r_{1}$, and rare superspreading events over areas with radius~$r_{2} >> r_{1}$, by taking~$\nu$ of the form
\begin{equation*}
    \nu(dr,du) = \left(
a_{1} \delta_{r_{1}}(dr) + a_{2} \delta_{r_{2}}(dr)
    \right) \delta_{U}(du)
\end{equation*}
for some~$U \in (0,1]$ and for~$a_{1} >> a_{2}$. This example illustrates that the EpiSLFV process can be defined with a limited number of parameters, which in turn suggests that inference using genetic or epidemiology data could be possible. 

\paragraph{Construction of the EpiSLFV process and applications to inference} Our first goal is to provide a rigorous construction of the EpiSLFV process, which is often an issue for SLFV-type processes (see \textit{e.g.}, \cite{louvet2023stochastic}). We will actually provide several possible constructions of the process, that all rely on a duality relation satisfied by the EpiSLFV process. The dual process can be interpreted as a branching process with competition, and also has strong links with the \textit{pruned Ancestral Selection Graph} (or \textit{pruned ASG}) from~\cite{lenz2015looking} if interpreting recovery as a "mutation" from the infected to the healthy type. 
This duality relation has a variety of applications. In this article, we will mostly focus on its applications to the construction of the EpiSLFV process, and to the study of whether an epidemic will survive and spread or go extinct depending on parameter values. However, another natural application, which is deferred to future work, is to the development of inference tools. Later in the article, we will quickly outline a possible approach to build an inference tool for datasets of infected/susceptible status of individuals in a sample, based on the duality relation and simulations of the dual process. Moreover, as the dual process encodes the possible chains of transmission of a pathogen to a given individual, and by extension the possible genealogies of the pathogen, it has the potential to be used to develop inference tools using genetic or genomic data, making use of the emergence of mass sequencing of genetic samples of pathogens. Our work in this article provides the theoretical grounding for the development of such inference tools. 

\paragraph{A reproduction number for the EpiSLFV process} Our second and main goal is to study how the fate of the epidemic depends on the measure~$\nu$, on the recovery rate~$\gamma$ and on the initial condition. In epidemiology, a classical approach to do this is to compute the \textit{basic reproduction number}~$\mathrm{R}_{0}$, which encodes the balance between new infections and recoveries \cite{britton2019stochastic}, and gives a mostly qualitative picture of the long-term fate of the epidemic: indeed, in many classical epidemiology models, 
if this number is below~$1$, then the epidemic quickly goes extinct, while the epidemic might survive and spread with non-zero probability (possibly depending on the initial condition) if this number is above~$1$. 
Due to its straightforward interpretability (when the properties described above are satisfied), our aim is to identify an equivalent of this quantity for the EpiSLFV process, in order to integrate it to a future inference framework. In this article, we will introduce our candidate for an equivalent of the basic reproduction number for the $(\gamma,\nu)$-EpiSLFV process, and make first steps towards showing that it provides an easily interpretable summary of the long-term dynamics of the epidemic. 
Therefore, our results highlight the potential of the EpiSLFV process to study epidemics with a strong spatial structure as well as a stochastic component. 

\paragraph{Outline} 
In Section~\ref{sec:detailed_introduction}, we start the article by introducing the~$(\gamma,\nu)$-EpiSLFV process and the terminology used throughout the paper regarding survival regimes and the types of initial conditions considered. We also give a summary of the results shown in this paper, along with a quick interpretation of their implications for the observed dynamics of an epidemic. In particular, we present our candidate for the reproduction number~$\mathrm{R}_{0}(\gamma,\nu)$, and present some heuristic and simulation-based arguments to support our conjecture. 

In Section~\ref{sec:martingale_pb}, we show that the~$(\gamma,\nu)$-EpiSLFV process can be constructed as the unique solution to a well-posed martingale problem, thanks to a duality relation with a spatial branching process with competition. The proof techniques used to show that the martingale problem is well-posed are fairly classical for SLFV-type processes (though accounting for the constant recovery rate of infected individuals requires some adaptation), and the reader familiar with this literature can skip straight to Section~\ref{subsubsec:appli_1} onwards, which focus on applications of the duality relation and of the martingale problem to the study of the dynamics of the epidemic. 

In Section~\ref{sec:quenched_SLFV}, we assume a different perspective and introduce a quenched construction of the~$(\gamma,\nu)$-EpiSLFV process driven by a space-time Poisson point process of reproduction events. This construction will be shown to be equivalent to the one from Section~\ref{sec:detailed_introduction}, and will allow us to prove additional results on the dynamics of the epidemic, \textit{e.g.}, that if the reproduction number $R_0^{(\gamma,\nu)}$ is smaller than one, the expected mass of infected individuals decays exponentially to zero, whereas if $R_0^{(\gamma,\nu)} > 1$ small outbreaks are able to spread at least temporarily. 

In Section~\ref{sec:partial_equivalence}, we again make use of the duality relation from Section~\ref{sec:martingale_pb} to link survival of the epidemic to properties of the dual branching process with competition. In particular, we show that if infected individuals are initially present in a large area, survival of the epidemic is equivalent to survival of the dual branching process, while if they are only present in a small area, survival is linked to finer properties of the dual process.

\paragraph{Acknowledgements} The authors are grateful to David Helekal, who drew our attention to potential applications of the SLFV framework in epidemiology. AL acknowledges support from the TUM Global Postdoc Fellowship program and partial support from the chair program "Mathematical Modelling and Biodiversity" of Veolia Environment-Ecole Polytechnique-National Museum of Natural History-Foundation X. BW was supported by the Engineering and Physical Sciences Research Council Grant [EP/V520202/1]. This project was initiated during the second edition of the "Probability meets Biology" workshop at the University of Bath. 

\section{The \texorpdfstring{$(\gamma, \nu$)}{}-EpiSLFV process - Definition and results}\label{sec:detailed_introduction}
The goal of this section is to provide a rigorous definition of the~$(\gamma,\nu)$-EpiSLFV process described informally in the introduction, and to give an overview of the mathematical results we aim at showing in this paper regarding the extinction/survival of an epidemic in the $(\gamma,\nu)$-EpiSLFV process. Unless specified otherwise, all the probabilistic objects considered will be defined on the probability space~$(\Omega, \fcal, \proba)$, and we will denote as~$\esp$ the expectation with respect to~$\proba$. 

\subsection{Definition of the~\texorpdfstring{$(\gamma, \nu$)}{}-EpiSLFV process}
In all that follows, let~$\gamma > 0$, and let~$\nu$ be a~$\sigma$-finite measure on~$(0,+\infty) \times (0,1]$ which satisfies
\begin{equation}\label{eqn:cond_nu}
\int_{0}^{1} \int_{0}^{\infty} ur^{d} \nu(dr,du) < + \infty. 
\end{equation}
This condition guarantees that the average "number" of descendants during a successful infection event is finite. It will be sufficient to show that the~$(\gamma,\nu)$-EpiSLFV process is well-defined, but some results will require~$\nu$ to satisfy the stricter condition
\begin{equation}\label{eqn:stricter_cond_nu}
\int_{0}^{1}\int_{0}^{\infty} r^{d} \nu(dr,du) < +\infty, 
\end{equation}
which guarantees that any compact area is affected by reproduction events at a finite rate.

\paragraph{State space} 
As a start, we introduce the state space over which the process of interest is defined. Let~$\ml$ be the set of all measures~$M$ on~$\rd \times \{0,1\}$ whose marginal distribution over~$\rd$ is Lebesgue measure. Let~$\omega_{M} : \rd \to [0,1]$ be an arbitrarily chosen~\textit{density} of~$M$, that is, a measurable function that satisfies
\begin{equation*}
M(dz,A) = \Big(
\omega_{M}(z) \mathds{1}_{\{0 \in A\}} + (1-\omega_{M}(z)) \mathds{1}_{\{1 \in A\}}
\Big) dz
\end{equation*}
for all~$z \in \rd$ and~$A \subseteq \{0,1\}$. Notice that the choice of~$\omega_{M}$ is not unique, but up to a Lebesgue-null set. We will refer to (any choice of)~$\omega_{M}$ as the \textit{density of healthy} (or type~$0$) \textit{individuals}. 

We endow~$\ml$ with the vague topology, and we denote by~$D_{\ml}[0,+\infty)$ the space of all càdlàg $\ml$-valued paths, endowed with the standard Skorokhod topology. 

\paragraph{Test functions} Our approach to provide a rigorous definition of the~$(\gamma,\nu)$-EpiSLFV process is to introduce it as the unique solution to a martingale problem. To do so, we now introduce the test functions over which this martingale problem will be defined, and we first set some additional notation. Let $C_{c}(\rmath^{d})$ be the space of continuous functions $f : \rmath^{d} \to \rmath$ with compact support, and let $C^{1}(\rmath)$ be the space of continuously differentiable functions $F : \rmath \to \rmath$. For all $f \in C_{c}(\rmath^{d})$ and $M \in \mcal_{\lambda}$, we set
\begin{equation*}
    \langle f,\omega_{M}
    \rangle := \int_{\rmath^{d}} f(z)\omega_{M}(z)dz. 
\end{equation*}
As the value of~$\langle f, \omega_{M} \rangle$ does not depend on the choice of the representative~$\omega_{M}$ for the density of healthy individuals in~$M$, we will use equivalently the notation~$\langle f, \omega_{M} \rangle$ and~$\langle f, M \rangle$. The test functions we consider are then of the form $\Psi_{F,f} : \ml \to \rmath$ with $F \in C^{1}(\rmath)$ and $f \in C_{c}(\rd)$, and are defined as
\begin{equation*}
    \forall M \in \ml, \Psi_{F,f}(M) := F\left(
\langle f,M \rangle
    \right) =: \Psi_{F,f}(\omega_{M}). 
\end{equation*}

\paragraph{Martingale problem} For all $(z,r,u) \in \rd \times (0,+\infty) \times (0,1]$, let $\Theta_{z,r,u}$ be the function defined as
\begin{equation*}
    \forall \omega : \rd \to [0,1] \text{ measurable, } \Theta_{z,r,u}(\omega) := \omega - \mathds{1}_{\bcal(z,r)}u\omega . 
\end{equation*}
The action of~$\Theta_{z,r,u}$ on~$\omega$ can be interpreted as replacing a fraction~$u$ of the healthy individuals in~$\bcal(z,r)$ by infected individuals. This corresponds to what happens during what we referred to earlier as a successful infection event. 
Moreover, for all $r > 0$, let $V_r$ denote the volume of the ball~$\bcal(0,r)$.
The operator~$\gyv$ characterizing the~$(\gamma,\nu)$-EpiSLFV process is then defined as follows. For all test function~$\Psi_{F,f}$ with~$F$ and~$f$ as above and for all~$M \in \ml$, we have
\begin{equation} \label{eq:generator}
\begin{aligned}
&\gyv \Psi_{F,f}(M) \\
&:= \gamma \langle f,1-\omega_{M} \rangle F'\left( \langle f,\omega_{M} \rangle \right) \\
&\hspace{1cm} + \int_{\rd}\int_{0}^{1} \int_{0}^{\infty} 
\frac{1}{V_{r}} \int_{\bcal(z,r)} \left(
1 - \omega_{M}(z')\right) \\
&\hspace{5cm} \times \Big(
\Psi_{F,f}\left(
\Theta_{z,r,u}(\omega_{M})
\right) - \Psi_{F,f}(\omega_{M})
\Big)dz'
\nu(dr,du)dz.
\end{aligned}
\end{equation}
The first term corresponds to the constant recovery rate of infected individuals, while the second one encodes the Poisson point process-driven infection dynamics. In Section~\ref{sec:duality_relation}, we will show that the martingale problem associated to~$\gyv$ is well-posed, as stated in the following result. 
\begin{theorem} \label{theo:martingale_pb_well_posed} For all $M^{0} \in \ml$, the martingale problem $(\gyv,\delta_{M^{0}})$ is well-posed.     
\end{theorem}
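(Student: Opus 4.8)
The plan is to prove existence and uniqueness separately, with uniqueness resting on the duality relation announced in the statement and existence on an approximation argument.

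For uniqueness, I would first construct the dual process $\xi = (\xi_t)_{t \geq 0}$: a spatial branching process with competition whose jumps are driven by the time-reversal of the reproduction point process. Informally, $\xi_t$ is a finite collection of potential-infector lineages in $\rd$; when a reproduction event of radius $r$ centred at $z$ overlaps a lineage, that lineage is resampled in $\bcal(z,r)$ and may branch, while the weight $\tfrac{1}{V_r}\int_{\bcal(z,r)}(1-\omega_M(z'))\,dz'$ appearing in \eqref{eq:generator} becomes the competition term acting on the lineages. The natural duality function is of product form,
\[
H\big(M,(x_1,\dots,x_n)\big) = \prod_{i=1}^{n} \omega_M(x_i),
\]
read as the probability that individuals sampled at $x_1,\dots,x_n$ are simultaneously healthy. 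Since $\omega_M$ is only defined up to a Lebesgue-null set, I would work throughout with the spatially integrated functionals $\int_{(\rd)^n} H(M,\mathbf{x})\,\psi(\mathbf{x})\,d\mathbf{x}$ for $\psi \in C_c((\rd)^n)$; these are well-defined on $\ml$ and, via $F(y)=y^n$ together with polarisation, lie in the closed linear span of the test functions $\Psi_{F,f}$ with $F$ polynomial.

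I would then establish the duality at the level of generators, applying $\gyv$ in the $M$-variable and the dual generator in the $\xi$-variable to $H$ and checking that the two expressions agree: the recovery term in $\gyv$ should match the killing (mutation to the healthy type) of lineages at rate $\gamma$, and the reproduction integral should match the branching-and-resampling of lineages. For any solution $(M_t)$ of the martingale problem this yields
\[
\esp_{M^0}\!\big[H(M_t,\xi_0)\big] = \esp_{\xi_0}\!\big[H(M^0,\xi_t)\big],
\]
which pins down every integrated mixed moment of $\omega_{M_t}$, hence the one-dimensional marginals of an arbitrary solution. Because the martingale problem formulation renders any solution Markov, uniqueness of the one-dimensional marginals upgrades to uniqueness of the law on $D_{\ml}[0,\infty)$ by the standard argument that duality implies uniqueness of martingale problems.

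For existence, I would proceed by approximation. Truncating $\nu$ to $\nu_n$, its restriction to $\{r \geq 1/n,\ u \geq 1/n\}$, the stricter condition \eqref{eqn:stricter_cond_nu} holds for $\nu_n$, every compact set is affected at a finite rate, and the corresponding process $M^n$ — a locally finite pure-jump dynamics superposed on the deterministic recovery drift — is readily constructed and solves the martingale problem for the truncated operator $\mathcal{G}^{(\gamma,\nu_n)}$. I would then prove tightness of $(M^n)_n$ in $D_{\ml}[0,\infty)$: compact containment is automatic from the vague topology on $\ml$, and the Aldous–Rebolledo criterion is checked using the martingale decomposition of $\Psi_{F,f}(M^n_t)$, whose quadratic variation is bounded uniformly in $n$ by a quantity controlled through \eqref{eqn:cond_nu}. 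Any weak limit point then solves the martingale problem for $\gyv$, the convergence $\mathcal{G}^{(\gamma,\nu_n)} \to \gyv$ on test functions being justified again by \eqref{eqn:cond_nu}, since the $u$-weight makes the high-frequency, small-impact events summable.

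The main obstacle is the generator-duality verification under the weaker hypothesis \eqref{eqn:cond_nu} rather than \eqref{eqn:stricter_cond_nu}: there the events affecting any fixed region occur at infinite rate, so both the forward action of $\gyv$ on $H$ and the dual dynamics must be interpreted as limits of their truncations, and one must show the error terms vanish — this is precisely where the $ur^{d}$-integrability is essential and where most of the care is needed. A secondary difficulty is controlling the dual itself: one must verify that the competition term prevents the lineage count from exploding, so that $\esp_{\xi_0}[H(M^0,\xi_t)]$ is finite and the duality identity is non-vacuous.
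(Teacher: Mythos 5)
Your strategy is essentially the paper's: existence by truncating $\nu$, proving tightness via Aldous--Rebolledo, and passing to the limit through uniform convergence of generators (Lemmas~\ref{lem:tight_sequence_slfvs} and~\ref{lem:cvg_subsequence_mp}); uniqueness via product-form duality with a branching-with-competition dual, integrated against test densities and closed with the Ethier--Kurtz machinery (Sections~\ref{subsec:extension_mp_Dpsi}--\ref{subsec:proof_duality} and Lemma~\ref{lem:lartingale_pb_at_most_one_sol}). However, several of your supporting claims would not survive as stated. In the existence step you truncate only $\nu$ and call the approximating process on all of $\rd$ ``readily constructed''; its jump rate is only \emph{locally} finite, so this construction is itself a nontrivial Poisson-based argument (it is essentially the paper's quenched construction of Section~\ref{sec:quenched_SLFV}). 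The paper sidesteps this by truncating the spatial domain to compacts $E_n$ as well, which makes the total jump rate finite and the approximating processes trivially well-defined. In the uniqueness step, your justification ``the martingale problem formulation renders any solution Markov'' is backwards: solutions of a martingale problem are not automatically Markov --- that is a consequence of well-posedness, which is what is being proved. The correct upgrade from one-dimensional to finite-dimensional uniqueness is Theorem~4.4.2(a) of Ethier--Kurtz applied after extending the one-dimensional statement to arbitrary initial laws, which is exactly what Lemma~\ref{lem:lartingale_pb_at_most_one_sol} does; no Markov property of individual solutions is invoked. Relatedly, restricting to $\psi \in C_c$ is not enough to run the Ethier--Kurtz duality argument: the law of the dual's atoms at positive times has only an $\mathbb{L}^1$ density (Lemma~\ref{lem:dual_process_absolutely_continuous}), which is why the paper extends the forward martingale problem to all $\psi \in \mathbb{L}^1((\rd)^k)$ (Lemma~\ref{lem:extended_martingale}).

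Your ``main obstacle'' is also misdiagnosed. Under the weak condition~\eqref{eqn:cond_nu} no truncation-limit interpretation of the generators is needed: since $|\Psi_{F,f}(\Theta_{z,r,u}(\omega)) - \Psi_{F,f}(\omega)| = O\big(u(r^{d}\wedge 1)\big)$, the $u$-factor in the jump magnitude makes $\gyv\Psi_{F,f}$ absolutely convergent even though a fixed region is hit at infinite rate, and the same factor gives the dual an effective per-atom jump rate $\gamma + \int u V_{r}\,\nu(dr,du) < \infty$. Non-explosion of the dual follows by comparison with a Yule process at this rate (Lemma~\ref{lem:dual_process_well_defined}); competition plays no role there --- it only reduces rates. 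Finally, your informal description of the dual conflates two mechanisms: in Definition~\ref{defn:ancestral_process} existing atoms are never resampled; a new atom is added uniformly in $\bcal(z,r)$ with probability $1-(1-u)^{\#\{\text{atoms in } \bcal(z,r)\}}$, and it is this probability that encodes competition, whereas the weight $\frac{1}{V_{r}}\int_{\bcal(z,r)}(1-\omega_{M}(z'))\,dz'$ in~\eqref{eq:generator} corresponds to sampling the parent's type and resurfaces only when the duality function is evaluated against $\omega_{M^{0}}$ at the terminal time. The generator-matching computation you propose would force you to this dual, so the plan is salvageable, but as written these steps are gaps rather than details.
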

In particular, the above result implies that the martingale problem associated to~$\gyv$ can be used to define the~$(\gamma,\nu)$-EpiSLFV process. 
\begin{definition}\label{defn:EpiSLFV}
Let $M^{0} \in \mcal_{\lambda}$. Then, the $(\gamma,\nu)$-EpiSLFV with initial condition~$M^{0}$ is the unique solution to the martingale problem $(\gyv,\delta_{M^{0}})$. 
\end{definition}
An illustration of the dynamics of the~$(\gamma,\nu)$-EpiSLFV process can be found in Figure~\ref{fig:epislfv}.

\begin{figure}[!htb]
	\centering
	\includegraphics[width=0.6\linewidth]{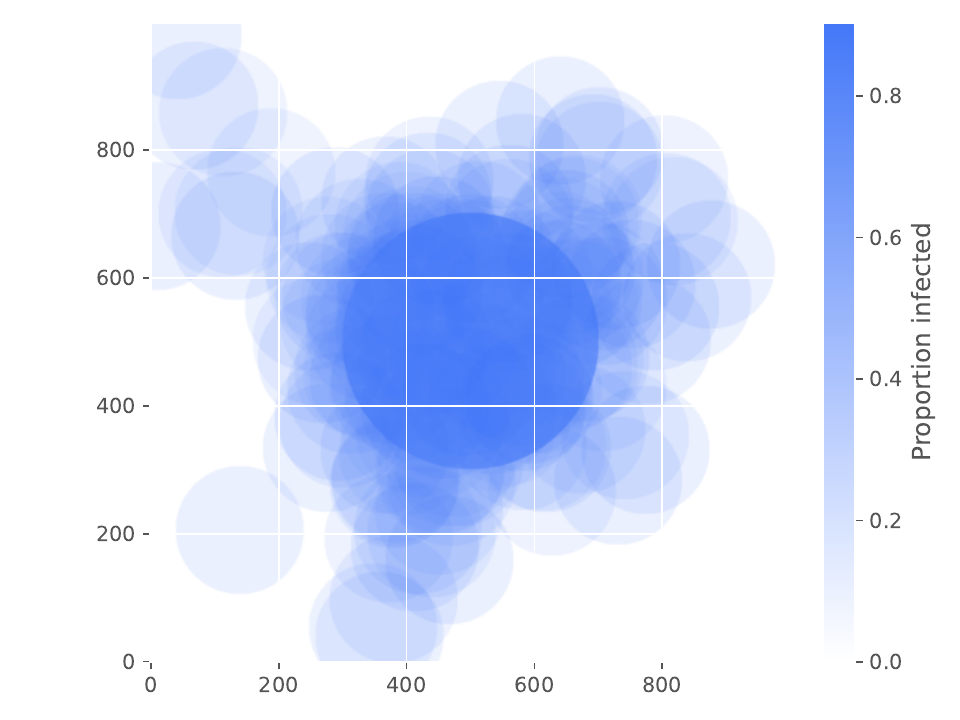}
	\caption{
Snapshot of the spatial repartition of infected individuals in a \texorpdfstring{$(\gamma,\nu)$}{}-EpiSLFV process with initial density of healthy individuals \texorpdfstring{$\omega^{0} = 1 - \mathds{1}_{\bcal(0,200)}(\cdot)$}{}. Here, \texorpdfstring{$\gamma = 20$}{} and \texorpdfstring{$\nu(dr,du) = 0.05 \delta_{0.1}(du) \delta_{100}(dr)$}{} (that is, all reproduction events have radius~\texorpdfstring{$100$}{} and impact parameter~0.1). The snapshot was taken at time~$t = 0.02$. 
    }
	\label{fig:epislfv}
\end{figure}

\paragraph{Initial condition} As stated above, within our framework, the~$(\gamma,\nu)$-EpiSLFV is well-defined even for very general initial conditions. However, for practical applications, we will focus on the three following classes of initial conditions:
\begin{itemize}
    \item "endemic" initial conditions, in which infected individuals are initially present everywhere; 
    \item "pandemic" initial conditions, in which infected individuals initially occupy a half-plane; 
    \item "epidemic" initial conditions, in which infected individuals are initially entirely contained in a compact set. 
\end{itemize}
Formally, these three classes of initial conditions are defined as follows. 
\begin{definition}\label{defn:initial_condition} (i) (Endemic initial condition) We say that~$M^{0} \in \ml$ is an endemic initial condition if there exists~$\varepsilon > 0$ such that~$1 - \omega_{M^{0}} > \varepsilon$ almost everywhere. 

(ii) (Pandemic initial condition) We say that~$M^{0} \in \ml$ is a pandemic initial condition if~$\text{Supp}(1 - \omega_{M^{0}})$ is equal to a half-plane~$H$ up to a Lebesgue-null set, and if there exists~$\varepsilon > 0$ such that~$1 - \omega_{M^{0}} > \varepsilon$ almost everywhere in~$H$. 

(iii) (Epidemic initial condition) We say that~$M^{0} \in \ml$ is an epidemic initial condition if~$\text{Supp}(1-\omega_{M^{0}})$ is equal to a compact set~$A \subset \rd$ with positive volume (up to a Lebesgue-null set), and if there exists~$\varepsilon > 0$ such that~$1 - \omega_{M^{0}} > \varepsilon$ almost everywhere in~$A$. 
\end{definition}

\paragraph{Extinction of the epidemic} Throughout this article, we aim at identifying conditions under which the epidemic can potentially survive or goes extinct almost surely in the~$(\gamma,\nu)$-EpiSLFV process. Since we consider general initial conditions for the initial state of the epidemic, including initial conditions in which the mass of infected individuals is infinite, it is not sufficient to consider the total mass of infected individuals to study whether the epidemic goes extinct. Therefore, we will adopt the following definition for the extinction of the process. 
\begin{definition}\label{defn:extinction_process} We say that the~$(\gamma,\nu)$-EpiSLFV process~$(M_{t})_{t \geq 0}$ goes extinct if for all compact~$A \subset \rd$ with positive volume, 
\begin{equation*}
\lim\limits_{t \to + \infty} \esp\left[ 
\langle \mathds{1}_{A}, 1 - \omega_{M_{t}} \rangle 
\right] = 0. 
\end{equation*}
\end{definition}
\begin{remark}
Notice that our definition of the extinction of the process can be seen as a slight abuse of terminology, since extinction/survival are generally properties of a realization of the process rather than of the process. This will be even more marked in the next section, where we will refer to the negation of Definition~\ref{defn:extinction_process} as \textit{survival of the process}, even if the epidemic might still go extinct in a non-zero fraction of the realizations of the process. However, this terminology appeared to us as the most natural given the context, and in all the rest of the article, it will always be implied that we refer to extinction or survival \emph{in expectation}. 
\end{remark}

\subsection{Survival regimes and partial equivalences}\label{subsec:survival_criteria}
\paragraph{The \texorpdfstring{$(\gamma,\nu)$}{}-ancestral process} Before stating our main results regarding the extinction/survival of an epidemic, we introduce our main tool to show these results: a dual process called the \textit{$(\gamma,\nu)$-ancestral process}, also defined using a Poisson point process $\widetilde{\Pi}$ on $\rmath \times \rd \times (0,\infty) \times (0,1]$ with intensity $dt \otimes dz \otimes \nu(dr,du)$, but defined over a different probability space $(\mathbf{\Omega}, \mathbf{F}, \mathbf{P})$. We denote as $\mathbf{E}$ the expectation with respect to~$\mathbf{P}$. Let $\mcal_{p}(\rd)$ be the set of all finite point measures on~$\rd$, endowed with the topology of weak convergence. 

\begin{definition}\label{defn:ancestral_process} ($(\gamma,\nu)$-ancestral process) Let $\Xi^{0} = \sum_{i = 1}^{N_{0}} \delta_{\xi(i)}$ be a $\mcal_{p}(\rd)$-valued random variable. 
 The $(\gamma,\nu)$-ancestral process $(\Xi_{t})_{t \geq 0}$ with initial condition $\Xi^{0}$ is defined as follows. Each atom in $\Xi^{0}$ is associated to an independent exponential random variable with parameter~$\gamma$, which gives its "death time", i.e., the time at which it is removed from the process. That is, for all $i \in \llbracket 1,N_{0} \rrbracket$, 
if $\delta_{\xi(i)}$ dies at time~$t(i) \sim \mathrm{Exp}(\gamma)$, then 
\begin{equation*}
    \Xi_{t(i)} = \Xi_{t(i)-} - \delta_{\xi(i)}. 
\end{equation*}
Then, for each $(t,z,r,u) \in \widetilde{\Pi} $ such that $\Xi_{t-}(\bcal(z,r))> 0$,
\begin{enumerate}
    \item With probability
    \begin{equation*}
        1 - (1-u)^{\Xi_{t-}(\bcal(z,r))}, 
    \end{equation*}
we sample a location~$z'$ uniformly at random in $\bcal(z,r),$ and we set
\begin{equation*}
    \Xi_{t} = \Xi_{t-} + \delta_{z'}. 
\end{equation*}
Moreover, we associate to the new atom $\delta_{z'}$ a death time equal to $t + E_{z'}$, where $E_{z'}$ is an independent exponential random variable with parameter~$\gamma$. 
\item We do nothing otherwise. 
\end{enumerate}
\end{definition}
The proof that this process is well-defined can be found in Section~\ref{subsec:defn_dual_process} (see Lemma \ref{lem:dual_process_well_defined}). 
Notice that this construction is independent of the ordering of the atoms in~$\Xi^{0}$.
Informally, the $(\gamma,\nu)$-ancestral process can be interpreted as a branching process with competition, in which each isolated ancestral particle reproduces at rate
\[
\int_{0}^{1} \int_{0}^{\infty} ur^{d}\nu(dr,du) < + \infty 
\]
and dies at rate~$\gamma$. Reproduction of ancestral particles corresponds to adding new potential ancestors of reproduction events of the $(\gamma, \nu)$-EpiSLFV process. The death of particles reflects the recovery mechanism in the $(\gamma, \nu)$-EpiSLFV process: ancestors which are healthy do not need to be traced further back in time.

\begin{remark}
While the $(\gamma,\nu)$-ancestral process can loosely be interpreted as a branching process with competition, it is significantly different from standard branching processes or classical population dynamics models with competition. Indeed, due to reproduction being controlled by an underlying Poisson point process, the offspring distributions of different particles are not independent, and many classical tools for studying branching processes cannot be applied to the $(\gamma,\nu)$-ancestral process. Moreover, competition does not act on individual death rates, but rather on individual birth rates. These deviations from well-studied processes motivate an in-depth study of the properties of the $(\gamma,\nu)$-ancestral process. 
\end{remark}
In Section~\ref{sec:duality_relation}, we will show that the $(\gamma,\nu)$-ancestral process satisfies a duality relation with the $(\gamma,\nu)$-EpiSLFV process. This duality relation is stated in Proposition~\ref{prop:duality_relation} in full, but can be summarized as follows:
\begin{center}
\textit{The probability that a set of $k$ individuals sampled at locations $x_{1},...,x_{k} \in \rd$ at time~$t$ does not contain any infected individuals is equal to the probability that starting a $(\gamma,\nu)$-ancestral process from locations $x_{1},...,x_{k}$, waiting a time~$t$, and sampling individuals at time~$0$ at the locations given by the ancestral process, we do not sample any infected individual.}
\end{center}

\begin{remark}
This duality relation can also be the basis for the development of \emph{Approximate Bayesian Computation} (ABC) inference methods based on the infected/susceptible status of a sample of individuals. Indeed, ABC methods require to generate a large number of simulations of the process of interest, which is difficult to do for the $(\gamma,\nu)$-EpiSLFV process: simulations need to be performed for the complete population, and are highly dependent on the initial condition of the epidemic, which is often unknown. The duality relation allows one to simulate the $(\gamma,\nu)$-ancestral process instead, which is significantly less costly to simulate, and whose simulation can be decoupled from the initial condition of the epidemic. The implementation of this approach, as well as extensions to other types of data (such as demo-genetic data), is deferred to future work. 
\end{remark}

\paragraph{Definition of survival regimes}
If we strictly define "survival of the epidemic" as the negation of the extinction property from Definition~\ref{defn:extinction_process}, survival is equivalent to the existence of a compact~$A \subseteq \rd$ such that
\begin{equation}
\limsup\limits_{t \to + \infty} \esp\left[ 
\langle \mathds{1}_{A}, 1 - \omega_{M_{t}} \rangle 
\right] > 0. \tag{SC1}
\end{equation}
However, this might seem too weak a definition of survival: the process may only survive in some small local area, and the local mass of infected individuals can go down arbitrarily close to zero regularly. We will refer to this survival regime as "\textit{transient local survival}", and we will also consider the following stricter survival regimes, which might be more in line with one's intuitive definition of survival:
\begin{enumerate}[label=(SC\arabic*), leftmargin=3\parindent]
\setcounter{enumi}{1}
 \item \label{sc2} (permanent local survival) There exists a compact $A \subseteq \mathbb{R}^d$ with positive volume such that
\[\liminf_{t \to + \infty} \mathbb{E} \big[ \langle \mathds{1}_A, 
1 - \omega_{M_t} \rangle \big] > 0. \]
\item \label{sc3} (transient global survival) For all compact $A \subseteq \mathbb{R}^d$ with positive volume,
\[\limsup_{t \to + \infty} \mathbb{E} \big[ \langle \mathds{1}_A, 
1 - \omega_{M_t} \rangle \big] > 0. \]
\item \label{sc4} (permanent global survival) For all compact $A \subseteq \mathbb{R}^d$ with positive volume,
\[\liminf_{t \to + \infty} \mathbb{E} \big[ \langle \mathds{1}_A, 
1 - \omega_{M_t} \rangle \big] > 0. \]
\end{enumerate}
Clearly, we have the following implications:
\[
\begin{tikzcd}[arrows=Rightarrow]
& (SC4) \arrow[dr] \arrow[d] \arrow[dl] & \\
(SC2) \arrow[r] & (SC1)  & (SC3) \arrow[l]
\end{tikzcd}
\]
We conjecture that these four survival regimes are in fact equivalent, and that the limit of
\[
\esp[\langle \mathds{1}_{A}, 1 - \omega_{M_{t}}\rangle]
\]
exists when~$t \to + \infty$. When starting from an endemic initial condition, we will actually be able to show a stronger result, and obtain a limiting result for the local density of infected individuals. The following results can be found in \Cref{sec:partial_equivalence}.
\begin{proposition}\label{prop:survival_endemic_case} 
Let~$M^{0} \in \ml$ be an endemic initial condition in the sense of \Cref{defn:initial_condition},
and let~$(M_{t})_{t \geq 0}$ be the~$(\gamma,\nu)$-EpiSLFV process with initial condition~$M^{0}$. Then, for all compact~$A \subseteq \rd$ with positive volume, 
\begin{equation*}
\lim\limits_{t \to + \infty} \esp\left[ 
\langle 
\mathds{1}_{A}, 1 - \omega_{M_{t}}
\rangle 
\right] = \mathrm{Vol}(A) \times \left(
\lim\limits_{t \to + \infty} \mathbf{P}(N_{t} > 0)
\right), 
\end{equation*}
where~$N_{t}$ is the number of atoms in the~$(\gamma,\nu)$-ancestral process with initial condition~$\delta_{0}$ introduced in Definition~\ref{defn:ancestral_process}. 
\end{proposition}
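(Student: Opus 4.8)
The plan is to reduce the statement to a long-time property of the dual process via the duality relation of \cref{prop:duality_relation}, and then to analyse the $(\gamma,\nu)$-ancestral process directly. Applying the duality with a single sampling location shows that, for almost every $x \in \rd$,
\[
\esp\left[\omega_{M_t}(x)\right] = \mathbf{E}_{\delta_x}\left[\prod_{z \in \Xi_t}\omega_{M^0}(z)\right],
\]
where the product runs over the atoms of $\Xi_t$ and equals $1$ on $\{N_t = 0\}$. Isolating the empty-product contribution $\mathbf{P}(N_t = 0)$ and integrating over $x \in A$, while using that the intensity $dt \otimes dz \otimes \nu$ is invariant under spatial translations (so the law of $N_t$ started from $\delta_x$ is that started from $\delta_0$), I obtain
\[
\esp\left[\langle\mathds{1}_A, 1 - \omega_{M_t}\rangle\right] = \mathrm{Vol}(A)\,\mathbf{P}(N_t > 0) - \int_A \mathbf{E}_{\delta_x}\left[\mathds{1}_{\{N_t>0\}}\prod_{z\in\Xi_t}\omega_{M^0}(z)\right]dx.
\]
Since no ball can carry a reproduction event once $\Xi$ is empty, the state $0$ is absorbing, so the events $\{N_t>0\}$ are nested decreasing; hence $\mathbf{P}(N_t>0)$ is non-increasing, its limit exists (and equals the non-extinction probability), and it remains only to show that the error integral vanishes as $t \to +\infty$.

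For the error term I would use the endemic assumption. By \cref{defn:initial_condition}, $\omega_{M^0} \le 1-\varepsilon$ almost everywhere. Every atom created during $(0,t]$ has an absolutely continuous location (it is sampled uniformly in a ball centred at a point of the absolutely continuous Poisson intensity), and after integrating over $x \in A$ the original atom also lands in the full-measure set $\{\omega_{M^0} \le 1-\varepsilon\}$; thus, for almost every $x$, on $\{N_t>0\}$ one has $\prod_{z\in\Xi_t}\omega_{M^0}(z) \le (1-\varepsilon)^{N_t}$. The error integral is therefore at most $\mathrm{Vol}(A)\,\mathbf{E}\big[\mathds{1}_{\{N_t>0\}}(1-\varepsilon)^{N_t}\big]$, and since the integrand is bounded by $1$, dominated convergence reduces the problem to proving that $(1-\varepsilon)^{N_t}\mathds{1}_{\{N_t>0\}} \to 0$ almost surely. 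On the extinction event $N_t = 0$ eventually, so the claim is immediate there; the real content is that on the survival event one has $N_t \to +\infty$ almost surely.

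Establishing $N_t \to +\infty$ on survival is the main obstacle, and is delicate precisely because competition acts on the \emph{birth} rates and couples the offspring of distinct particles, so standard branching-process arguments do not apply directly. The key uniform estimate is that whenever $\Xi$ carries $N$ atoms, the total birth rate is at most $N\lambda$ with $\lambda := V_1\int_0^1\int_0^\infty u r^d\,\nu(dr,du) < +\infty$; this follows from $1-(1-u)^n \le nu$ and condition \eqref{eqn:cond_nu}. As the total death rate is exactly $N\gamma$, at every jump the probability that the next event is a death is at least $q := \gamma/(\gamma+\lambda) > 0$, uniformly in the configuration. Hence from any state with at most $k$ atoms the probability of reaching $0$ within the next $k$ jumps (all deaths) is at least $q^k > 0$. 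I would then run an excursion argument: were the process to visit $\{1,\dots,k\}$ at infinitely many jump times without going extinct, the strong Markov property would yield infinitely many disjoint extinction attempts of conditional probability at least $q^k$, and a conditional Borel--Cantelli argument would force extinction almost surely. Thus survival is incompatible with visiting $\{1,\dots,k\}$ infinitely often, and since $k$ is arbitrary, $N_t \to +\infty$ on the survival event.

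Combining the two cases, $(1-\varepsilon)^{N_t}\mathds{1}_{\{N_t>0\}} \to 0$ almost surely, so dominated convergence gives $\mathbf{E}\big[\mathds{1}_{\{N_t>0\}}(1-\varepsilon)^{N_t}\big] \to 0$ and hence the error integral tends to $0$. Passing to the limit $t \to +\infty$ in the displayed identity then yields $\lim_{t\to+\infty}\esp[\langle\mathds{1}_A, 1-\omega_{M_t}\rangle] = \mathrm{Vol}(A)\,\big(\lim_{t\to+\infty}\mathbf{P}(N_t>0)\big)$, as claimed.
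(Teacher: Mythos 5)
Your proposal is correct, and although it shares the overall architecture of the paper's argument (apply the duality relation, then show that the dual particle number cannot stay bounded while remaining positive), the key dual-process input is established by a genuinely different route. The paper (Lemma~\ref{lem:duality_endemic_case}) sandwiches $\esp[\langle\mathds{1}_{A},1-\omega_{M_{t}}\rangle]$ between two bounds involving a cutoff~$\widetilde{N}$, and then proves the in-probability statement $\mathbf{P}(1\leq N_{t}<\widetilde{N})\to 0$ (Lemma~\ref{lem:unbounded_dual}) by contradiction: along deterministic times spaced more than one unit apart, comparison with a dominating binary branching process gives a uniform lower bound~$\varepsilon'$ on the probability of extinction within a unit time window, and since these extinction events are pairwise disjoint (extinction being absorbing) infinitely many of them would make $\mathbf{P}(N_{t}=0)$ exceed one. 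You instead prove the stronger pathwise statement that $N_{t}\to+\infty$ almost surely on the survival event, working with the embedded jump chain: the total death rate is exactly $N\gamma$ while the total birth rate is at most $N\lambda$ with $\lambda=\int_{0}^{1}\int_{0}^{\infty}uV_{r}\,\nu(dr,du)$ (finite by~\eqref{eqn:cond_nu}), so every jump is a death with probability at least $q=\gamma/(\gamma+\lambda)$ uniformly in the spatial configuration; hence from any state with at most~$k$ atoms extinction occurs within~$k$ jumps with probability at least~$q^{k}$, and conditional Borel--Cantelli excludes infinitely many visits to $\{1,\dots,k\}$ on survival. Your exact decomposition of the duality identity, with nonnegative error term bounded by $\mathbf{E}\big[(1-\varepsilon)^{N_{t}}\mathds{1}_{\{N_{t}>0\}}\big]$, replaces the paper's two-sided bounds and is arguably cleaner; what the paper's version buys is that it never needs the jump chain or non-accumulation of jumps, only the Markov property at fixed times. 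One detail to keep explicit in a full write-up: transferring the almost-everywhere bound $\omega_{M^{0}}\leq 1-\varepsilon$ to the atoms of $\Xi_{t}$ requires absolute continuity of the locations of newly created atoms together with Fubini for the initial atom, exactly as you indicate; the paper sidesteps this by fixing a density version with $1-\omega^{0}\geq\varepsilon$ everywhere, which is legitimate because the right-hand side of the duality relation is insensitive to the choice of version.
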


When starting from a pandemic initial condition, we can also show that the four survival regimes are equivalent, though this time we have no limiting value for the local mass of infected individuals. Again survival of the process is tied to the long-term behaviour of the number~$(N_{t})_{t \geq 0}$ of particles in the~$(\gamma,\nu)$-ancestral process. 
\begin{proposition}\label{prop:survival_pandemic_case} 
Let~$M^{0} \in \ml$ be a pandemic initial condition in the sense of \Cref{defn:initial_condition}
and let~$(M_{t})_{t \geq 0}$ be the~$(\gamma,\nu)$-EpiSLFV process with initial condition~$M^{0}$. Then, for all compact~$A \subseteq \rd$ with positive volume, the three following properties are equivalent:
\begin{align*}
\text{\emph{(i)}}& \quad \liminf\limits_{t \to + \infty} \esp\left[
\langle \mathds{1}_{A}, 1 - \omega_{M_{t}} \rangle 
\right] = 0 \\
\text{\emph{(ii)}}& \quad \limsup\limits_{t \to + \infty} \esp\left[
\langle \mathds{1}_{A}, 1 - \omega_{M_{t}} \rangle 
\right] = 0 \\
\text{and \emph{(iii)}}& \quad \lim\limits_{t \to + \infty} \mathbf{P}\left(
N_{t} > 0
\right) = 0. 
\end{align*}
\end{proposition}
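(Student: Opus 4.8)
Write $\Xi^{(x)}$ for the $(\gamma,\nu)$-ancestral process of \Cref{defn:ancestral_process} started from $\delta_x$, with $N_t$ its number of atoms and $\xi^{(x)}_t(1),\dots,\xi^{(x)}_t(N_t)$ its atoms; by translation invariance of the intensity $dt\otimes dz\otimes\nu(dr,du)$ the law of $N_t$ does not depend on $x$. Since a realisation with no atoms can never produce one, the event $\{N_t>0\}$ is non-increasing in $t$, so $t\mapsto\mathbf{P}(N_t>0)$ decreases to some $p_\infty\in[0,1]$ and statement (iii) is equivalent to $p_\infty=0$. The plan is to read everything off the duality relation (\Cref{prop:duality_relation}) for a single sampled individual, which gives $\esp[\omega_{M_t}(x)]=\mathbf{E}\big[\prod_{i=1}^{N_t}\omega_{M^0}(\xi^{(x)}_t(i))\big]$. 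Since the $\rd$-marginal of $M_t$ is Lebesgue measure, Tonelli yields $\esp[\langle\mathds{1}_A,1-\omega_{M_t}\rangle]=\int_A\esp[1-\omega_{M_t}(x)]\,dx$. The pandemic condition of \Cref{defn:initial_condition} says $\omega_{M^0}=1$ a.e.\ on $H^c$ and $\omega_{M^0}<1-\varepsilon$ a.e.\ on $H$, so the integrand vanishes unless $\Xi^{(x)}_t$ has an atom in $H$, and (with the empty-product convention)
\[
\varepsilon\,\mathbf{P}\big(\Xi^{(x)}_t(H)>0\big)\ \le\ \esp\big[1-\omega_{M_t}(x)\big]\ \le\ \mathbf{P}(N_t>0).
\]
Integrating the right inequality over $A$ gives $\esp[\langle\mathds{1}_A,1-\omega_{M_t}\rangle]\le\mathrm{Vol}(A)\,\mathbf{P}(N_t>0)$, hence (iii)$\Rightarrow$(ii); and (ii)$\Rightarrow$(i) is immediate. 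It remains to prove (i)$\Rightarrow$(iii), which I do in contrapositive form: if $p_\infty>0$ then $\liminf_t\esp[\langle\mathds{1}_A,1-\omega_{M_t}\rangle]>0$. By the left inequality and Fatou, this reduces to bounding $\liminf_t\mathbf{P}(\Xi^{(x)}_t(H)>0)$ below, uniformly for $x$ in the compact set $A$.

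The key device is a \emph{reflection symmetry}. Because the driving Poisson point process is invariant under every spatial isometry, for any $y$ the law of $\Xi^{(y)}$ is invariant under the reflection $\sigma_y$ that fixes $y$ in the hyperplane parallel to $\partial H$. When $y\in H$ one checks that $H\cup\sigma_y(H)=\rd$, so on $\{N_s>0\}$ at least one atom lies in $H\cup\sigma_y(H)$; a union bound together with $\mathbf{P}(\Xi^{(y)}_s(H)>0)=\mathbf{P}(\Xi^{(y)}_s(\sigma_y(H))>0)$ then gives $\mathbf{P}(\Xi^{(y)}_s(H)>0)\ge\tfrac12\mathbf{P}(N_s>0)$. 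Thus a dual \emph{started inside} $H$ already has the required lower bound, which settles all $x\in A\cap H$.

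To upgrade this to arbitrary $x\in A$ (in particular $x\in H^c$) I combine a spreading step with the Markov property and a monotone coupling. First, a \emph{spreading lemma}: since $\nu\neq0$, a finite chain of reproduction events can move an atom across any bounded distance, so there exist $s_0<\infty$ and $\kappa>0$, uniform over the compact $A$, with $\mathbf{P}(\Xi^{(x)}_{s_0}(H)>0)\ge\kappa$ for all $x\in A$. Second, the ancestral process admits a coupling that is monotone in its initial condition: extra atoms only raise the reproduction probabilities $1-(1-u)^{k}$, so, deaths being coupled, a larger initial configuration keeps a larger configuration at all times. Consequently, conditioning on an atom $y\in H$ being present at time $s_0$ and applying the Markov property, the configuration at time $t$ dominates an independent copy of $\Xi^{(y)}_{t-s_0}$; if that copy has an atom in $H$, so does $\Xi^{(x)}_t$. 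The reflection bound applied to the copy ($y\in H$) gives conditional probability at least $\tfrac12\mathbf{P}(N_{t-s_0}>0)$, whence $\mathbf{P}(\Xi^{(x)}_t(H)>0)\ge\tfrac{\kappa}{2}\mathbf{P}(N_{t-s_0}>0)$. Letting $t\to\infty$ yields $\liminf_t\mathbf{P}(\Xi^{(x)}_t(H)>0)\ge\tfrac{\kappa}{2}p_\infty$ uniformly in $x\in A$, so $\liminf_t\esp[\langle\mathds{1}_A,1-\omega_{M_t}\rangle]\ge\tfrac{\varepsilon\kappa}{2}\mathrm{Vol}(A)\,p_\infty>0$, closing the cycle.

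The main obstacle is precisely this last combination. The naive idea of deducing ``an atom lies in $H$ at time $t$'' directly from survival $\{N_t>0\}$ fails, because the descendants of an atom that has entered $H$ may later drift back into $H^c$, so survival alone does not keep dual mass in $H$; unlike the endemic case (\Cref{prop:survival_endemic_case}), where $\omega_{M^0}<1$ everywhere makes every atom count. The resolution is to \emph{re-apply the reflection bound after} the dual has reached $H$, which forces a definite fraction of the surviving mass to sit in $H$ at all later times. The remaining technical points are to make the spreading probability $\kappa$ and time $s_0$ uniform over the compact $A$, and to justify the monotone coupling and the reflection invariance rigorously from the Poisson construction of \Cref{sec:quenched_SLFV}.
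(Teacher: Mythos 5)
Your proposal is correct and follows essentially the same route as the paper: the duality sandwich $\varepsilon\,\mathbf{P}\big(\Xi_{t}(H)>0\big)\le\esp\left[1-\omega_{M_{t}}(x)\right]\le\mathbf{P}(N_{t}>0)$ is the paper's Lemma~\ref{lem:bound_pandemic}, your reflection/factor-$\tfrac{1}{2}$ bound for duals started inside~$H$ together with the restart-after-reaching-$H$ argument (Markov property plus monotone coupling) reproduces the two halves of the paper's Lemma~\ref{lem:equivalence_dual_pandemic}, and Fatou/dominated convergence close the argument exactly as in the paper's proof of the proposition. The only cosmetic differences are that you start the dual at~$\delta_{x}$ with $H$ fixed instead of starting at~$\delta_{0}$ and translating~$H$, and that you use a fixed spreading time~$s_{0}$ with a uniform lower bound~$\kappa$ over the compact~$A$ where the paper uses the hitting time of~$H$ (uniformity being in fact unnecessary, since Fatou only needs a pointwise positive lower bound).
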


In the case of an epidemic initial condition, we only have equivalence of local and global survival in the permanent or transient case. The survival regimes can be rephrased in terms of the distribution of the locations of particles in the~$(\gamma,\nu)$-ancestral process. 
\begin{proposition}\label{prop:survival_epidemic_case} 
Let~$M^{0} \in \ml$ be an epidemic initial condition in the sense of \Cref{defn:initial_condition} and let~$(M_{t})_{t \geq 0}$ be the $(\gamma,\nu)$-EpiSLFV process with initial condition~$M^{0}$. 

(i) For all compact~$A \subseteq \rd$ with positive volume, 
\begin{equation*}
\liminf\limits_{t \to + \infty} \esp\left[ 
\langle \mathds{1}_{A}, 1 - \omega_{M_{t}} \rangle 
\right] = 0 
\end{equation*}
if, and only if for all~$n \in \nmath$, 
\begin{equation*}
\liminf\limits_{t \to + \infty} \mathbf{P}\left(
\Xi_{t}(\bcal(0,n)) > 0 
\right) = 0. 
\end{equation*}

(ii) For all compact~$A \subseteq \rd$ with positive volume, 
\begin{equation*}
\lim\limits_{t \to + \infty} \esp \left[ 
\langle \mathds{1}_{A}, 1 - \omega_{M_{t}} \rangle 
\right] = 0
\end{equation*}
if, and only if for all~$n \in \nmath$, 
\begin{equation*}
\lim\limits_{t \to + \infty} \mathbf{P}\left(
\Xi_{t}(\bcal(0,n)) > 0
\right) = 0. 
\end{equation*}
\end{proposition}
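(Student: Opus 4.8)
The plan is to route both equivalences through the duality relation of Proposition~\ref{prop:duality_relation} applied to a single sampled individual, reducing each side to one common geometric quantity attached to the ancestral process. Write $A_0 := \text{Supp}(1-\omega_{M^{0}})$ for the (compact, positive-volume) initial infected region, fix $R_0$ with $A_0 \subseteq \bcal(0,R_0)$, and let $\varepsilon>0$ be the constant from Definition~\ref{defn:initial_condition}(iii), so that $\omega_{M^{0}} = 1$ a.e.\ outside $A_0$ and $\omega_{M^{0}} \leq 1-\varepsilon$ a.e.\ on $A_0$. Taking $k=1$ and $x_1=z$ in the duality relation, and using that the driving Poisson point process and death mechanism of the ancestral process are translation-invariant (so the process started from $\delta_z$ has the same law as $z+\Xi_t$, where $(\Xi_t)_{t\geq 0}$ starts from $\delta_0$ as in the statement), I would first record
\begin{equation*}
\esp[\omega_{M_t}(z)] = \mathbf{E}\Big[\prod_{w \in \Xi_t}\omega_{M^{0}}(z+w)\Big].
\end{equation*}

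The next step is to turn the initial condition into a two-sided bound. For a fixed atom configuration the product above equals $1$ exactly when no atom $w$ satisfies $z+w\in A_0$, and is at most $1-\varepsilon$ as soon as one atom does; since every factor lies in $[0,1]$, this yields, for a.e.\ $z$,
\begin{equation*}
\varepsilon\,\mathbf{P}\big(\exists\, w\in\Xi_t:\ z+w\in A_0\big) \leq 1-\esp[\omega_{M_t}(z)] \leq \mathbf{P}\big(\exists\, w\in\Xi_t:\ z+w\in A_0\big).
\end{equation*}
(Some care is needed because $\omega_{M^{0}}$ is only defined up to a Lebesgue-null set, but this is harmless after integrating in $z$ and applying Tonelli.) Integrating over the test set $A$ and setting
\begin{equation*}
I_A(t) := \mathbf{E}\Big[\mathrm{Vol}\Big(A \cap \bigcup_{w\in\Xi_t}(A_0 - w)\Big)\Big] = \int_A \mathbf{P}\big(\exists\, w\in\Xi_t:\ z+w\in A_0\big)\,dz,
\end{equation*}
where the two expressions agree by Tonelli, I obtain the sandwich $\varepsilon\, I_A(t) \leq \esp[\langle \mathds{1}_A, 1-\omega_{M_t}\rangle] \leq I_A(t)$. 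Both the $\liminf$ statement (i) and the $\lim$ statement (ii) therefore reduce to the corresponding statements about $I_A(t)$.

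It then remains to bound $I_A(t)$ above and below by ball-occupation probabilities of the ancestral process. For the implication from the dual to the process, if $A \subseteq \bcal(0,R_A)$ then the event $\{\exists w\in\Xi_t:\ z+w\in A_0\}$ forces an atom into $\bcal(0,R_0+R_A)$, so $I_A(t) \leq \mathrm{Vol}(A)\,\mathbf{P}(\Xi_t(\bcal(0,n))>0)$ with $n := \lceil R_0+R_A\rceil$; this gives one direction of both (i) and (ii). For the converse I would exploit the freedom to choose the test set $A$: taking $A := \bcal(0,R_0+n)$ guarantees $A_0 - w \subseteq A$ for every $w\in\bcal(0,n)$, so on the event $\{\Xi_t(\bcal(0,n))>0\}$ the random set $A \cap \bigcup_w(A_0-w)$ has volume at least $\mathrm{Vol}(A_0)$, whence $I_A(t) \geq \mathrm{Vol}(A_0)\,\mathbf{P}(\Xi_t(\bcal(0,n))>0)$. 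Combining the two bounds with the $\varepsilon$-sandwich and the elementary monotonicity of $\liminf$ and $\limsup$ under pointwise inequalities (together with $\lim = 0 \iff \limsup = 0$ for nonnegative quantities) closes both equivalences.

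I expect the main obstacle to lie less in any single estimate than in isolating the correct common quantity: recognizing that, under an epidemic initial condition, the localized mass $\esp[\langle \mathds{1}_A, 1-\omega_{M_t}\rangle]$ is comparable (up to the factor $\varepsilon$) to the expected volume of the random set $A \cap \bigcup_w(A_0-w)$, and that the $\bcal(0,n)$-occupation events arise precisely by choosing $A$ to be a sufficiently large ball. The freedom to choose $A$ in the reverse direction, and the fact that the initial region $A_0$ enters only through its diameter and volume, are what make the finer, localization-sensitive equivalences of (i)--(ii) go through while preventing the stronger collapse of all four survival regimes that is available in the endemic and pandemic cases.
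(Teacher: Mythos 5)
Your sandwich bound is correct and is exactly the paper's: applying Proposition~\ref{prop:duality_relation} with $k=1$, using translation invariance of the driving Poisson point process, and exploiting $1-\omega_{M^{0}}\geq\varepsilon$ on $A_{0}$ gives precisely the epidemic analogue of Lemma~\ref{lem:bound_pandemic}, namely $\varepsilon\int_{A}\mathbf{P}(\Xi_{t}(A_{0}-z)>0)\,dz\leq\esp[\langle\mathds{1}_{A},1-\omega_{M_{t}}\rangle]\leq\int_{A}\mathbf{P}(\Xi_{t}(A_{0}-z)>0)\,dz$, and your ``$\Leftarrow$'' direction (choosing $n$ so large that $A_{0}-z\subseteq\bcal(0,n)$ for all $z\in A$) coincides with the paper's argument.

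The gap is in the ``$\Rightarrow$'' direction, and it comes from a misreading of the quantifier structure. The proposition asserts the equivalence \emph{for every fixed} compact $A$ with positive volume: from $\liminf_{t}\esp[\langle\mathds{1}_{A},1-\omega_{M_{t}}\rangle]=0$ for that given $A$, one must deduce the ball statement for all $n$. You instead ``exploit the freedom to choose the test set $A$'' and take $A=\bcal(0,R_{0}+n)$; this only proves that the ball statement follows when the hypothesis holds for these special large origin-centred balls. For a general $A$ --- say a small ball far from the origin --- your lower bound $I_{A}(t)\geq\mathrm{Vol}(A_{0})\,\mathbf{P}(\Xi_{t}(\bcal(0,n))>0)$ is simply false (already $I_{A}(t)\leq\mathrm{Vol}(A)$, which may be smaller than $\mathrm{Vol}(A_{0})$), and nothing in your argument transfers ``$\liminf_{t}I_{A}(t)=0$'' from one compact to another. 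That transfer is the real content of the result, and the paper isolates it as a statement about the dual process alone (Lemma~\ref{lem:equivalence_dual_epidemic}): $\liminf_{t}\mathbf{P}(\Xi_{t}(K)>0)=0$ for one compact $K$ with positive volume if and only if it holds for every ball $\bcal(0,n)$. Its proof requires dynamical input that is entirely absent from your proposal: rotation invariance of $\Xi$ (started from $\delta_{0}$) around the origin, a covering argument reducing an annulus to finitely many rotated copies of a ball inside $K$, and the fact that an atom present in any ball produces, directly or in several steps, an atom in a prescribed region at a rate bounded away from zero --- so persistent occupation of some $\bcal(0,n_{0})$ would force persistent occupation of $K$. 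Without this ingredient your argument establishes only the weaker claim that the ball statement is equivalent to ``$\liminf_{t}\esp[\langle\mathds{1}_{A},1-\omega_{M_{t}}\rangle]=0$ for \emph{all} compact $A$ simultaneously,'' which in particular does not deliver the equivalence of local and global survival (transient or permanent) that is the point of the proposition; the same gap appears verbatim in your treatment of part (ii).
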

While we do expect that the different survival criteria are in fact equivalent even when starting from an epidemic initial condition, showing that survival of the associated~$(\gamma,\nu)$-ancestral process implies the required result regarding the distribution of atoms in this process is deferred to future work. 

\subsection{A reproduction number for the~\texorpdfstring{$(\gamma, \nu$)}{}-EpiSLFV process}
\paragraph{Background} In many epidemiological models, an important quantity is the \textit{basic reproduction number} (sometimes called \textit{basic reproduction ratio}), generally denoted~$\mathrm{R}_{0}$ \cite{britton2019stochastic}. In simple models, this number has a direct interpretation as the average number of individuals that an infected individual will attempt to infect (and successfully infect if they were healthy beforehand). In particular, it has a threshold value of~$1$: above one, the epidemic grows and reaches a macroscopic size with non-zero probability, while the epidemic quickly goes extinct if~$\mathrm{R}_{0} < 1$. In more complex models, the interpretation of~$\mathrm{R}_{0}$ is sometimes less straightforward, but it generally still exhibits a threshold at~$1$ (but see e.g. Theorem~4.1 in~\cite{forien2022stochasticepidemicmodelsvarying} for a counterexample). Our goal is to derive such a quantity for the~$(\gamma,\nu)$-EpiSLFV process. 

\paragraph{Definition}
Let us start with a heuristic derivation of what could be the reproduction number for the~$(\gamma,\nu)$-EpiSLFV process. To do so, we interpret the process as the infinite-population limit of an individual-based model. To simplify the derivation, we assume that reproduction events have fixed parameters~$(R,U)$, $R > 0$ and~$U \in (0,1]$, and hence that~$\nu$ is of the form
\begin{equation*}
\nu(dr,du) = \alpha \delta_{R}(dr) \delta_{U}(du)
\end{equation*}
for some~$\alpha > 0$. Each infected individual recovers at rate~$\gamma$. Moreover, to reproduce and infect other individuals, an infected individual first needs to be covered by a reproduction event, that is, to be within radius~$R$ of an event centre. This occurs at rate~$\alpha V_{R}$. We choose the parental individual associated to the event uniformly at random in the affected area, which contains a mass~$V_{R}$ of individuals, so the infected individual of interest is chosen with probability~$V_{R}^{-1}$. Moreover, the infected individual will then infect a fraction~$U$ of the individuals in the affected area. Combining these observations, informally, the expected mass of individuals infected by an infected individual before it recovers is given by
\begin{equation*}
\gamma^{-1} \times \alpha V_{R} \times \frac{1}{V_{R}} \times U V_{R} = \alpha U V_{R} \gamma^{-1}. 
\end{equation*}
If we proceed similarly with a general $\sigma$-finite measure~$\nu$ on~$(0,+\infty) \times (0,1]$ satisfying~\eqref{eqn:cond_nu}, we obtain the following candidate for the basic reproduction number. 
\begin{definition}\label{defn:reproduction_number} We define the reproduction number~$\mathrm{R}_{0}(\gamma, \nu)$ of the~$(\gamma,\nu)$-EpiSLFV process as
\begin{equation*}
\mathrm{R}_{0}(\gamma, \nu) := \frac{1}{\gamma} \int_{0}^{1}\int_{0}^{\infty} uV_{r} \nu(dr,du).     
\end{equation*}
\end{definition}

\paragraph{Conjecture and supporting results}
Our conjecture is that the quantity~$\mathrm{R}_{0}(\gamma,\nu)$ from Definition~\ref{defn:reproduction_number} behaves exactly as the basic reproduction number for other epidemiological models, and exhibits a threshold at~$1$. This conjecture seems to be supported by numerical simulations, as shown in Figure~\ref{fig:threshold_impact_increase}. 
\begin{conjecture}\label{conj:survival_extinction} For all~$\gamma > 0$ and for all~$\sigma$-finite measure~$\nu$ on~$(0,\infty) \times (0,1]$ satisfying~\eqref{eqn:cond_nu}, 

\noindent (i) If $\mathrm{R}_{0}(\gamma, \nu) < 1$, then the~$(\gamma,\nu)$-EpiSLFV process goes extinct (in the sense of Definition~\ref{defn:extinction_process}).

\noindent (ii) If $\mathrm{R}_{0}(\gamma, \nu) > 1$, then the~$(\gamma,\nu)$-EpiSLFV process does not go extinct. 
\end{conjecture}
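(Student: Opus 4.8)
The plan is to translate both statements into properties of the $(\gamma,\nu)$-ancestral process through the duality relation of Proposition~\ref{prop:duality_relation}, exploiting that $\mathrm{R}_{0}(\gamma,\nu)$ is exactly the ratio $\beta/\gamma$, where
\[
\beta := \int_{0}^{1}\int_{0}^{\infty} u V_{r}\,\nu(dr,du)
\]
is the reproduction rate of a \emph{single, isolated} ancestral particle and $\gamma$ is its death rate. Indeed, a lone particle at $\xi$ is hit by events $(t,z,r,u)$ with $\xi \in \bcal(z,r)$ at total rate $\int\int u V_{r}\,\nu(dr,du)=\beta$, since for one particle the reproduction probability is $1-(1-u)^{1}=u$ and $\{z : \xi\in\bcal(z,r)\}$ has volume $V_{r}$. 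Hence, absent competition, the ancestral process is a branching process with mean offspring number $\beta/\gamma = \mathrm{R}_{0}(\gamma,\nu)$, and the entire difficulty is to understand how the competition term $1-(1-u)^{n}$ distorts this branching threshold.

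\paragraph{The subcritical regime $\mathrm{R}_{0}(\gamma,\nu)<1$}
Here competition works in our favour, and I would argue directly. Writing $N_{t}=\Xi_{t}(\rd)$ and using the elementary bound $1-(1-u)^{n}\le nu$ together with the Fubini identity $\int_{\rd}\Xi(\bcal(z,r))\,dz = N\,V_{r}$, the total birth rate in a configuration $\Xi$ satisfies
\[
\int_{\rd}\int_{0}^{1}\int_{0}^{\infty}\big(1-(1-u)^{\Xi(\bcal(z,r))}\big)\,\nu(dr,du)\,dz \;\le\; \beta\,N \;=\; \gamma\,\mathrm{R}_{0}(\gamma,\nu)\,N.
\]
Since each particle dies at rate $\gamma$, Dynkin's formula for the pure-jump process $N_{t}$ gives $\tfrac{d}{dt}\ebold[N_{t}] \le \gamma(\mathrm{R}_{0}(\gamma,\nu)-1)\,\ebold[N_{t}]$, so Grönwall's inequality yields $\ebold[N_{t}] \le N_{0}\,e^{\gamma(\mathrm{R}_{0}(\gamma,\nu)-1)t}$, which decays to $0$ when $\mathrm{R}_{0}(\gamma,\nu)<1$. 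To convert this into extinction of the EpiSLFV process for an \emph{arbitrary} initial condition $M^{0}$, I would use the $k=1$ case of the duality relation, $\esp[\omega_{M_{t}}(x)] = \ebold\big[\prod_{y\in\Xi_{t}}\omega_{M^{0}}(y)\big]$ with $\Xi_{0}=\delta_{x}$. As the empty product equals $1$ and each factor lies in $[0,1]$, we have $1-\prod_{y\in\Xi_{t}}\omega_{M^{0}}(y)\le \mathds{1}_{\{N_{t}>0\}}$, whence by translation invariance of the ancestral dynamics
\[
\esp\big[\langle \mathds{1}_{A},1-\omega_{M_{t}}\rangle\big] \;\le\; \mathrm{Vol}(A)\,\pbold(N_{t}>0) \;\le\; \mathrm{Vol}(A)\,\ebold[N_{t}] \;\xrightarrow[t\to\infty]{}\; 0,
\]
which is exactly extinction in the sense of Definition~\ref{defn:extinction_process}. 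This direction is routine.

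\paragraph{The supercritical regime $\mathrm{R}_{0}(\gamma,\nu)>1$}
This is where the real work lies. Restricting first to endemic (resp. pandemic) initial conditions, Proposition~\ref{prop:survival_endemic_case} (resp.~\ref{prop:survival_pandemic_case}) reduces non-extinction of the EpiSLFV process to the single statement that the $(\gamma,\nu)$-ancestral process started from $\delta_{0}$ survives with positive probability, i.e. $\lim_{t\to\infty}\pbold(N_{t}>0)>0$. The task thus becomes to show that a spatial branching process with \emph{self-thinning} competition survives whenever the isolated growth rate $\gamma(\mathrm{R}_{0}(\gamma,\nu)-1)$ is positive. A cheap first observation is that while $N_{t}=1$ there is no competition at all, so $\ebold[N_{t}]$ grows like $e^{\gamma(\mathrm{R}_{0}(\gamma,\nu)-1)t}$ for a while, already yielding the weaker "temporary spread" statement. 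To upgrade this to genuine survival I would run a renormalisation/block argument in the spirit of the comparison of the contact process with supercritical oriented percolation: tile $\rd\times[0,\infty)$ into space--time boxes, declare a box \emph{good} if the ancestral particles inside it seed, before dying, at least one particle in each neighbouring spatial box with probability close to $1$, and show that the field of good boxes stochastically dominates a supercritical oriented percolation. The crucial input is a lower bound on the seeding probability that survives competition; since competition only lowers birth rates, one controls it by bounding the number of particles present in a single box, which is feasible locally even though it is hopeless globally.

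\paragraph{Main obstacle}
The genuine difficulty---and the reason the statement is only a conjecture---is the \emph{sharpness} of the threshold at $1$. Competition strictly decreases the effective birth rate, so a block construction of the type above only delivers survival once $\mathrm{R}_{0}(\gamma,\nu)$ exceeds some $\mathrm{R}_{c}\ge 1$, leaving the window $1<\mathrm{R}_{0}(\gamma,\nu)<\mathrm{R}_{c}$ untouched. To close it one must show that competition becomes \emph{asymptotically negligible near criticality}: a population that barely survives grows slowly and therefore disperses over a region whose volume outpaces its particle number, so the local density---and with it the gap between $1-(1-u)^{n}$ and $nu$---tends to $0$. Making this self-improving dispersal estimate quantitative, uniformly in time and in a form that feeds back into the block construction, is the hard part; the finiteness condition~\eqref{eqn:stricter_cond_nu} is likely needed here to control the local event rate. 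The epidemic initial condition would then follow from the endemic/pandemic cases together with the implication flagged as open after Proposition~\ref{prop:survival_epidemic_case}.
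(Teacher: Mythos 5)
This statement is a \emph{conjecture}: the paper does not prove it, and instead supports it with partial results (\Cref{prop:extinction_R0_inf_1}, \Cref{thm:survival_theorem}, \Cref{prop:small_masses_grow}), so your proposal has to be measured against those rather than against a complete proof.

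For the subcritical part (i), your argument is genuinely different from, and stronger than, what the paper establishes. The paper's \Cref{prop:extinction_R0_inf_1} is proved via the quenched construction of \Cref{sec:quenched_SLFV}: a coupling of the total infected mass with a stochastic jump process, the total-mass identity of \Cref{lem:expression_expectation}, and Gr\"onwall's inequality; it therefore requires an epidemic initial condition and the stricter condition~\eqref{eqn:stricter_cond_nu}, and it controls the total mass rather than the local masses of \Cref{defn:extinction_process}. Your route works entirely on the dual side: the bound $1-(1-u)^{n} \leq nu$ together with $\int_{\rd} \Xi(\bcal(z,r))\,dz = N V_{r}$ shows that the ancestral process's total birth rate is at most $\gamma \mathrm{R}_{0}(\gamma,\nu) N_{t}$ while its death rate is exactly $\gamma N_{t}$, whence $\ebold[N_{t}] \leq e^{-\gamma(1-\mathrm{R}_{0}(\gamma,\nu))t}$; the one-particle duality bound $\esp[\langle \mathds{1}_{A}, 1-\omega_{M_{t}}\rangle] \leq \mathrm{Vol}(A)\,\pbold(N_{t}>0)$ --- which is precisely the upper bound derived inside \Cref{lem:duality_endemic_case}, and whose derivation uses nothing about~$M^{0}$ --- then finishes the proof. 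This needs only~\eqref{eqn:cond_nu} and works for \emph{every} initial condition in~$\ml$, so, modulo two routine justifications (Dynkin's formula for $N_{t}$, legitimate since $N_{t}$ is dominated by a Yule process as in \Cref{lem:dual_process_well_defined}; and the use of the \emph{differential} rather than integral form of Gr\"onwall, since the coefficient $\gamma(\mathrm{R}_{0}(\gamma,\nu)-1)$ is negative), your argument settles part (i) of the conjecture in a generality the paper's partial results do not reach.

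For the supercritical part (ii), you do not have a proof and you say so; what you propose coincides in spirit with the paper's own partial result. Your space--time block comparison with oriented percolation is exactly the mechanism behind \Cref{thm:survival_theorem}: the paper couples the dual with a nearest-neighbour contact process on a cubic grid (\Cref{lem:coupling_contact_process}), invokes Harris's survival theorem (\Cref{lem:contact_process}), and transfers survival to the forward process via duality for endemic initial conditions --- and, just as you predict, this only works above a dimension-dependent threshold $\mathrm{R}_{0}^{\max}(d) \geq 1$, for fixed event radius, leaving the window $(1,\mathrm{R}_{0}^{\max}(d))$ untouched. Your identification of the crux (showing that competition becomes negligible near criticality) and of the extra gap in the epidemic case (the open implication flagged after \Cref{prop:survival_epidemic_case}) matches the paper's own assessment of why the statement remains a conjecture; on this half your proposal adds strategy but no new progress beyond \Cref{thm:survival_theorem} and \Cref{prop:small_masses_grow}.
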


\begin{figure}[!htb]
	\centering
	\includegraphics[width=\linewidth]{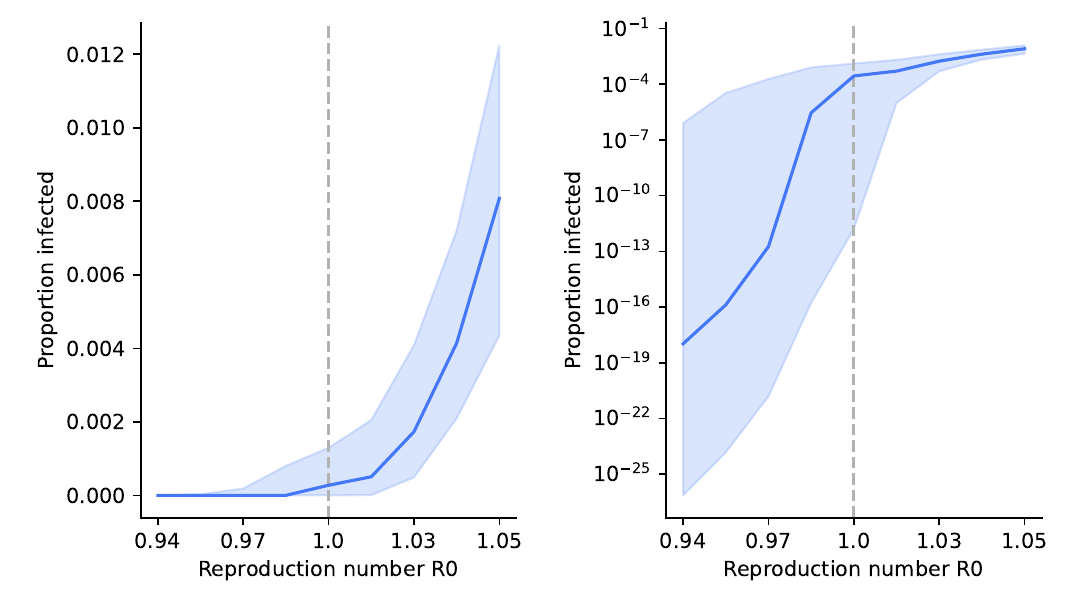}
	\caption{	
    Transition between extinction and survival of the~$(\gamma,\nu)$-EpiSLFV process, as a function of the reproduction number~$\mathrm{R}_{0}(\gamma,\nu)$. Simulations were ran with~$\gamma = 1$, from an initial density of healthy individuals~$\omega^{0} = 1 - 0.9 \mathds{1}_{\bcal(0,50)}(\cdot)$, and with $\nu(dr,du) = \delta_{4}(dr) \delta_{0.03 + 0.0003x}(du)$ for $x = 0, ..., 8$. For each value of~$x$, we ran~$100$ simulations of the $(\gamma,\nu)$-EpiSLFV process on a $200 \times 200$ grid with edge length~$1$, and recorded the average proportion of infected individuals at time~$t = 100$. The two plots show the resulting median (dark blue line) and $90$-percentiles (light blue lines) of the proportion of infected individuals in the population, on standard and logarithmic scales and as a function of the reproduction number~$\mathrm{R}_{0}(\gamma,\nu)$ (approximated by replacing the volume of~$\bcal(0,4)$ by the number of locations on the grid covered by events with radius~$4$). As a comparison, without any successful spreading event, the proportion of infected individuals would be around~$6.5 \times 10^{-45}$. The vertical dotted grey line indicates the value of~$\mathrm{R}_{0}(\gamma,\nu)$ at which the transition between extinction and survival is conjectured to occur. 
 }
	\label{fig:threshold_impact_increase}
\end{figure}

Our main result to support this conjecture is an equation describing the evolution of the mass of infected individuals in the~$(\gamma,\nu)$-EpiSLFV process when starting from an epidemic initial condition. 
\begin{lemma}\label{lem:expression_expectation}
Assume that~$\nu$ satisfies~\eqref{eqn:stricter_cond_nu}. Let~$M^{0} \in \ml$ be an epidemic initial condition, and let~$(M_{t})_{t \geq 0}$ be the unique solution to the martingale problem~$(\gyv, \delta_{M^{0}})$. Then, for all~$t \geq 0$, 
\begin{equation*}
\begin{aligned}
&\esp\left[ 
\langle \mathds{1}_{\rd}, 1 - \omega_{M_{t}} \rangle 
\right] \\
&= \esp \left[ 
\langle \mathds{1}_{\rd}, 1 - \omega_{M^{0}} \rangle 
\right] \\
&\hspace{1cm} + \int_{0}^{t}\int_{0}^{1}\int_{0}^{\infty} \esp\left[ 
\langle \mathds{1}_{\rd}, (1 - (\mathrm{R}_{0}(\gamma, \nu))^{-1})(1 - \omega_{M_{s}}) - \left(
(\overline{1 - \omega_{M_{s}}})(\cdot, r)
\right)^{2} \rangle 
\right] \nu(dr,du)ds, 
\end{aligned}
\end{equation*}
where for all~$z \in \rd$, $(\overline{1 - \omega_{M_{s}}})(z, r)$ denotes the spatial average of the function~$1 - \omega_{M_{s}}$ over the ball~$\bcal(z,r)$. 
\end{lemma}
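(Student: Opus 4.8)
The plan is to extract the evolution equation directly from the martingale problem of \Cref{theo:martingale_pb_well_posed}, applied to the test functions $\Psi_{\mathrm{id},f}$ obtained by taking $F=\mathrm{id}\in C^1(\rmath)$ and $f\in C_c(\rd)$ with $0\le f\le 1$. Because $\langle f,1\rangle=\int_{\rd}f$ is constant, the infected-mass functional $M\mapsto\langle f,1-\omega_M\rangle$ equals this constant minus $\Psi_{\mathrm{id},f}$, so the martingale property transfers to it. For compactly supported $f$ the relevant martingale is bounded (indeed $\langle f,\omega_M\rangle\le\|f\|_{L^1}$, and \eqref{eqn:stricter_cond_nu} makes $\gyv\Psi_{\mathrm{id},f}$ bounded since only a finite rate of reproduction events meets $\mathrm{supp}(f)$), so taking expectations and applying Fubini gives
\[\esp\big[\langle f,1-\omega_{M_t}\rangle\big]=\esp\big[\langle f,1-\omega_{M^0}\rangle\big]-\int_0^t\esp\big[\gyv\Psi_{\mathrm{id},f}(M_s)\big]\,ds.\]

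Next I would evaluate $\gyv\Psi_{\mathrm{id},f}$ explicitly. With $F'\equiv1$ the recovery term is $\gamma\langle f,1-\omega_M\rangle$; and since $\Theta_{z,r,u}(\omega_M)=\omega_M-\mathds{1}_{\bcal(z,r)}u\omega_M$, the increment $\Psi_{\mathrm{id},f}(\Theta_{z,r,u}(\omega_M))-\Psi_{\mathrm{id},f}(\omega_M)=-u\int_{\bcal(z,r)}f\omega_M$ is independent of the inner integration variable, while the inner average $V_r^{-1}\int_{\bcal(z,r)}(1-\omega_M)$ is exactly $(\overline{1-\omega_M})(z,r)$. The two simplifications that close the formula are, first, $\int_{\bcal(z,r)}\omega_M=V_r\big(1-(\overline{1-\omega_M})(z,r)\big)$ once $f$ is replaced by $\mathds{1}_{\rd}$, and second, the translation-invariance identity $\int_{\rd}(\overline{1-\omega_M})(z,r)\,dz=\int_{\rd}(1-\omega_M)$ obtained by Fubini. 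Feeding these in, the linear part becomes $\gamma\,(\mathrm{R}_0(\gamma,\nu)-1)\int_{\rd}(1-\omega_M)$ and the quadratic part is $-\int_0^1\int_0^\infty uV_r\int_{\rd}(\overline{1-\omega_M})(z,r)^2\,dz\,\nu(dr,du)$; extracting the common factor $\gamma\,\mathrm{R}_0(\gamma,\nu)=\int_0^1\int_0^\infty uV_r\,\nu(dr,du)$ from \Cref{defn:reproduction_number} reorganises both into the integrand appearing in the statement.

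The step I expect to be the main obstacle is the passage $f_n\nearrow\mathds{1}_{\rd}$, since $\mathds{1}_{\rd}$ is not an admissible test function. Taking $f_n\in C_c(\rd)$ with $0\le f_n\le1$ and $f_n\uparrow\mathds{1}_{\rd}$, monotone convergence controls the left-hand side and the initial term, while dominated convergence controls the generator term, but all of this presupposes the a priori bound $\sup_{s\le t}\esp[\langle\mathds{1}_{\rd},1-\omega_{M_s}\rangle]<\infty$. Formally this is immediate: discarding the non-positive quadratic term gives $\tfrac{d}{ds}\esp[\langle\mathds{1}_{\rd},1-\omega_{M_s}\rangle]\le\gamma\,(\mathrm{R}_0(\gamma,\nu)-1)\,\esp[\langle\mathds{1}_{\rd},1-\omega_{M_s}\rangle]$, so Gronwall would propagate the finite initial mass $\mathrm{Vol}(A)$ of the epidemic initial condition to exponential-in-time control.

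The delicacy is that this Gronwall estimate is a priori circular, as the dominating quantity is precisely the mass whose finiteness we wish to establish. I would break the circularity by keeping the truncations $f_n$ throughout, bounding the growth term of $\esp[\langle f_n,1-\omega_{M_s}\rangle]$ by $\gamma\,\mathrm{R}_0(\gamma,\nu)\,\esp[\langle\mathds{1}_{\rd},1-\omega_{M_s}\rangle]$ using $\omega_M\le1$ and the same translation-invariance identity, and separately establishing that the expected infected mass is finite on bounded time intervals by comparison with a dominating pure-birth dynamics — most transparently through the quenched Poisson-point-process construction of \Cref{sec:quenched_SLFV}, or through a first-moment bound on the dual $(\gamma,\nu)$-ancestral process of \Cref{defn:ancestral_process} combined with the duality relation. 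Once $s\mapsto\esp[\langle\mathds{1}_{\rd},1-\omega_{M_s}\rangle]$ is known to be finite and locally bounded, the monotone and dominated convergence arguments close and the identity for $f=\mathds{1}_{\rd}$ follows. Condition \eqref{eqn:stricter_cond_nu} and the compactness of $\mathrm{Supp}(1-\omega_{M^0})$ are exactly what legitimise every interchange of limit, expectation and integral.
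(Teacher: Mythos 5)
Your proposal is correct and takes essentially the same route as the paper: an expected-evolution equation extracted from the martingale problem on truncated test functions, passage to $\mathds{1}_{\rd}$ by monotone/dominated convergence, and --- exactly as in the paper --- the a priori integrability of the total infected mass supplied by the quenched-construction coupling with the stochastic jump process (Lemmas~\ref{lem:sjp_finite_expectation} and~\ref{lem:coupling_sjp_quenched}). The only cosmetic differences are that the paper truncates with indicators $\mathds{1}_{A_n}$ of compacts, which are admissible test functions by the $\mathbb{L}^{1}$ extension in Lemma~\ref{lem:extended_martingale}, and routes the algebra through the iterated average $\overline{\overline{\omega}}_{M_s}(\cdot,r)$ (Lemmas~\ref{lem:martingale_pb_infected} and~\ref{lem:martingale_pb_infected_complete_space}) before reducing to the squared single average, whereas your computation reaches the single-average form directly.
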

In Section~\ref{subsec:application_R0_inf_1}, we show a version of this result that holds for more general initial conditions (see Lemma~\ref{lem:martingale_pb_infected}). The main interest of this result is that it clearly highlights that the~$(\gamma,\nu)$-EpiSLFV process goes extinct when~$\mathrm{R}_{0}(\gamma, \nu) < 1$ (as both terms in the integral are then negative), as stated in the following result, whose proof can be found at the end of Section~\ref{subsec:application_R0_inf_1}. 
\begin{proposition}\label{prop:extinction_R0_inf_1}
Assume that~$\nu$ satisfies~\eqref{eqn:stricter_cond_nu}. Let~$M^{0} \in \ml$ be an epidemic initial condition, and let~$(M_{t})_{t \geq 0}$ be the unique solution to the martingale problem~$(\gyv, \delta_{M^{0}})$. Assume that~$\mathrm{R}_{0}(\gamma, \nu) < 1$. Then, 
\begin{equation*}
\lim\limits_{t \to + \infty} \esp\left[ 
\langle \mathds{1}_{\rd}, 1 - \omega_{M_{t}} \rangle 
\right] = 0. 
\end{equation*}
\end{proposition}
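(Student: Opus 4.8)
The plan is to track the single scalar quantity $g(t) := \esp[\langle \mathds{1}_{\rd}, 1 - \omega_{M_{t}}\rangle]$, the expected total mass of infected individuals, and to show that it decays to $0$ as a direct consequence of \Cref{lem:expression_expectation}. First I would check that $g(0)$ is finite: since $M^{0}$ is an epidemic initial condition, $1 - \omega_{M^{0}}$ vanishes outside a compact set $A$ and is bounded by $1$, so $g(0) \leq \mathrm{Vol}(A) < \infty$. \Cref{lem:expression_expectation} then expresses $g(t)$ as $g(0)$ plus the time-integral of an explicit integrand $J(s)$; in particular $g$ is absolutely continuous with $g'(s) = J(s)$ for almost every $s$.

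The first key observation is a sign analysis of $J(s)$. Since $\mathrm{R}_{0}(\gamma,\nu) < 1$ we have $1 - \mathrm{R}_{0}(\gamma,\nu)^{-1} < 0$, so the linear contribution $(1 - \mathrm{R}_{0}(\gamma,\nu)^{-1})(1 - \omega_{M_{s}})$ is pointwise non-positive (as $1 - \omega_{M_{s}} \geq 0$), while the quadratic contribution $-\big((\overline{1-\omega_{M_{s}}})(\cdot,r)\big)^{2}$ is manifestly non-positive. Hence $J(s) \leq 0$, so $g$ is non-increasing; being bounded below by $0$, it converges to some limit $L \geq 0$. This already recovers the qualitative picture, and it remains to upgrade convergence to $L = 0$.

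For the quantitative step I would discard the (non-positive) quadratic term and keep only the linear one. Carrying out the $(r,u)$-integration of the linear term produces the total infection intensity $\int_{0}^{1}\int_{0}^{\infty} uV_{r}\,\nu(dr,du) = \gamma\,\mathrm{R}_{0}(\gamma,\nu)$, which is finite by \eqref{eqn:stricter_cond_nu} and \Cref{defn:reproduction_number}. This yields, for almost every $s$,
\[
g'(s) = J(s) \leq \big(1 - \mathrm{R}_{0}(\gamma,\nu)^{-1}\big)\,\gamma\,\mathrm{R}_{0}(\gamma,\nu)\, g(s) = -\gamma\big(1 - \mathrm{R}_{0}(\gamma,\nu)\big)\, g(s).
\]
Writing $\lambda := \gamma(1 - \mathrm{R}_{0}(\gamma,\nu)) > 0$, the function $s \mapsto e^{\lambda s} g(s)$ is absolutely continuous with almost-everywhere non-positive derivative, hence non-increasing; therefore $g(t) \leq g(0)\, e^{-\lambda t} \to 0$. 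In particular $L = 0$, which is exactly the claim (and in fact establishes the exponential decay announced in the introduction).

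The genuine mathematical content of the proposition sits upstream, in \Cref{lem:expression_expectation}, which I am assuming; granting it, the argument above is short. The only points requiring a little care are the finiteness of $g(0)$ (handled by the compact support of an epidemic initial condition) and of the infection intensity (handled by \eqref{eqn:stricter_cond_nu}), together with the passage from the integral identity of \Cref{lem:expression_expectation} to the almost-everywhere differential inequality and the monotonicity of $e^{\lambda s} g(s)$. I expect the main obstacle, if any, to be purely bookkeeping: making sure the $\nu$-integration of the linear term is legitimate and produces precisely the constant $\gamma\,\mathrm{R}_{0}(\gamma,\nu)$, so that dropping the quadratic term still leaves a strictly dissipative bound. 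A Grönwall/monotonicity argument as above then closes the proof; alternatively, one could argue by contradiction, noting that if $L > 0$ then $J(s)$ stays below a strictly negative constant, forcing $g(t) \to -\infty$, which is absurd.
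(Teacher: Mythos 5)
Your proof is correct and takes essentially the same route as the paper: both start from \Cref{lem:expression_expectation}, discard the manifestly non-positive quadratic term, recognise that the remaining linear term integrates against $\nu$ to give the factor $(1 - \mathrm{R}_{0}(\gamma,\nu)^{-1})\int_{0}^{1}\int_{0}^{\infty} uV_{r}\,\nu(dr,du) = -\gamma(1-\mathrm{R}_{0}(\gamma,\nu))$, and conclude by Gr\"onwall that $\esp[\langle \mathds{1}_{\rd}, 1-\omega_{M_{t}}\rangle]$ decays exponentially. The only cosmetic difference is that you pass to an almost-everywhere differential inequality and a monotonicity argument for $e^{\lambda s}g(s)$, whereas the paper applies Gr\"onwall's inequality directly in integral form, which sidesteps the (harmless) absolute-continuity bookkeeping.
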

When~$\mathrm{R}_{0}(\gamma, \nu) > 1$, the main obstacle to showing that the epidemic survives is that effective infection rates are reduced in areas containing a lot of infected individuals, which can lead to a decrease of the mass of infected individuals. In the special case of an endemic initial condition and when the radius of reproduction events is constant, we can however show that there exists a threshold value for~$\mathrm{R}_{0}(\gamma,\nu)$ above which the~$(\gamma,\nu)$-EpiSLFV does not go extinct. 

\begin{theorem}\label{thm:survival_theorem} There exists~$\mathrm{R}_{0}^{\max}(d) \geq 1$ that only depends on the dimension such that for all~$\gamma,\rcal > 0$, for all finite measure~$\mu$ on~$(0,1]$ and for all endemic initial condition~$M^{0} \in \ml$, if
\begin{equation*}
\mathrm{R}_{0}(\gamma, \delta_{\rcal}(dr)\mu(du)) > \mathrm{R}_{0}^{\max}(d), 
\end{equation*}
then the~$(\gamma,\delta_{\rcal}(dr)\mu(du))$-EpiSLFV with initial condition~$M^{0}$ does not go extinct (in the sense of Definition~\ref{defn:extinction_process}), and survives in the sense of~(SC4). 
\end{theorem}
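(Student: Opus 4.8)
The plan is to reduce the statement to a survival property of the dual $(\gamma,\nu)$-ancestral process, and then to establish that survival for large $\mathrm{R}_{0}$ by a renormalisation argument comparing the dual with supercritical oriented percolation. For the reduction: since $M^{0}$ is endemic and $\nu = \delta_{\mathcal{R}}(dr)\mu(du)$, Proposition~\ref{prop:survival_endemic_case} gives, for every compact $A$ of positive volume, $\lim_{t\to\infty}\esp[\langle \mathds{1}_{A}, 1-\omega_{M_{t}}\rangle] = \mathrm{Vol}(A)\,p_{\infty}$, where $p_{\infty} := \lim_{t\to\infty}\mathbf{P}(N_{t}>0)$ and $N_{t}$ counts the atoms of the ancestral process of Definition~\ref{defn:ancestral_process} started from $\delta_{0}$. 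Because the empty state is absorbing for $(\Xi_{t})$ (a birth requires $\Xi_{t-}(\mathcal{B}(z,r))>0$), the map $t\mapsto \mathbf{P}(N_{t}>0)$ is non-increasing, so $p_{\infty}$ exists and equals the probability that the ancestral process from a single atom never dies out. If $p_{\infty}>0$, then $\liminf_{t\to\infty}\esp[\langle \mathds{1}_{A},1-\omega_{M_{t}}\rangle] = \mathrm{Vol}(A)\,p_{\infty}>0$ for every such $A$, which is exactly (SC4) and a fortiori non-extinction. Hence it suffices to exhibit $\mathrm{R}_{0}^{\max}(d)\ge 1$, depending only on $d$, such that $\mathrm{R}_{0}(\gamma,\delta_{\mathcal{R}}\mu)>\mathrm{R}_{0}^{\max}(d)$ forces $p_{\infty}>0$.

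Next I would remove the parameters $\gamma$ and $\mathcal{R}$ by scaling: rescaling space by $\mathcal{R}$ and time by $\gamma$ maps the $(\gamma,\delta_{\mathcal{R}}\mu)$-ancestral process to a unit-radius, unit-death-rate ancestral process whose reproduction number is unchanged and equal to $\mathrm{R}_{0}(\gamma,\delta_{\mathcal{R}}\mu)$. Survival is invariant under this relabelling, so I may assume $\mathcal{R}=\gamma=1$, and the threshold produced below will depend on $d$ alone (and, as explained next, not on the detailed shape of $\mu$ beyond its effect on $\mathrm{R}_{0}$).

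The core step is survival of the dual when $\mathrm{R}_{0}$ is large. An isolated atom emits new atoms at rate $V_{\mathcal{R}}\int_{0}^{1} u\,\mu(du) = \gamma\,\mathrm{R}_{0}$ and dies at rate $\gamma$, each offspring being placed within distance $2\mathcal{R}$ of its parent (uniform in $\mathcal{B}(z,\mathcal{R})$ with $z$ uniform in $\mathcal{B}(\text{parent},\mathcal{R})$). Crucially for a lower bound, competition here can only raise the success probability $1-(1-u)^{k}$ of a covering event, never lower the chance that a lone pioneer emits an offspring into empty forward territory. I would therefore fix a finite family of disjoint target cells of diameter $\asymp\mathcal{R}$ placed at displacement $\asymp\mathcal{R}$ from a parent, chosen so that a uniformly placed offspring lands in their union with probability at least a packing constant $\rho(d)>0$. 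The expected number of offspring an isolated atom places in the targets before dying is then at least $\rho(d)\,\mathrm{R}_{0}$, giving a Galton–Watson-type reproduction with mean at least $\rho(d)\,\mathrm{R}_{0}$, supercritical once $\mathrm{R}_{0}>1/\rho(d)=:\mathrm{R}_{0}^{\max}(d)\ge 1$; this estimate uses only $\int u\,\mu$, hence is uniform over all finite $\mu$ with the prescribed $\mathrm{R}_{0}$.

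The difficulty, and where I expect the real work to lie, is that distinct target cells do not evolve independently: particles are static but their descendants spread by steps of size $\le 2\mathcal{R}$, so offspring families can later migrate into one another and the naive Galton–Watson comparison is not literally valid. I would make the argument rigorous by a block construction: partition space-time into blocks of side $L\asymp\mathcal{R}$ and duration $T\asymp 1/\gamma$, call a block \emph{good} if one atom present at its start seeds at least one atom in each required neighbouring block by its end, and use the pioneer/branching estimate above to show $\mathbf{P}(\text{good})\to 1$ as $\mathrm{R}_{0}\to\infty$. Since the dynamics have range $2\mathcal{R}$, block-goodness is a finite-range-dependent field, so by a Liggett–Schonmann–Stacey comparison it dominates supercritical oriented percolation once $\mathbf{P}(\text{good})$ exceeds the percolation threshold, achievable by enlarging $\mathrm{R}_{0}^{\max}(d)$; survival of the percolation cluster then forces $p_{\infty}>0$. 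Two points to keep under control are that a single seeded atom suffices to restart the block estimate (so ambient particles only help, consistent with the monotonicity of competition noted above), and that the construction genuinely scales out $\gamma$ and $\mathcal{R}$, leaving a threshold depending on $d$ only.
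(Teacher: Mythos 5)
Your proposal is correct in its overall strategy, but it takes a genuinely different route for the core step. The reduction is essentially shared: like you, the paper passes by duality to survival of the dual ancestral process started from~$\delta_{0}$, although its proof of \Cref{thm:survival_theorem} uses the one-sided bound $\esp[\langle \mathds{1}_{A}, 1-\omega_{M_{t}}\rangle] \geq \varepsilon\, \mathrm{Vol}(A)\, \mathbf{P}(\Xi_{t}(\rd) \geq 1)$ obtained directly from \Cref{prop:duality_relation}, rather than the limit statement of \Cref{prop:survival_endemic_case} that you invoke; the two reductions are interchangeable here, and yours has the small advantage of exhibiting~(SC4) as an exact limit. For survival of the dual when $\mathrm{R}_{0}$ is large, however, the paper does not use a block construction with a Liggett--Schonmann--Stacey comparison: it paves~$\rd$ by cubes of edge $\rcal/\sqrt{d+3}$, chosen precisely so that a radius-$\rcal$ event centred in a cube covers every adjacent cube, and then thins the driving Poisson point process (an event with impact~$u$, centre in~$C_{i}^{\rcal}$ and parental location sampled in~$C_{j}^{\rcal}$ is retained with probability~$u$) to embed a nearest-neighbour contact process with infection rate $C(d)\gamma \mathrm{R}_{0}$ inside the ancestral process (\Cref{lem:coupling_contact_process}); Harris's theorem on the finiteness of the contact-process critical value then yields the explicit threshold $\mathrm{R}_{0}^{\max}(d) = \lambda_{c}(d)C(d)^{-1}$ (\Cref{lem:contact_process,corr:coupling_contact_process}). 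Both routes rest on the same two structural facts you identify, namely that a covered atom triggers a birth with probability $1-(1-u)^{k} \geq u$ and that extra atoms only help (monotonicity), so your scheme is sound; it is also more robust, since it would tolerate a genuine radius distribution where the paper's cube geometry is tailored to the fixed radius~$\rcal$, and your preliminary space-time rescaling (which the paper does not need, as the cube construction absorbs $\gamma$ and $\rcal$ into the contact rates) is easily justified from the Poissonian construction of the dual. What your route costs is bookkeeping in the block estimate: the good-block event must require that the seeded atoms are still \emph{alive} at the end of the block, and since atoms die at rate~$\gamma$, a single seed born early in a block of duration $\asymp 1/\gamma$ survives to its end only with probability bounded away from~$1$ uniformly in~$\mathrm{R}_{0}$; your claim that the goodness probability tends to~$1$ therefore needs the (standard) correction that one counts \emph{all} descendants landing in a target, whose number grows linearly in~$\mathrm{R}_{0}$, or shrinks the block duration before sending $\mathrm{R}_{0} \to \infty$.
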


Moreover, we can show that when starting from an epidemic initial condition and under less stringent conditions on~$\nu$ compared to Theorem~\ref{thm:survival_theorem} (constant rate of reproduction events affecting any given compact, rather than constant radius and finite~$\mu$), in the early stages of an epidemic, the mass of infected individuals grows in expectation. 
\begin{proposition}\label{prop:small_masses_grow} 
Assume that~$\nu$ satisfies~\eqref{eqn:stricter_cond_nu} and that $\mathrm{R}_0(\gamma, \nu) > 1$. Let~$M^{0} \in \ml$ be an epidemic initial condition that satisfies
\begin{equation*}
\langle 
\mathds{1}_{\rd}, 1 - \omega_{M^{0}}
\rangle < \frac{\int_{0}^{1} \int_{0}^{\infty} uV_{r}\nu(dr,du) - \gamma}{\int_{0}^{1}\int_{0}^{\infty} u\nu(dr,du)} =: C(\nu). 
\end{equation*}
Let~$(M_{t})_{t \geq 0}$ be the unique solution to the martingale problem~$(\gyv, \delta_{M^{0}})$, and let~$\tau$ be the hitting time defined as
\begin{equation*}
\tau := \inf\left\{ 
t \geq 0 : \langle \mathds{1}_{\rd}, 1 - \omega_{M_{t}} \rangle > C(\nu)
\right\}. 
\end{equation*}
Then, the function
\begin{equation*}
t \to \esp\left[ 
\langle 
\mathds{1}_{\rd}, 1 - \omega_{M_{t \wedge \tau}}
\rangle \right]
\end{equation*}
is non-decreasing.
\end{proposition}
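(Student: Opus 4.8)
The plan is to control the expected infected mass through its compensator, localised at $\tau$, and to show that this compensator is non-negative for as long as the total mass stays below $C(\nu)$. Fix the epidemic initial condition $M^{0}$, and for a state $M$ write $g(z):=1-\omega_{M}(z)$ for the infected density, $\overline g(z,r)$ for its average over $\bcal(z,r)$, and set $m_{t}:=\langle\mathds 1_{\rd},1-\omega_{M_{t}}\rangle$. Since $M^{0}$ is an epidemic initial condition, $m_{0}<C(\nu)<\infty$; moreover $\mathrm R_{0}(\gamma,\nu)>1$ forces $C(\nu)>0$ and hence $\int_{0}^{1}\int_{0}^{\infty}u\,\nu(dr,du)<\infty$ (otherwise $C(\nu)=0$ and the hypothesis $m_{0}<C(\nu)$ is vacuous). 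Since $(M_{t})$ solves the martingale problem $(\gyv,\delta_{M^{0}})$, approximating $\mathds 1_{\rd}$ by compactly supported test functions in~\eqref{eq:generator} (legitimate for an epidemic initial condition, where the infected mass stays integrable) shows that $\mathcal M_{t}:=m_{t}-m_{0}-\int_{0}^{t}\mathcal J(M_{s})\,ds$ is a martingale, with compensator integrand given by the expression in Lemma~\ref{lem:expression_expectation}, namely
\[
\mathcal J(M):=\int_{0}^{1}\!\int_{0}^{\infty}uV_{r}\Big[\big(1-\mathrm R_{0}(\gamma,\nu)^{-1}\big)\,\langle\mathds 1_{\rd},g\rangle-\langle\mathds 1_{\rd},\overline g(\cdot,r)^{2}\rangle\Big]\,\nu(dr,du).
\]

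\textbf{The key competition estimate.}
The heart of the proof is an upper bound on the quadratic competition term. Writing $\overline g(z,r)=V_{r}^{-1}\int_{\bcal(z,r)}g$, using Fubini and the equivalence $z'\in\bcal(z,r)\iff z\in\bcal(z',r)$, I would obtain
\[
\langle\mathds 1_{\rd},\overline g(\cdot,r)^{2}\rangle=\frac{1}{V_{r}^{2}}\int_{\rd}\int_{\rd}g(z')\,g(z'')\,|\bcal(z',r)\cap\bcal(z'',r)|\;dz'\,dz''\le\frac{m^{2}}{V_{r}},
\]
the inequality following from $|\bcal(z',r)\cap\bcal(z'',r)|\le V_{r}$ and $\int_{\rd}g=m$. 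Substituting this, together with the identity $\int_{0}^{1}\int_{0}^{\infty}uV_{r}\,\nu(dr,du)=\gamma\,\mathrm R_{0}(\gamma,\nu)$ (so that the linear coefficient in $\mathcal J$ equals $\int uV_{r}\,\nu-\gamma$), yields
\[
\mathcal J(M)\ge\Big(\int_{0}^{1}\!\int_{0}^{\infty}uV_{r}\,\nu(dr,du)-\gamma\Big)m-\Big(\int_{0}^{1}\!\int_{0}^{\infty}u\,\nu(dr,du)\Big)m^{2}.
\]
Since $\int uV_{r}\,\nu-\gamma=\big(\int u\,\nu\big)\,C(\nu)$ by definition of $C(\nu)$, the right-hand side equals $\big(\int_{0}^{1}\int_{0}^{\infty}u\,\nu(dr,du)\big)\,m\,(C(\nu)-m)$, which is non-negative precisely when $0\le m\le C(\nu)$.

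\textbf{Optional stopping and conclusion.}
Because $\mathcal M$ is a martingale, so is the stopped process $\mathcal M_{t\wedge\tau}$, and taking expectations gives $\esp[m_{t\wedge\tau}]=m_{0}+\esp\big[\int_{0}^{t\wedge\tau}\mathcal J(M_{s})\,ds\big]$. Hence, for $0\le t_{1}\le t_{2}$,
\[
\esp[m_{t_{2}\wedge\tau}]-\esp[m_{t_{1}\wedge\tau}]=\esp\Big[\int_{t_{1}}^{t_{2}}\mathds 1_{\{s<\tau\}}\,\mathcal J(M_{s})\,ds\Big].
\]
By definition of $\tau$, every $s<\tau$ satisfies $m_{s}\le C(\nu)$, so on $\{s<\tau\}$ the estimate above gives $\mathcal J(M_{s})\ge 0$. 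The increment is therefore non-negative, and $t\mapsto\esp[\langle\mathds 1_{\rd},1-\omega_{M_{t\wedge\tau}}\rangle]$ is non-decreasing, as claimed.

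\textbf{Main obstacle.}
The crux is the competition estimate: naive bounds such as $\overline g^{2}\le\overline g$ or Jensen's inequality only give $\langle\mathds 1_{\rd},\overline g(\cdot,r)^{2}\rangle\le m$, which is too weak and would leave an uncontrolled $-\gamma m$ term. What rescues the argument is the convolution structure of $\overline g$, which upgrades the bound to the genuinely quadratic $m^{2}/V_{r}$; the factor $V_{r}$ then cancels the $V_{r}$ inside $uV_{r}\,\nu$, leaving exactly the weight $\int u\,\nu$ in the denominator of $C(\nu)$. It is precisely this cancellation that identifies $C(\nu)$ as the correct threshold.
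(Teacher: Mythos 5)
Your proof is correct and follows essentially the same route as the paper's: the compensator formula of Lemma~\ref{lem:expression_expectation} (in its martingale form, Lemma~\ref{lem:martingale_pb_infected_complete_space}), the bound $\langle \mathds{1}_{\rd}, (\overline{1-\omega_{M_s}})(\cdot,r)^{2}\rangle \leq V_{r}^{-1}\langle \mathds{1}_{\rd}, 1-\omega_{M_s}\rangle^{2}$ obtained from $\mathrm{Vol}(\bcal(z',r)\cap\bcal(z'',r))\leq V_{r}$, and the algebraic cancellation that identifies $C(\nu)$ as the threshold. If anything, your treatment of the stopping time --- stopping the martingale at $\tau$ and writing the increment as $\esp[\int_{t_{1}}^{t_{2}}\mathds{1}_{\{s<\tau\}}\mathcal{J}(M_{s})\,ds]$ --- is cleaner than the paper's informal ``for all $0\leq s<t<\tau$'' phrasing, which mixes deterministic times with a random one.
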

The proofs of Theorem~\ref{thm:survival_theorem} and Proposition~\ref{prop:small_masses_grow} be found at the end of Section~\ref{subsec:application_R0_inf_1}. Proposition~\ref{prop:small_masses_grow} suggests that the absence of infected individuals is not an attractor when~$\mathrm{R}_{0}(\gamma, \nu) > 1$, and that~$\mathrm{R}_{0}(\gamma, \nu) > 1$ is equivalent to survival in expectation. However, we leave this conjecture as an open problem deferred to future work. 

\paragraph{\texorpdfstring{$\mathrm{R}_{0}(\gamma, \nu)$}{}-invariant operations} An interesting observation is the fact that Definition~\ref{defn:reproduction_number} implies that $\mathrm{R}_{0}(\gamma, \nu)$ is invariant by three classes of operations: a rescaling of time, a rescaling of space, and a rescaling of the intensity of reproduction events coupled with a rescaling of time. These three operations are very different in nature: indeed, the first two also leave the distribution of the~$(\gamma,\nu)$-EpiSLFV process invariant, in the following sense. 
\begin{proposition}\label{prop:R0_invariant} Let~$M^{0} \in \ml$, and let~$(M_{t})_{t \geq 0}$ be the~$(\gamma,\nu)$-EpiSLFV process with initial condition~$M^{0}$. 

\noindent (i) For all~$a > 0$, $(M_{at})_{t \geq 0}$ is the~$(a\gamma,a\nu)$-EpiSLFV process with initial condition~$M^{0}$. 

\noindent (ii) For all~$b > 0$, we introduce the following notation:
\begin{itemize}
    \item For all~$M \in \ml$, we denote as~$M^{[b]}$ the element of~$\ml$ with density~$\omega_{M^{[b]}}$ satisfying 
\begin{equation*}
\forall z \in \rd, \omega_{M^{[b]}}(z) = \omega_{M}(bz) \text{ (up to a Lebesgue null set).}
\end{equation*}
    \item Let~$\nu^{\langle b \rangle}$ be the~$\sigma$-finite measure on~$(0,\infty)\times (0,1)$ defined as 
\begin{equation*}
\nu^{\langle b \rangle}(dr,du) = b^{d} \nu\left(
d(br),du
\right).
\end{equation*}
\end{itemize}
Then, $(M_{t}^{[b]})_{t \geq 0}$ is the~$(\gamma,\nu^{\langle b \rangle})$-EpiSLFV process with initial condition~$M^{0,[b]}$.
\end{proposition}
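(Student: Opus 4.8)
The plan is to exploit the well-posedness of the martingale problem (Theorem~\ref{theo:martingale_pb_well_posed}) together with uniqueness: for each claim it suffices to exhibit that the transformed path solves the martingale problem associated to the claimed parameters and initial condition, and then invoke uniqueness to identify it with the corresponding EpiSLFV process. Throughout, I would use that $(M_t)_{t\geq0}$ solving $(\gyv,\delta_{M^0})$ means that $M_0=M^0$ and that for every test function $\Psi_{F,f}$ the process $\Psi_{F,f}(M_t)-\Psi_{F,f}(M_0)-\int_0^t\gyv\Psi_{F,f}(M_s)\,ds$ is a martingale.

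For part (i), set $\tilde M_t:=M_{at}$; since $\tilde M_0=M^0$, only the martingale property under the claimed generator needs checking. Evaluating the martingale above at time $at$ and changing variables $s=au$ in the time integral yields
\begin{equation*}
\Psi_{F,f}(\tilde M_t)-\Psi_{F,f}(\tilde M_0)-\int_0^t a\,\gyv\Psi_{F,f}(\tilde M_u)\,du,
\end{equation*}
which is a martingale with respect to the time-changed filtration. The final step is the algebraic observation that the generator in~\eqref{eq:generator} is linear in $\gamma$ (first term) and in $\nu$ (second term, an integral against $\nu$), so that $a\,\gyv=\mathcal{G}^{(a\gamma,a\nu)}$. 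Hence $\tilde M$ solves $(\mathcal{G}^{(a\gamma,a\nu)},\delta_{M^0})$ and uniqueness concludes.

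For part (ii), the key preliminary identity is that spatial rescaling turns a test function of $M^{[b]}$ into a test function of $M$ with a rescaled spatial weight: writing $f_b(z):=b^{-d}f(b^{-1}z)\in C_c(\rd)$, a change of variables gives $\langle f,\omega_{M^{[b]}}\rangle=\langle f_b,\omega_M\rangle$ and $\langle f,1-\omega_{M^{[b]}}\rangle=\langle f_b,1-\omega_M\rangle$, hence $\Psi_{F,f}(M^{[b]})=\Psi_{F,f_b}(M)$. Applying the martingale property of $M$ to the test function $\Psi_{F,f_b}$ and translating back, it then suffices to prove the generator identity
\begin{equation*}
\gyv\Psi_{F,f_b}(M)=\mathcal{G}^{(\gamma,\nu^{\langle b\rangle})}\Psi_{F,f}(M^{[b]}),
\end{equation*}
after which $M^{[b]}$ solves $(\mathcal{G}^{(\gamma,\nu^{\langle b\rangle})},\delta_{M^{0,[b]}})$ and uniqueness again concludes.

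The main obstacle is verifying this last identity, which requires carefully carrying out the change of variables in the nested spatial integrals of~\eqref{eq:generator}. The recovery (first) term is immediate since $\gamma$ is unchanged and $\langle f_b,1-\omega_M\rangle=\langle f,1-\omega_{M^{[b]}}\rangle$. For the infection (second) term, the substitutions $\zeta=b^{-1}z$ and $w'=b^{-1}z'$ send the ball $\bcal(z,r)$ to $\bcal(\zeta,b^{-1}r)$ and, using $\mathds{1}_{\bcal(\zeta,\rho)}(b^{-1}y)=\mathds{1}_{\bcal(b\zeta,b\rho)}(y)$, convert $\Theta_{z,r,u}(\omega_M)$ read against $f_b$ into $\Theta_{b^{-1}z,b^{-1}r,u}(\omega_{M^{[b]}})$ read against $f$. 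The bookkeeping I expect to be most delicate is the cancellation of volume and Jacobian factors: the inner average picks up $b^d/V_r=1/V_{b^{-1}r}$ via $V_r=b^dV_{b^{-1}r}$, the outer $dz$ contributes $b^d$, and these must recombine so that, after substituting $\rho=b^{-1}r$, the measure $b^d\nu(dr,du)$ becomes exactly $\nu^{\langle b\rangle}(d\rho,du)$ as defined in the statement. Matching these factors precisely is where the computation must be done with care, but it is otherwise routine.
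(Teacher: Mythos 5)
Your proposal is correct and follows essentially the same route as the paper: both parts reduce to showing the transformed path solves the martingale problem with the claimed parameters and then invoking well-posedness, with part (i) resting on the time change plus joint linearity of the generator in $(\gamma,\nu)$, and part (ii) on the spatial change of variables relating $\Psi_{F,f}(M^{[b]})$ to a test function of $M$ (your $f_b(z)=b^{-d}f(b^{-1}z)$ is simply the inverse of the paper's bijection $f\mapsto f^{(b)}$, $f^{(b)}(z)=b^{d}f(bz)$) together with the generator identity obtained by the same Jacobian/volume bookkeeping, which you state correctly.
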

The proof of this result can be found in Section~\ref{subsec:rescale_martingale_pb}. Observe that we indeed have for all~$a > 0$ and~$b > 0$, 
\begin{align*}
\mathrm{R}_{0}(a\gamma,a\nu) &= \mathrm{R}_{0}(\gamma, \nu) \\
\text{and } \mathrm{R}_{0}(\gamma,\nu^{\langle b \rangle}) &= \gamma^{-1} \int_{0}^{1}\int_{0}^{\infty} u V_{r} \nu^{\langle b \rangle}(dr,du) \\
&= \gamma^{-1} \int_{0}^{1}\int_{0}^{\infty} u V_{r} b^{d} \nu\left(d(br),du\right) \\
&= \gamma^{-1} \int_{0}^{1}\int_{0}^{\infty} u V_{r'/b} b^{d} \nu(dr',du) \\
&= \gamma^{-1} \int_{0}^{1} \int_{0}^{\infty} u V_{r'} \nu(dr',du) \\
&= \mathrm{R}_{0}(\gamma, \nu).
\end{align*}

The third operation is inherently different, as it is \textit{not} expected to leave the distribution of the~$(\gamma,\nu)$-EpiSLFV process invariant. The rescaling compensates a higher rate of events by smaller impacts and vice versa. This can be interpreted as varying the amount of noise. While we expect the extinction/survival dynamics to be driven by~$\mathrm{R}_{0}(\gamma, \nu)$, and hence to be invariant by a rescaling of the intensity of reproduction events (up to a change of timescale), the result we will show is weaker.

\begin{proposition}\label{prop:R0_invariant_U} Let~$M^{0} \in \ml$ and~$\beta > 0$. Let~$\nu^{[\beta]}$ be the $\sigma$-finite measure on~$(0,\infty) \times (0,1]$ defined as
\begin{equation*}
\nu^{[\beta]}(dr,du) = \beta^{-1} \nu(dr,d(u/\beta)).
\end{equation*}
Let~$(M_{t})_{t \geq 0}$ (\textit{resp.}, $(M_{t}^{[\beta]})_{t \geq 0}$) be the~$(\gamma,\nu)$-EpiSLFV process (\textit{resp.}, $(\gamma,\nu^{[\beta]})$-EpiSLFV process) with initial condition~$M^{0}$. 

\noindent (i) In the regime~$\beta \geq 1$, if~$(M_{t})_{t \geq 0}$ goes extinct (in the sense of Definition~\ref{defn:extinction_process}), $(M_{t}^{[\beta]})_{t \geq 0}$ also goes extinct. 

\noindent (ii) In the regime~$\beta < 1$, if~$(M_{t}^{[\beta]})_{t \geq 0}$ goes extinct, then so does~$(M_{t})_{t \geq 0}$. 
\end{proposition}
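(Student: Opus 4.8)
\emph{Reduction of (ii) to (i).} The plan is to reduce everything to a single monotonicity statement for the expected local mass of infected individuals. First I would record that the operation $\nu\mapsto\nu^{[\beta]}$ satisfies $(\nu^{[\beta]})^{[1/\beta]} = \nu$, and that for $\beta<1$ no impact exceeds $1$, so $\nu^{[\beta]}$ is defined without truncation and still satisfies~\eqref{eqn:cond_nu}. Consequently statement (ii), read with base measure $\nu^{[\beta]}$ and scaling factor $1/\beta\ge1$, is exactly statement (i) applied to the pair $(\nu^{[\beta]},(\nu^{[\beta]})^{[1/\beta]}) = (\nu^{[\beta]},\nu)$. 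It therefore suffices to prove (i). Writing $P_t^{\nu}$ and $P_t^{\nu^{[\beta]}}$ for the two EpiSLFV semigroups and $\Psi_A(M) := \langle\mathds{1}_A, 1-\omega_M\rangle$, the goal becomes: for every $\beta\ge1$, every compact $A$ of positive volume and every $t\ge0$,
\[
P_t^{\nu^{[\beta]}}\Psi_A(M^0)\ \le\ P_t^{\nu}\Psi_A(M^0).
\]
Since the left-hand side is nonnegative, extinction of the $(\gamma,\nu)$-process then forces $P_t^{\nu^{[\beta]}}\Psi_A(M^0)\to0$ for every such $A$ by squeezing, which is precisely extinction of the $(\gamma,\nu^{[\beta]})$-process in the sense of Definition~\ref{defn:extinction_process}.

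\emph{Interpolation between the two semigroups.} The recovery term of~\eqref{eq:generator} depends only on $\gamma$, so $\mathcal{G}^{(\gamma,\nu^{[\beta]})} - \gyv$ is the difference of the two jump parts alone. Setting $\phi(s) := P_s^{\nu^{[\beta]}}\big(P_{t-s}^{\nu}\Psi_A\big)(M^0)$ for $s\in[0,t]$, one has $\phi(0) = P_t^\nu\Psi_A(M^0)$, $\phi(t) = P_t^{\nu^{[\beta]}}\Psi_A(M^0)$ and, formally,
\[
\phi'(s) = P_s^{\nu^{[\beta]}}\Big(\big(\mathcal{G}^{(\gamma,\nu^{[\beta]})} - \gyv\big)\,G_{t-s}\Big)(M^0),\qquad G_\tau := P_\tau^{\nu}\Psi_A.
\]
As $P_s^{\nu^{[\beta]}}$ is an expectation, hence positivity preserving, it is enough to show $\big(\mathcal{G}^{(\gamma,\nu^{[\beta]})} - \gyv\big)G_\tau(M)\le0$ for every state $M$ and every $\tau\ge0$; this gives $\phi'\le0$ and the inequality $\phi(t)\le\phi(0)$. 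Using the jump part of~\eqref{eq:generator} and the scaling identity $\int\!\int h(r,u)\,\nu^{[\beta]}(dr,du) = \beta^{-1}\int\!\int h(r,\beta u)\,\nu(dr,du)$, the generator difference equals
\[
\int_{\rd}\int_0^\infty \frac{1}{V_r}\int_{\bcal(z,r)}(1-\omega_M(z'))\,dz' \Big[\int_0^1\!\big(\beta^{-1}(g(\beta u)-g(0)) - (g(u)-g(0))\big)\nu(dr,du)\Big]dz,
\]
where $g(s') := G_\tau\big(\Theta_{z,r,s'}(\omega_M)\big)$; for $\beta\ge1$ the first term is present only on $\{u\le1/\beta\}$, while on $\{u>1/\beta\}$ only the second, negative, term $-(g(u)-g(0))$ survives. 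Since the prefactor $\tfrac1{V_r}\int_{\bcal(z,r)}(1-\omega_M)\ge0$, everything reduces to showing that $g$ is nondecreasing and concave on $[0,1]$.

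\emph{Monotonicity and concavity via duality.} This is where the duality relation of Proposition~\ref{prop:duality_relation} does the essential work. Applied with a single sample, it gives, for each $x\in\rd$, $\esp_M[1-\omega_{M_\tau}(x)] = 1 - \ebold_{\delta_x}\big[\prod_i\omega_M(Y_i)\big]$, where $(Y_i)$ are the atoms of the $(\gamma,\nu)$-ancestral process $\Xi_\tau$ started from $\delta_x$. Integrating over $A$ by Fubini,
\[
G_\tau(M) = \mathrm{Vol}(A) - \int_A \ebold_{\delta_x}\Big[\textstyle\prod_i\omega_M(Y_i)\Big]dx.
\]
Since $\Theta_{z,r,s'}(\omega_M) = \omega_M\,(1-s'\mathds{1}_{\bcal(z,r)})$, one gets $\prod_i \Theta_{z,r,s'}(\omega_M)(Y_i) = W_x(1-s')^{K_x}$ with $W_x := \prod_i\omega_M(Y_i)\in[0,1]$ and $K_x := \#\{i:Y_i\in\bcal(z,r)\}$, so that
\[
g(s') = \mathrm{Vol}(A) - \int_A \ebold_{\delta_x}\big[W_x(1-s')^{K_x}\big]dx.
\]
Differentiating under the integral yields $g'(s') = \int_A\ebold_{\delta_x}[W_xK_x(1-s')^{K_x-1}]dx\ge0$ and $g''(s') = -\int_A\ebold_{\delta_x}[W_xK_x(K_x-1)(1-s')^{K_x-2}]dx\le0$. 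Hence $g$ is nondecreasing and concave, so $\psi(s') := g(s')-g(0)$ is nonnegative, nondecreasing, concave and vanishes at $0$; therefore $s'\mapsto\psi(s')/s'$ is nonincreasing, which for $\beta\ge1$ (and $\beta u\le1$) gives $\beta^{-1}\psi(\beta u)\le\psi(u)$, while the truncated tail contributes $-\psi(u)\le0$. Both contributions to the bracket are $\le0$, closing the argument.

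\emph{Main obstacle and technical caveats.} The conceptual heart is the concavity of $g$, which the representation through $(1-s')^{K_x}$ renders transparent; without duality, directly comparing the two genuinely different jump structures (bigger-but-rarer versus smaller-but-more-frequent events, with success probabilities entangled through the nonlinear competition weight $\tfrac1{V_r}\int_{\bcal(z,r)}(1-\omega_M)$) would be the delicate point. The remaining work is routine but must be done carefully: $\Psi_A$ is to be approximated by test functions $\Psi_{F,f}$ with $F=\mathrm{id}$ and $f_n\in C_c(\rd)$, $f_n\uparrow\mathds{1}_A$, passing to the limit by monotone convergence; the interpolation identity for $\phi$ must be justified by exhibiting $G_\tau = P_\tau^{\nu}\Psi_A$ in a common core of the two generators and verifying differentiability of $s\mapsto P_s^{\nu^{[\beta]}}G_{t-s}$, for which the explicit dual representation supplies the needed regularity; and the differentiation under $\ebold_{\delta_x}$ requires finiteness of the second factorial moment of $N_\tau$, which holds on any finite horizon under~\eqref{eqn:cond_nu} because the per-particle branching rate $\int uV_r\,\nu(dr,du)$ is finite.
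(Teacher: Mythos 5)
Your overall strategy is sound and genuinely different from the paper's. The paper proves both parts entirely on the dual side: it constructs a pathwise coupling of the $(\gamma,\nu)$- and $(\gamma,\nu^{[\beta]})$-ancestral processes under which the atom set of the latter stays nested inside that of the former (Lemma~\ref{lem:coupling_ancestral_process}), and then transfers this nesting through the duality relation to the semigroup comparison $\esp[\langle\mathds{1}_{A},1-\omega_{M_{t}^{[\beta]}}\rangle]\le\esp[\langle\mathds{1}_{A},1-\omega_{M_{t}}\rangle]$ (Lemma~\ref{lem:implication_coupling_R0_invariant}); your reduction of (ii) to (i) via $(\nu^{[\beta]})^{[1/\beta]}=\nu$ is exactly the paper's. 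Your replacement of the coupling by a generator interpolation is a legitimate alternative, and it is pleasing that its analytic kernel is literally the same inequality as the paper's: the technical Lemma~\ref{lem:technical_lemma_coupling}, $1-(1-\beta u)^{n}\le\beta\,(1-(1-u)^{n})$, is precisely the statement that $s\mapsto\psi(s)/s$ is nonincreasing for $\psi(s)=1-(1-s)^{n}$, i.e.\ your concavity of $g$ specialised to the functions $(1-s')^{K_{x}}$ produced by the dual representation. (Incidentally, you do not even need to differentiate under $\ebold_{\delta_{x}}$: $g$ is an affine shift of a nonnegative mixture of the elementary nondecreasing concave functions $s'\mapsto 1-(1-s')^{K}$, $K\in\nmath$, so monotonicity and concavity are immediate and no moment bound on $N_{\tau}$ is needed.)

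The one genuine gap is the Duhamel identity $\phi'(s)=P_{s}^{\nu^{[\beta]}}\big((\mathcal{G}^{(\gamma,\nu^{[\beta]})}-\gyv)G_{t-s}\big)(M^{0})$, which you flag as needing justification but do not justify. This is not routine in the paper's framework: the martingale problems are posed only on the classes $\Psi_{F,f}$ and $D_{\psi}$, no core or semigroup-domain theory is developed, and $G_{\tau}=P_{\tau}^{\nu}\Psi_{A}$ belongs to neither class, so neither the existence of $\mathcal{G}^{(\gamma,\nu^{[\beta]})}G_{\tau}$ nor the differentiability of $s\mapsto P_{s}^{\nu^{[\beta]}}G_{t-s}(M^{0})$ comes for free. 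The dual representation does suggest a repair: writing $G_{\tau}(M)=\mathrm{Vol}(A)\big(\pbold(N_{\tau}\ge1)-\sum_{n\ge1}\pbold(N_{\tau}=n)\,D_{\tilde\psi^{(n)}_{\tau}}(M)\big)$, with $\tilde\psi^{(n)}_{\tau}$ the conditional atom densities from Lemma~\ref{lem:dual_process_absolutely_continuous}, and using the Step~1 bound of Lemma~\ref{lem:extended_martingale}, $|\gyv D_{\tilde\psi^{(n)}_{\tau}}(M)|\le n\,(\gamma+\int uV_{r}\,\nu(dr,du))$, together with $\ebold[N_{\tau}]<\infty$, one can hope to extend both generators and both martingale properties to $G_{\tau}$ by dominated convergence. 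But carrying this out is real work, not a caveat, and it is exactly the technical overhead that the paper's coupling is designed to avoid: nestedness of the dual atom sets plus $\omega_{M^{0}}\in[0,1]$ yields the semigroup comparison with no domain questions at all.
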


Notice that the transformation~$\nu \to \nu^{[\beta]}$ does not always leave the reproduction number invariant. Indeed, rewriting the definition of~$\nu^{[\beta]}$ as 
\begin{equation*}
\nu^{[\beta]}(dr,du) = \beta^{-1} \mathds{1}_{\{u/\beta \leq 1\}} \nu(dr,d(u/\beta))
\end{equation*}
to keep track of the fact that the support of~$\nu$ is included in~$(0,\infty) \times (0,1]$, we have
\begin{align*}
\mathrm{R}_{0}(\gamma,\nu^{[\beta]}) &= \frac{1}{\gamma} \int_{0}^{1}\int_{0}^{\infty} uV_{r} \nu^{[\beta]}(dr,du) \\
&= \frac{1}{\beta \gamma} \int_{0}^{1} \int_{0}^{\infty} uV_{r} \mathds{1}_{\{u/\beta \leq 1\}} \nu(dr,d(u/\beta)) \\
&= \frac{1}{\beta \gamma} \int_{0}^{1/\beta} \int_{0}^{\infty} \beta u' V_{r}  \mathds{1}_{\{u' \leq 1\}} \nu(dr,du') \\
&= \frac{1}{\gamma} \int_{0}^{\min(1,1/\beta)} \int_{0}^{\infty} u' V_{r} \nu(dr,du'). 
\end{align*}
Therefore, while we have~$\mathrm{R}_{0}(\gamma,\nu^{[\beta]}) = \mathrm{R}_{0}(\gamma, \nu)$ if~$\beta \leq 1$, this is not necessarily the case if~$\beta > 1$, and depends on whether the support of~$\nu$ is included in~$(0,\infty) \times (0,1/\beta]$.

\section{Properties of the martingale problem associated to \texorpdfstring{$\gyv$}{}}
\label{sec:martingale_pb}
The goal of this section is to compile results on the properties of the martingale problem associated to the operator~$\gyv$ defined in Eq.\eqref{eq:generator}. These properties will then be used in other sections to define and study the $(\gamma,\nu)$-EpiSLFV process. This section is structured as follows. In Section~\ref{subsec:existence_sol_mp}, we show that the martingale problem admits at least one solution. In Section~\ref{subsec:extension_mp_Dpsi}, we extend the martingale problem to a family of test functions of the form $D_{\psi}, \psi \in \mathbb{L}^{1}((\rd)^{k})$ and $k \geq 1$, defined as
\begin{equation*}
\forall M \in \ml, D_{\psi}(M) := \int_{(\rd)^{k}} \psi(x_{1},...,x_{k}) \left\{ 
\prod_{j = 1}^{k} \omega_{M}(x_{j})
\right\} dx_{1}...dx_{k}. 
\end{equation*}
This family of test functions includes indicator functions $\mathds{1}_{A}(\cdot)$ of measurable subsets~$A$ of $\rd$ with strictly positive volume, which are the basis of our definition of the extinction of the epidemic (see Definition~\ref{defn:extinction_process}). Moreover, this family of test functions will be used in Section~\ref{sec:duality_relation} to establish a duality relation satisfied by any solution to the martingale problem, from which we will deduce that the martingale problem is well-posed, as stated in Theorem~\ref{theo:martingale_pb_well_posed}. 

\subsection{Existence of a solution}\label{subsec:existence_sol_mp}
The goal of this section is to show the following result. 

\begin{lemma}\label{lem:existence_sol_mp}
For all $M^{0} \in \ml$, the martingale problem $(\gyv,\delta_{M^{0}})$ admits at least one solution. 
\end{lemma}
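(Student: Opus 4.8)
The plan is to construct a solution to the martingale problem $(\gyv, \delta_{M^0})$ via an approximation scheme, passing to the limit using a tightness-plus-identification argument. The natural approximations to build are truncations of the measure $\nu$: for each $n \geq 1$, I would set $\nu_n := \nu|_{\{r \geq 1/n\}}$, which is a finite measure on $[1/n, \infty) \times (0,1]$ (this truncation satisfies the stricter condition~\eqref{eqn:stricter_cond_nu}, so any given compact is affected by reproduction events at a finite rate). For such a finite $\nu_n$, reproduction events affecting any compact set occur at a finite rate, and together with the constant recovery mechanism, the dynamics can be realized directly as a piecewise-deterministic Markov process driven by a space-time Poisson point process of intensity $dt \otimes dz \otimes \nu_n(dr,du)$, with deterministic exponential relaxation of $1 - \omega_t$ at rate $\gamma$ between events. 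The existence of such a $\nu_n$-driven process is standard, and by construction it solves the martingale problem associated to the truncated operator $\mathcal{G}^{(\gamma,\nu_n)}$.

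The core of the argument is then to show that the sequence of laws of these approximating processes $(M^n_t)_{t \geq 0}$ on $D_{\ml}[0,\infty)$ is tight, and that any subsequential limit solves the full martingale problem $(\gyv, \delta_{M^0})$. For tightness, I would verify a Jakubowski-type or Aldous-type criterion: since $\ml$ is endowed with the vague topology, it suffices to control the fluctuations of $\langle f, M^n_t \rangle = \langle f, \omega_{M^n_t} \rangle$ for $f$ in a suitable dense countable family of $C_c(\rd)$. Applying the operator $\mathcal{G}^{(\gamma,\nu_n)}$ to the test function $\Psi_{\mathrm{id}, f}$, one gets that $\langle f, \omega_{M^n_t} \rangle$ minus a predictable drift is a martingale; the drift and the quadratic variation can both be bounded uniformly in $n$ using the integrability condition~\eqref{eqn:cond_nu}, because the only way $\nu_n$ enters the bounds is through integrals of $u r^d$ against $\nu_n \leq \nu$. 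This uniform control yields tightness of each $(\langle f, M^n_\cdot \rangle)_n$ via the Aldous criterion, and hence tightness of $(M^n_\cdot)_n$ in $D_{\ml}[0,\infty)$.

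For the identification step, I would fix a subsequential limit $M^\infty$ and show that for each test function $\Psi_{F,f}$ the process $\Psi_{F,f}(M^\infty_t) - \Psi_{F,f}(M^0) - \int_0^t \gyv \Psi_{F,f}(M^\infty_s)\,ds$ is a martingale. The standard route is to pass to the limit in the defining martingale identity for the $n$-th process. Two things need care here: first, the continuity of the maps $M \mapsto \Psi_{F,f}(M)$ and $M \mapsto \gyv\Psi_{F,f}(M)$ with respect to the vague topology (the former is direct since $f$ has compact support and $F \in C^1$; the latter requires dominated convergence arguments), and second, the convergence $\mathcal{G}^{(\gamma,\nu_n)} \Psi_{F,f} \to \gyv \Psi_{F,f}$, which follows by monotone/dominated convergence as $\nu_n \uparrow \nu$, again leaning on~\eqref{eqn:cond_nu} to dominate the integrand uniformly.

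The main obstacle I anticipate is the interaction between the vague topology on $\ml$ and the identification of the limiting generator, specifically the innermost spatial average term $\frac{1}{V_r}\int_{\bcal(z,r)}(1 - \omega_M(z'))\,dz'$ appearing in~\eqref{eq:generator}. Because elements of $\ml$ are only defined up to Lebesgue-null modifications of their density, and convergence is vague, one must argue that these local-average functionals are genuinely continuous along the converging subsequence and that the triple integral against $\nu$ is well-controlled near $r \to 0$ and $r \to \infty$ — precisely the regime where the truncation was removed. Handling the small-$r$ contributions (where events become arbitrarily frequent but have vanishing effect, controlled by $u r^d$) and ensuring no mass of the generator escapes in the limit is where the integrability hypothesis~\eqref{eqn:cond_nu} does the essential work, and is the step I would write most carefully.
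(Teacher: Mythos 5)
Your overall scheme (approximate by truncated dynamics, prove tightness via Aldous with bounds uniform in $n$ coming from \eqref{eqn:cond_nu}, then identify any subsequential limit as a solution) is exactly the skeleton of the paper's proof. However, your very first step contains a genuine error: the truncation $\nu_n := \nu|_{\{r \geq 1/n\}}$ is in general \emph{not} a finite measure and does \emph{not} satisfy the stricter condition \eqref{eqn:stricter_cond_nu}. Condition \eqref{eqn:cond_nu} only controls $\int u r^{d}\,\nu(dr,du)$, so $\nu$ may place infinite mass near $u = 0$ even at bounded-away-from-zero radii. Concretely, take
\begin{equation*}
\nu(dr,du) = \delta_{1}(dr)\, u^{-3/2}\,du \quad \text{on } (0,\infty)\times(0,1];
\end{equation*}
then $\int_{0}^{1}\int_{0}^{\infty} u r^{d}\,\nu(dr,du) = \int_{0}^{1} u^{-1/2}\,du = 2 < \infty$, so \eqref{eqn:cond_nu} holds, yet $\nu(\{r \geq 1/n\}\times(0,1]) = \int_{0}^{1}u^{-3/2}\,du = \infty$ and $\int\int_{\{r \geq 1/n\}} r^{d}\,\nu(dr,du) = \infty$. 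So your approximating processes are not well-defined as stated: their per-compact event rate is infinite, and the piecewise-deterministic construction you invoke breaks down. The fix is to truncate in $u$ as well (e.g.\ $\nu_n := \nu|_{\{r \geq 1/n,\, u \geq 1/n\}}$, which \emph{is} finite by \eqref{eqn:cond_nu}), or, as the paper does, simply invoke $\sigma$-finiteness of $\nu$ to choose \emph{any} increasing sequence of finite measures $\nu_n \uparrow \nu$.

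Two further points of comparison with the paper's proof. First, the paper also truncates \emph{space}: the $n$-th approximant is an EpiSLFV restricted to a compact $E_n \uparrow \rd$, so it is an honest pure-jump Markov process with finite \emph{total} jump rate, and its existence is elementary; your whole-space construction with a finite $\nu_n$ still has infinite total (spatial) event rate and needs the finite-dependency-cone argument to be well-defined — doable (it is essentially the quenched construction of Section~\ref{sec:quenched_SLFV}), but not free. Second, for the identification step the paper does not prove vague-continuity of $M \mapsto \gyv\Psi_{F,f}(M)$, which is the delicate point you flag; instead it shows that $\gcal^{[n]}\Psi_{F,f}(M) \to \gyv\Psi_{F,f}(M)$ \emph{uniformly in $M \in \ml$} (by splitting the spatial integral according to $E_n$, $E_n^{\cap r}$ and their complements, with the dominating bound $C\,u(r^{d}\wedge 1)\mathds{1}_{z \in \mathrm{Supp}(f,r)}$) and then applies Theorem~4.8.10 of \cite{ethier1986markov}. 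That route bypasses the continuity question entirely and is the cleaner way to close your argument.
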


In order to do so, we follow the approach that is now classical for spatial $\Lambda$-Fleming Viot processes (\cite{etheridge2020rescaling,FW22}, see also \cite{louvet2024asymptotics}).  Let $(E_{n})_{n \geq 0}$ be an increasing sequence of compact subsets of~$\rd$ that converges to $\rd$ when $n \to + \infty$, and let $(\nu_{n})_{n \geq 0}$ be an increasing sequence of finite measures on $(0,\infty) \times (0,1]$ that converges to~$\nu$ when $n \to + \infty$ (we recall that~$\nu$ is only assumed to be~$\sigma$-finite). Then, it is possible to define the $(\gamma,\nu_{n})$-EpiSLFV process on the compact subset~$E_{n}$ using the informal definition from the introduction, as this process is càdlàg with a finite jump rate. Moreover, this process is the unique solution to the martingale problem associated to the operator~$\gcal^{[n]}$ defined on test functions of the form $\Psi_{F,f}$, $F \in C^{1}(\mathbb{R})$, $f \in C_{c}(\rd)$ as follows. For all test functions $\Psi_{F,f}$ and for all $M \in \ml$, we set
\begin{align*}
&\gcal^{[n]} \Psi_{F,f}(M) \\
&:=  \gamma \langle f, 1-\omega_{M} \rangle F'\left(
\langle f,\omega_{M} \rangle 
\right) \\
&\quad \quad + \int_{E_{n}}\int_{0}^{1}\int_{0}^{\infty} \frac{1}{\mathrm{Vol}(\bcal(z,r) \cap E_{n})} \int_{\bcal(z,r) \cap E_{n}} \left[ 
\Psi_{F,f}\left(
\Theta_{z,r,u}^{[n]}(\omega_{M})
\right) - \Psi_{F,f}(\omega_{M})
\right]  \\
&\hspace{8cm} \times (1-\omega_{M}(z')) dz' \nu_n (dr,du) dz, 
\end{align*}
\noindent where for all $n \in \nmath$ and for all $z, r, u \in \rd \times (0,\infty) \times (0,1]$, $\Theta_{z,r,u}^{[n]}$ is the function defined as
\begin{equation*}
\forall \omega : \rd \to [0,1] \text{ measurable, } \Theta_{z,r,u}^{[n]}(\omega) := \omega - \mathds{1}_{\bcal(z,r) \cap E_{n}}(\cdot) u\omega. 
\end{equation*}
The next two proofs will feature the notation
\begin{equation*}
\mathrm{Supp}(f,r) := \left\{
z \in \rtwo : \bcal(z,r) \cap \mathrm{Supp}(f) \neq \emptyset 
\right\},
\end{equation*}
for $r > 0 $.

\begin{lemma}\label{lem:tight_sequence_slfvs} Let $M^{0} \in \ml$, and for all $n \geq 0$, let $(M_{t}^{[n]})_{t \geq 0}$ be the unique solution to the martingale problem $(\gcal^{[n]},\delta_{M^{0}})$. Then, the sequence $(M^{[n]})_{n \geq 0}$ is relatively compact. 
\end{lemma}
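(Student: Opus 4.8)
The plan is to establish relative compactness of the sequence $(M^{[n]})_{n \geq 0}$ in $D_{\ml}[0,+\infty)$ by verifying the standard conditions for tightness of càdlàg processes valued in a metric space, in the spirit of the approach used for SLFV-type processes in~\cite{etheridge2020rescaling,FW22}. Since $\ml$ is endowed with the vague topology, convergence in $\ml$ is tested against functions $f \in C_c(\rd)$, so it suffices to control the real-valued processes $(\langle f, M_t^{[n]} \rangle)_{t \geq 0} = (\langle f, \omega_{M_t^{[n]}} \rangle)_{t \geq 0}$ for each fixed $f \in C_c(\rd)$. The strategy is therefore to fix an arbitrary $f \in C_c(\rd)$ and show that the family of real-valued processes $(\langle f, M^{[n]} \rangle)_{n \geq 0}$ is tight in $D_{\rmath}[0,+\infty)$, and then invoke a criterion (such as the one in Ethier--Kurtz, Theorem~3.9.1) that lifts tightness of the projections onto a separating/convergence-determining family to relative compactness of the measure-valued processes themselves. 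A natural separating family is a countable dense subset of $C_c(\rd)$.

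The key steps, in order, would be as follows. First I would record the uniform bound $0 \leq \omega_{M_t^{[n]}}(z) \leq 1$, which holds for every $n$, $t$ and (a.e.) $z$, since $\Theta_{z,r,u}^{[n]}$ maps $[0,1]$-valued densities to $[0,1]$-valued densities and recovery only increases $\omega$ towards $1$. This gives the compact containment condition for free: $\langle f, \omega_{M_t^{[n]}} \rangle$ is bounded uniformly in $n$ and $t$ by $\|f\|_1$, so at each fixed time the values lie in a fixed compact interval, and more to the point the vague-topology compact containment holds because $\ml$-elements with densities in $[0,1]$ form a vaguely compact set. Second, and this is the heart of the argument, I would apply the martingale problem to the test function $\Psi_{F,f}$ with $F = \mathrm{id}$ to obtain the Dynkin decomposition of $\langle f, \omega_{M_t^{[n]}} \rangle$ into a drift part plus a martingale part, and then use the Aldous--Rebolledo criterion: it is enough to bound, uniformly in $n$ and over stopping times $\tau \leq T$ and small increments $\delta$, the drift increment and the increment of the predictable quadratic variation of the martingale part. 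The drift term is controlled using $\langle f, 1 - \omega \rangle \leq \|f\|_1$ for the recovery part, and for the reproduction part by the bound $\bigl|\Psi_{\mathrm{id},f}(\Theta^{[n]}_{z,r,u}(\omega)) - \Psi_{\mathrm{id},f}(\omega)\bigr| \leq u \int_{\bcal(z,r)} |f(z')| \, dz'$, which after integrating against $\nu_n(dr,du)\,dz \leq \nu(dr,du)\,dz$ and using $\mathrm{Supp}(f,r)$ to localise the outer $z$-integral yields a bound proportional to $\int_0^1 \int_0^\infty u r^d \, \nu(dr,du) < +\infty$ by assumption~\eqref{eqn:cond_nu}. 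The quadratic variation of the martingale is handled analogously by taking $F(x) = x^2$ in the generator and extracting the carré-du-champ; the resulting bound is again finite and uniform in $n$ thanks to~\eqref{eqn:cond_nu}.

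I expect the main obstacle to be the uniformity in $n$ of these estimates, since both the truncation of the measure ($\nu_n \nearrow \nu$) and the spatial truncation to $E_n$ modify the generator $\gcal^{[n]}$: the denominator $\mathrm{Vol}(\bcal(z,r) \cap E_n)$ differs from $V_r$, and the indicator $\mathds{1}_{\bcal(z,r) \cap E_n}$ replaces $\mathds{1}_{\bcal(z,r)}$. The point to verify carefully is that these truncations only ever \emph{decrease} the relevant integrals relative to the untruncated $(\gamma,\nu)$-bounds — the reproduction contribution on $E_n$ is dominated by the full-space contribution because $\mathds{1}_{\bcal(z,r) \cap E_n} \leq \mathds{1}_{\bcal(z,r)}$ and $\nu_n \leq \nu$, while the awkward ratio $\frac{1}{\mathrm{Vol}(\bcal(z,r)\cap E_n)} \int_{\bcal(z,r)\cap E_n}$ is an average of the integrand $1 - \omega_{M}(z')$ over a subset and hence stays bounded by $1$. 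Once it is checked that every estimate can be dominated by an $n$-independent constant times the finite integral in~\eqref{eqn:cond_nu} (together with $\|f\|_1$, $\|f\|_\infty$ and the Lebesgue measure of $\mathrm{Supp}(f,r)$), the Aldous--Rebolledo bounds follow uniformly, and tightness of each projection $(\langle f, M^{[n]} \rangle)_{n}$ gives the claimed relative compactness of $(M^{[n]})_{n \geq 0}$.
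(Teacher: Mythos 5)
Your proposal is correct and follows essentially the same route as the paper: the paper also applies the Aldous--Rebolledo criterion (via projections $\langle f, M^{[n]}_t\rangle$ for compactly supported $f$, following \cite{etheridge2020rescaling}), splits the drift into the recovery term and the reproduction (jump) term, bounds the jump increments by a constant times $u(r^d\wedge 1)$ localised to $z \in \mathrm{Supp}(f,r)$, uses $\mathrm{Vol}(\mathrm{Supp}(f,r)) \lesssim r^d \vee 1$ together with $\nu_n \leq \nu$ to get an $n$-independent bound proportional to $\int_0^1\int_0^\infty u r^d\,\nu(dr,du) < \infty$, and handles the quadratic variation by the same estimate after squaring (using $u^2(r^d\wedge 1)^2 \leq u(r^d\wedge 1)$, which is what your carré-du-champ computation with $F(x)=x^2$ produces). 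Your observations on the $[0,1]$-bound giving compact containment and on the truncations $E_n$, $\nu_n$ only decreasing the relevant integrals are exactly the points the paper's estimates rely on.
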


\begin{proof}
We follow the outline of the proof of Theorem~1.2, step~(i), item~(c) in \cite[p.24-26]{etheridge2020rescaling}, which we adapt to account for the presence of the additional term of the form
\[
\gamma \langle 
f, 1-\omega_{M}
\rangle F'\left( 
\langle f, \omega_{M} \rangle 
\right). 
\]
By the same reasoning as in the proof of Theorem~1.2, \cite{etheridge2020rescaling}  and since~$f$ is compactly supported, we can apply the Aldous-Rebolledo criterion (see e.g. \cite{etheridge2000introduction}, Theorem 1.17) and conclude provided we can bound the three following terms uniformly over all $n \in \nmath$ and $M \in \ml$: 

\noindent \textsc{Finite variation - Continuous term}
\begin{equation*}
A(M) := \left| 
\gamma \langle 
f, 1-\omega_{M}
\rangle F'\left( 
\langle f, \omega_{M} \rangle 
\right)
\right|
\end{equation*}

\noindent \textsc{Finite variation - Jump term}
\begin{align*}
B(n,M) &:= \int_{E_{n}}\int_{0}^{1}\int_{0}^{\infty} \frac{1}{\mathrm{Vol}(\bcal(z,r) \cap E_{n})} \int_{\bcal(z,r) \cap E_{n}} \left| 
\Psi_{F,f}\left(
\Theta_{z,r,u}^{[n]}(\omega_{M})
\right) - \Psi_{F,f}(\omega_{M})
\right|  \\
&\hspace{7.3cm} \times (1-\omega_{M}(z')) dz' \nu_n (dr,du) dz
\end{align*}

\noindent \textsc{Quadratic variation}
\begin{align*}
C(n,M) &:= \int_{E_{n}}\int_{0}^{1}\int_{0}^{\infty} \frac{1}{\mathrm{Vol}(\bcal(z,r) \cap E_{n})} \int_{\bcal(z,r) \cap E_{n}} \left( 
\Psi_{F,f}\left(
\Theta_{z,r,u}^{[n]}(\omega_{M})
\right) - \Psi_{F,f}(\omega_{M})
\right)^{2}  \\
&\hspace{7.3cm} \times (1-\omega_{M}(z')) dz' \nu_n (dr,du) dz
\end{align*}

We start with the control of the finite variation of the continuous term. 

\noindent \textsc{Finite variation - Continuous term} Let $\mathrm{Supp}(f)$ be the support of~$f$ (we recall that $f \in C_{c}(\rd)$). Then, $\langle f, 1-\omega_{M} \rangle$ and $\langle f, \omega_{M} \rangle$ are elements of the interval
\begin{equation*}
\left[ 
-\max(|f|) \mathrm{Vol}(\mathrm{Supp}(f)), \max(|f|) \mathrm{Vol}(\mathrm{Supp}(f))
\right]
\end{equation*}
We conclude using the fact that since $F \in C^{1}(\rmath)$, $F'$ is bounded over the above interval. 

\noindent \textsc{Finite variation - Jump term}
Since~$f$ is compactly supported, there exists two constants
$C_{1}^{(f)}, C_{2}^{(f)} > 0$ depending only on~$f$ such that for all $(z,r) \in \rd \times (0,+\infty)$ and $n \geq 0$, 
\begin{align}
\left| 
\langle 
f, \mathds{1}_{\bcal(z,r) \cap E_{n}}(\cdot) \omega  
\rangle \right| &\leq \mathds{1}_{z \in \mathrm{Supp} (f,r)} \times C_{1}^{(f)} \left(
r^{d} \wedge 1
\right) \label{eqn:internal_term} \\
\text{and } \mathrm{Vol}\left\{ 
z \in \rd : z \in \mathrm{Supp} (f,r) 
\right\} &\leq C_{2}^{(f)} \left(
r^{d} \vee 1
\right). \label{eqn:vol_support}
\end{align}
Then, again since $F \in C^{1}(\rmath)$, by Taylor's theorem and \eqref{eqn:internal_term}, there exists a constant $C_{3}^{(F,f)} > 0$ depending only on~$F$ and $f$ such that for all $(z,r,u) \in \rd \times (0,+\infty) \times (0,1]$ and $n \geq 0$, 
\begin{equation} \label{eqn:control_variation}
\left| 
\Psi_{F,f}\left(
\Theta_{z,r,u}^{[n]}(\omega_{M})
\right) - \Psi_{F,f}\left(
\omega_{M}
\right)
\right| \leq \mathds{1}_{z \in \mathrm{Supp} (f,r)} \times C_{1}^{(f)} C_{3}^{(F,f)} u\left(
r^{d} \wedge 1
\right).  
\end{equation}
Therefore, for all $n \geq 0$ and $M \in \ml$, 
\begin{align*}
&B(n,M) \\
&\begin{aligned}
&\leq \int_{E_{n}}\int_{0}^{1} \int_{0}^{\infty} \frac{1}{\mathrm{Vol}(\bcal(z,r) \cap E_{n})} \\
& \hspace{2.7cm} \times \int_{\bcal(z,r) \cap E_{n}}
C_{3}^{(F,f)} \mathds{1}_{z \in \mathrm{Supp} (f,r)} C_{1}^{(f)} u \left(r^{d} \wedge 1\right) dz' \nu_n (dr,du)dz
\end{aligned}\\
&\leq C_{3}^{(F,f)}C_{1}^{(f)} \int_{0}^{1}\int_{0}^{\infty} \int_{E_{n}} \mathds{1}_{z \in \mathrm{Supp} (f,r)} u\left(
r^{d} \wedge 1
\right) dz \nu_n (dr,du) \\
&\leq C_{3}^{(F,f)}C_{1}^{(f)} \int_{0}^{1}\int_{0}^{\infty} \mathrm{Vol}(\mathrm{Supp}(f,r)) u\left(
r^{d} \wedge 1
\right) dz \nu_n (dr,du) \\
&\leq C_{3}^{(F,f)} C_{1}^{(f)} \int_{0}^{1}\int_{0}^{\infty} C_{2}^{(f)} \left(
r^{d} \vee 1
\right) u \left(
r^{d} \wedge 1
\right) \nu_{n}(dr,du) \\
&= C_{3}^{(F,f)} C_{1}^{(f)} C_{2}^{(f)} \int_{0}^{1}\int_{0}^{\infty} u r^{d} \nu_{n}(dr,du) \\
&\leq C_{3}^{(F,f)} C_{1}^{(f)} C_{2}^{(f)} \int_{0}^{1}\int_{0}^{\infty} u r^{d} \nu (dr,du)
\end{align*}
as $(\nu_{n})_{n \geq 0}$ is an increasing sequence of measures converging to~$\nu$. 
Here we used \eqref{eqn:control_variation} to obtain the first inequality, and \eqref{eqn:vol_support} to obtain the fourth inequality. 
Using 
Condition~\eqref{eqn:cond_nu}, we then conclude that~$B(n,M) < + \infty$. 

\noindent \textsc{Quadratic variation}
By \eqref{eqn:control_variation}, for all $(z,r,u) \in \rd \times (0,+\infty) \times (0,1]$ and $n \geq 0$, 
\begin{align*}
\left| 
\Psi_{F,f}\left(
\Theta_{z,r,u}^{[n]}(\omega_{M})
\right) - \Psi_{F,f}\left(\omega_{M}\right)
\right|^{2} 
&\leq \left(
C_{3}^{(F,f)} C_{1}^{(f)}
\right)^{2} \mathds{1}_{z \in \mathrm{Supp} (f,r)} u^{2} \left(
r^{d} \wedge 1
\right)^{2} \\ 
&\leq \left(
C_{3}^{(F,f)} C_{1}^{(f)}
\right)^{2} \mathds{1}_{z \in \mathrm{Supp} (f,r)} u \left(
r^{d} \wedge 1
\right), 
\end{align*}
which allows us to conclude as in the case of the finite variation of the jump term. 
\end{proof}

As $(M^{[n]})_{n \geq 0}$ is relatively compact, it admits converging sub-sequences. We next show that each of these sub-sequences converges to a solution to the martingale problem $(\gyv, \delta_{M^{0}})$. 

\begin{lemma}\label{lem:cvg_subsequence_mp} Let $(M^{[m(n)]})_{n \geq 0}$ be a converging sub-sequence of $(M^{[n]})_{n \geq 0}$.
Then, the sub-sequence $(M^{[m(n)]})_{n \geq 0}$ converges to a solution to the martingale problem $(\gyv, \delta_{M^{0}})$. 
\end{lemma}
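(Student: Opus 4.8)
The plan is to show that the subsequential limit~$M$ solves~$(\gyv,\delta_{M^{0}})$ by passing to the limit in the weak (conditional) form of the martingale property. Fix a test function~$\Psi_{F,f}$ with~$F \in C^{1}(\rmath)$ and~$f \in C_{c}(\rd)$. Since each~$(M^{[m(n)]}_{t})_{t\geq 0}$ is the unique solution to~$(\gcal^{[m(n)]},\delta_{M^{0}})$, for every~$0 \leq s \leq t$ and every bounded continuous functional~$H$ of the path up to time~$s$,
\[
\esp\left[\left(\Psi_{F,f}(M^{[m(n)]}_{t}) - \Psi_{F,f}(M^{[m(n)]}_{s}) - \int_{s}^{t} \gcal^{[m(n)]}\Psi_{F,f}(M^{[m(n)]}_{u})\,du\right) H\big(M^{[m(n)]}\big)\right] = 0.
\]
I would invoke the Skorokhod representation theorem to realize the convergence~$M^{[m(n)]} \to M$ almost surely in the Skorokhod topology on a common probability space, and restrict~$s$, $t$ (and any times entering~$H$) to the co-countable set of continuity points of the limit path, at which~$M^{[m(n)]}_{\cdot} \to M_{\cdot}$ pointwise. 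The two endpoint terms cause no trouble: the map~$M \mapsto \Psi_{F,f}(M) = F(\langle f, \omega_{M}\rangle)$ is continuous in the vague topology (as~$f$ is compactly supported) and bounded, since~$\langle f,\omega_{M}\rangle$ ranges over a fixed compact interval on which~$F$ is bounded, so bounded convergence transfers their limits through the expectation. The initial law is~$\delta_{M^{0}}$ for all~$n$, hence also for~$M$.

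The substantive work is the convergence of the integral term, which I would split as~$\int_{s}^{t}\big[\gcal^{[m(n)]}\Psi_{F,f}(M^{[m(n)]}_{u}) - \gyv\Psi_{F,f}(M^{[m(n)]}_{u})\big]\,du$ plus~$\int_{s}^{t}\big[\gyv\Psi_{F,f}(M^{[m(n)]}_{u}) - \gyv\Psi_{F,f}(M_{u})\big]\,du$. For the second integral I need continuity of~$M \mapsto \gyv\Psi_{F,f}(M)$ in the vague topology. Expanding~$\Psi_{F,f}(\Theta_{z,r,u}(\omega_{M})) - \Psi_{F,f}(\omega_{M})$ via Taylor's theorem shows that~$\gyv\Psi_{F,f}(M)$ is assembled from functionals of the form~$\langle f,\omega_{M}\rangle$ and from local ball-integrals~$\int_{\bcal(z,r)} g(z')\omega_{M}(z')\,dz'$. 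These are continuous because the uniform bound~$\omega_{M} \in [0,1]$ upgrades vague convergence in~$\ml$ to convergence when testing against arbitrary~$L^{1}$ functions (by approximating such functions in~$L^{1}$ by~$C_{c}$ functions and using~$\|\omega_{M^{[m(n)]}}\|_{\infty} \leq 1$ for a uniform error bound), and~$\mathds{1}_{\bcal(z,r)} \in L^{1}$. Thus the integrand converges at continuity points~$u$, while the uniform bound~$|\gyv\Psi_{F,f}(M)| \leq \mathrm{const}$ inherited from the estimates on~$B(n,M)$ and~$A(M)$ in the proof of \Cref{lem:tight_sequence_slfvs} supplies the domination that lets me pass the limit through both the time integral and the expectation.

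The main obstacle is the first integral, namely the generator convergence~$\gcal^{[m(n)]}\Psi_{F,f} \to \gyv\Psi_{F,f}$, since~$\gcal^{[n]}$ differs from~$\gyv$ in three ways simultaneously: the truncated ball~$\bcal(z,r)\cap E_{n}$ in place of~$\bcal(z,r)$, the normalization~$\mathrm{Vol}(\bcal(z,r)\cap E_{n})^{-1}$ in place of~$V_{r}^{-1}$, and~$\nu_{n}$ in place of~$\nu$. Here I would use that the integrand vanishes unless~$z \in \mathrm{Supp}(f,r)$, so that for each fixed~$r$ the truncation becomes inert on the relevant region once~$E_{n}$ absorbs it, with the discrepancy confined to event centres near~$\partial E_{n}$ or to large radii. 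Bounding these contributions with the uniform estimates~\eqref{eqn:control_variation}, \eqref{eqn:internal_term} and~\eqref{eqn:vol_support}, together with the integrability condition~\eqref{eqn:cond_nu} and the monotone convergence~$\nu_{n} \uparrow \nu$, I expect to show~$\sup_{M \in \ml}|\gcal^{[m(n)]}\Psi_{F,f}(M) - \gyv\Psi_{F,f}(M)| \to 0$, after which dominated convergence disposes of the first integral. Combining the three limits gives the weak martingale identity for~$(\gyv, M)$; since~$\Psi_{F,f}$ was arbitrary, $M$ solves~$(\gyv,\delta_{M^{0}})$, which also establishes \Cref{lem:existence_sol_mp}.
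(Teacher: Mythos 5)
Your proposal is correct, and its analytic core coincides with the paper's proof: the decisive step in both is the uniform (in $M \in \ml$) convergence $\gcal^{[m(n)]}\Psi_{F,f}(M) \to \gyv\Psi_{F,f}(M)$, obtained from the same ingredients --- localization of the event centres to $\mathrm{Supp}(f,r)$, the estimates \eqref{eqn:internal_term}, \eqref{eqn:vol_support}, \eqref{eqn:control_variation}, the monotone convergence $\nu_{n} \uparrow \nu$, Condition~\eqref{eqn:cond_nu}, and dominated convergence in the $\nu$-integral. Where you differ is in how this estimate is converted into the martingale property of the limit: the paper simply feeds the uniform generator convergence into Theorem~4.8.10 of~\cite{ethier1986markov}, whereas you re-derive that transfer by hand, passing to the limit in the weak identity $\esp[(\Psi_{F,f}(M_{t}) - \Psi_{F,f}(M_{s}) - \int_{s}^{t}\gcal\Psi_{F,f}(M_{u})\,du)\,H] = 0$ via Skorokhod representation, restriction to a.s.\ continuity times, and the splitting of the time integral into a generator-discrepancy term and a term requiring vague-continuity of $M \mapsto \gyv\Psi_{F,f}(M)$. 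This costs you one extra lemma that the paper never needs --- the continuity of $\gyv\Psi_{F,f}$ on $\ml$ --- but your argument for it is sound: the uniform bound $\omega_{M} \in [0,1]$ does upgrade vague convergence to convergence against arbitrary $\mathbb{L}^{1}$ test functions (approximate in $\mathbb{L}^{1}$ by $C_{c}$ functions), which handles the ball averages and $\langle f\mathds{1}_{\bcal(z,r)}, \omega_{M}\rangle$, and the bound \eqref{eqn:control_variation} supplies the domination needed to pass this pointwise convergence through the $\nu(dr,du)\,dz$ integral. In exchange, your route is self-contained where the paper's is a black-box citation; note also that once continuity of $\gyv\Psi_{F,f}$ is available, your argument would in principle tolerate weaker-than-uniform generator convergence, though you do not need that flexibility here. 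The only soft spots are presentational rather than mathematical: the uniform convergence step is stated as an expectation (``I expect to show'') rather than carried out, and the extension of the martingale identity from continuity times to all times (by right-continuity and boundedness) is left implicit; both are standard and are exactly the computations the paper performs.
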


\begin{proof}
We follow the outline of the proof of the existence result in Theorem~2.2, \cite[p.42-43]{FW22}. By Theorem~4.8.10 in~\cite{ethier1986markov}, it is sufficient to show that for each test function of the form $\Psi_{F,f}$, $F \in C^{1}(\rmath)$, $f \in C_{c}(\rd)$, we have
\begin{equation*}
\gcal^{[n]}\Psi_{F,f}(M) \xrightarrow[n \to + \infty]{} \gyv \Psi_{F,f}(M)
\end{equation*}
uniformly in~$M \in \ml$. To do so, let $M \in \ml$ and $n \in \nmath$. For all $r > 0$, let
\begin{equation*}
E_{n}^{\cap r} := \left\{
z \in \rtwo : \bcal(z,r) \subseteq E_{n}
\right\}. 
\end{equation*}
We recall that $\gyv \Psi_{F,f}(M)$ is defined as
\begin{align*}
& \gyv \Psi_{F,f}(M) \\ 
&\begin{aligned}
&= \gamma \langle 
f, 1-\omega_{M}
\rangle F'\left(
\langle
f,\omega_{M}
\rangle \right) \\
& \quad + \int_{\rd}\int_{0}^{1}\int_{0}^{\infty} \frac{1}{V_{r}} \int_{\bcal(z,r)} \left(1-\omega_{M}(z')\right) \\
&\hspace{6cm} \times \left[ 
\Psi_{F,f}\left(
\Theta_{z,r,u}(\omega_{M})
\right) - \Psi_{F,f}\left(\omega_{M}\right)
\right] dz'\nu(dr,du)dz.
\end{aligned}\\
\intertext{As for any given $r > 0$, $\Theta_{z,r,u}(\omega_{M})$ is equal to $\omega_{M}$ outside of $\mathrm{Supp}(f,r)$,}
& \gyv \Psi_{F,f}(M) \\
&\begin{aligned}
&= \gamma \langle 
f, 1-\omega_{M}
\rangle F'\left(
\langle
f,\omega_{M}
\rangle \right) \\
&\quad + \int_{0}^{1}\int_{0}^{\infty} \int_{\mathrm{Supp}(f,r)} \frac{1}{V_{r}} \int_{\bcal(z,r)} \left(
1-\omega_{M}(z')
\right) \\
&\hspace{6cm} \times \left[ 
\Psi_{F,f}\left(
\Theta_{z,r,u}(\omega_{M})
\right) - \Psi_{F,f}\left(\omega_{M}\right)
\right] dz'dz\nu(dr,du). \\ 
\end{aligned}\\
\intertext{In order to make the operator $\gcal^{[n]}$ appear, we now decompose the integral over $z$:}
&\gyv \Psi_{F,f}(M) \\ 
&\begin{aligned}
&= \gamma \langle 
f, 1-\omega_{M}
\rangle F'\left(
\langle
f,\omega_{M}
\rangle \right) \\
&\quad + \int_{0}^{1} \int_{0}^{\infty} \int_{\mathrm{Supp}(f,r) \cap E_{n}^{c}} \frac{1}{V_{r}} \int_{\bcal(z,r)} \left(
1-\omega_{M}(z')
\right) \\
&\hspace{6cm} \times \left[ 
\Psi_{F,f}\left(
\Theta_{z,r,u}(\omega_{M})
\right) - \Psi_{F,f}\left(\omega_{M}\right)
\right]dz' dz\nu(dr,du) \\
&\quad + \int_{0}^{1}\int_{0}^{\infty} \int_{\mathrm{Supp}(f,r) \cap E_{n}^{\cap r}} \frac{1}{\mathrm{Vol}(\bcal(z,r) \cap E_{n})} \int_{\bcal(z,r) \cap E_{n}}
\left[ 
\Psi_{F,f}\left(
\Theta_{z,r,u}^{[n]} (\omega_{M})
\right) - \Psi_{F,f}\left(\omega_{M}\right) 
\right] \\
&\hspace{9.3cm} \times \left(
1-\omega_{M}(z')
\right) dz' dz\nu(dr,du) \\ 
&\quad + \int_{0}^{1} \int_{0}^{\infty} \int_{\mathrm{Supp}(f,r) \cap \left(
E_{n} \backslash E_{n}^{\cap r} \right)} \frac{1}{V_{r}} \int_{\bcal(z,r)}
\left[ 
\Psi_{F,f}\left(
\Theta_{z,r,u}(\omega_{M})
\right) - \Psi_{F,f}\left(\omega_{M}\right)
\right]\\ 
&\hspace{7cm} \times \left(
1-\omega_{M}(z')
\right)  dz' dz\nu(dr,du). \\
\end{aligned}\\
\intertext{The integral over $\mathrm{Supp}(f,r)$ does not appear in $\gcal^{[n]}$. The integral over $\mathrm{Supp}(f,r) \cap E_{n}^{\cap r}$ appears in the same form in both $\gyv$ and $\gcal^{[n]}$, while the last integral appears in a different form in $\gcal^{[n]}$. Therefore, }
&\gyv \Psi_{F,f}(M)  \\
&\begin{aligned}
&= \gcal^{[n]} \Psi_{F,f}(M) \\
&\quad + \int_{0}^{1} \int_{0}^{\infty} \int_{\mathrm{Supp}(f,r) \cap E_{n}^{c}} \frac{1}{V_{r}} \int_{\bcal(z,r)} \left(
1-\omega_{M}(z')
\right) \\
&\hspace{6cm} \times \left[ 
\Psi_{F,f}\left(
\Theta_{z,r,u}(\omega_{M})
\right) - \Psi_{F,f}\left(\omega_{M}\right)
\right]dz' dz\nu(dr,du) 
\end{aligned} \\
&\begin{aligned} 
&\quad + \int_{0}^{1} \int_{0}^{\infty} \int_{\mathrm{Supp}(f,r) \cap \left(
E_{n} \backslash E_{n}^{\cap r} \right)} \frac{1}{V_{r}} \int_{\bcal(z,r)}
\left[ 
\Psi_{F,f}\left(
\Theta_{z,r,u}(\omega_{M})
\right) - \Psi_{F,f}\left(\omega_{M}\right)
\right]\\ 
&\hspace{9cm} \times \left(
1-\omega_{M}(z')
\right)  dz' dz\nu(dr,du) \\
&\quad - \int_{0}^{1} \int_{0}^{\infty} \int_{\mathrm{Supp}(f,r) \cap \left(
E_{n} \backslash E_{n}^{\cap r} \right)} \frac{1}{\mathrm{Vol}(\bcal(z,r) \cap E_{n})} \int_{\bcal(z,r) \cap E_{n}}
\\
&\hspace{4cm} \times \left[ 
\Psi_{F,f}\left(
\Theta_{z,r,u}^{[n]}(\omega_{M})
\right) - \Psi_{F,f}\left(\omega_{M}\right)
\right] \left(
1-\omega_{M}(z')
\right)  dz' dz\nu(dr,du).
\end{aligned}
\end{align*}
Our goal is to control the three integral terms. We start with the first and second one. 
As \eqref{eqn:internal_term} and \eqref{eqn:control_variation} stay true if we replace $\mathds{1}_{\bcal(z,r) \cap E_{n}}(\cdot)$ (resp. $\Theta_{z,r,u}^{[n]}$) by $\mathds{1}_{\bcal(z,r)}(\cdot)$ (resp. $\Theta_{z,r,u}$), we have 
\begin{align*}
&\frac{1}{V_{r}} \int_{\bcal(z,r)} \left(1- \omega_{M}(z')\right) \times \Big| 
\Psi_{F,f}\left(
\Theta_{z,r,u}(\omega_{M})
\right) - \Psi_{F,f}\left(\omega_{M}\right)
\Big| dz' \\
&\leq C_{3}^{(F,f)} \mathds{1}_{z \in \mathrm{Supp} (f,r)} C_{1}^{(f)} u\left(r^{d} \wedge 1 \right) \\
&= C_{3}^{(F,f)} C_{1}^{(f)} u(r^{d} \wedge 1) 
\end{align*}
when $z \in \mathrm{Supp}(f,r) \cap E_{n}^{c}$ or $\mathrm{Supp}(f,r) \cap (E_{n} \backslash E_{n}^{\cap r})$.
Moreover, by \eqref{eqn:control_variation}, for all $n \geq 0$, we also have 
\begin{align*}
&\frac{1}{\mathrm{Vol}(\bcal(z,r) \cap E_{n})} \int_{\bcal(z,r) \cap E_{n}} \left(
1 - \omega_{M}(z')
\right) \times \left| 
\Psi_{F,f}\left(
\Theta_{z,r,u}^{[n]}(\omega_{M}
\right) - \Psi_{F,f}(\omega_{M})
\right| dz' \\
&\leq C_{3}^{(F,f)} \mathds{1}_{z \in \mathrm{Supp} (f,r)} C_{1}^{(f)} u\left(
r^{d} \wedge 1
\right) \\
&= C_{3}^{(F,f)} C_{1}^{(f)} u(r^{d} \wedge 1)
\end{align*}
for $z \in \mathrm{Supp}(f,r) \cap (E_{n} \backslash E_{n}^{\cap r})$, which allows us to control the third term. Using these three upper bounds yields 
\begin{align*}
&\left| 
\gcal^{[n]}\Psi_{F,f}(M) - \gyv \Psi_{F,f}(M)
\right| \\
&\leq 3C_{3}^{(F,f)} C_{1}^{(f)} \int_{0}^{1} \int_{0}^{\infty} \Bigg[ 
\int_{\mathrm{Supp}(f,r) \cap E_{n}^{c}} u\left(
r^{d} \wedge 1
\right) dz \\
&\hspace{5cm} + \int_{\mathrm{Supp}(f,r) \cap \left(E_{n} \backslash E_{n}^{\cap r}\right)} u\left(
r^{d} \wedge 1
\right) dz
\Bigg] \nu(dr,du) \\
&\begin{aligned}
&\leq 3 C_{3}^{(F,f)} C_{1}^{(f)} \int_{0}^{1} \int_{0}^{\infty} \Big( 
\mathrm{Vol}\left(
\mathrm{Supp}(f,r) \cap E_{n}^{c}
\right) \\
&\hspace{5cm} + \mathrm{Vol}\left(
\mathrm{Supp}(f,r) \cap \left( 
E_{n} \backslash E_{n}^{\cap r}
\right)\right)
\Big) \times u\left(
r^{d} \wedge 1
\right) \nu(dr,du) \\
\end{aligned}\\
&\leq 3C_{3}^{(F,f)}C_{1}^{f}\int_{0}^{1} \int_{0}^{\infty} \mathrm{Vol}(\mathrm{Supp}(f,r)) u \left(r^{d} \wedge 1\right) \nu(dr,du) \\
&\leq 3C_{3}^{(F,f)}C_{1}^{f}C_{2}^{(f)}\int_{0}^{1} \int_{0}^{\infty} ur^{d} \nu(dr,du)
\end{align*}
by \eqref{eqn:vol_support}. By Condition~\eqref{eqn:cond_nu}, this upper bound is finite, so we can apply the dominated convergence theorem and obtain
\begin{equation*}
\left| 
\gcal^{[n]} \Psi_{F,f}(M) - \gyv \Psi_{F,f}(M)
\right| \xrightarrow[n \to + \infty]{} 0
\end{equation*}
uniformly in~$M$, which allows us to conclude. 
\end{proof}

We can now conclude with the proof of Lemma~\ref{lem:existence_sol_mp}. 

\begin{proof}[Proof of Lemma \ref{lem:existence_sol_mp}] Let $M^{0} \in \ml$, and for all $n \geq 0$, let $(M_{t}^{[n]})_{t \geq 0}$ be the unique solution to the martingale problem $(\gcal^{[n]}, \delta_{M^{0}})$. By Lemma~\ref{lem:tight_sequence_slfvs}, the sequence $(M^{[n]})_{n \geq 0}$ is relatively compact, and by Lemma~\ref{lem:cvg_subsequence_mp}, each converging sub-sequence converges to a solution to the martingale problem~$(\gyv, \delta_{M^{0}})$, which allows us to conclude. 
\end{proof}

\subsection{An extended set of test functions for the martingale problem associated to \texorpdfstring{$\gyv$}{}}\label{subsec:extension_mp_Dpsi}
The goal of this section is to show that we can extend the martingale problem associated to~$\gyv$ to test functions of the form $D_{\psi}$, $\psi \in \mathbb{L}^{1}((\rd)^{k})$ and $k \geq 1$, which we recall are defined as
\begin{equation*}
\forall M \in \ml, D_{\psi}(M) := \verybigint_{(\rd)^{k}} \psi(x_{1},...,x_{k}) \left\{ 
\prod_{j = 1}^{k} \omega_{M}(x_{j})
\right\} dx_{1}...dx_{k}. 
\end{equation*}
To do so, we write $\llbracket 1, k \rrbracket := \{ 1,2,...,k\}$ and extend the operator $\gyv$ to test functions of the form $D_{\psi}$ by setting for all $M \in \ml$,
\begin{align*}
&\gyv D_{\psi}(M) \\
&:= \verybigsum_{l = 1}^{k} \verybigint_{(\rd)^{k}} \gamma \psi(x_{1},...,x_{k}) (1-\omega_{M}(x_{l})) \times \left\{ 
\prod_{\substack{j \in \llbracket 1,k \rrbracket \\ j \neq l}} \omega_{M}(x_{j})
\right\} dx_{1}...dx_{k} \\
&+ \verybigint_{\rd}\verybigint_{0}^{1}\verybigint_{0}^{\infty}\verybigint_{\bcal(z,r)} \frac{1}{V_{r}} \verybigint_{(\rd)^{k}} \psi(x_{1},...,x_{k}) (1-\omega_M(z')) \times \left\{ 
\prod_{j \in \llbracket 1,k \rrbracket} \omega_{M}(x_{j})
\right\} \\
&\hspace{5cm} \times \left(
(1-u)^{\#\{i \in \llbracket 1,k \rrbracket : x_{i} \in \bcal(z,r)\}} - 1
\right)  dx_{1}...dx_{k} dz' \nu(dr,du)dz.
\end{align*}
Observe that this definition is consistent with the action of $\gyv$ over test functions of the form~$\Psi_{F,f}$: if $\psi \in C_{c}(\rd)$, then $D_{\psi} = \Psi_{\mathrm{Id},\psi}$ and for all $M \in \ml$, we have
\begin{equation*}
\gyv D_{\psi}(M) = \gyv \Psi_{\mathrm{Id,\psi}}(M). 
\end{equation*}
Indeed, 
\begin{align*}
&\gyv D_{\psi}(M)\\
&= \int_{\rd} \gamma \psi(x_{1})(1 - \omega_{M}(x_{1}))dx_{1} \\
&\quad + \int_{\rd}\int_{0}^{1}\int_{0}^{\infty} \frac{1}{V_{r}} \int_{\bcal(z,r)} \left(
1 - \omega_{M}(z')
\right) \\
&\hspace{4.5cm} \times \int_{\rd} \left(
-u \mathds{1}_{\bcal(z,r)}(x_{1})\omega_{M}(x_{1}) \psi(x_{1})
\right) dx_{1}dz' \nu(dr,du) dz 
\end{align*}
The first term can be rewritten as
\begin{align*}
\int_{\rd} \gamma \psi(x_{1})(1-\omega_{M}(x_{1}))dx_{1} 
&= \gamma \langle \psi, 1-\omega_{M} \rangle \\
&= \gamma \langle \psi, 1 - \omega_{M} \rangle (\mathrm{Id})'\left(
\langle \psi, \omega_{M} \rangle
\right)
\end{align*}
as $(\mathrm{Id})' : x \to 1$. Regarding the second term, observe that for all $(z,r,u) \in \rd \times (0, \infty) \times (0,1]$, 
\begin{align*}
&\int_{\rd} \left(
-u \mathds{1}_{\bcal(z,r)}(x_{1})\omega_{M}(x_{1})\psi(x_{1})
\right) dx_{1} \\
&= \int_{\rd} \psi(x_{1})\times \left(
\omega_{M}(x_{1}) - u \mathds{1}_{\bcal(z,r)}(x_{1})\omega_{M}(x_{1})-\omega_{M}(x_{1})
\right) dx_{1} \\
&= \int_{\rd} \psi(x_{1}) \Theta_{z,r,u}(\omega_{M})(x_{1})dx_{1} - \int_{\rd} \psi(x_{2})\omega_{M}(x_{2})dx_{2} \\
&= \langle \psi, \Theta_{z,r,u}(\omega_{M}) \rangle - \langle \psi, \omega_{M} \rangle \\
&= \Psi_{\mathrm{Id},\psi}\left(
\Theta_{z,r,u}(\omega_{M})
\right) - \Psi_{\mathrm{Id},\psi}(\omega_{M}).
\end{align*}
Therefore, 
\begin{equation*}
\gyv D_{\psi}(M) = \gyv \Psi_{\mathrm{Id},\psi}(M). 
\end{equation*}
Moreover, if we extend the definition of $\langle \cdot, M \rangle$ to elements of~$\mathbb{L}^{1}(\rd)$, for all~$A \subseteq \rd$ measurable with positive volume, we have
\begin{align*}
&\gyv D_{\mathds{1}_{A}}(M) \nonumber \\
&= \gamma \langle \mathds{1}_{A}, 1-\omega_{M} \rangle \nonumber\\
&\quad + \int_{\rd}\int_{0}^{1}\int_{0}^{\infty} \frac{1}{V_{r}} \int_{\bcal(z,r)} \left(
1 - \omega_{M}(z')
\right) \times \left(
\langle 
\mathds{1}_{A}, \Theta_{z,r,u}(\omega_{M})
\rangle - \langle \mathds{1}_{A}, \omega_{M} \rangle 
\right) dz'\nu(dr,du)dz .
\end{align*}
This gives us a simplified expression to describe the action of~$\gyv$ on the test functions used to define our extinction criteria. 

The main result of this section is the following lemma. 
\begin{lemma}\label{lem:extended_martingale} Let $M^{0} \in \ml$, and let $(M_{t})_{t \geq 0}$ be a solution to the martingale problem $(\gyv, \delta_{M^{0}})$ defined over test functions of the form $\Psi_{F,f}$. Then, for all $k \geq 1$ and $\psi \in \mathbb{L}^{1}((\rd)^{k})$, 
\begin{equation*}
\left(
D_{\psi}(M_{t}) - D_{\psi}(M_{0}) - \int_{0}^{t} \gyv D_{\psi}(M_{s})ds
\right)_{t \geq 0}
\end{equation*}
is a martingale. 
\end{lemma}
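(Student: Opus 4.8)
The plan is to exploit the linearity of both $D_{\psi}$ and the extended operator $\gyv D_{\psi}$ in the argument $\psi$, together with the fact that the martingale property is already available for test functions of the form $\Psi_{F,f}$. I would proceed in four steps: first, treat the base case $\psi = f^{\otimes k}$ for $f \in C_{c}(\rd)$ by specialising $F(x) = x^{k}$; second, extend to arbitrary tensor products $f_{1} \otimes \cdots \otimes f_{k}$ by polarisation; third, extend to finite linear combinations of such tensor products by linearity; and fourth, pass to a general $\psi \in \mathbb{L}^{1}((\rd)^{k})$ by density of tensor products together with a uniform approximation estimate.

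For the base case, note that $\Psi_{x^{k},f}(M) = \langle f,\omega_{M} \rangle^{k} = D_{f^{\otimes k}}(M)$ and that $x \mapsto x^{k}$ lies in $C^{1}(\rmath)$, so $(\Psi_{x^{k},f}(M_{t}) - \Psi_{x^{k},f}(M_{0}) - \int_{0}^{t} \gyv \Psi_{x^{k},f}(M_{s})\,ds)_{t \geq 0}$ is a martingale by hypothesis. The point to verify is the operator identity $\gyv \Psi_{x^{k},f}(M) = \gyv D_{f^{\otimes k}}(M)$, a direct computation generalising the $k=1$ case already carried out above: writing $a = \langle f,\omega_{M} \rangle$ and using $\langle f, \Theta_{z,r,u}(\omega_{M}) \rangle = a - u\langle f\mathds{1}_{\bcal(z,r)}, \omega_{M}\rangle$, the extended jump integrand factorises as $\int_{(\rd)^{k}} f^{\otimes k}(x)\prod_{j}\omega_{M}(x_{j})\big((1-u)^{\#\{i : x_{i} \in \bcal(z,r)\}} - 1\big)\,dx = (a - u\langle f\mathds{1}_{\bcal(z,r)},\omega_{M}\rangle)^{k} - a^{k}$, which matches $F(\langle f,\Theta_{z,r,u}(\omega_{M})\rangle) - F(\langle f,\omega_{M}\rangle)$, and the recovery terms match analogously. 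I would then apply the polarisation identity $\prod_{i=1}^{k} a_{i} = \frac{1}{k!}\sum_{S \subseteq \llbracket 1,k \rrbracket}(-1)^{k-|S|}\big(\sum_{i \in S} a_{i}\big)^{k}$ with $a_{i} = \langle f_{i},\omega_{M}\rangle$, which expresses $D_{f_{1} \otimes \cdots \otimes f_{k}}$ as a finite linear combination of functions $\Psi_{x^{k},\, \sum_{i \in S} f_{i}}$ with $\sum_{i \in S} f_{i} \in C_{c}(\rd)$; since the extended operator is linear in $\psi$, the martingale property is inherited, and then extends to all finite linear combinations of tensor products by linearity.

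The final and most delicate step is the $\mathbb{L}^{1}$-approximation, and this is where I expect the main obstacle to lie. The crucial estimate is that the extended operator is controlled by the $\mathbb{L}^{1}$-norm uniformly in $M$, i.e. $|\gyv D_{\psi}(M)| \leq C(k,\gamma,\nu)\|\psi\|_{\mathbb{L}^{1}}$. The recovery term is bounded by $\gamma k \|\psi\|_{\mathbb{L}^{1}}$ using $0 \leq \omega_{M} \leq 1$. For the jump term the difficulty is the integration over $z \in \rd$, which is only finite because of the cancellation in $(1-u)^{\#\{\dots\}} - 1$: I would use $0 \leq 1 - (1-u)^{m} \leq mu$ together with the volume identity $\int_{\rd} \#\{i : x_{i} \in \bcal(z,r)\}\,dz = k V_{r}$ to integrate out $z$, yielding a bound proportional to $\|\psi\|_{\mathbb{L}^{1}} \int_{0}^{1}\int_{0}^{\infty} u V_{r}\,\nu(dr,du)$, which is finite by~\eqref{eqn:cond_nu}. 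Combined with the elementary bound $|D_{\psi}(M)| \leq \|\psi\|_{\mathbb{L}^{1}}$, this makes both $\psi \mapsto D_{\psi}$ and $\psi \mapsto \gyv D_{\psi}$ Lipschitz in $\psi$ uniformly in $M$. Choosing tensor-product combinations $\psi_{n} \to \psi$ in $\mathbb{L}^{1}((\rd)^{k})$ for which the result already holds, the compensated processes converge uniformly over $\Omega$ — the controlling bound is deterministic — hence in $L^{1}(\proba)$, so the conditional-expectation characterisation of the martingale property passes to the limit and yields the claim for general $\psi$.
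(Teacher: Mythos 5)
Your proposal is correct and follows essentially the same route as the paper's proof: verify the consistency $\gyv D_{f^{\otimes k}} = \gyv \Psi_{x^{k},f}$ for tensor powers, reduce general tensor products $f_{1}\otimes\cdots\otimes f_{k}$ to powers $\langle g,\omega_{M}\rangle^{k}$ by polarisation, and pass to general $\psi \in \mathbb{L}^{1}((\rd)^{k})$ via density of tensor-product combinations together with the uniform bound $|\gyv D_{\psi}(M)| \leq k\|\psi\|_{1}\bigl(\gamma + \int_{0}^{1}\int_{0}^{\infty} uV_{r}\,\nu(dr,du)\bigr)$, which is exactly the paper's Step~1 estimate. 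The only differences are organisational (the paper treats $k=1$ separately before $k\geq 2$, and phrases the limiting step through the conditional-expectation characterisation with bounded test functionals $h_{i}$ rather than your uniform-in-$\Omega$ convergence, but both rest on the same deterministic Lipschitz-in-$\psi$ bounds).
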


\begin{proof}
We follow the structure of the proof of Lemma~3.1 in \cite{etheridge2020rescaling}. The main difference is the treatment of the additional term in the $(\gamma,\nu)$-EpiSLFV process encoding the exponential decay of the density of infected individuals, in the absence of new reproduction events. We adopt the following approach:
\begin{itemize}
    \item \textsc{Step 1}: Provide a general bound on $\gyv D_{\psi}(M)$. 
    \item \textsc{Step 2}: Show that the martingale result is true for $k = 1$ and $\psi \in \mathbb{L}^{1}(\rd)$. 
    \item \textsc{Step 3}: Extend the result to $k \geq 2$. 
\end{itemize}

\noindent \textsc{Step 1}: Let $k \geq 1$, $\psi \in \mathbb{L}^{1}((\rd)^{k})$ and $M \in \ml$. 
As $\omega_{M}$ is $[0,1]$-valued, we have
\begin{align*}
&\left|\sum_{l = 1}^{k} \int_{(\rd)^{k}} \gamma \psi(x_{1},...,x_{k}) (1-\omega_{M}(x_{l})) \times \left\{ 
\prod_{\substack{j \in \llbracket 1,k \rrbracket \\ j \neq l}} \omega_{M}(x_{j})
\right\} dx_{1}...dx_{k}\right| \\
&\leq \sum_{l = 1}^{k} \int_{(\rd)^{k}} \left| 
\gamma \psi(x_{1},...,x_{k}) (1-\omega_{M}(x_{l})) \times \left\{ 
\prod_{\substack{j \in \llbracket 1,k \rrbracket \\ j \neq l}} \omega_{M}(x_{j})
\right\}
\right| dx_{1}...dx_{k} \\ 
&\leq \sum_{l = 1}^{k} \int_{(\rd)^{k}} \gamma \left|\psi(x_{1},...,x_{k})\right| dx_{1}...dx_{k} \\
&= k \gamma ||\psi||_{1}.
\end{align*}
Similarly, for all $(z,r,u) \in \rd \times (0,\infty) \times (0,1]$, 
\begin{align*}
&\left|\verybigint_{\bcal(z,r)} \frac{1}{V_{r}} \verybigint_{(\rd)^{k}} \psi(x_{1},...,x_{k}) (1-\omega_M(z')) \times \left\{ 
\prod_{j \in \llbracket 1,k \rrbracket} \omega_{M}(x_{j})
\right\}\right.\\
&\hspace{3cm} \left. \vphantom{\prod_{j \in \llbracket 1,k \rrbracket}} 
\times \left(
(1-u)^{\#\{i \in \llbracket 1,k \rrbracket : x_{i} \in \bcal(z,r)\}} - 1
\right) dx_{1}...dx_{k} dz' \nu(dr,du)dz \right| \\
&\leq \int_{\bcal(z,r)} \frac{1}{V_{r}} \int_{(\rd)^{k}} \left| 
\psi(x_{1},...,x_{k})
\right| \times \left(
1 - (1-u)^{\#\{ 
i \in \llbracket 1,k \rrbracket : x_{i} \in \bcal(z,r)
\}}
\right) dx_{1}...dx_{k} dz' \\ 
&\leq \int_{(\rd)^{k}} u \# \left\{ 
i \in \llbracket 1,k \rrbracket : x_{i} \in \bcal(z,r)
\right\} \times \left| 
\psi(x_{1},...,x_{k})
\right| dx_{1}...dx_{k} \\
&= \int_{(\rd)^{k}} u \times \left( 
\sum_{i = 1}^{k} \mathds{1}_{\bcal(x_{i},r)}(z)
\right) \times \left| 
\psi(x_{1},...,x_{k})
\right| dx_{1}...dx_{k}. 
\end{align*}
Therefore, 
\begin{align*}
&\left| 
\gyv D_{\psi}(M) 
\right| \\
&\leq k \gamma ||\psi||_{1} + \int_{\rd}\int_{0}^{1}\int_{0}^{\infty} \int_{(\rd)^{k}} u \times \left(
\sum_{i = 1}^{k} \mathds{1}_{\bcal(x_{i},r)}(z)
\right) \times \left| 
\psi(x_{1},...,x_{k})
\right| dx_{1}...dx_{k} \nu(dr,du)dz \\
&= k\gamma ||\psi||_{1} + \int_{(\rd)^{k}} \left| 
\psi(x_{1},...,x_{k})
\right| \left(
\int_{0}^{1}\int_{0}^{\infty}\int_{\rd} u \times \left(
\sum_{i = 1}^{k} \mathds{1}_{\bcal(x_{i},r)}(z)
\right)
dz \nu(dr,du)\right) dx_{1}...dx_{k} \\
&= k\gamma ||\psi||_{1} + \int_{(\rd)^{k}} k \left| 
\psi(x_{1},...,x_{k})
\right| \times \left(
\int_{0}^{1}\int_{0}^{\infty} u V_{r}\nu(dr,du)
\right) dx_{1}...dx_{k} \\ 
&= k||\psi||_{1} \left(
\gamma + \int_{0}^{1}\int_{0}^{\infty} uV_{r} \nu(dr,du)
\right),
\end{align*}
which is finite by assumption (see \eqref{eqn:cond_nu}).

\noindent \textsc{Step 2}: We showed earlier that if~$k = 1$ and $\psi' \in C_{c}(\rd)$, as $D_{\psi'} = \Psi_{\mathrm{Id},\psi'}$, the result is true by consistency of the definition of~$\gyv$. Therefore, let 
$\psi \in \mathbb{L}^{1}(\rd)$. Since $C_{c}(\rd)$ is dense in $\mathbb{L}^{1}(\rd)$ for the $L^{1}$ norm, we can find a $C_{c}(\rd)$-valued sequence $(\psi^{(n)})_{n \geq 1}$ which converges to~$\psi \in \mathbb{L}^{1}(\rd)$. Then, for all $n \geq 1$, for all $m \geq 0$, $0 \leq t_{1} < t_{2} < ... < t_{m} \leq t < t+s$ and $h_{1},...,h_{m} \in C_{b}(\ml)$, 
\begin{align*}
&\esp\left[ 
\left(
D_{\psi}\left(
M_{t+s}
\right) - D_{\psi}\left(
M_{t}
\right) - \int_{t}^{t+s} \gyv D_{\psi}\left(
M_{u}
\right)du
\right) \times \left(
\prod_{i = 1}^{m} h_{i}\left(
M_{t_{i}}
\right)\right)
\right] \\
&= \esp\left[ 
\left(
D_{\psi^{(n)}}\left(
M_{t+s}
\right) - D_{\psi^{(n)}}\left(
M_{t}
\right) - \int_{t}^{t+s} \gyv D_{\psi^{(n)}}\left(
M_{u}
\right)du
\right) \times \left(
\prod_{i = 1}^{m} h_{i}\left(
M_{t_{i}}
\right)\right)
\right] \\
&+ \esp\left[ 
\left(
D_{\psi}\left(
M_{t+s}
\right) - D_{\psi^{(n)}}\left(
M_{t+s}
\right)
\right) \times \left(
\prod_{i = 1}^{m} h_{i}\left(
M_{t_{i}}
\right)\right)
\right]
\\
&+\esp\left[ 
\left(
-D_{\psi}\left(
M_{t}
\right) + D_{\psi^{(n)}}\left(
M_{t}
\right)
\right) \times \left(
\prod_{i = 1}^{m} h_{i}\left(
M_{t_{i}}
\right)\right)
\right] \\
&+ \esp\left[ 
\left(
\int_{t}^{t+s} \gyv D_{\psi^{(n)}}\left(
M_{u}
\right) - \gyv D_{\psi}\left(
M_{u}
\right)du
\right) \times \left(
\prod_{i = 1}^{m} h_{i}\left(
M_{t_{i}}
\right)\right)
\right]. 
\end{align*}
As $\psi^{(n)} \in C_{c}(\rd)$ and $D_{\psi^{(n)}} = \Psi_{\mathrm{Id},\psi^{(n)}}$, the first term is equal to zero by consistency of the definition of~$\gyv$. 
Recalling that for all~$\tilde{\psi} \in \mathbb{L}^{1}(\rd)$ and $\widetilde{M} \in \mcal_{\lambda}$, we have
\begin{equation*}
D_{\tilde{\psi}}(\widetilde{M}) = \int_{(\rd)^{k}}\tilde{\psi}(x_{1},...,x_{k})\left\{ 
\prod_{j = 1}^{k} \omega_{\widetilde{M}}(x_{j})
\right\}dx_{1}...dx_{k},
\end{equation*}
we can apply the dominated convergence theorem to the second and the third term to show that both converge to zero when $n \to + \infty$. By Step~$1$ and observing that 
\begin{equation*}
\gcal^{(\gamma,\nu)}D_{\psi^{(n)}}(M_{u}) - \gcal^{(\gamma,\nu)}D_{\psi}(M_{u}) = \gcal^{(\gamma,\nu)}D_{\psi^{(n)}-\psi}, 
\end{equation*}
we can do the same with the fourth term, which allows us to conclude. 

\noindent \textsc{Step 3}: Let $k \geq 2$. As a first step, we show that the result is true if $\psi \in \mathbb{L}^{1}((\rd)^{k})$ is of the form 
\begin{equation*}
    \psi(x_{1},...,x_{k}) = \prod_{i = 1}^{k} f(x_{i})
\end{equation*}
with $f \in C_{c}(\rd)$. Indeed, in this case, if we set $F^{[k]} : x \in \rmath \to x^{k}$, we have that for all $M \in \ml$, 
\begin{align*}
D_{\psi}(M) &= \int_{(\rd)^{k}} \left[ 
\prod_{i = 1}^{k} f(x_{i})
\right] \times \left[ 
\prod_{i = 1}^{k} \omega_{M}(x_{i})
\right] dx_{1}...dx_{k} \\
&= \langle f,\omega_{M} \rangle^{k} \\
&= \Psi_{F^{[k]},f}(M). 
\end{align*}
Therefore, we need to check that the definition of~$\gyv$ is consistent for such test functions, or in other words that
\begin{equation*}
\forall M \in \ml, \gyv D_{\psi}(M) = \gyv \Psi_{F^{[k]},f}(M). 
\end{equation*}
To do so, let $M \in \ml$. 
We have 
\begin{align*}
&\gyv D_{\psi}(M) \\
&\begin{aligned}
&= \verybigsum_{l = 1}^{k} \verybigint_{(\rd)^{k}} \gamma \left[ 
\prod_{i = 1}^{k} f(x_{i})
\right] \times (1-\omega_{M}(x_{l})) \times \left\{
\prod_{\substack{j \in \llbracket 1,k \rrbracket \\ j \neq l}} \omega_{M}(x_{j})
\right\} dx_{1}...dx_{k} \\
&+ \verybigint_{\rd}\verybigint_{0}^{1} \verybigint_{0}^{\infty} \verybigint_{\bcal(z,r)} 
\verybigint_{(\rd)^{k}} 
\left\{ 
\prod_{j \in \llbracket 1,k \rrbracket} \omega_{M}(x_{j})f(x_{j})
\right\} 
\left(
(1-u)^{\#\{
i \in \llbracket 1,k \rrbracket : x_{i} \in \bcal(z,r)
\}} - 1
\right) dx_{1}...dx_{k}
 \\
&\hspace{4cm} \times \frac{1-\omega_{M}(z')}{V_{r}}dz' \nu(dr,du)dz
\end{aligned}
\end{align*}
We start with the first term. Observe that
\begin{align*}
&\verybigsum_{l = 1}^{k} \verybigint_{(\rd)^{k}} \gamma \left[ 
\prod_{i = 1}^{k} f(x_{i})
\right] \times (1-\omega_{M}(x_{l})) \times \left\{
\prod_{\substack{j \in \llbracket 1,k \rrbracket \\ j \neq l}} \omega_{M}(x_{j})
\right\} dx_{1}...dx_{k} \\
&= \sum_{l = 1}^{k} \gamma \int_{(\rd)^{k}} f(x_{l}) \left(
1 - \omega_{M}(x_{l})
\right) \times \left[ 
\prod_{\substack{j \in \llbracket 1,k \rrbracket \\ j \neq l}} f(x_{j})\omega_{M}(x_{j})
\right]dx_{1}...dx_{k} \\
&= \sum_{l = 1}^{k} \gamma \langle f, 1 - \omega_{M} \rangle \times \langle f, \omega_{M} \rangle^{k-1} \\
&= \gamma k \langle f, 1 - \omega_{M} \rangle \times \langle f, \omega_{M} \rangle^{k-1} \\
&= \gamma \langle f, 1 - \omega_{M} \rangle \times \left(
F^{[k]}
\right)' \left( 
\langle f, \omega_{M} \rangle 
\right), 
\end{align*}
where we recall that $F^{[k]}$ is the function $F^{[k]} : x \to x^{k}$. Therefore, it has the same form as the corresponding term in $\gyv \Psi_{F^{[k]},f}(M)$. 

We now consider the second term in $\gyv D_{\psi}(M)$. To do so, notice that
\begin{align*}
\int_{(\rd)^{k}} \left\{
\prod_{j = 1}^{k} \omega_{M}(x_{j})f(x_{j})
\right\} dx_{1}...dx_{k} 
&= \prod_{j = 1}^{k} \left(
\omega_{M}(x_{j})f(x_{j})
\right) \\
&= \langle f, \omega_{M} \rangle^{k} \\
&= \Psi_{F^{[k]}, f}(\omega_{M}). 
\end{align*}
Moreover, 
\begin{align*}
&\verybigint_{(\rd)^{k}} \left\{
\prod_{j = 1}^{k} \omega_{M}(x_{j})f(x_{j})
\right\} \times \left(
(1-u)^{\#\{
i \in \llbracket 1,k \rrbracket : x_{i} \in \bcal(z,r)
\}} - 1
\right) dx_{1}...dx_{k} \\
&= \verybigint_{(\rd)^{k}} \left\{ 
\prod_{\substack{j \in \llbracket 1,k \rrbracket \\ x_{j} \in \bcal(z,r)}} (1-u)\omega_{M}(x_{j})f(x_{j})
\right\} \times \left\{ 
\prod_{\substack{l \in \llbracket 1,k \rrbracket \\ x_{l} \notin \bcal(z,r)}} \omega_{M}(x_{l})f(x_{l})
\right\} dx_{1}...dx_{k} \\
&= \verybigint_{(\rd)^{k}}
\prod_{j \in \llbracket 1,k \rrbracket} \left(
\mathds{1}_{\bcal(z,r)}(x_{j})(1-u)\omega_{M}(x_{j})f(x_{j}) + \left(
1 - \mathds{1}_{\bcal(z,r)}(x_{j})
\right) \omega_{M}(x_{j})f(x_{j})
\right)dx_{1}...dx_{k} \\
&= \langle f, \left(
1 - \mathds{1}_{\bcal(z,r)}(\cdot)
\right)\omega_{M} + (1-u) \mathds{1}_{\bcal(z,r)}(\cdot)\omega_{M} \rangle^{k} \\
&= \langle f, \Theta_{z,r,u}(\omega_{M}) \rangle^{k} \\
&= \Psi_{F^{[k]}, f}(\Theta_{z,r,u}(\omega_{M})). 
\end{align*}
Combining all the results together yields 
\begin{align*}
&\gyv D_\psi(M)\\
&= \gamma \langle f,1-\omega_{M} \rangle \times \left(F^{[k]}\right)'\left(\langle f,\omega_{M} \rangle \right) \\
&\quad + \int_{\rd}\int_{0}^{1}\int_{0}^{\infty}\int_{\bcal(z,r)} \frac{1-\omega_{M}(z')}{V_{r}}\times \left(
\Psi_{F^{[k]},f}\left(
\Theta_{z,r,u}(\omega_{M})
\right) - \Psi_{F^{[k]},f}\left(
\omega_{M}
\right)\right) dz' \nu(dr,du)dz \\
&= \gyv \Psi_{F^{[k]},f}(M),
\end{align*}
and the desired property is satisfied when $\psi$ is of the form
\begin{equation*}
    \psi(x_{1},...,x_{k}) = \prod_{i = 1}^{k} f(x_{i}).
\end{equation*}
Then, observe that any general $\psi \in \mathbb{L}^{1}((\rd)^{k})$ can be approximated by linear combinations of functions of the form $\psi_{1}(x_{1}) \times ... \times \psi_{k}(x_{k})$ with $\psi_{1},...,\psi_{k} \in C_{c}(\rd)$. These correspond to test functions of the form 
\begin{equation*}
    D_{\otimes \psi_{i}}(M) = \prod_{i = 1}^{k} \langle \psi_{i},\omega_{M} \rangle, 
\end{equation*}
which we can rewrite as a linear combination of functions of the form $\langle f,\omega_{M} \rangle^{m}$, $m \geq 1$ and $f \in C_{c}(\rd)$ by polarisation (\cite[p.42]{landsberg}). We conclude as in Step~$2$, using the upper bound given by Step~$1$. 
\end{proof}

\subsection{Duality relation and applications}\label{sec:duality_relation}
We now switch focus and aim at showing a duality relation between the $(\gamma,\nu)$-EpiSLFV process and the \textit{$(\gamma,\nu)$-ancestral process}.
More precisely, in Section~\ref{subsec:defn_dual_process}, we state that any solution of the martingale problem associated to the operator~$\gyv$ satisfies a certain duality relation, and then we show this result in Sections~\ref{subsec:dual_martingale_pb} and \ref{subsec:proof_duality}. 
This duality result can be used to complete the proof of Theorem~\ref{theo:martingale_pb_well_posed} and show that the martingale problem used to define the~$(\gamma,\nu)$-EpiSLFV is well-posed. Moreover, we will also use it in the remaining sections to derive the long-term dynamics of the~$(\gamma,\nu)$-EpiSLFV process from properties of the dual~$(\gamma,\nu)$-ancestral process. 

\subsubsection{A measure-valued dual process}\label{subsec:defn_dual_process}

Our candidate for a dual process associated to the $(\gamma,\nu)$-EpiSLFV process, the $(\gamma,\nu)$-ancestral process, was introduced earlier in Definition~\ref{defn:ancestral_process}. We start by showing that this process is well-defined. 

\begin{lemma}\label{lem:dual_process_well_defined} The $(\gamma,\nu)$-ancestral process introduced in Definition~\ref{defn:ancestral_process} is a well-defined Markov jump process. 
\end{lemma}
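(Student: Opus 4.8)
The plan is to construct $(\Xi_t)_{t\geq 0}$ explicitly as a pure-jump $\mcal_{p}(\rd)$-valued process by listing its successive jump times, the only real difficulty being that the driving Poisson point process $\widetilde{\Pi}$ may have infinite intensity: since $\nu$ is merely $\sigma$-finite and only required to satisfy \eqref{eqn:cond_nu}, infinitely many points $(t,z,r,u)$ of $\widetilde{\Pi}$ can have $\bcal(z,r)$ intersecting a fixed compact set in any bounded time interval. The key observation that resolves this is that an event is only \emph{accepted} (i.e.\ produces a new atom) with probability $1-(1-u)^{\Xi_{t-}(\bcal(z,r))}$, and using the elementary bound $1-(1-u)^{n} \leq nu$, valid for all $n \in \nmath$ and $u \in [0,1]$, together with $\int_{\rd}\mathds{1}_{\bcal(z,r)}(\xi)\,dz = V_{r}$ applied to each atom, the total rate of accepted events from a configuration $\Xi = \sum_{i=1}^{N}\delta_{\xi(i)}$ satisfies
\[
\int_{\rd}\int_{0}^{1}\int_{0}^{\infty}\left(1-(1-u)^{\Xi(\bcal(z,r))}\right)\nu(dr,du)\,dz \;\leq\; N\int_{0}^{1}\int_{0}^{\infty} u V_{r}\,\nu(dr,du),
\]
which is finite by \eqref{eqn:cond_nu}. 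Hence, although $\widetilde{\Pi}$ fires infinitely often, successful reproductions occur at a finite rate that is linear in the number of atoms.

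With this estimate in hand, I would build the process inductively on its jumps. Starting from the finite configuration $\Xi^{0}$, suppose the path has been defined up to a time at which it equals some finite measure $\Xi$ with $N$ atoms, each carrying its pre-assigned $\mathrm{Exp}(\gamma)$ death clock. While $\Xi$ is unchanged, the acceptance probability $1-(1-u)^{\Xi(\bcal(z,r))}$ is a deterministic function of $(z,r,u)$, so by the thinning (marking) theorem for Poisson point processes the accepted points of $\widetilde{\Pi}$ form a Poisson point process whose total intensity is the finite quantity bounded above; the first such point therefore occurs after an almost surely positive, exponentially distributed waiting time. The next jump of $\Xi$ is the minimum of this waiting time and the first of the finitely many residual death times; at that instant we either remove the dying atom or add a new atom $\delta_{z'}$, with $z'$ uniform in the relevant ball and equipped with a fresh independent death clock, and then repeat. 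Since $\Xi$ is constant between consecutive such instants, this produces a well-defined càdlàg path, and one checks directly from the symmetric form of the rates that it does not depend on the labelling of the atoms.

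It remains to rule out an accumulation of jump times in finite time. From the estimate above, when the configuration has $N$ atoms new atoms appear at total rate at most $\lambda N$, where $\lambda := \int_{0}^{1}\int_{0}^{\infty} u V_{r}\,\nu(dr,du) < +\infty$, while deaths only decrease the count; hence the number of atoms $(N_{t})_{t \geq 0}$ is dominated by a Yule process with per-capita birth rate $\lambda$ started from $N_{0}$. Yule processes are non-explosive, so $N_{t} < +\infty$ for all $t$ almost surely and the successive jump times increase to $+\infty$, which lets me patch the inductive construction into a single process defined for all $t \geq 0$. Finally, since the death clocks are memoryless and $\widetilde{\Pi}$ has stationary independent increments and is independent of the initial clocks, the law of the future evolution depends on the past only through the current atom positions, so $(\Xi_{t})_{t \geq 0}$ is a Markov jump process with transitions read off from the construction. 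The main obstacle is the first step --- taming the infinite intensity of $\widetilde{\Pi}$ through the bound $1-(1-u)^{n}\leq nu$ and \eqref{eqn:cond_nu} --- after which non-explosion and the Markov property follow by standard arguments.
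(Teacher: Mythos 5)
Your proof is correct and rests on exactly the same two ingredients as the paper's own argument: the per-atom rate bound $\int_{0}^{1}\int_{0}^{\infty} uV_{r}\,\nu(dr,du) + \gamma < \infty$ (via $1-(1-u)^{n}\leq nu$ and Condition~\eqref{eqn:cond_nu}) and domination of the atom count by a non-explosive Yule process started from the a.s.\ finite $\Xi_{0}(\rd)$. You simply spell out in more detail the thinning construction and the Markov property, which the paper treats as immediate from the Poisson point process structure.
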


\begin{proof}
Due to the Poisson point process-based construction, we only need to show that there is no accumulation of jumps. The rate at which a given atom in~$\Xi_{t}$ is affected by a reproduction or death event is given by 
\begin{equation*}
    \int_{0}^{1}\int_{0}^{\infty} V_{r}u\nu(dr,du) + \gamma, 
\end{equation*}
which is finite by assumption (see \eqref{eqn:cond_nu}). Therefore, the jump rate of $(\Xi_{t})_{t \geq 0}$ is bounded from above by the one of a Yule process in which each particle splits in two at the above rate, and starting from $\Xi_{0}(\rd)$ particles. We conclude using the fact that $\Xi_{0}(\rd)$ is almost surely finite. 
\end{proof}

In order to show the duality relation, we will need the following property. Let $\psi$ be a density function on $(\rd)^{k}$ for some $k \geq 1$, and let $\mu_{\psi}$ be the random point measure constructed by sampling~$k$ points in $\rd$ according to the distribution with density~$\psi$ with respect to Lebesgue measure on $(\rd)^{k}$. 

\begin{lemma}\label{lem:dual_process_absolutely_continuous} Under the notation of Definition~\ref{defn:ancestral_process}, if the distribution of~$\Xi_{0}$ has the form~$\mu_{\psi}$ for some density~$\psi$ on $(\rd)^{k}$, $k \geq 1$, then for every $t \geq 0$ and $j \geq 1$, conditionally on $N_{t} = j$, the law of $(\xi_{t}^{1},...,\xi_{t}^{j})$ is absolutely continuous with respect to Lebesgue measure. 
\end{lemma}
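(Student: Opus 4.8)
The plan is to argue by induction on the number of jumps of $(\Xi_t)_{t \geq 0}$, exploiting the fact that the only source of genuinely new spatial randomness is the uniform sampling of the location of each newly created atom. By Lemma~\ref{lem:dual_process_well_defined} the process is a Markov jump process with no accumulation of jumps, so almost surely only finitely many events (deaths and accepted reproduction events) occur in $[0,t]$. Let $0 = \tau_0 < \tau_1 < \tau_2 < \cdots$ denote the successive jump times, and fix a measurable labelling of the atoms, say by order of creation; since permuting coordinates preserves Lebesgue-null sets, the particular choice is immaterial for absolute continuity. Note also that atoms do not move between jumps, so the configuration at time~$t$ coincides with the one at the last jump time $\tau_n \le t$.

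I would prove the following inductive statement $H_n$: for every $m \ge 0$, conditionally on $\{\tau_n < \infty,\ N_{\tau_n} = m\}$, the joint law of the $m$ atom locations is absolutely continuous with respect to Lebesgue measure on $(\rd)^m$. The base case $H_0$ is immediate, since $\Xi_0$ has law $\mu_\psi$, so that $N_0 = k$ and the $k$ initial locations have joint density~$\psi$. For the inductive step, fixing a target count $m'$ and conditioning on $\{\tau_{n+1}<\infty,\ N_{\tau_{n+1}} = m'\}$ forces the $(n+1)$-th event to be either a birth from a configuration with $m'-1$ atoms or a death from one with $m'+1$ atoms, so it suffices to treat these two transitions and conclude that $H_{n+1}$ holds as a mixture of absolutely continuous laws. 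In the death case one atom is removed; since the death clocks ring at the common rate~$\gamma$ independently of positions, conditioning on ``the next event is the death of atom~$i$'' only reweights the pre-jump density (absolutely continuous by $H_n$) by a bounded factor and then marginalises out one coordinate, both of which preserve absolute continuity. In the birth case a new atom is added at a location~$z'$ which, conditionally on the event parameters $(z,r,u)$ and on acceptance, is uniform on $\bcal(z,r)$ and hence has conditional density $V_r^{-1}\mathds{1}_{\bcal(z,r)}(\cdot)$; averaging over the conditional law of $(z,r,u)$ leaves $z'$ with a genuine density given the pre-jump configuration. Writing the joint density of (pre-jump locations, $z'$) as the product of the pre-jump density and this conditional density then shows that the enlarged configuration is absolutely continuous on $(\rd)^{m'}$, which establishes $H_{n+1}$.

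Finally, to obtain the statement of the lemma I would decompose, for fixed $t \ge 0$ and $j \ge 1$,
\[
\{N_t = j\} = \bigsqcup_{n \ge 0} \{\tau_n \le t < \tau_{n+1},\ N_{\tau_n} = j\},
\]
and observe that on each piece the locations at time~$t$ equal those at $\tau_n$, which are absolutely continuous by $H_n$. The conditional law given $\{N_t = j\}$ is therefore a countable mixture of absolutely continuous laws, and is hence absolutely continuous.

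I expect the main technical point to be the reproduction step: one must check carefully that conditioning on the (configuration-dependent) type of the next event, on its acceptance, and on the requirement $\Xi_{\tau-}(\bcal(z,r)) > 0$ does not destroy absolute continuity. This works precisely because all of these conditionings act only through discrete, position-dependent but bounded reweightings, and leave intact the uniform-in-a-ball law of the freshly sampled location, which is what injects a density into the enlarged configuration at every birth.
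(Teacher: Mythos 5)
Your proof is correct and is essentially the argument the paper itself invokes: the paper's ``proof'' of this lemma is a one-line reference to Lemma~1.6 of Etheridge--V\'eber--Yu (2020), which rests on exactly the induction you carry out, namely over the finitely many jumps in $[0,t]$, with deaths acting by marginalisation (conditioning on positive-probability events preserves absolute continuity) and births injecting a uniform-in-a-ball conditional density for the new atom. Your write-up simply makes explicit the details that the paper delegates to that citation.
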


\begin{proof}
The proof follows from similar arguments as in the proof of Lemma 1.6, \cite{etheridge2020rescaling}. 
\end{proof}

For all $(x_{1},...,x_{k}) \in (\rd)^{k}$, we also set
\begin{equation*}
    \Xi\left[ 
x_{1},...,x_{k}
    \right] = \sum_{i = 1}^{k} \delta_{x_{i}} \in \mcal_{p}(\rd). 
\end{equation*}
The $(\gamma,\nu)$-EpiSLFV process and the $(\gamma,\nu)$-ancestral process then satisfy the following duality relation. 

\begin{proposition}\label{prop:duality_relation} Let $M^{0} \in \ml$, and let $(M_{t})_{t \geq 0}$ be a solution to the martingale problem $(\gyv,\delta_{M^{0}})$. Let $k \geq 1$, and let $\psi \in \mathbb{L}^{1}((\rd)^{k})$. Then, for every $t \geq 0$, 
\begin{align*}
&\esp_{M^{0}}\left[
\int_{(\rd)^{k}} \psi(x_{1},...,x_{k}) \left\{ 
\prod_{j = 1}^{k} \omega_{M_{t}}(x_{j})
\right\} dx_{1}...dx_{k}
\right] \\
&= \int_{(\rd)^{k}} \psi(x_{1},...,x_{k}) \ebold_{\Xi[x_{1},...x_{k}]} \left[ 
\prod_{j = 1}^{N_{t}[x_{1},...,x_{k}]} \omega_{M^{0}}\left(
\xi_{t}^{j}[x_{1},...,x_{k}]
\right)
\right]dx_{1}...dx_{k},
\end{align*}
where $N_{t}[x_{1},...,x_{k}]$ and $(\xi_{t}^{j}[x_{1},...,x_{k}])_{1 \leq j \leq N_{t}[x_{1},...,x_{k}]}$ are given by the $(\gamma,\nu)$-ancestral process with initial condition~$\Xi[x_{1},...,x_{k}]$. 
\end{proposition}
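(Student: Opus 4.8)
The plan is to apply the standard generator-duality machinery for Markov processes (in the spirit of \cite{ethier1986markov}), using as duality function
\[
H(M, \Xi) := \prod_{i=1}^{N}\omega_{M}(\xi_{i}), \qquad \Xi = \sum_{i=1}^{N}\delta_{\xi_{i}} \in \mcal_{p}(\rd),
\]
with the convention $H(M,\mathbf{0}) = 1$ for the empty configuration. The point is that for a configuration $\Xi[x_{1},\dots,x_{k}]$ one has $\int_{(\rd)^{k}} \psi(x) H(M,\Xi[x])\,dx = D_{\psi}(M)$, the extended test function from \Cref{subsec:extension_mp_Dpsi}, while the right-hand side of the claimed identity is exactly $\int_{(\rd)^{k}}\psi(x)\,\ebold_{\Xi[x]}\bigl[H(M^{0},\Xi_{t})\bigr]\,dx$. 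Hence the proposition reduces, after integration against $\psi$, to the symmetric relation $\esp_{M^{0}}[H(M_{t},\Xi^{0})] = \ebold_{\Xi^{0}}[H(M^{0},\Xi_{t})]$.

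First I would record the generator $\mathcal{A}$ of the $(\gamma,\nu)$-ancestral process, whose martingale problem is established in \Cref{subsec:dual_martingale_pb}: acting on a function $G$ of the configuration, it has a death part $\gamma\sum_{i}[G(\Xi-\delta_{\xi_{i}})-G(\Xi)]$ and a birth part integrating $\bigl(1-(1-u)^{\Xi(\bcal(z,r))}\bigr)V_{r}^{-1}\int_{\bcal(z,r)}[G(\Xi+\delta_{z'})-G(\Xi)]\,dz'$ against $dz\,\nu(dr,du)$. The central step is the pointwise generator identity
\[
\gyv\,H(\cdot,\Xi)(M) = \mathcal{A}\,H(M,\cdot)(\Xi),
\]
which I would verify by splitting each operator into two matching pieces. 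On the recovery/death side, removing the atom $\xi_{i}$ turns $\prod_{j}\omega_{M}(\xi_{j})$ into $\prod_{j\neq i}\omega_{M}(\xi_{j})$, so the death part equals $\gamma\sum_{i}(1-\omega_{M}(\xi_{i}))\prod_{j\neq i}\omega_{M}(\xi_{j})$, which is precisely the recovery contribution to $\gyv D_{\psi}$ computed in \Cref{subsec:extension_mp_Dpsi} once $\psi$ is stripped off. On the infection/birth side, $H(M,\Xi+\delta_{z'})-H(M,\Xi) = -(1-\omega_{M}(z'))\prod_{j}\omega_{M}(\xi_{j})$ and $\Xi(\bcal(z,r)) = \#\{i:\xi_{i}\in\bcal(z,r)\}$, so the birth part reproduces exactly the factor $\bigl((1-u)^{\#\{i:\xi_{i}\in\bcal(z,r)\}}-1\bigr)$ appearing in $\gyv D_{\psi}$ (the sign matching via $-(1-(1-u)^{m}) = (1-u)^{m}-1$). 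Matching the two pieces establishes the identity.

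With the generator identity in hand, I would run the usual interpolation argument: fix $t\geq 0$ and for $s\in[0,t]$ set $\phi(s) := \ebold_{\Xi^{0}}\bigl[\esp_{M^{0}}[H(M_{t-s},\Xi_{s})]\bigr]$, so that $\phi(0)$ and $\phi(t)$ are the two sides of the duality. Using the extended martingale problem of \Cref{lem:extended_martingale} on the EpiSLFV side (integrated against $\psi$, so that $D_{\psi}$ is a legitimate test function) together with the martingale problem for the ancestral process, the derivative of $\phi$ combines $-\gyv$ acting in the $M$-variable and $\mathcal{A}$ acting in the $\Xi$-variable, which cancel by the generator identity; hence $\phi$ is constant. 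The uniform bound $|\gyv D_{\psi}(M)|\leq k\|\psi\|_{1}\bigl(\gamma + \int_{0}^{1}\int_{0}^{\infty} uV_{r}\,\nu(dr,du)\bigr)$ from Step~1 of the proof of \Cref{lem:extended_martingale}, together with the no-explosion property of \Cref{lem:dual_process_well_defined}, supplies the integrability needed to differentiate under the expectations and to justify the use of Fubini's theorem.

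The main obstacle is that $H(\cdot,\Xi)$ is genuinely \emph{not} a pointwise-defined functional of $M$: the density $\omega_{M}$ is determined only up to a Lebesgue-null set, so $\prod_{i}\omega_{M}(\xi_{i})$ is ambiguous for fixed atoms $\xi_{i}$. This is exactly why the statement is integrated against $\psi$ and why \Cref{lem:dual_process_absolutely_continuous} is indispensable: it guarantees that, starting from a $\psi$-distributed configuration, the law of $(\xi_{t}^{1},\dots,\xi_{t}^{N_{t}})$ is absolutely continuous with respect to Lebesgue measure, so that $\prod_{j}\omega_{M^{0}}(\xi_{t}^{j})$ is well-defined almost surely and independent of the chosen representative of $\omega_{M^{0}}$. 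Carrying out this null-set bookkeeping carefully --- ensuring every product is evaluated at points avoiding the relevant null sets, and performing the interpolation at the level of the $\psi$-integrated quantities rather than pointwise --- is where the bulk of the technical care is required.
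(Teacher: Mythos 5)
Your proposal is correct and follows essentially the same route as the paper: both verify the generator-exchange identity between $\gyv$ acting on $D_{\psi}$ and the ancestral-process generator (the paper's $\leftgyv$) acting on $\Phi_{\omega}$, then conclude via the Ethier--Kurtz interpolation argument (Lemma~4.4.10/Theorem~4.4.11 of \cite{ethier1986markov}), using \Cref{lem:extended_martingale}, \Cref{lem:martingale_pb_dual}, and \Cref{lem:dual_process_absolutely_continuous} in exactly the roles you assign them. Your observation that the null-set ambiguity of $\omega_{M}$ forces the argument to be carried out at the level of $\psi$-integrated quantities is precisely how the paper handles it, via the conditional densities $\psi_{t}^{(N_{t})}$.
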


This result will be shown in Section~\ref{subsec:proof_duality}. 

We next use Proposition~\ref{prop:duality_relation} to show that the martingale problem $(\gyv,\delta_{M^{0}})$ is well-posed. To do so, we start with the following lemma. 

\begin{lemma}\label{lem:lartingale_pb_at_most_one_sol} For all $M^{0} \in \ml$, the martingale problem $(\gyv,\delta_{M^{0}})$ admits at most one solution in~$D_{\ml}[0,+\infty)$.     
\end{lemma}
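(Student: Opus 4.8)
The plan is to derive uniqueness from the duality relation of Proposition~\ref{prop:duality_relation}, via the classical principle that for a martingale problem the one-dimensional marginal laws determine all finite-dimensional distributions. Concretely, I would invoke Theorem~4.4.2 in~\cite{ethier1986markov}: once I show that any two solutions of $(\gyv,\delta_{M^{0}})$ share the same one-dimensional marginals for every $M^{0}\in\ml$, that theorem upgrades this to equality of all finite-dimensional distributions, which is exactly the claimed uniqueness in $D_{\ml}[0,+\infty)$.

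First I would fix the one-dimensional marginals. Let $(M_{t})_{t\ge0}$ be any solution of $(\gyv,\delta_{M^{0}})$; by Lemma~\ref{lem:extended_martingale} the functionals $D_{\psi}$ lie in the extended domain, so $\esp_{M^{0}}[D_{\psi}(M_{t})]$ is well-defined, and Proposition~\ref{prop:duality_relation} expresses it purely in terms of $M^{0}$ and the law of the $(\gamma,\nu)$-ancestral process. Since the dual process does not depend on the chosen solution, any two solutions started from $M^{0}$ assign the same value to $\esp[D_{\psi}(M_{t})]$ for all $t\ge0$, $k\ge1$ and $\psi\in\mathbb{L}^{1}((\rd)^{k})$.

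Next I would argue that these functionals are measure-determining on $\ml$, so that equality of the expectations above forces equality in law of $M_{t}$. For any $f\in C_{c}(\rd)$ the random variable $\langle f,\omega_{M_{t}}\rangle$ is bounded (by $\|f\|_{1}$, since $\omega_{M_{t}}\in[0,1]$), and taking $\psi(x_{1},\dots,x_{k})=\prod_{i=1}^{k}f(x_{i})$ gives $\esp[D_{\psi}(M_{t})]=\esp[\langle f,\omega_{M_{t}}\rangle^{k}]$, exactly as in Step~3 of Lemma~\ref{lem:extended_martingale}. As a bounded random variable has a determinate moment problem, the law of $\langle f,\omega_{M_{t}}\rangle$ is thereby fixed. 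Applying this to arbitrary linear combinations $f=\sum_{i}\lambda_{i}f_{i}$ and using the Cram\'er--Wold device pins down the joint law of $(\langle f_{1},\omega_{M_{t}}\rangle,\dots,\langle f_{m},\omega_{M_{t}}\rangle)$ for all $f_{1},\dots,f_{m}\in C_{c}(\rd)$. Since the Borel $\sigma$-algebra of $\ml$ for the vague topology is generated by the evaluation maps $M\mapsto\langle f,\omega_{M}\rangle$, $f\in C_{c}(\rd)$, a monotone-class argument then shows that the law of $M_{t}$ is uniquely determined; hence any two solutions have identical one-dimensional marginals.

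The main obstacle is this measure-determining step: one must verify both that the bounded functionals $\langle f,\omega_{M}\rangle$ carry enough moment information (the boundedness coming from $\omega_{M}\in[0,1]$ together with the compact support of $f$ is precisely what makes the moment problem determinate) and that these linear functionals generate the vague Borel structure on $\ml$. Granting this, Theorem~4.4.2 in~\cite{ethier1986markov}, applied for each deterministic initial condition $M^{0}\in\ml$, promotes one-dimensional marginal uniqueness to finite-dimensional uniqueness, which yields at most one solution and completes the proof.
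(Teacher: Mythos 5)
Your proposal is correct and follows the same overall skeleton as the paper: use the duality relation of Proposition~\ref{prop:duality_relation} to show that any two solutions of $(\gyv,\delta_{M^{0}})$ have the same one-dimensional distributions, then invoke Theorem~4.4.2 in~\cite{ethier1986markov} to upgrade this to equality of finite-dimensional distributions. Where you genuinely diverge is in the measure-determining step. The paper disposes of it by citing Lemma~2.1(c) of~\cite{VW15}, which states that the linear span of the constants and of the functionals $D_{\psi}$ (with $\psi\in\mathbb{L}^{1}\cap C$) is dense in $C(\ml)$, hence separating on probability measures on $\ml$. You instead give a self-contained argument: taking $\psi=f^{\otimes k}$ recovers the moments of the bounded random variable $\langle f,\omega_{M_{t}}\rangle$, whose law is then determined (determinate moment problem on a compact interval); Cram\'er--Wold pins down joint laws of finitely many such functionals; and a monotone-class argument identifies the law of $M_{t}$ on the vague Borel $\sigma$-algebra, which is indeed generated by the maps $M\mapsto\langle f,\omega_{M}\rangle$, $f\in C_{c}(\rd)$ (any vague evaluation against $g\in C_{c}(\rd\times\{0,1\})$ is a constant plus $\langle g(\cdot,0)-g(\cdot,1),\omega_{M}\rangle$). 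This buys independence from the cited density result at the cost of a longer argument; both are valid.

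One small point to tighten: Theorem~4.4.2(a) of~\cite{ethier1986markov} requires one-dimensional uniqueness for solutions started from \emph{arbitrary} initial distributions, not only Dirac masses, because its proof regenerates solutions from random states at intermediate times. The paper flags this explicitly (``this result can be extended to any probability distribution for the initial value''), whereas your phrase ``applied for each deterministic initial condition'' glosses over it. The fix is immediate --- the duality identity for deterministic $M^{0}$ integrates against any initial law by conditioning on $M_{0}$ --- but it should be said.
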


\begin{proof}
The proof is a direct adaptation of the proof of item~(i)(b) in the proof of Theorem 1.2,~\cite{etheridge2020rescaling}, which is itself an adaptation of the proof of Proposition~4.4.7 in~\cite{ethier1986markov}. Following the presentation of the proof of item~(i)(b), Theorem 1.2,~\cite{etheridge2020rescaling}, we only give a rough outline of the proof, and refer to the proof of Proposition~4.4.7 in~\cite{ethier1986markov} (that holds in a much more general setting) for the technical details. 

By Lemma~2.1(c) in~\cite{VW15}, the linear span of the set of constant functions and of functions of the form 
\begin{equation*}
    M \in \ml \to \int_{(\rd)^{k}} \psi(x_{1},...,x_{k})\left\{ 
\prod_{j = 1}^{k} \omega_{M}(x_{j})
    \right\}dx_{1}...dx_{k}
\end{equation*}
with $k \geq 1$ and $\psi \in \mathbb{L}^{1}((\rd)^{k}) \cap C((\rd)^{k})$ is dense in the set of all continuous functions on $\ml$, and hence separating on the space of all probability distributions on $\ml$. By the duality relation in Proposition~\ref{prop:duality_relation}, any two solutions $(M_{t}^{1})_{t \geq 0}$ and $(M_{t}^{2})_{t \geq 0}$ to the martingale problem~$(\gyv,\delta_{M^{0}})$ satisfy that for all $t \geq 0$, $M_{t}^{1}$ and $M_{t}^{2}$ are equal in distribution (following the terminology used in the proof of Theorem~1.2,~\cite{etheridge2020rescaling}, we say that $M^{1}$ and~$M^{2}$ have the same one-dimensional distributions). This result can be extended to any probability distribution for the initial value~$M_{0}$, and we can then apply Theorem~4.4.2~(a),~\cite{ethier1986markov} to obtain that any two solutions to the martingale problem~$(\gcal^{(\gamma,\nu)},\delta_{M^{0}})$ have the same finite-dimensional distributions (in the same sense as earlier in the one-dimensional case), which allows us to conclude. 
\end{proof}

We can now show Theorem~\ref{theo:martingale_pb_well_posed}.
\begin{proof}[Proof of Theorem~\ref{theo:martingale_pb_well_posed}] Let~$M^{0} \in \ml$. By Lemma~\ref{lem:existence_sol_mp}, the martingale problem associated to~$(\gyv, \delta_{M^{0}})$ admits at least one solution, and by Lemma~\ref{lem:lartingale_pb_at_most_one_sol}, it admits at most one solution, which allows us to conclude. 
\end{proof}

\subsubsection{The \texorpdfstring{$(\gamma,\nu)$}{}-ancestral process as a solution to a martingale problem}\label{subsec:dual_martingale_pb}
In order to show Proposition~\ref{prop:duality_relation}, we will proceed as in the proof of Proposition~1.7,~\cite{etheridge2020rescaling} and adapt the proof of Theorem~4.4.11,~\cite{ethier1986markov}. We need to identify a martingale problem which the~$(\gamma,\nu)$-ancestral process solves. To do so, we consider test functions of the form $\Phi_{\omega} : \mcal_{p}(\rd) \to \rmath$ with $\omega : \rd \to [0,1]$ measurable, which are defined as follows. For all $\Xi \in \mcal_{p}(\rd)$, writing $\Xi = \sum_{i = 1}^{l} \delta_{x_{i}}$, we have
\begin{equation*}
    \Phi_{\omega}(\Xi) = \prod_{i = 1}^{l} \omega(x_{i}). 
\end{equation*}
We then consider the operator $\leftgyv$ acting over test functions of the form $\Phi_{\omega}$, defined as follows. For all $\Xi = \sum_{i = 1}^{l} \delta_{x_{i}} \in \mcal_{p}(\rd)$, 
\begin{align*}
&\leftgyv \Phi_{\omega}(\Xi)\\
&:=  \sum_{i = 1}^{l} \gamma\left(
1 - \omega(x_{i})
\right)\left[ 
\prod_{\substack{j \in \llbracket 1,l \rrbracket \\ j \neq i}} \omega(x_{j})
\right] \\
&\quad + \int_{\rd}\int_{0}^{1}\int_{0}^{\infty} \int_{\bcal(z,r)} \frac{1 - (1-u)^{\#\{ 
i \in \llbracket 1,l \rrbracket : x_{i} \in \bcal(z,r)
\}}}{V_{r}}\left(
\omega(z')-1
\right) \left[ 
\prod_{j = 1}^{l} \omega(x_{j})
\right]dz' \nu(dr,du)dz. 
\end{align*}
The first term encodes the "death" of each atom of~$\Psi$ according to an exponential clock with rate~$\gamma$, while the second term encodes the addition of a new atom with probability
\begin{equation*}
1 - (1-u)^{\#\{ 
i \in \llbracket 1,l \rrbracket : x_{i} \in \bcal(z,r)
\}}
\end{equation*}
whenever a reproduction event~$(t,z,r,u) \in \widetilde{\Pi}$ satisfies $\Xi_{t-}(\bcal(z,r)) > 0$. Notice that the definition of $\leftgyv$ is independent of the ordering of $x_{1}$, ..., $x_{l}$. 

\begin{lemma}\label{lem:martingale_pb_dual} Let $(\Xi_{t})_{t \geq 0}$ be as in Definition~\ref{defn:ancestral_process}. Under the notation of Definition~\ref{defn:ancestral_process}, if there exists $k > 0$ such that $\pbold(\Xi^{0}(\rd) \leq k) = 1$, then for every $\omega : \rd \to [0,1]$ measurable, 
\begin{equation*}
\left(
\Phi_{\omega}(\Xi_{t}) - \Phi_{\omega}(\Xi_{0}) - \int_{0}^{t} \leftgyv \Phi_{\omega}(\Xi_{s})ds
\right)_{t \geq 0}
\end{equation*}
is a martingale. 
\end{lemma}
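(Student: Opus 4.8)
The plan is to recognise the statement as an instance of Dynkin's formula for a pure-jump Markov process: once the pointwise generator of the $(\gamma,\nu)$-ancestral process on test functions of the form $\Phi_{\omega}$ is identified and checked to coincide with $\leftgyv$, the martingale property follows from the standard compensation theory for jump processes (as in Proposition~1.7 of~\cite{etheridge2020rescaling} and Theorem~4.4.11 of~\cite{ethier1986markov}), provided one controls the integrability of the compensator. So the argument splits into an algebraic identification of the generator and an analytic integrability estimate.

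First I would read off the two jump mechanisms from Definition~\ref{defn:ancestral_process} and compute the increment of $\Phi_{\omega}$ across each. Writing $\Xi_{t-} = \sum_{i=1}^{l}\delta_{x_{i}}$, a death of the $i$-th atom (rate $\gamma$) sends $\Xi_{t-}$ to $\Xi_{t-}-\delta_{x_{i}}$, so
\begin{equation*}
\Phi_{\omega}(\Xi_{t-}-\delta_{x_{i}}) - \Phi_{\omega}(\Xi_{t-}) = (1-\omega(x_{i}))\prod_{\substack{j\in\llbracket 1,l\rrbracket\\ j\neq i}}\omega(x_{j}),
\end{equation*}
and summing over $i$ with weight $\gamma$ reproduces the first term of $\leftgyv\Phi_{\omega}$. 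A reproduction event $(t,z,r,u)\in\widetilde{\Pi}$ with $\Xi_{t-}(\bcal(z,r))>0$ adds, with probability $1-(1-u)^{\#\{i : x_{i}\in\bcal(z,r)\}}$, an atom $\delta_{z'}$ with $z'$ uniform on $\bcal(z,r)$; the increment is $(\omega(z')-1)\prod_{j=1}^{l}\omega(x_{j})$, and integrating against the intensity $\frac{1}{V_{r}}\mathds{1}_{\bcal(z,r)}(z')\,dz'\otimes dz\otimes\nu(dr,du)$ weighted by the success probability reproduces the second term of $\leftgyv\Phi_{\omega}$. This confirms that the pointwise generator of the process agrees with $\leftgyv$ on the class $\{\Phi_{\omega}\}$.

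The more delicate step is the integrability input needed to upgrade this to a true martingale. Since $\omega$ is $[0,1]$-valued, $\Phi_{\omega}$ is bounded by $1$. For the generator, using $1-(1-u)^{n}\leq nu$, the bounds $|\omega(z')-1|\leq 1$ and $\frac{1}{V_{r}}\int_{\bcal(z,r)}dz'=1$, and the Fubini identity $\int_{\rd}\#\{i : x_{i}\in\bcal(z,r)\}\,dz = N_{s}V_{r}$, I would obtain
\begin{equation*}
\big|\leftgyv\Phi_{\omega}(\Xi_{s})\big| \leq N_{s}\left(\gamma + \int_{0}^{1}\int_{0}^{\infty} uV_{r}\,\nu(dr,du)\right),
\end{equation*}
which is finite by Condition~\eqref{eqn:cond_nu}.

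The main obstacle is that this bound grows linearly in the atom count $N_{s}$, which is \emph{not} almost surely bounded: although $\pbold(\Xi^{0}(\rd)\leq k)=1$ controls the initial state, reproduction events let $N_{s}$ grow. To close the argument I would reuse the Yule-process domination from the proof of Lemma~\ref{lem:dual_process_well_defined}: $(N_{s})_{s\geq 0}$ is stochastically dominated by a Yule process started from at most $k$ particles in which each particle splits at rate $\lambda := \gamma + \int_{0}^{1}\int_{0}^{\infty} uV_{r}\,\nu(dr,du)$, so that $\ebold[N_{s}]\leq k e^{\lambda s}$. Hence $\ebold\big[\int_{0}^{t}|\leftgyv\Phi_{\omega}(\Xi_{s})|\,ds\big] \leq \lambda\int_{0}^{t} k e^{\lambda s}\,ds < \infty$, which guarantees that the compensated process is a genuine martingale rather than merely a local one, completing the proof.
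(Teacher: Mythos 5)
Your proposal is correct and follows essentially the same route as the paper: the same pointwise bound $|\leftgyv\Phi_{\omega}(\Xi_{s})| \leq N_{s}\big(\gamma + \int_{0}^{1}\int_{0}^{\infty} uV_{r}\,\nu(dr,du)\big)$, the same Yule-process domination (combined with the hypothesis $\pbold(\Xi^{0}(\rd)\leq k)=1$) to get integrability of the compensator, and the same standard Dynkin-type conclusion, which the paper phrases as the identity $\frac{d}{dt}\ebold_{\Xi_{0}}[\Phi_{\omega}(\Xi_{t})]\big|_{t=0} = \leftgyv\Phi_{\omega}(\Xi)$ and you phrase as an explicit jump-by-jump identification of the generator. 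The only cosmetic difference is that you spell out the increment computation for death and reproduction events, which the paper leaves implicit.
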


\begin{proof}
First, we show that the operator $\leftgyv$ is well-defined. Let $\omega : \rd \to [0,1]$ measurable, and let 
\begin{equation*}
    \Xi = \sum_{i = 1}^{N} \delta_{x_{i}} \in \mcal_{p}(\rd). 
\end{equation*}
Then, similar to step 1 of the proof of \Cref{lem:extended_martingale},
\begin{align*}
&\left| 
\leftgyv \Phi_{\omega}(\Xi)
\right| \\
&\begin{aligned}
&\leq \left| 
\verybigsum_{i = 1}^{N} \gamma(1-\omega(x_{i})) \left[ 
\prod_{\substack{j \in \llbracket 1, l \rrbracket \\ j \neq i}} \omega(x_{j})
\right]
\right| \\
&\quad + \left| 
\int_{\rd}\int_{0}^{1} \int_{0}^{\infty} \int_{\bcal(z,r)} \frac{1 - (1-u)^{\#\{ 
i \in \llbracket 1,l \rrbracket : x_{i} \in \bcal(z,r)
\}}}{V_{r}}\left(
\omega(z')-1
\right) \left[ 
\prod_{j = 1}^{l} \omega(x_{j})
\right]dz' \nu(dr,du)dz 
\right|
\end{aligned}\\
&\begin{aligned}
&\leq \gamma N\\
&\quad + \int_{\rd}\int_{0}^{1} \int_{0}^{\infty} \int_{\bcal(z,r)}\frac{u\#\{ 
i \in \llbracket 1,l \rrbracket : x_{i} \in \bcal(z,r)
\}}{V_{r}}\left(
1-\omega(z')
\right) \left[ 
\prod_{j = 1}^{l} \omega(x_{j})
\right]dz' \nu(dr,du)dz
\end{aligned} \\ 
&\leq \gamma N + \int_{\rd}\int_{0}^{1}\int_{0}^{\infty} u \left(
\sum_{i = 1}^{N} \mathds{1}_{\bcal(x_{i},r)}(z)
\right) \nu(dr,du)dz \\
&\leq N \left(
\gamma + \int_{0}^{1}\int_{0}^{\infty} uV_{r} \nu(dr,du)
\right), 
\end{align*}
which is finite by \eqref{eqn:cond_nu}. In particular, by the same reasoning as in the proof of Lemma~\ref{lem:dual_process_well_defined} and since $\Xi_{0} (\mathbb{R}^d) \leq k$ a.s., $\leftgyv \Phi_{\omega}(\Xi_{t})$ is integrable for all $t \geq 0$. By Fubini's theorem, we deduce that
\begin{equation*}
    \Phi_{\omega}(\Xi_{t}) - \Phi_{\omega}(\Xi_{0}) - \int_{0}^{t} \leftgyv \Phi_{\omega}(\Xi_{s})ds 
\end{equation*}
is also integrable for all $t \geq 0$. We conclude using the observation that
\begin{equation*}
    \left.\frac{d}{dt}\ebold_{\Xi_{0}}\left[ 
    \Phi_{\omega}(\Xi_{t})
    \right]\right|_{t = 0} = \leftgyv \Phi_{\omega}(\Xi). \qedhere
\end{equation*}
\end{proof}

\subsubsection{Proof of the duality relation} \label{subsec:proof_duality}
We can now show Proposition \ref{prop:duality_relation}. To do so, we recall that we will proceed as in the proof of Proposition~1.7 in~\cite{etheridge2020rescaling}, which is itself an adaptation of the proof of Theorem~4.4.11,~\cite{ethier1986markov}. 

\begin{proof}[Proof of Proposition \ref{prop:duality_relation}]
Let $(\Xi_{t})_{t \geq 0}$ be the $(\gamma,\nu)$-ancestral process with initial condition~$\Xi_{0}$ with distribution of the form~$\mu_{\psi}$ (which we will denote as $\Xi_{0} \sim \mu_{\psi}$). For all $t \geq 0$, we write
\begin{equation*}
    \Xi_{t} = \sum_{i = 1}^{N_{t}} \delta_{\xi_{t}^{i}}, 
\end{equation*}
and conditionally on the event $\{N_{t} = n\}$, we denote as $\psi_{t}^{(n)}$ the density of the location of the points $\xi_{t}^{1},...,\xi_{t}^{n}$ (we recall that such a density exists by Lemma~\ref{lem:dual_process_absolutely_continuous}). For all $s,t \geq 0$, let 
\begin{equation*}
    F(s,t) := \esp_{M^{0}}\left[\ebold\left[\left.
\int_{(\rd)^{N_{t}}} \psi_{t}^{(N_{t})}(x_{1},...,x_{N_{t}}) \times \left(
\prod_{j = 1}^{N_{t}} \omega_{M_{s}}(x_{j})
\right)dx_{1}...dx_{N_{t}}
\right|\Xi_{0} \sim \mu_{\psi}\right]\right]. 
\end{equation*}
Let $s,t \geq 0$. By Lemmas \ref{lem:extended_martingale} and \ref{lem:dual_process_absolutely_continuous}, 
\begin{equation*}
\left(
D_{\psi_{t}^{(N_{t})}}\left(
M_{t'}
\right) - D_{\psi_{t}^{(N_{t})}}\left(
M_{0}
\right) - \int_{0}^{t'} \gyv D_{\psi_{t}^{(N_{t})}} \left(
M_{\tilde{t}}
\right)d\tilde{t}
\right)_{t' \geq 0}
\end{equation*}
is a martingale, so 
\begin{align*}
F(s,t) - F(0,t) 
&= \ebold\left[\left.\esp_{M^{0}}\left[
D_{\psi_{t}^{(N_{t})}}(M_{s})
\right]
\right|\Xi_{0} \sim \mu_{\psi}\right] - \ebold\left[\left.\esp_{M^{0}}\left[
D_{\psi_{t}^{(N_{t})}}(M_{0})
\right]
\right|\Xi_{0} \sim \mu_{\psi}\right] \\
&= \ebold\left[\left.\esp_{M^{0}}\left[
\int_{0}^{s} \gyv 
D_{\psi_{t}^{(N_{t})}}(M_{\tilde{t}})d\tilde{t}
\right]
\right|\Xi_{0} \sim \mu_{\psi}\right]. 
\end{align*}
Similarly, by Lemma~\ref{lem:martingale_pb_dual}, as $\psi \in \mathbb{L}^{1}((\rd)^{k})$, we know that $\pbold(\Xi_{0}(\rd) = k) = 1$, and hence
\begin{equation*}
\left( 
\Phi_{\omega_{M_{s}}}\left(
\Xi_{t'}\right) - \Phi_{\omega_{M_{s}}}\left(
\Xi_{0}\right) - \int_{0}^{t'} \leftgyv \Phi_{\omega_{M_{s}}}\left(
\Xi_{\tilde{t}}\right)d\tilde{t}
\right)_{t' \geq 0}
\end{equation*}
is a martingale. Therefore, 
\begin{align*}
F(s,t) - F(s,0) &= \esp_{M^{0}}\left[\ebold\left[\left.
\Phi_{\omega_{M_{s}}}\left(
\Xi_{t}\right)
\right|\Xi_{0} \sim \mu_{\psi}
\right]\right] - \esp_{M^{0}}\left[\ebold\left[\left.
\Phi_{\omega_{M_{s}}}\left(
\Xi_{0}\right)
\right|\Xi_{0} \sim \mu_{\psi}
\right]\right] \\
&= \esp_{M^{0}}\left[\ebold\left[\left.
\int_{0}^{t} \leftgyv 
\Phi_{\omega_{M_{s}}}\left(
\Xi_{\tilde{t}}\right)d\tilde{t}
\right|\Xi_{0} \sim \mu_{\psi}
\right]\right].
\end{align*}
If we can show that for all $s,t \geq 0$, 
\begin{equation*}
\esp_{M^{0}}\left[\ebold\left[\left.
\leftgyv \Phi_{\omega_{M_{s}}}\left(\Xi_{t-s}\right)
\right|
\Xi_{0} \sim \mu_{\psi}
\right]
\right] = \ebold\left[\left.\esp_{M^{0}}\left[ 
\gyv D_{\psi_{t-s}^{(N_{t-s})}} \left(M_{s}\right)
\right]\right|\Xi_{0} \sim \mu_{\psi}
\right],
\end{equation*}
then we can apply Lemma 4.4.10 in~\cite{ethier1986markov} and conclude. To do so, observe that conditionally on $\Xi_{0} \sim \mu_{\psi}$, 
\begin{align*}
&\leftgyv \Phi_{\omega_{M_{s}}}\left(
\Xi_{t-s}
\right) \\
&\begin{aligned}
&= \verybigsum_{i = 1}^{N_{t-s}} \gamma \left(
1 - \omega_{M_{s}}\left(
\xi_{t-s}^{i}
\right)\right) \times \left[ 
\prod_{\substack{j \in \llbracket 1,N_{t-s} \rrbracket \\ j \neq i}} \omega_{M_{s}}(\xi_{t-s}^{j})
\right] \\
& \quad + \int_{\rd}\int_{0}^{1}\int_{0}^{\infty}\int_{\bcal(z,r)} \frac{1 - \omega_{M_{s}}(z')}{V_{r}} \times \left(
(1-u)^{\#\{ 
i \in \llbracket 1,N_{t-s} \rrbracket : \xi_{t-s}^{i} \in \bcal(z,r)
\}} - 1
\right) \\
&\hspace{6cm}\times \left[ 
\prod_{j = 1}^{N_{t-s}} \omega_{M_{s}}(\xi_{t-s}^{j})
\right] dz' \nu(dr,du)dz
\end{aligned}
\end{align*}
so 
\begin{align*}
&\esp_{M^{0}}\left[ 
\ebold\left[\left. 
\leftgyv \Phi_{\omega_{M_{s}}}\left( 
\Xi_{t-s}
\right)\right|
\Xi_{0} \sim \mu_{\psi}
\right]\right] \\
&= \esp_{M^{0}}\left[ 
\ebold\left[\left. \ebold \left[\left.
\leftgyv \Phi_{\omega_{M_{s}}}\left( 
\Xi_{t-s}
\right)
\right| 
N_{t-s}\right]
\right|
\Xi_{0} \sim \mu_{\psi}
\right]\right] \\
&\begin{aligned}
&= \esp_{M^{0}}\left[ 
\ebold\left[\left. 
\verybigint_{(\rd)^{N_{t-s}}} \psi_{t-s}^{(N_{t-s})}(x_{1},...,x_{N_{t-s}}) \vphantom{\prod_{\substack{j \in \llbracket 1,N_{t-s} \rrbracket \\ j \neq i}}}
\right.\right.\right. \\
& \hspace{2cm}
\left.\left.\left.
\times \left[ 
\verybigsum_{i = 1}^{N_{t-s}} \gamma \left(
1 - \omega_{M_{s}}\left(
x_{i}
\right)\right) \times \left[ 
\prod_{\substack{j \in \llbracket 1,N_{t-s} \rrbracket \\ j \neq i}} \omega_{M_{s}}(x_{j})
\right]
\right]dx_{1}...dx_{N_{t-s}}
\right|
\Xi_{0} \sim \mu_{\psi}
\right]\right] \\
&\; + \esp_{M^{0}}\left[ 
\ebold\left[\left. 
\int_{(\rd)^{N_{t-s}}} 
\int_{\rd}\int_{0}^{1}\int_{0}^{\infty}\int_{\bcal(z,r)}
\frac{1 - \omega_{M_{s}}(z')}{V_{r}} \times \left(
(1-u)^{\#\{ 
i \in \llbracket 1,N_{t-s} \rrbracket : x_{i} \in \bcal(z,r)
\}} - 1
\right)
 \right.\right.\right.\\
 &\hspace{1.5cm} \left.\left.\left.\times \psi_{t-s}^{(N_{t-s})}(x_{1},...,x_{N_{t-s}})
\times \left[ 
\prod_{j = 1}^{N_{t-s}} \omega_{M_{s}}(x_{j})
\right] dz' \nu(dr,du)dz dx_{1}...dx_{N_{t-s}}
\right|
\Xi_{0} \sim \mu_{\psi}
\right]\right] 
\end{aligned}\\
\intertext{which we can rearrange as }
&\esp_{M^{0}}\left[ 
\ebold\left[\left. 
\leftgyv \Phi_{\omega_{M_{s}}}\left( 
\Xi_{t-s}
\right)\right|
\Xi_{0} \sim \mu_{\psi}
\right]\right] \\
&\begin{aligned}
&= \esp_{M^{0}}\left[ 
\ebold\left[\left. 
\verybigsum_{i = 1}^{N_{t-s}} \verybigint_{(\rd)^{N_{t-s}}} \gamma \psi_{t-s}^{(N_{t-s})}(x_{1},...,x_{N_{t-s}}) \times \left(
1 - \omega_{M_{s}}\left(
x_{i}
\right)\right) \vphantom{\prod_{\substack{j \in \llbracket 1,N_{t-s} \rrbracket \\ j \neq i}}}
\right.\right.\right. \\
& \hspace{5cm}
\left.\left.\left.
\times \left[ 
\prod_{\substack{j \in \llbracket 1,N_{t-s} \rrbracket \\ j \neq i}} \omega_{M_{s}}(x_{j})
\right]dx_{1}...dx_{N_{t-s}}
\right|
\Xi_{0} \sim \mu_{\psi}
\right]\right] \\
&\; + \esp_{M^{0}}\left[ 
\ebold\left[\left. 
\int_{\rd}\int_{0}^{1}\int_{0}^{\infty}\int_{\bcal(z,r)}
\frac{1}{V_{r}}\int_{(\rd)^{N_{t-s}}} 
\psi_{t-s}^{(N_{t-s})}(x_{1},...,x_{N_{t-s}}) \times 
\left(1 - \omega_{M_{s}}(z')\right) 
\vphantom{\prod_{j = 1}^{N_{t-s}}}
\right.\right.\right.\\
&\hspace{5.5cm}\times  \left(
(1-u)^{\#\{ 
i \in \llbracket 1,N_{t-s} \rrbracket : x_{i} \in \bcal(z,r)
\}} - 1
\right) \\
&\hspace{5.5cm} \left.\left.\left. \times
\left[ 
\prod_{j = 1}^{N_{t-s}} \omega_{M_{s}}(x_{j})
\right] 
dz' \nu(dr,du)dz dx_{1}...dx_{N_{t-s}}
\right|
\Xi_{0} \sim \mu_{\psi}
\right]\right] 
\end{aligned}\\
\intertext{We recognize the expression of the operator $\gyv$ acting on $D_{\psi_{t-s}}^{(N_{t-s})}$, so}
&\esp_{M^{0}}\left[ 
\ebold\left[\left. 
\leftgyv \Phi_{\omega_{M_{s}}}\left( 
\Xi_{t-s}
\right)\right|
\Xi_{0} \sim \mu_{\psi}
\right]\right] \\
&= \esp_{M^{0}}\left[ 
\ebold\left[\left. 
\gyv D_{\psi_{t-s}^{(N_{t-s})}}(M_{s})
\right|
\Xi_{0} \sim \mu_{\psi}
\right]\right] \\
&= \ebold\left[\left. \esp_{M^{0}}\left[
\gyv D_{\psi_{t-s}^{(N_{t-s})}} (M_{s}) \right]
\right|
\Xi_{0} \sim \mu_{\psi}
\right]
\end{align*}
by Fubini's theorem, which allows us to conclude. 
\end{proof}

\subsubsection{Application to the rescaling of the impact parameter}\label{subsubsec:appli_1}
Throughout the paper, we will investigate several applications of the duality relation given by~Proposition~\ref{prop:duality_relation}. A first possible use is to show the result stated in Proposition~\ref{prop:R0_invariant_U}, using the following observation. For all~$\Xi \in \mcal_{p}(\rd)$, we denote as~$A(\Xi)$ the set of locations of the atoms in~$\Xi$. That is, we have
\begin{equation*}
\Xi = \sum_{z \in A(\Xi)} \delta_{z}. 
\end{equation*}
Recall the definition of $\nu^{[\beta]}$ as
\begin{equation*}
\nu^{[\beta]}(dr,du) = \beta^{-1} \mathds{1}_{\{u/\beta \leq 1\}} \nu(dr,d(u/\beta)).
\end{equation*}
\begin{lemma}\label{lem:implication_coupling_R0_invariant} For all~$\beta \geq 1$, let~$(\Xi_{t}^{[\beta]})_{t \geq 0}$ be the~$(\gamma,\nu^{[\beta]})$-ancestral process with initial condition~$\delta_{0}$. Let~$(\Xi_{t})_{t \geq 0}$ be the~$(\gamma,\nu)$-ancestral process with initial condition~$\delta_{0}$. Assume that for all~$\beta \geq 1$, it is possible to couple~$\Xi$ and~$\Xi^{[\beta]}$ in such a way that
\begin{equation*}
\forall t \geq 0, A(\Xi_{t}^{[\beta]}) \subseteq A(\Xi_{t}).
\end{equation*}
Then, the results from Proposition~\ref{prop:R0_invariant_U} hold.
\end{lemma}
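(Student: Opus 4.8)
The plan is to feed the coupling hypothesis into the duality relation of \Cref{prop:duality_relation}, turning a pathwise comparison of ancestral processes into a comparison of the expected local masses of infected individuals. First I would apply \Cref{prop:duality_relation} with $k=1$ and $\psi = \mathds{1}_{A}$ for an arbitrary compact $A \subseteq \rd$ of positive volume. Writing $M$ for the $(\gamma,\nu)$-EpiSLFV process and $M^{[\beta]}$ for the $(\gamma,\nu^{[\beta]})$-EpiSLFV process (both started from $M^{0}$), this gives
\begin{equation*}
\esp\left[\langle \mathds{1}_{A}, \omega_{M_{t}}\rangle\right] = \int_{A} \ebold_{\delta_{x}}\left[\prod_{z \in A(\Xi_{t})} \omega_{M^{0}}(z)\right]dx,
\end{equation*}
together with the analogous identity in which $\Xi_{t}$ is replaced by $\Xi_{t}^{[\beta]}$. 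Here I have used \Cref{lem:dual_process_absolutely_continuous}: since the atom positions are absolutely continuous, the atoms of $\Xi_{t}$ are almost surely pairwise distinct, so the product over all $N_{t}$ atoms coincides with the product over the location set $A(\Xi_{t})$, and the choice of representative of $\omega_{M^{0}}$ is immaterial.

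The second step is a monotonicity observation. Because $\omega_{M^{0}}$ is $[0,1]$-valued, enlarging the index set can only decrease the product, so the inclusion $A(\Xi_{t}^{[\beta]}) \subseteq A(\Xi_{t})$ furnished by the coupling gives, pointwise,
\begin{equation*}
\prod_{z \in A(\Xi_{t})} \omega_{M^{0}}(z) \leq \prod_{z \in A(\Xi_{t}^{[\beta]})} \omega_{M^{0}}(z).
\end{equation*}
The coupling in the hypothesis is stated for the initial condition $\delta_{0}$, whereas the duality integral starts the ancestral process from $\delta_{x}$; I would bridge this using the translation-equivariance of the $(\gamma,\nu)$-ancestral dynamics (the death rate $\gamma$ is spatially homogeneous and the driving intensity $dz \otimes \nu(dr,du)$ is translation invariant), which lets me shift the $\delta_{0}$-coupling by $x$ to obtain, for every $x$, a coupling of the two ancestral processes started from $\delta_{x}$ that satisfies the same inclusion. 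Taking $\ebold$ and integrating over $x \in A$ then yields $\esp[\langle \mathds{1}_{A}, \omega_{M_{t}}\rangle] \leq \esp[\langle \mathds{1}_{A}, \omega_{M_{t}^{[\beta]}}\rangle]$, that is,
\begin{equation*}
\esp\left[\langle \mathds{1}_{A}, 1 - \omega_{M_{t}^{[\beta]}}\rangle\right] \leq \esp\left[\langle \mathds{1}_{A}, 1 - \omega_{M_{t}}\rangle\right] \qquad (\beta \geq 1).
\end{equation*}
Part (i) of \Cref{prop:R0_invariant_U} is then immediate: if the right-hand side tends to $0$ for every compact $A$, so does the nonnegative left-hand side, whence $M^{[\beta]}$ goes extinct.

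For part (ii) I would exploit the multiplicativity of the impact rescaling, $(\nu^{[\beta]})^{[\alpha]} = \nu^{[\alpha\beta]}$, which follows by a direct change of variables in the definition of $\nu^{[\cdot]}$; taking $\alpha = 1/\beta > 1$ gives $(\nu^{[\beta]})^{[1/\beta]} = \nu$. Thus the $(\gamma,\nu)$-process is exactly the $[1/\beta]$-rescaling of the $(\gamma,\nu^{[\beta]})$-process with exponent $1/\beta \geq 1$, and the comparison established above applies verbatim with $\nu^{[\beta]}$ in the role of the base measure and $1/\beta$ in the role of $\beta$, the requisite inclusion $A(\Xi_{t}^{[1/\beta]}) \subseteq A(\Xi_{t})$ (for the $(\gamma,\nu^{[\beta]})$-ancestral process) being produced by the same construction. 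This shows that extinction of $M^{[\beta]}$ forces extinction of $M$. Before invoking duality one must also check that $\nu^{[\beta]}$ satisfies the integrability condition \eqref{eqn:cond_nu}, which is a one-line change of variables using that the support of $\nu$ lies in $(0,\infty) \times (0,1]$.

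I expect the main obstacle to be the careful transfer of the $\delta_{0}$-coupling to every starting point $\delta_{x}$ appearing in the duality integral: one must verify that the joint coupling (not merely each marginal) is translation equivariant, which requires it to be built from the same translation-invariant driving noise for both processes, and then argue measurability in $x$ so that Fubini's theorem applies when integrating the inclusion over $A$. The remaining ingredients—the monotonicity of products of $[0,1]$-valued factors and the algebraic identity $(\nu^{[\beta]})^{[1/\beta]} = \nu$—are routine.
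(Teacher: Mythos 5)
Your proposal is correct and follows essentially the same route as the paper's proof: duality (Proposition~\ref{prop:duality_relation}) plus monotonicity of products of $[0,1]$-valued factors under the coupling inclusion yields $\esp[\langle \mathds{1}_{A}, 1-\omega_{M_{t}^{[\beta]}}\rangle] \leq \esp[\langle \mathds{1}_{A}, 1-\omega_{M_{t}}\rangle]$ for $\beta \geq 1$, and part~(ii) follows by the inverse rescaling identity $(\nu^{[\beta]})^{[1/\beta]} = \nu$ for $\beta < 1$, exactly as in the paper. The only (cosmetic) difference is that the paper sidesteps your ``main obstacle'' by translating the initial density $\omega_{M^{0}}$ via $Tr[\omega_{M^{0}},0,z]$ and keeping the coupled pair started at $\delta_{0}$, rather than translating the coupling itself, so no equivariance of the joint construction is ever needed.
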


\begin{proof}
First, we show that this assumption implies Proposition~\ref{prop:R0_invariant_U}~(i). Let~$\beta \geq 1$ and~$t \geq 0$. By Proposition~\ref{prop:duality_relation}, for all compact~$A \subseteq \rd$ with positive volume, we have
\begin{align*}
\esp\left[ 
\langle \mathds{1}_{A}, 1 - \omega_{M_{t}} \rangle 
\right] &= \esp \left[ 
\int_{\rd} \mathds{1}_{A}(z)(1 - \omega_{M_{t}}(z))dz
\right] \\
&= \mathrm{Vol}(A) - \int_{\rd} \mathds{1}_{A}(z) \mathbf{E}_{\Xi[z]}\left[ 
\prod_{j = 1}^{N_{t}[z]} \omega_{M^{0}}(\xi_{t}^{j}[z])
\right] dz. 
\end{align*}
For all~$z \in \rd$, let~$Tr[\omega_{M^{0}},0,z]$ be the translation of~$\omega_{M^{0}}$ that moves~$z$ to~$0$. If we denote as~$N_{t}$ and~$\xi_{t}^{j}$, $1 \leq j \leq N_{t}$ the number and locations of the atoms in~$\Xi_{t}$, then by invariance by translation of the distribution of the underlying Poisson point process, we obtain
\begin{equation*}
\esp\left[ 
\langle \mathds{1}_{A}, 1 - \omega_{M_{t}} \rangle 
\right] = \mathrm{Vol}(A) - \int_{\rd} \mathds{1}_{A}(z) \mathbf{E}_{\delta_{0}}\left[ 
\prod_{j = 1}^{N_{t}} Tr[\omega_{M^{0}},0,z](\xi_{t}^{j})
\right]dz.
\end{equation*}
We now denote as~$N_{t}^{[\beta]}$ and~$\eta_{t}^{j}$, $1 \leq j \leq N_{t}^{[\beta]}$ the number and locations of the atoms in~$\Xi_{t}^{[\beta]}$. By assumption, 
\begin{equation*}
\{ 
\eta_{t}^{j}, 1 \leq j \leq N_{t}^{[\beta]}
\} \subseteq \{ 
\xi_{t}^{j}, 1 \leq j \leq N_{t}
\}
\end{equation*}
so for all~$z \in \rd$, as~$\omega_{M^{0}}$ is~$[0,1]$-valued, 
\begin{equation*}
\mathbf{E}_{\delta_{0}}\left[ 
\prod_{j = 1}^{N_{t}} Tr[\omega_{M^{0}},0,z](\xi_{t}^{j})
\right] \leq \mathbf{E}_{\delta_{0}} \left[ 
\prod_{j = 1}^{N_{t}^{[\beta]}} Tr[\omega_{M^{0}},0,z](\eta_{t}^{j})
\right]. 
\end{equation*}
We deduce
\begin{align*}
\esp\left[ 
\langle \mathds{1}_{A}, 1 - \omega_{M_{t}} \rangle 
\right] &\geq \mathrm{Vol}(A) - \int_{A} \mathbf{E}_{\delta_{0}} \left[ 
\prod_{j = 1}^{N_{t}^{[\beta]}} Tr[\omega_{M^{0}},0,z](\eta_{t}^{j})
\right]dz \\
&= \esp\left[ 
\langle \mathds{1}_{A}, 1 - \omega_{M_{t}^{[\beta]}} \rangle 
\right].
\end{align*}
If~$(M_{t})_{t \geq 0}$ goes extinct, then for all compact~$A \subseteq \rd$ with positive volume, 
\begin{equation*}
\lim\limits_{t \to + \infty} \esp\left[ 
\langle \mathds{1}_{A}, 1 - \omega_{M_{t}} \rangle 
\right] = 0.
\end{equation*}
The above inequality implies that we then also have
\begin{equation*}
\lim\limits_{t \to + \infty} \esp\left[ 
\langle \mathds{1}_{A}, 1 - \omega_{M_{t}^{[\beta]}} \rangle 
\right] = 0, 
\end{equation*}
which concludes the first part of the proof. 

We next prove that Proposition~\ref{prop:R0_invariant_U}~(i) implies Proposition~\ref{prop:R0_invariant_U}~(ii). To do so, let~${\beta < 1}$. Highlighting that both~$\nu$ and~$\nu^{[\beta]}$ are supported in~$(0,\infty)\times (0,1]$ when writing the definition of~$\nu^{[\beta]}$, we have
\begin{equation*}
\nu^{[\beta]}(dr,du) = \beta^{-1} \mathds{1}_{\{u \leq 1\}} \mathds{1}_{\{u/\beta \leq 1\}} \nu(dr,d(u/\beta)).
\end{equation*}
The change of variable~$u' = u/\beta$ and $\tilde{\beta} = \beta^{-1}$ yields
\begin{equation*}
\nu^{[\beta]}\big(dr,d(u'/\tilde{\beta})\big) = \tilde{\beta} \mathds{1}_{\{\beta u' \leq 1\} } \mathds{1}_{\{ u' \leq 1\} } \nu(dr,du'), 
\end{equation*}
so
\begin{equation*}
\mathds{1}_{\{\beta u' \leq 1\}} \mathds{1}_{\{u' \leq 1\}} \nu(dr,du') = \tilde{\beta}^{-1} \mathds{1}_{\{u'/\tilde{\beta} \leq 1\}} \nu^{[\beta]}\big(dr,d(u'/\tilde{\beta})\big).
\end{equation*}
As~$\beta < 1$ and as~$\nu$ is supported in~$(0,\infty) \times (0,1]$, the above equality becomes
\begin{equation*}
\nu(dr,du') = \tilde{\beta}^{-1} \mathds{1}_{\{u'/\tilde{\beta} \leq 1\}} \nu^{[\beta]}\big(dr,d(u'/\tilde{\beta})\big), 
\end{equation*}
and as~$\tilde{\beta} > 1$, we can apply~Proposition~\ref{prop:R0_invariant_U}~(i) and conclude.
\end{proof}

The duality relation allowed us to rephrase the statements in Proposition~\ref{prop:R0_invariant_U} in terms of properties of the dual ancestral process. Now our goal is to construct the coupling we need between the~$(\gamma,\nu)$-ancestral process and the~$(\gamma,\nu^{[\beta]})$-ancestral process. To do so, we will need the following technical lemma. 
\begin{lemma}\label{lem:technical_lemma_coupling} For all $n \in \nmath \backslash \{0\}$, for all~$\beta \geq 1$ and~$0 < u \leq 1$ such that~$\beta u \leq 1$, we have
\begin{equation*}
\frac{1 - (1 - \beta u)^{n}}{1 - (1-u)^{n}} \leq \beta. 
\end{equation*}
\end{lemma}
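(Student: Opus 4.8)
The plan is to recognise the ratio as a statement about a concave function through the origin, for which the superhomogeneity $h(\beta u) \le \beta\,h(u)$ is automatic. Concretely, I would fix $n \in \nmath \setminus \{0\}$ and introduce the auxiliary function $h : [0,1] \to \rmath$ defined by $h(x) = 1 - (1-x)^{n}$. Since $1 - (1-u)^{n} > 0$ whenever $u > 0$ and $n \ge 1$ (as $1-u < 1$), the claimed inequality is equivalent to the single estimate $h(\beta u) \le \beta\, h(u)$, and this reformulation is the crux of the argument.

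First I would record the two elementary properties of $h$. Differentiating gives $h'(x) = n(1-x)^{n-1}$ and $h''(x) = -n(n-1)(1-x)^{n-2}$, so that $h''(x) \le 0$ on $[0,1]$ for every $n \ge 1$ (the factor $n(n-1)$ vanishing when $n=1$); hence $h$ is concave. Moreover $h(0) = 0$. Next I would invoke the standard consequence that a concave function vanishing at the origin has a non-increasing difference quotient from $0$: for $0 < x_{1} \le x_{2}$ one writes $x_{1} = \tfrac{x_{1}}{x_{2}}\, x_{2} + \bigl(1 - \tfrac{x_{1}}{x_{2}}\bigr)\cdot 0$ and applies concavity to obtain $h(x_{1}) \ge \tfrac{x_{1}}{x_{2}}\, h(x_{2})$, i.e. $h(x_{1})/x_{1} \ge h(x_{2})/x_{2}$.

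To finish I would apply this monotonicity with $x_{1} = u$ and $x_{2} = \beta u$, which is legitimate precisely because the hypotheses give $0 < u \le \beta u \le 1$ (using $\beta \ge 1$ for the first inequality and $\beta u \le 1$ for the second, the latter also guaranteeing $1 - \beta u \ge 0$ so that $h$ is evaluated inside its domain). This yields $h(\beta u)/(\beta u) \le h(u)/u$, and multiplying through by $\beta u > 0$ gives $h(\beta u) \le \beta\, h(u)$, which is exactly the desired bound after dividing by $h(u) = 1 - (1-u)^{n} > 0$.

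I do not expect a genuine obstacle here: the only points requiring care are bookkeeping ones, namely checking that the domain constraint $\beta u \le 1$ keeps $1 - \beta u$ non-negative (so concavity of $h$ applies throughout) and that the denominator $1 - (1-u)^{n}$ is strictly positive (so the division is valid). An equivalent route, should one prefer to avoid citing the concavity lemma, is to fix $u$ and differentiate $g(\beta) := \beta\bigl(1-(1-u)^{n}\bigr) - \bigl(1 - (1-\beta u)^{n}\bigr)$ in $\beta$, verifying $g(1)=0$, $g'(1) = \bigl(1-(1-u)^{n}\bigr) - n u (1-u)^{n-1} \ge 0$ via the identity $1-(1-u)^{n} = u\sum_{k=0}^{n-1}(1-u)^{k} \ge n u (1-u)^{n-1}$, and noting that $g'(\beta) = \bigl(1-(1-u)^{n}\bigr) - n u (1-\beta u)^{n-1}$ is non-decreasing in $\beta$ because $(1-\beta u)^{n-1}$ decreases; hence $g \ge 0$ on $[1,1/u]$. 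I would lead with the concavity argument, as it is shortest and most transparent.
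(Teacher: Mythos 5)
Your proof is correct, and it takes a genuinely different route from the paper's. The paper argues purely algebraically: it factorises $1-(1-\beta u)^{n} = \beta u\sum_{i=0}^{n-1}(1-\beta u)^{i}$, bounds each term via $(1-\beta u)^{i}\leq (1-u)^{i}$ (using $\beta\geq 1$), and reassembles the sum as $\beta\bigl(1-(1-u)^{n}\bigr)$ — two lines, no calculus, no auxiliary lemma. You instead recognise the statement as subhomogeneity of the concave function $h(x)=1-(1-x)^{n}$ with $h(0)=0$, and deduce it from the monotonicity of the difference quotient $h(x)/x$. Your approach is more conceptual and more general: it works verbatim for non-integer exponents and for any concave $h$ vanishing at the origin, and it makes the mechanism behind the inequality transparent. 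The paper's approach buys self-containedness and brevity — it is exactly the kind of throwaway computation one wants inside a coupling construction, and (amusingly) the factorisation identity it relies on is the same one you invoke in your fallback derivative argument to check $g'(1)\geq 0$. Both proofs handle the domain constraint $\beta u\leq 1$ and the positivity of the denominator correctly, so there is no gap in either.
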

\begin{proof}
Let~$n \in \nmath \backslash \{0\}$, $\beta \geq 1$ and~$0 < u \leq 1$. Assume that~$\beta u \leq 1$. Using the factorisation formula for~$a^{n} - b^{n}$ we have
\begin{align*}
1 - (1 - \beta u)^{n} &= (1 - 1 + \beta u) \left(
\sum_{i = 0}^{n-1} (1-\beta u)^{i}
\right) \\
&\leq \beta u \left(
\sum_{i = 0}^{n - 1} (1-u)^{i} 
\right) \\
&= \beta \times (1 - (1-u)^{n}), 
\end{align*}
which allows us to conclude. 
\end{proof}

We now construct the coupling between the two ancestral processes. 
\begin{lemma}\label{lem:coupling_ancestral_process} Let~$\beta \geq 1$. Under the notation of Lemma~\ref{lem:implication_coupling_R0_invariant}, it is possible to couple~$\Xi$ and~$\Xi^{[\beta]}$ in such a way that
\begin{equation*}
\forall t \geq 0, A(\Xi_{t}^{[\beta]}) \subseteq A(\Xi_{t}).
\end{equation*}
\end{lemma}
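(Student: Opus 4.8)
The plan is to realise both processes on one probability space from a single driving Poisson point process together with independent marks, arranged so that every reproduction and every death of $\Xi^{[\beta]}$ is mirrored by $\Xi$ at the same location. Concretely, I would drive $\Xi$ by the Poisson point process $\Pi$ on $\rmath\times\rd\times(0,\infty)\times(0,1]$ with intensity $dt\,dz\,\nu(dr,du')$ exactly as in \Cref{defn:ancestral_process}, and obtain the reproduction events of $\Xi^{[\beta]}$ as a \emph{predictable thinning} of this same $\Pi$. The point that reduces the whole statement to \Cref{lem:technical_lemma_coupling} is that, after reparametrising $\nu^{[\beta]}$ through $\nu$ (a $\nu^{[\beta]}$-event of impact $u$ becomes a $\nu$-event of impact $u'=u/\beta$, with rate scaled by $\beta^{-1}$), the per-event reproduction probability of $\Xi^{[\beta]}$ is \emph{pointwise dominated} by that of $\Xi$.

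For the construction, to each atom ever created — including the initial atom at $0$, common to both processes — I attach an independent $\mathrm{Exp}(\gamma)$ death clock shared by the two processes, so that an atom present in both dies simultaneously in both while the clock of an atom present only in $\Xi$ acts only on $\Xi$. To each point $(t,z,r,u')$ of $\Pi$ I attach two further independent marks: a decision variable $W\sim\mathrm{Unif}[0,1]$ and a location $V$ uniform in $\bcal(z,r)$. Writing $n=\Xi_{t-}(\bcal(z,r))$ and $m=\Xi^{[\beta]}_{t-}(\bcal(z,r))$, I let $\Xi$ add an atom at $V$ iff $W<1-(1-u')^{n}=:q$, and I let $\Xi^{[\beta]}$ add an atom at the \emph{same} $V$ iff $W<\beta^{-1}\big(1-(1-\beta u')^{m}\big)\mathds{1}_{\{u'\le 1/\beta\}}=:q^{[\beta]}$. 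Since events that do not reproduce leave the state unchanged, this gives $\Xi$ exactly the law of the $(\gamma,\nu)$-ancestral process, and the reproduction intensity of $\Xi^{[\beta]}$ equals $\beta^{-1}\big(1-(1-\beta u')^{m}\big)\mathds{1}_{\{u'\le 1/\beta\}}\,dt\,dz\,\nu(dr,du')$, which is precisely the $\nu^{[\beta]}$-reproduction intensity after the change of variables $u=\beta u'$; hence $\Xi^{[\beta]}$ has exactly the law of the $(\gamma,\nu^{[\beta]})$-ancestral process.

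The inclusion is then proved by induction over the (a.s.\ locally finite) jump times, with the invariant that there is a location-preserving injection of the atoms of $\Xi^{[\beta]}_t$ into those of $\Xi_t$ sharing death clocks; in particular $A(\Xi^{[\beta]}_t)\subseteq A(\Xi_t)$ and $m\le n$ on every ball. Death events preserve the invariant immediately. At a reproduction point the invariant gives $m\le n$, so by \Cref{lem:technical_lemma_coupling} (applied with exponent $m$ when $u'\le 1/\beta$ and $m\ge 1$) together with monotonicity of $s\mapsto 1-(1-u')^{s}$,
\[
\beta^{-1}\big(1-(1-\beta u')^{m}\big)\mathds{1}_{\{u'\le 1/\beta\}}\ \le\ 1-(1-u')^{m}\ \le\ 1-(1-u')^{n},
\]
the remaining cases $u'>1/\beta$ and $m=0$ being trivial. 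Thus $q^{[\beta]}\le q$, so $\{W<q^{[\beta]}\}\subseteq\{W<q\}$: whenever $\Xi^{[\beta]}$ reproduces, $\Xi$ reproduces too, at the same location $V$ and with the same death clock, which extends the injection across the jump and closes the induction.

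The only delicate auxiliary points are that each marginal has the correct law and that the two thresholds may be realised by a single shared mark $W$; both follow because, conditionally on the past, $W$ is uniform and independent, so one is performing a simultaneous predictable (hence law-preserving) thinning of $\Pi$ for the two processes. I expect the genuine obstacle to be the pointwise domination $q^{[\beta]}\le q$: it cannot rest on \Cref{lem:technical_lemma_coupling} alone, because the two processes carry \emph{different} atom counts $m\le n$ inside $\bcal(z,r)$, and the impact-rescaling bound must be combined with monotonicity in the count exactly as in the display above.
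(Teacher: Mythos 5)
Your proof is correct, and it reaches the paper's nesting invariant by a genuinely different coupling construction. The paper also argues by induction on the jumps of $\Xi^{[\beta]}$, but it never introduces a common driving Poisson point process with shared marks: instead, conditionally on the current pair of configurations, it decomposes the jump intensity of $\Xi$ into five independent sources (deaths of atoms in $A(\Xi_t^{[\beta]})$; deaths of atoms in $A(\Xi_t)\setminus A(\Xi_t^{[\beta]})$; reproductions triggered by atoms of $A(\Xi_t)\setminus A(\Xi_t^{[\beta]})$; a \emph{lower bound} reproduction source whose rate is exactly the total reproduction rate of $\Xi^{[\beta]}$; and a remainder), and then couples by identifying the first and fourth sources with the two sources driving $\Xi^{[\beta]}$. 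In that version, \Cref{lem:technical_lemma_coupling} is applied with the \emph{same} exponent $m=\Xi_{t}^{[\beta]}(\bcal(z,r))$ on both sides — its only job is to show that the remainder source has nonnegative rate, i.e.\ that the decomposition is legitimate — so the paper never needs your extra monotonicity step $1-(1-u')^{m}\le 1-(1-u')^{n}$: the additional atoms of $\Xi$ are siphoned into a separate independent source rather than absorbed into a larger per-event acceptance probability. Your route buys a fully explicit coupling (a single noise source, deterministic thinning rules, shared location mark $V$ and shared death clocks), from which both marginal laws and the inclusion $A(\Xi_t^{[\beta]})\subseteq A(\Xi_t)$ are immediate; the price is the per-event domination $q^{[\beta]}\le q$, which, as you correctly isolate, needs the impact-rescaling bound \emph{combined} with monotonicity in the atom count, together with the (routine, but necessary) check that the recursive state-dependent thinning of a single $\Pi$ is well defined via local finiteness of retained points — the same issue the paper dispatches at the end with ``no accumulation of jumps in $\Xi^{[\beta]}$''.
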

\begin{proof}
We will argue by induction on the jumps of~$\Xi^{[\beta]}$. The result holds at time~$t = 0$ since both processes have the same initial condition. Then, let~$t \geq 0$. We assume that we have
\begin{equation*}
A(\Xi_{t}^{[\beta]}) \subseteq A(\Xi_{t}). 
\end{equation*}
If~$A(\Xi_{t}^{[\beta]}) = \emptyset$, the result is trivial, so we assume that~$A(\Xi_{t}^{[\beta]}) \neq \emptyset$. 
We denote by~$\Xi_{t} \backslash \Xi_{t}^{[\beta]}$ the measure
\begin{equation*}
\Xi_{t} \backslash \Xi_{t}^{[\beta]} := \sum_{z \in A(\Xi_{t}) \backslash A(\Xi_{t}^{[\beta]})} \delta_{z}. 
\end{equation*}
We first focus on the dynamics of~$\Xi^{[\beta]}$ at time~$t$. A jump of this process can be triggered by two possible sources:

\indent \textsc{Source~A - "Death of an atom"} Occurs at rate~$\gamma \Xi_{t}^{[\beta]}(\rd)$, the atom dying being then chosen uniformly at random. 

\indent \textsc{Source~B - "Production of a new atom"} Occurs at rate 
\begin{align*}
&\int_{0}^{1}\int_{0}^{\infty}\int_{\rd} \left(
1 - (1-u)^{\Xi_{t}^{[\beta]} (\mathcal{B}(z,r))}
\right) dz\nu^{[\beta]}(dr,du) \\
&= \frac{1}{\beta} \int_{0}^{1/\beta}\int_{0}^{\infty} \int_{\rd} \left(
1 - (1-\beta u')^{\Xi_{t}^{[\beta]}(\mathcal{B}(z,r))}
\right) dz\nu(dr,du'). 
\end{align*}
conditionally on~$A(\Xi_{t}^{[\beta]})$, these two sources are completely independent. 

In order to construct the coupling of the~$(\gamma,\nu)$-ancestral process~$\Xi$ with~$\Xi^{[\beta]}$, we will decompose its dynamics at time~$t$ as the sum of five independent sources for jumps (conditionally on~$A(\Xi_{t}^{[\beta]})$ and~$A(\Xi_{t})$), some of them corresponding to sources for~$\Xi^{[\beta]}$. To do so, observe that the rate at which a new atom is added to~$\Xi$ due to an atom in~$A(\Xi_{t}^{[\beta]})$ is given by
\begin{equation*}
\int_{0}^{1}\int_{0}^{\infty}\int_{\rd} \left(
1 - (1-u)^{\Xi_{t}^{[\beta]}(\mathcal{B}(z,r))}
\right) dz \nu(dr,du), 
\end{equation*}
which by Lemma~\ref{lem:technical_lemma_coupling} satisfies
\begin{align*}
&\int_{0}^{1}\int_{0}^{\infty}\int_{\rd} \left(
1 - (1-u)^{\Xi_{t}^{[\beta]}(\mathcal{B}(z,r))}
\right) dz \nu(dr,du) \\ 
&\geq \frac{1}{\beta} \int_{0}^{1}\int_{0}^{\infty} \int_{\rd} \left(
1 - (1-\beta u')^{\Xi_{t}^{[\beta]}(\mathcal{B}(z,r))}
\right) dz \nu(dr,du') \\
&\geq \frac{1}{\beta} \int_{0}^{1/\beta} \int_{0}^{\infty} \int_{\rd} \left(
1 - (1-\beta u')^{\Xi_{t}^{[\beta]}(\mathcal{B}(z,r))}
\right) dz \nu(dr,du')
\end{align*}
as $\beta \geq 1$. This lower bound corresponds to the rate of jumps driven by~\textsc{Source~B} for~$\Xi_{t}^{[\beta]}$, and the distribution of the location of the new atom is the same in both cases. Using this observation, a jump of~$\Xi$ can be triggered by five possible sources:

\indent \textsc{Source~$1$ - "Death of an atom in~$A(\Xi_{t}^{[\beta]})$"} Occurs at rate~$\gamma \Xi_{t}^{[\beta]}(\rd)$ (same as for \textsc{Source~A}), and the atom dying is then chosen uniformly at random among the ones in~$A(\Xi_{t}^{[\beta]})$.

\indent \textsc{Source~$2$ - "Death of an atom in~$A(\Xi_{t}) \backslash A(\Xi_{t}^{[\beta]})$"} Occurs at rate
\begin{equation*}
\gamma \left(
\Xi_{t}(\rd) - \Xi_{t}^{[\beta]}(\rd)
\right), 
\end{equation*}
and the atom dying is then chosen uniformly at random among the ones in~$A(\Xi_{t}) \backslash A(\Xi_{t}^{[\beta]})$. 

\indent \textsc{Source~$3$ - "Production of a new atom by an atom in~$A(\Xi_{t}) \backslash A(\Xi_{t}^{[\beta]})$"} Occurs at rate
\begin{equation*}
\int_{0}^{1}\int_{0}^{\infty} \int_{\rd} \left(
1 - u
\right)^{\Xi_{t}^{[\beta]}(\mathcal{B}(z,r))} \times \left(
1 - (1-u)^{\Xi_{t}\backslash \Xi_{t}^{[\beta]}(\mathcal{B}(z,r))}
\right) dz \nu(dr,du). 
\end{equation*}

\indent \textsc{Source~$4$ - "Production of a new atom by one in~$A(\Xi_{t}^{[\beta]})$, lower bound"} Occurs at rate
\begin{equation*}
\frac{1}{\beta} \int_{0}^{1/\beta} \int_{0}^{\infty} \int_{\rd} \left(
1 - (1-\beta u')^{\Xi_{t}^{[\beta]}(\mathcal{B}(z,r))}
\right) dz \nu(dr,du').
\end{equation*}

\indent \textsc{Source~$5$ - "Production of a new atom by one in~$A(\Xi_{t}^{[\beta]})$, remaining part"} Occurs at rate
\begin{equation*}
\begin{aligned}
&\int_{0}^{1}\int_{0}^{\infty}\int_{\rd} \left(
1 - (1-u)^{\Xi_{t}^{[\beta]}(\mathcal{B}(z,r))}
\right) dz\nu(dr,du) \\
&\hspace{1cm} - \frac{1}{\beta} \int_{0}^{1/\beta}\int_{0}^{\infty}\int_{\rd} \left(
1 - (1-\beta u')^{\Xi_{t}^{[\beta]}(\mathcal{B}(z,r))}
\right) dz \nu(dr,du').
\end{aligned}
\end{equation*}
We can then couple~$\Xi$ and~$\Xi^{[\beta]}$ by using~\textsc{Source~A} for \textsc{Source~$1$}, and \textsc{Source~B} for \textsc{Source~$4$}, all the other sources being independent. This guarantees that~$\Xi^{[\beta]}$ stays "nested" in~$\Xi$. We conclude using the fact that there is no accumulation of jumps in~$\Xi^{[\beta]}$. 
\end{proof}

We conclude this section with the proof of Proposition~\ref{prop:R0_invariant_U}. 
\begin{proof}[Proof of Proposition~\ref{prop:R0_invariant_U}] 
By Lemma~\ref{lem:coupling_ancestral_process}, we can apply Lemma~\ref{lem:implication_coupling_R0_invariant} and conclude. 
\end{proof}

\subsubsection{Application to the proof of Theorem \texorpdfstring{\ref{thm:survival_theorem}}{}}
Another application of the duality relation is to show the existence of a dimension-dependent threshold for~$\mathrm{R}_{0}(\gamma,\nu)$ above which the~$(\gamma,\nu)$-EpiSLFV process does not go extinct, as stated in Theorem~\ref{thm:survival_theorem}. Indeed, we will prove that under some conditions on~$\nu$, it is possible to couple a~$(\gamma,\nu)$-ancestral process to a $d$-dimensional contact process, for which such a result is known. We will then use the duality relation to transfer the result from the ancestral process to the forward-in-time $(\gamma,\nu)-$EpiSLFV process started from an endemic initial condition. 

In order to construct the coupling, we first introduce some notation. Let~$\gamma,\rcal > 0$, and let~$\mu$ be a finite measure on~$(0,1]$. We consider the paving of~$\rd$ by cubes of edge length~$\rcal/\sqrt{d + 3}$ such that the set~$\vcal^{\rcal}$ of all cube centres contains the origin. We say that~$(i,j) \in \vcal^{\rcal}$ are \textit{neighbours} if
\begin{equation*}
    ||i-j|| = \frac{\rcal}{\sqrt{d+3}}
\end{equation*}
\textit{i.e.,} if the corresponding cubes share a face. 
If so, we write $i \sim_{\rcal} j$. 
The distance is chosen in such a way that an event of radius $\rcal$ with centre in $i$ will cover all neighbouring cubes. For all~$i \in \vcal^{\rcal}$, let~$C_{i}^{\rcal}$ be the \textit{interior} of the cube with centre~$i$ and edge length~$\rcal/\sqrt{d + 3}$. The reason for considering the interior rather than the whole cube is that we then have for all~$\Xi \in \mcal_{p}(\rd)$, 
\begin{equation*}
\sum_{i \in \vcal^{\rcal}} \Xi(C_{i}^{\rcal}) \leq \Xi(\rd), 
\end{equation*}
while including the border could lead to some atoms being counted several times. 

Let~$\xi^{\gamma,\rcal,\mu} = (\xi_{t}^{\gamma,\rcal,\mu})_{t \geq 0}$ be the $\{0,1\}^{\vcal^{\rcal}}$-valued process with initial condition
\begin{equation*}
\xi_{0}^{\gamma,\rcal,\mu} = (\mathds{1}_{0}(i))_{i \in \vcal^{\rcal}}
\end{equation*}
and with the following transition rates: at site~$i \in \vcal^{\rcal}$, 
\begin{align*}
1 &\to 0 \text{ at rate } \gamma \\
\text{and } 0 &\to 1 \text{ at rate } C(d) \gamma \mathrm{R}_{0}(\gamma,\delta_{\rcal}(dr)\mu(du)) \times \sum_{j : j \sim_{\rcal} i} \xi_{t}(j), 
\end{align*}
where
\begin{equation*}
C(d) = \frac{\Gamma(\frac{d}{2} +1)}{\pi^{d/2} (d+3)^d}.
\end{equation*}
In other words, each occupied site in~$\vcal^{\rcal}$ becomes empty at rate~$\gamma$, and attempts to fill a given empty neighbouring site at rate
\begin{equation*}
    C(d) \gamma \mathrm{R}_{0}(\gamma,\delta_{\rcal}(dr)\mu(du)). 
\end{equation*}
The process~$\xi^{\gamma,\rcal,\mu}$ is a standard nearest-neighbour contact process in~$\rd$, and it is a well-known result that it exhibits a critical threshold above which the process survives forever with non-zero probability. 
\begin{lemma}\label{lem:contact_process} 
(Adaptation of \cite{harris1974contact}, P.985, Theorem~(9.1)) There exists~$\lambda_{c}(d) \geq 1$ depending only on the dimension such that for all~$\gamma,\rcal > 0$ and for all finite measure~$\mu$ on~$(0,1]$, if
\begin{equation*}
C(d) \mathrm{R}_{0}(\gamma,\delta_{\rcal}(dr)\mu(du)) > \lambda_{c}(d), 
\end{equation*}
then $\xi^{\gamma,\rcal,\mu}$ survives forever with non-zero probability. 
\end{lemma}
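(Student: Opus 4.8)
The plan is to recognise $\xi^{\gamma,\rcal,\mu}$ as a standard nearest-neighbour contact process on a lattice graph-isomorphic to $\mathbb{Z}^d$, and then to reduce the statement to Harris's classical supercriticality theorem by a deterministic time change. First I would pin down the geometry: the vertex set $\vcal^{\rcal}$ consists of the centres of a paving of $\rd$ by cubes of edge length $\rcal/\sqrt{d+3}$, so it is a translate of $(\rcal/\sqrt{d+3})\mathbb{Z}^d$, and by definition $i \sim_{\rcal} j$ precisely when $i$ and $j$ differ by $\rcal/\sqrt{d+3}$ in exactly one coordinate. Hence the graph $(\vcal^{\rcal}, \sim_{\rcal})$ is isomorphic, as a graph, to $\mathbb{Z}^d$ with its nearest-neighbour edges, via the map sending the origin to $0$ and scaling space by $\sqrt{d+3}/\rcal$. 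Under this identification, $\xi^{\gamma,\rcal,\mu}$ is exactly the interacting particle system on $\mathbb{Z}^d$ in which each occupied site becomes empty at rate $\gamma$ and each empty site becomes occupied at rate $\lambda$ times its number of occupied neighbours, where $\lambda := C(d)\gamma\,\mathrm{R}_{0}(\gamma, \delta_{\rcal}(dr)\mu(du))$; that is, a contact process with recovery rate $\gamma$ and per-edge infection rate $\lambda$.

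Next I would normalise the recovery rate to $1$ by speeding up time by a factor $\gamma$, i.e. setting $\widetilde{\xi}_{t} := \xi^{\gamma,\rcal,\mu}_{t/\gamma}$. This turns the process into the standard contact process on $\mathbb{Z}^d$ with recovery rate $1$ and infection parameter $\lambda/\gamma = C(d)\,\mathrm{R}_{0}(\gamma, \delta_{\rcal}(dr)\mu(du))$. Since $t \mapsto t/\gamma$ is a bijection of $[0,+\infty)$, the event that the configuration is non-empty for all times is identical for the two processes, so $\xi^{\gamma,\rcal,\mu}$ survives forever if and only if $\widetilde{\xi}$ does; moreover both start from the single occupied site $\mathds{1}_{0}$ at the origin.

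Finally I would invoke the theorem of Harris \cite{harris1974contact}, which furnishes for the standard contact process on $\mathbb{Z}^d$ a critical value depending only on the dimension: there exists $\lambda_{c}(d)$, which we may freely take to be $\geq 1$ (replacing the true critical value by its maximum with $1$ if necessary), such that whenever the infection parameter exceeds $\lambda_{c}(d)$, the process started from a single occupied site survives forever with positive probability. Applying this to $\widetilde{\xi}$, the hypothesis $C(d)\,\mathrm{R}_{0}(\gamma, \delta_{\rcal}(dr)\mu(du)) > \lambda_{c}(d)$ yields survival of $\widetilde{\xi}$, hence of $\xi^{\gamma,\rcal,\mu}$, as required. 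The main point to treat carefully here is not any single hard estimate but the bookkeeping of the reduction: checking that the prescribed rates define exactly a contact process on $\mathbb{Z}^d$, that the constant $C(d)$ and the factor $\gamma$ are absorbed correctly by the time change into the single dimensionless parameter $C(d)\,\mathrm{R}_{0}$, and that the resulting threshold $\lambda_{c}(d)$ genuinely depends on nothing but $d$ — which is transparent once the lattice has been identified with $\mathbb{Z}^d$, since the underlying geometry is then independent of $\gamma$, $\rcal$ and $\mu$.
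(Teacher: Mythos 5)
Your proposal is correct and takes essentially the same route as the paper: the paper gives no separate proof of this lemma, simply observing that $\xi^{\gamma,\rcal,\mu}$ is a standard nearest-neighbour contact process (on the cubic lattice $\vcal^{\rcal}$, which is a scaled copy of $\mathbb{Z}^{d}$) and citing Harris's theorem, with the recovery-rate normalisation and the threshold $\lambda_{c}(d)\geq 1$ absorbed into the word ``adaptation''. Your write-up merely makes explicit the lattice identification and the time change $t\mapsto t/\gamma$ that reduce the parameters to the single dimensionless quantity $C(d)\,\mathrm{R}_{0}(\gamma,\delta_{\rcal}(dr)\mu(du))$, which is exactly the intended argument.
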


In order to be able to transfer this result to the~$(\gamma,\delta_{\rcal}(dr)\mu(du))$-ancestral process, our goal is to show the following coupling. 
\begin{lemma}\label{lem:coupling_contact_process} Let~$\gamma,\rcal > 0$ and let~$\mu$ be a finite measure on~$(0,1]$. Let~$\Xi = (\Xi_{t})_{t \geq 0}$ be the~$(\gamma,\delta_{\rcal}(dr)\mu(du))$-ancestral process with initial condition~$\Xi_{0} = \delta_{0}$. Then, it is possible to couple~$\Xi$ and~$\xi^{\gamma,\rcal,\mu}$ in such a way that for all~$t \geq 0$ and~$i \in \vcal^{\rcal}$, 
\begin{equation*}
\xi_{t}^{\gamma,\rcal,\mu}(i) \leq \Xi_{t}(C_{i}^{\rcal}). 
\end{equation*}
In particular, for all~$t \geq 0$, under this coupling, we have
\begin{equation*}
\sum_{i \in \vcal^{\rcal}} \xi_{t}^{\gamma,\rcal,\mu}(i) \leq \Xi_{t}(\rd). 
\end{equation*}
\end{lemma}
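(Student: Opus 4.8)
The plan is to realise both processes on a single probability space from the common driving data of Definition~\ref{defn:ancestral_process} (the Poisson point process $\widetilde{\Pi}$, the independent rate-$\gamma$ death clocks, and the independent uniform offspring placements), and to prove $\xi_{t}^{\gamma,\rcal,\mu}(i) \leq \Xi_{t}(C_{i}^{\rcal})$ for all $i$ by induction on the jumps of the coupled process, in the spirit of Lemma~\ref{lem:coupling_ancestral_process}. The invariant I would maintain is that every site $i$ occupied by $\xi^{\gamma,\rcal,\mu}$ carries a distinguished \emph{representative} atom of $\Xi$ lying inside $C_{i}^{\rcal}$; this is strictly stronger than the claimed inequality and is what makes the induction close. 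Since both processes start from a single occupied object at the origin, the invariant holds at $t = 0$. The geometric input, which I would record first, is that the edge length $\rcal/\sqrt{d+3}$ is chosen precisely so that the union of two face-adjacent cubes $C_{i}^{\rcal} \cup C_{j}^{\rcal}$ has diameter $\rcal$; hence whenever an event centre $z$ lies in one of the two cubes, the ball $\bcal(z,\rcal)$ contains both cubes entirely. This guarantees simultaneously that a parent atom in an occupied neighbour cube lies in the event ball and that the target cube is available to receive the offspring.

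Deaths are the easy half. I would tie the recovery clock of each occupied site $i$ to the death clock (rate $\gamma$) of its representative atom. Because atoms never move in the $(\gamma,\nu)$-ancestral process and the representative is alive for exactly as long as the site is occupied, $\xi_{t}^{\gamma,\rcal,\mu}(i) = 1$ forces $\Xi_{t}(C_{i}^{\rcal}) \geq 1$; when the representative dies we set $\xi^{\gamma,\rcal,\mu}(i) \to 0$, which preserves the invariant even if other atoms remain in $C_{i}^{\rcal}$, and reproduces the correct contact death rate $\gamma$.

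Births are the delicate half and contain the main obstacle. I would realise a contact infection of an empty site $i$ from an occupied neighbour $j$ as a \emph{thinning}: it is triggered only by a genuine reproduction event of $\Xi$ that deposits a new atom in $C_{i}^{\rcal}$ with a parent in $C_{j}^{\rcal}$, the freshly created atom becoming the representative of $i$, so that $\Xi_{t}(C_{i}^{\rcal}) \geq 1$ holds automatically at the instant $\xi^{\gamma,\rcal,\mu}(i)$ is switched on. The hard part is to check that the rate of such driving events, \emph{attributable to a single occupied neighbour cube}, is at least the prescribed contact birth rate $C(d)\gamma\mathrm{R}_{0}(\gamma,\delta_{\rcal}(dr)\mu(du))$ per neighbour, and to partition these driving events into sources that are independent across neighbour pairs, so that the thinned process $\xi^{\gamma,\rcal,\mu}$ has exactly the law prescribed before Lemma~\ref{lem:contact_process}. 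The rate lower bound is assembled from three ingredients: restricting to event centres for which $\bcal(z,\rcal)$ covers both cubes (the covering fact above); bounding the success probability from below using the elementary inequality $1 - (1-u)^{n} \geq u$ for $n \geq 1$, valid since the neighbour cube already contains an atom; and multiplying by the probability $\mathrm{Vol}(C_{i}^{\rcal})/V_{\rcal}$ that the uniformly placed offspring falls into $C_{i}^{\rcal}$. The constant $C(d)$ is exactly what is needed to make this product dominate the contact rate, so verifying this inequality quantitatively, and organising the independence of the per-neighbour sources, is where the real work lies.

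Once the driving sources are shown to dominate the contact birth rates, the induction closes as in Lemma~\ref{lem:coupling_ancestral_process}, using that $\Xi$ has no accumulation of jumps (Lemma~\ref{lem:dual_process_well_defined}) so that the coupling is well defined on all of $[0,+\infty)$. The final assertion then follows immediately by summing the cube-wise inequality over $i \in \vcal^{\rcal}$ and using $\sum_{i \in \vcal^{\rcal}} \Xi_{t}(C_{i}^{\rcal}) \leq \Xi_{t}(\rd)$, which holds because the cubes $C_{i}^{\rcal}$ are taken open and hence disjoint.
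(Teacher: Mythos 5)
Your proposal follows essentially the same route as the paper's proof: the same geometric fact that a radius-$\rcal$ event centred in one cube covers any face-adjacent cube, the same representative-atom invariant with the site's recovery clock tied to the representative's death clock, and the same $u$-thinning of ancestral births (justified by $1-(1-u)^{n} \geq u$ together with the volume fraction $\mathrm{Vol}(C_{i}^{\rcal})/V_{\rcal}$ for the offspring landing in the target cube) to generate the contact-process infections, with the final assertion obtained by summing over the disjoint open cubes. The quantitative step you defer is exactly the computation the paper carries out — the rate of events with centre in the occupied cube, offspring in the empty neighbour, retained with probability $u$, is $C(d)\rcal^{d}\,u\,\mu(du)$ per ordered neighbour pair — and the independence you worry about comes for free, since distinct ordered pairs of cubes correspond to disjoint sets of points of the driving Poisson process.
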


\begin{corollary}\label{corr:coupling_contact_process}
There exists $\mathrm{R}_{0}^{\max}(d) \geq 1$ such that for all~$\gamma,\rcal > 0$ and~$\mu$ finite measure on~$(0,1]$, if
\begin{equation*}
\mathrm{R}_{0}(\gamma,\delta_{\rcal}(dr)\mu(du)) > \mathrm{R}_{0}^{\max}(d),
\end{equation*}
then the~$(\gamma,\delta_{\rcal}(dr)\mu(du))$-ancestral process with initial condition~$\delta_{0}$ survives forever with non-zero probability. 
\end{corollary}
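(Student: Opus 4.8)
The plan is to obtain Corollary~\ref{corr:coupling_contact_process} directly by combining the contact-process survival result of Lemma~\ref{lem:contact_process} with the pathwise domination established in Lemma~\ref{lem:coupling_contact_process}, after choosing the threshold $\mathrm{R}_{0}^{\max}(d)$ precisely so that the hypothesis of the corollary matches the hypothesis of Lemma~\ref{lem:contact_process}.

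First I would set
\[
\mathrm{R}_{0}^{\max}(d) := \frac{\lambda_{c}(d)}{C(d)},
\]
where $\lambda_{c}(d)$ is the critical value supplied by Lemma~\ref{lem:contact_process}. To confirm that this is an admissible threshold, i.e. that $\mathrm{R}_{0}^{\max}(d) \geq 1$, I would combine $\lambda_{c}(d) \geq 1$ with the bound $C(d) \leq 1$. The latter follows by rewriting $C(d) = (\omega_{d}(d+3)^{d})^{-1}$, where $\omega_{d} = \pi^{d/2}/\Gamma(\tfrac{d}{2}+1)$ is the volume of the unit ball in~$\rd$, and observing that $\omega_{d}(d+3)^{d} \geq 1$ for every $d \geq 1$; hence $\mathrm{R}_{0}^{\max}(d) \geq \lambda_{c}(d) \geq 1$.

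Next, assuming $\mathrm{R}_{0}(\gamma,\delta_{\rcal}(dr)\mu(du)) > \mathrm{R}_{0}^{\max}(d)$, multiplying through by $C(d) > 0$ yields $C(d)\,\mathrm{R}_{0}(\gamma,\delta_{\rcal}(dr)\mu(du)) > \lambda_{c}(d)$, so Lemma~\ref{lem:contact_process} applies and the contact process $\xi^{\gamma,\rcal,\mu}$ survives forever with positive probability. By Lemma~\ref{lem:coupling_contact_process} I may realise $\Xi$ and $\xi^{\gamma,\rcal,\mu}$ on a common probability space so that $\sum_{i \in \vcal^{\rcal}} \xi_{t}^{\gamma,\rcal,\mu}(i) \leq \Xi_{t}(\rd)$ for all $t \geq 0$ almost surely. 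On the positive-probability event that $\xi^{\gamma,\rcal,\mu}$ never dies out, the left-hand side is at least $1$ for every $t$, forcing $\Xi_{t}(\rd) > 0$ for every $t$; therefore the $(\gamma,\delta_{\rcal}(dr)\mu(du))$-ancestral process started from $\delta_{0}$ survives forever with probability at least $\mathbf{P}(\xi^{\gamma,\rcal,\mu}\ \text{survives forever}) > 0$.

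There is essentially no genuine obstacle at the level of the corollary itself: all the analytical difficulty resides in the two lemmas it invokes. The only points requiring any care are the elementary verification that $\mathrm{R}_{0}^{\max}(d) \geq 1$ (equivalently $C(d) \leq 1$), and the fact that survival must be transferred from the contact process to the ancestral process \emph{along the coupled realisation}, so that the two processes survive on the \emph{same} event rather than merely agreeing in distribution.
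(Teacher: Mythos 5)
Your proof is correct and follows essentially the same route as the paper: both take $\mathrm{R}_{0}^{\max}(d) = \lambda_{c}(d)C(d)^{-1}$, invoke Lemma~\ref{lem:contact_process} to get survival of the contact process, and transfer it to the ancestral process via the coupling of Lemma~\ref{lem:coupling_contact_process}. Your additional checks --- that $C(d) \leq 1$ so the threshold is indeed $\geq 1$, and that survival is transferred along the coupled realisation rather than merely in distribution --- are details the paper leaves implicit, and they are verified correctly.
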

\begin{proof}
Under the notation of Lemma~\ref{lem:contact_process}, we can take
\begin{equation*}
\mathrm{R}_{0}^{\max}(d) = \lambda_{c}(d) C(d)^{-1}.
\end{equation*}
If $\mathrm{R}_{0}(\gamma,\delta_{\rcal}(dr)\mu(du)) > \mathrm{R}_{0}^{\max}(d)$, by Lemma~\ref{lem:contact_process}, $\xi^{\gamma,\rcal,\mu}$ survives forever with non-zero probability. 
We can then apply Lemma~\ref{lem:coupling_contact_process} and conclude. 
\end{proof}

We now construct the coupling between the $(\gamma,\delta_{\rcal}(dr)\mu(du))$-ancestral process and the $d$-dimensional contact process $\xi^{\gamma,\rcal,\mu}$. 
\begin{proof}[Proof of Lemma \ref{lem:coupling_contact_process}] The coupling strategy is a consequence of the following observation. Let $ i\sim_{\rcal} j \in \vcal^{\rcal}$. The maximal distance between points in two adjacent cubes $C_j^{\rcal}$ and $C_i^{\rcal}$ is given by the length of the diagonal
\begin{equation*}
\sqrt{\Big( \frac{2 \rcal}{\sqrt{d+3}} \Big)^2 + (d-1) \Big( \frac{\rcal}{\sqrt{d+3}} \Big)^2 } = \rcal.
\end{equation*}
This entails that the cube~$C_{j}^{\rcal}$ is entirely included in any reproduction event of size $\rcal$ with centre in~$C_{i}^{\rcal}$. Therefore, the rate at which a reproduction event occurs with:
\begin{itemize}
    \item impact parameter~$u$, 
    \item a centre in~$C_{i}^{\rcal}$, and
    \item a parental location sampled in~$C_{j}^{\rcal}$
\end{itemize}
is equal to 
\begin{align*}
\frac{\mathrm{Vol} (C_j^{\rcal})}{\mathrm{Vol} (\mathcal{B}(0, \rcal))} \mathrm{Vol} (C_i^{\rcal}) \mu (du) &= \frac{\Big( \frac{\rcal}{\sqrt{d+3}} \Big)^d}{\frac{\pi^{d/2}}{\Gamma (\frac{d}{2} +1)} \rcal^d} \Big( \frac{\rcal}{\sqrt{d+3}} \Big)^d \mu (du) \\
&= \frac{\Gamma(\frac{d}{2} +1)}{\pi^{d/2} (d+3)^d} \rcal^d \mu (du) \\
&= C(d) \rcal^d \mu(du).
\end{align*}
In other words, we can use these reproduction events to construct the $d$-dimensional contact process~$\xi^{\gamma,\rcal,\mu}$, by keeping each reproduction event with impact parameter~$u$ with a probability of~$u$, in order to recover the rate
\begin{equation*}
\int_{0}^{1} uC(d) \rcal^{d} \mu(du) = C(d) \gamma \mathrm{R}_{0}(\gamma,\delta_{\rcal}(dr)\mu(du))
\end{equation*}
at which site~$i$ attempts to fill site~$j$. 

We can now construct both processes~$\xi^{\gamma,\rcal,\mu}$ and~$\Xi$ in a way that preserves the desired coupling property
\begin{equation*}
\forall t \geq 0, \forall i \in \vcal^{\rcal}, \xi_{t}^{\gamma,\rcal,\mu}(i) \leq \Xi_{t}(C_{i}^{\rcal}).
\end{equation*}
The coupling property is clearly true at time~$t = 0$. Moreover, following the terminology of Definition~\ref{defn:ancestral_process}, we use the exponentially distributed "death time" of~$\delta_{0}$ as the time at which the transition~$1 \to 0$ occurs for site~$0$ in the contact process~$\xi^{\gamma,\rcal,\mu}$. 

Then, for each $(t,z,r,u) \in \widetilde{\Pi}$ such that~$\Xi_{t-}(\bcal(z,r)) > 0$, let~$z'$ be the location sampled uniformly at random in~$\bcal(z,r)$. We distinguish two cases. 

\noindent $-$ \textsc{Case 1} If there exists $i \sim_{\rcal} j \in \vcal^{\rcal}$ such that~$z \in C_{i}^{\rcal}$, $z' \in C_{j}^{\rcal}$, and such that 
\begin{equation*}
\xi_{t-}^{\gamma,\rcal,\mu}(i) = 1 - \xi_{t-}^{\gamma,\rcal,\mu}(j) = 1, 
\end{equation*}
we proceed as follows. 
\begin{enumerate}
    \item With probability~$u$, we set
    \begin{equation*}
        \xi_{t}^{\gamma,\rcal,\mu}(j) = 1 \text{ and } \Xi_{t} = \Xi_{t-} + \delta_{z'}. 
    \end{equation*}
    Moreover, the newly assigned death time for the atom~$\delta_{z'}$ will also give the time at which the transition~$1 \to 0$ occurs for site~$j$ in~$\xi^{\gamma,\rcal,\mu}$. 
    \item With probability
    \begin{equation*}
        (1-u) \times \left(
1 - (1-u)^{\Xi_{t-}(\bcal(z,r))-1}
        \right), 
    \end{equation*}
    we set $\Xi_{t} = \Xi_{t-} + \delta_{z'}$, while nothing happens to the contact process. 
    \item We do nothing otherwise.
\end{enumerate}

\noindent $-$ \textsc{Case 2} If there is no~$i \sim_{\rcal} j \in \vcal^{\rcal}$ such as described above, then we proceed as in Definition~\ref{defn:ancestral_process} for~$\Xi_{t}$, while we do nothing regarding~$\xi_{t}^{\gamma,\rcal,\mu}$. 

This allows us to construct both processes simultaneously, in a way that preserves the coupling property, which concludes the proof.
\end{proof}

We can now conclude with the proof of Theorem~\ref{thm:survival_theorem}. 
\begin{proof}[Proof of Theorem \ref{thm:survival_theorem}] Let~$\gamma,\rcal > 0$ and let~$\mu$ be a finite measure on~$(0,1]$. Let~$M^{0} \in \mcal$ be an endemic initial condition, and let~$\varepsilon > 0$ be such that~$1 - \omega_{M^{0}} > \varepsilon$ almost everywhere. Let~$(M_{t})_{t \geq 0}$ be the~$(\gamma,\delta_{\rcal}(dr)\mu(du))$-EpiSLFV process with initial condition~$M^{0}$. We take~$\mathrm{R}_{0}^{max}(d)$ as given by Corollary~\ref{corr:coupling_contact_process}. Assume that 
\begin{equation*}
\mathrm{R}_{0}(\gamma,\delta_{\rcal}(dr)\mu(du)) > \mathrm{R}_{0}^{max}(d). 
\end{equation*}
Our goal is to show that for all compact~$A \subset \rd$ with positive volume, 
\begin{equation*}
    \liminf\limits_{t \to + \infty} \esp\left[ 
    \langle \mathds{1}_{A}, 1 - \omega_{M_{t}} \rangle 
    \right] > 0. 
\end{equation*}
To do so, let~$A \subset \rd$ be a compact with positive volume, and let~$t \geq 0$. By Proposition~\ref{prop:duality_relation}, we have
\begin{align*}
&\esp\left[ 
\langle\mathds{1}_{A}, 1 - \omega_{M_{t}} \rangle  
\right]\\
&= \mathrm{Vol}(A) - \esp\left[ 
\int_{\rd} \mathds{1}_{A}(z) \omega_{M_{t}}(z) dz
\right] \\ 
&= \mathrm{Vol}(A) - \int_{\rd} \mathds{1}_{A}(z) \mathbf{E}_{\Xi[z]} \left[ 
\prod_{j = 1}^{N_{t}[z]} \omega_{M^{0}} \left(
\xi_{t}^{j}[z]
\right)
\right]dz \\
&\geq \mathrm{Vol}(A) - \int_{\rd} \mathds{1}_{A}(z) \times \left(
1 - \mathbf{P}_{\Xi[z]}(N_{t}[z] \geq 1) + (1-\varepsilon) \mathbf{P}_{\Xi[z]}(N_{t}[z] \geq 1)
\right) dz \\
&= \epsilon \int_{\rd} \mathds{1}_{A}(z) \mathbf{P}_{\Xi[z]}(N_{t}[z] \geq 1) dz. 
\end{align*}
As the distribution of~$N_{t}[z]$ does not depend on~$z$, we can replace~$N_{t}[z]$ by the number of atoms at time~$t$ in the~$(\gamma,\delta_{\rcal}(dr)\mu(du))$-ancestral process $\Xi = (\Xi_{s})_{s \geq 0}$ with initial condition~$\delta_{0}$. Therefore, 
\begin{align*}
\esp\left[ 
\langle\mathds{1}_{A}, 1 - \omega_{M_{t}} \rangle  
\right] &\geq \varepsilon \int_{\rd} \mathds{1}_{A}(z) \mathbf{P}(\Xi_{t}(\rd) \geq 1)dz \\
&= \varepsilon \mathrm{Vol}(A) \mathbf{P}(\Xi_{t}(\rd) \geq 1), 
\intertext{so}
\liminf\limits_{t \to + \infty} 
\esp\left[ 
\langle\mathds{1}_{A}, 1 - \omega_{M_{t}} \rangle  
\right] &\geq \liminf\limits_{t \to + \infty} \varepsilon \mathrm{Vol}(A) \mathbf{P}(\Xi_{t}(\rd) \geq 1) \\
&> 0
\end{align*}
by Corollary~\ref{corr:coupling_contact_process}, which allows us to conclude. 
\end{proof}

\subsection{Rescaling time or space in the martingale problem}\label{subsec:rescale_martingale_pb}
As stated in Theorem~\ref{theo:martingale_pb_well_posed}, the martingale problem associated to~$\gyv$ characterizes the~$(\gamma,\nu)$-EpiSLFV process entirely. As an application, we make use of this result to show that the process can be seen as invariant under a rescaling of time or space, which is the content of Proposition~\ref{prop:R0_invariant}. As a start, we show Proposition~\ref{prop:R0_invariant}~(i), which is fairly straightforward once Theorem~\ref{theo:martingale_pb_well_posed} is known. 

\begin{proof}[Proof of Proposition~\ref{prop:R0_invariant}~(i)] Let~$a > 0$. We show that~$(M_{at})_{t \geq 0}$ is a solution to the well-posed martingale problem~$(\gcal^{(a\gamma, a\nu)}, \delta_{M^{0}})$. Let~$\Psi_{F,f}$, $F \in C^{1}(\rmath)$ and~$f \in C_{c}(\rd)$ be a test function for the martingale problem. By definition, we know that
\begin{equation*}
\left(
\Psi_{F,f}(M_{t}) - \Psi_{F,f}(M_{0}) - \int_{0}^{t} \gyv \Psi_{F,f}(M_{s})ds
\right)_{t \geq 0}
\end{equation*}
is a martingale. Let~$t = at' \geq 0$. Observe that
\begin{align*}
&\int_{0}^{t} \gyv \Psi_{F,f}(M_{s})ds \\
&\begin{aligned}
&= \int_{0}^{at'} \gamma \langle f,  1 - \omega_{M_{s}} \rangle F'\left(
\langle f, M_{s} \rangle 
\right)ds \\
& \quad + \int_{0}^{at'} \int_{\rd}\int_{0}^{1}\int_{0}^{\infty} \frac{1}{V_{r}} \int_{\bcal(z,r)} \left(
1 - \omega_{M_{s}}(z')
\right) \times \Big(
\Psi_{F,f}(\Theta_{z,r,u}(\omega_{M_{s}})) - \Psi_{F,f}(\omega_{M_{s}})
\Big) \\
&\hspace{12cm} dz' \nu(dr,du)dzds
\end{aligned}\\
&\begin{aligned}
&= \int_{0}^{t'} a\gamma \langle f, 1 - \omega_{M_{as'}} \rangle F'\left(
\langle f, \omega_{M_{as'}} \rangle
\right)ds' \\
&\quad + \int_{0}^{t'} \int_{\rd} \int_{0}^{1} \int_{0}^{\infty} \frac{a}{V_{r}}  \int_{\bcal(z,r)} \left(
1 - \omega_{M_{as'}}(z')
\right) \times \Big(
\Psi_{F,f}(\Theta_{z,r,u}(\omega_{M_{as'}})) - \Psi_{F,f}(\omega_{M_{as'}})
\Big) \\
&\hspace{12cm} dz' \nu(dr,du)dzds' 
\end{aligned}\\
&= \int_{0}^{t'} \gcal^{(a\gamma,a\nu)} \Psi_{F,f}(M_{as'})ds'. 
\end{align*}
Therefore, for all~$t = at' \geq 0$, 
\begin{align*}
&\Psi_{F,f}(M_{t}) - \Psi_{F,f}(M_{0}) - \int_{0}^{t} \gyv \Psi_{F,f}(M_{s})ds\\
&= \Psi_{F,f}(M_{at'}) - \Psi_{F,f}(M_{0}) - \int_{0}^{t'} \gcal^{(a\gamma,a\nu)} \Psi_{F,f}(M_{as'})ds', 
\end{align*}
from which we conclude that
\begin{equation*}
\left(
\Psi_{F,f}(M_{at'}) - \Psi_{F,f}(M_{0}) - \int_{0}^{t'} \gcal^{(a\gamma,a\nu)} \Psi_{F,f}(M_{as'})ds'
\right)_{t \geq 0}
\end{equation*}
is a martingale. 
\end{proof}

We proceed similarly to show Proposition~\ref{prop:R0_invariant}~(ii), whose proof is slightly more notationally intensive. 
\begin{proof}[Proof of Proposition~\ref{prop:R0_invariant}~(ii)]
Let $b > 0$. We follow the same overall strategy as in the proof of~(i), and aim to show that~$(M_{t}^{[b]})_{t \geq 0}$ is a solution to the martingale problem
\begin{equation*}
(\gcal^{(\gamma,\nu^{\langle b \rangle})}, \delta_{M^{0,[b]}}), 
\end{equation*}
given the fact that $(M_{t})_{t \geq 0}$ is a solution to the martingale problem~$(\gyv, \delta_{M^{0}})$. To do so, for all~$f \in C_{c}(\rd)$, let~$f^{(b)} \in C_{c}(\rd)$ be the function defined as
\begin{equation*}
\forall z \in \rd, f^{(b)}(z) = b^{d} f(bz). 
\end{equation*}
Observe that the function $f \to f^{(b)}$ is a bijection over~$C_{c}(\rd)$. Therefore, it is sufficient to show that for all~$t \geq 0$, $f \in C_{c}(\rd)$ and $F \in C^{1}(\rmath)$, 
\begin{align*}
&\Psi_{F,f}(M_{t}) - \Psi_{F,f}(M_{0}) - \int_{0}^{t} \gyv \Psi_{F,f}(M_{s})ds \\ 
&= \Psi_{F,f^{(b)}}\left(M_{t}^{[b]}\right) - \Psi_{F,f^{(b)}}\left(M_{0}^{[b]}\right) - \int_{0}^{t} \gcal^{(\gamma,\nu^{\langle b \rangle})}\Psi_{F,f^{(b)}}\left(
M_{s}^{[b]}
\right)ds, 
\end{align*}
which can be simplified as showing that for all~$t \geq 0$, $f \in C_{c}(\rd)$ and $F \in C^{1}(\rmath)$, 
\begin{align*}
\text{(A)} \quad & \Psi_{F,f}(M_{t}) = \Psi_{F,f^{(b)}}\left(M_{t}^{[b]}\right), \\
\text{(B)} \quad & \gamma \langle f, 1 - \omega_{M_{t}}\rangle F'\left(
\langle f, \omega_{M_{t}}
\right) = \gamma \langle f^{(b)}, 1 - \omega_{M_{t}^{[b]}} \rangle F'\left(
\langle f^{(b)}, \omega_{M_{t}^{[b]}} \rangle 
\right), \text{ and} \\
\text{(C)} \quad & \int_{\rd}\int_{0}^{1}\int_{0}^{\infty} \frac{1}{V_{r}} \int_{\bcal(z,r)} \left(
1 - \omega_{M_{t}}(z)
\right) \times \left(
\Psi_{F,f}(\Theta_{z,r,u}(\omega_{M_{t}})) - \Psi_{F,f}(\omega_{M_{t}})
\right) dz' \nu(dr,du)dz \\
\text{\textcolor{white}{(D)}} \quad &= \int_{\rd} \int_{0}^{1} \int_{0}^{\infty} \frac{1}{V_{r}} \int_{\bcal(z,r)} \left(
\Psi_{F,f^{(b)}}\left(\Theta_{z,r,u}\left(\omega_{M_{t}^{[b]}}\right)\right) - \Psi_{F,f^{(b)}}\left(
\omega_{M_{t}^{[b]}}
\right)
\right) \\
\text{\textcolor{white}{(D)}} \quad &\textcolor{white}{= \int_{\rd} \int_{0}^{1} \int_{0}^{\infty} \frac{1}{V_{r}} \int_{\bcal(z,r)}} \times \left(
1 - \omega_{M_{t}^{[b]}}(z')
\right) dz'\nu(dr,du)dz.
\end{align*}
Before we prove these different statements, we recall that for~$M \in \ml$, we denote by~$M^{[b]}$ the element of~$\ml$ with density~$\omega_{M^{[b]}}$ satisfying $\forall z \in \rd, \omega_{M^{[b]}}(z) = \omega_{M}(bz)$ and by~$\nu^{\langle b \rangle}$ the~$\sigma$-finite measure on~$(0,\infty)\times (0,1]$ defined as $\nu^{\langle b \rangle}(dr,du) = b^{d} \nu (
d(br),du)$.

\noindent \textsc{Proof of (A)} Let $t \geq 0$, $f \in C_{c}(\rd)$ and $F \in C^{1}(\rmath)$. We have
\begin{align*}
\Psi_{F,f}(M_{t}) &= F\left(
\int_{\rd} f(z) \omega_{M_{t}}(z)dz
\right) \\
&= F\left(
\int_{\rd} b^{d} f(bz') \omega_{M_{t}}(bz')dz'
\right) \\
&= F\left(
\int_{\rd} f^{(b)}(z') \omega_{M_{t}^{[b]}}(z')dz' 
\right) \\
&= \Psi_{F,f^{(b)}}(M_{t}^{[b]}),
\end{align*}
which concludes the proof of (A). 

\noindent \textsc{Proof of (B)} Let $t \geq 0$, $f \in C_{c}(\rd)$ and $F \in C^{1}(\rmath)$. As the proof of~(A) does not rely on having $F \in C^{1}(\rmath)$, we can also apply this result to $F'(\langle f, \omega_{M_{t}} \rangle)$ and obtain that 
\begin{equation*}
F'\left(
\langle f, \omega_{M_{t}} \rangle 
\right) = F' \left(
\langle f^{(b)}, \omega_{M_{t}^{[b]}} \rangle 
\right). 
\end{equation*}
Moreover, 
\begin{align*}
\langle f, 1 - \omega_{M_{t}} \rangle &= \int_{\rd} f(z) dz - \int_{\rd} f(z) \omega_{M_{t}}(z)dz \\
&= \int_{\rd} b^{d} f(bz')dz' - \int_{\rd} b^{d} f(bz') \omega_{M_{t}}(bz')dz' \\
&= \langle f^{(b)}, 1 - \omega_{M_{t}^{[b]}} \rangle .
\end{align*}
Combining these two observations together yields the desired result. 

\noindent \textsc{Proof of (C)} Let $t \geq 0$, $f \in C_{c}(\rd)$ and $F \in C^{1}(\rmath)$. By~(A), we know that
\begin{equation*}
\Psi_{F,f}(\omega_{M_{t}}) = \Psi_{F,f^{(b)}}\left(
M_{t}^{[b]}
\right). 
\end{equation*}
Moreover, observe that for all $z \in \rd$, $r \in (0,+\infty)$ and $u \in (0,1]$, 
\begin{align*}
\int_{\rd} f(y) \Theta_{z,r,u}(\omega_{M_{t}})(y) dy 
&= \int_{\rd} f(y) \omega_{M_{t}}(y)dy - \int_{\rd} u f(y) \mathds{1}_{\bcal(z,r)}(y) \omega_{M_{t}}(y)dy \\
&= \langle f^{(b)}, \omega_{M_{t}^{[b]}} \rangle - \int_{\rd} u b^{d} f(by') \mathds{1}_{\bcal(z,r)}(by') \omega_{M_{t}}(by')dy' \\
&= \langle f^{(b)}, \omega_{M_{t}^{[b]}} \rangle - \int_{\rd} u f^{(b)}(y') \mathds{1}_{\bcal(z/b, r/b)}(y') \omega_{M_{t}^{[b]}}(y')dy' \\
&= \langle f^{(b)}, \Theta_{z/b, r/b, u}(\omega_{M_{t}^{[b]}}) \rangle,
\end{align*}
which implies that
\begin{equation*}
\Psi_{F,f}\left(
\Theta_{z,r,u}(\omega_{M_{t}})
\right) = \Psi_{F,f} \left(
\Theta_{z/b,r/b,u}\left(
\omega_{M_{t}^{[b]}}
\right)
\right). 
\end{equation*}
Therefore, 
\begin{align*}
&\int_{\rd}\int_{0}^{1}\int_{0}^{\infty} \frac{1}{V_{r}} \int_{\bcal(z,r)} \left(
1 - \omega_{M_{t}}(z')
\right) \times \left(
\Psi_{F,f}\left(
\Theta_{z,r,u}(\omega_{M_{t}})
\right) - \Psi_{F,f}\left(\omega_{M_{t}}\right)
\right) dz' \nu(dr,du)dz \\
&= \int_{\rd}\int_{0}^{1}\int_{0}^{\infty} \frac{1}{V_{r}} \int_{\bcal(z,r)}  \left(
\Psi_{F,f}\left(
\Theta_{z/b,r/b,u}\left(
\omega_{M_{t}^{[b]}}
\right)\right) - \Psi_{F,f^{(b)}}\left(\omega_{t}^{[b]}\right)
\right)\\
&\textcolor{white}{= \int_{\rd}\int_{0}^{1}\int_{0}^{\infty} \frac{1}{V_{r}} \int_{\bcal(z,r)}} 
\times \left(
1 - \omega_{M_{t}}(z')
\right)
dz' \nu(dr,du)dz \\
&= \int_{\rd}\int_{0}^{1}\int_{0}^{\infty} \frac{1}{V_{r}} \times \left(
\Psi_{F,f}\left(
\Theta_{z/b,r/b,u}\left(
\omega_{M_{t}^{[b]}}
\right)\right) - \Psi_{F,f^{(b)}}\left(\omega_{t}^{[b]}\right)
\right)  \\
&\textcolor{white}{= \int_{\rd}\int_{0}^{1}\int_{0}^{\infty}} \times \left(
\int_{\rd} \mathds{1}_{\bcal(z,r)}(z') \left(
1 - \omega_{M_{t}}
\right)dz'
\right) \nu(dr,du)dz. 
\end{align*}
A change of variables yields 
\begin{align*}
&\int_{\rd} \int_{0}^{1}\int_{0}^{\infty} \frac{1}{V_{r}} \int_{\bcal(z,r)} \left(
1 - \omega_{M_{t}}(z')
\right) \times \Big(
\Psi_{F,f}(\Theta_{z,r,u}(\omega_{M_{t}})) - \Psi_{F,f}(\omega_{M_{t}})
\Big) dz' \nu(dr,du) dz \\
&= \int_{\rd} \int_{0}^{1}\int_{0}^{\infty} \frac{1}{b^{d}V_{r/b}} \times \Big(
\Psi_{F,f^{(b)}}(\Theta_{z/b,r/b,u}(\omega_{M_{t}^{[b]}})) - \Psi_{F,f^{(b)}}(\omega_{M_{t}^{[b]}})
\Big) \\
&\hspace{3cm} \times \left(
\int_{\rd} \mathds{1}_{\bcal(z/b,r/b)}(z_1)b^{d} (1-\omega_{M_{t}^{[b]}}(z_1))dz_1
\right) \nu(d(b\times r/b),du)dz \\
&= \int_{\rd} \int_{0}^{1}\int_{0}^{\infty} \frac{b^{2d}}{b^{d}V_{r'}} \times \Big(
\Psi_{F,f^{(b)}} (\Theta_{z_2,r',u}(\omega_{M_{t}^{[b]}})) - \Psi_{F,f^{(b)}}(\omega_{M_{t}^{[b]}})
\Big) \\
&\hspace{3cm} \times \left(
\int_{\bcal(z_2,r')}(1 - \omega_{M_{t}^{[b]}}(z_1))dz_1
\right) \nu(d(br'),du) dz_2 \\
&= \int_{\rd} \int_{0}^{1}\int_{0}^{\infty} \frac{1}{V_{r'}} \int_{\bcal(z_2,r')} \left(
1 - \omega_{M_{t}^{[b]}}(z_1)
\right) \\
&\hspace{3cm} \times \Big(
\Psi_{F,f^{(b)}}(\Theta_{z_2,r',u}(\omega_{M_{t}^{[b]}})) - \Psi_{F,f^{(b)}}(\omega_{M_{t}^{[b]}})
\Big) dz_1dz_2\nu^{\langle b \rangle}(dr',du).
\end{align*}
which allows us to conclude. 
\end{proof}

\section{Quenched construction and applications}\label{sec:quenched_SLFV}
The goal of this section is to provide a quenched construction of the~$(\gamma,\nu)$-EpiSLFV process, from which we will be able to derive several additional properties of the process, independently or in conjunction with the martingale problem characterisation. This construction and the associated results will only be valid in a slightly restricted setting, corresponding to Condition~\eqref{eqn:stricter_cond_nu} (which guarantees that any compact area is affected by reproduction events at a finite rate).
To obtain this construction, our strategy will be to \textit{augment} the Poisson point process of reproduction events in order to add two additional sources of randomness: the spatial location of the potential parent chosen as part of the reproduction event, and the sampling of its type. In particular, conditionally on this augmented Poisson point process, what happens during a reproduction event is entirely deterministic. 

This section is structured as follows. In Section~\ref{subsec:quenched_EpiSLFV}, we augment the Poisson point process~$\Pi$ to account for the two additional sources of randomness mentioned above, and use this point process to construct the \textit{quenched $(\gamma,\nu)$-EpiSLFV process}. In Section~\ref{subsec:quenched_equiv_defn}, we show that the quenched process is solution to the martingale problem associated to $\gyv$, which by Theorem~\ref{theo:martingale_pb_well_posed} guarantees that the quenched process is equal in distribution to the $(\gamma,\nu)$-EpiSLFV process as defined so far.
The remaining three subsections then use this alternative construction of the~$(\gamma,\nu)$-EpiSLFV process to show that the process is monotonic in the initial condition (Section~\ref{subsec:quenched_application_monotonicity}), to construct a coupling of the mass of infected individuals with a branching process (Section~\ref{subsec:coupling_branching}), and to show that the process goes extinct when~$\mathrm{R}_0(\gamma,\nu) < 1$ (Section~\ref{subsec:application_R0_inf_1}). We recall that all these results will only be shown under the stronger condition~\eqref{eqn:stricter_cond_nu}. 

\subsection{The quenched \texorpdfstring{$(\gamma,\nu)$}{}-EpiSLFV process}\label{subsec:quenched_EpiSLFV}
As a start, we augment our Poisson point process~$\Pi$ of reproduction events to include additional sources of randomness. To do so, we first recall that this point process of reproduction events is defined on~$\rmath \times \rd \times (0,\infty) \times (0,1]$ and with intensity
\begin{equation*}
    dt \otimes dz \otimes \nu(dr,du). 
\end{equation*}
In the more general case, $\nu$ satisfies~\eqref{eqn:cond_nu}, but within the framework of this section, we need~$\nu$ to satisfy the stricter condition~\eqref{eqn:stricter_cond_nu}. 

To each point~$(t,z,r,u) \in \Pi$, we associate the following new random variables, which are independent from the other points in~$\Pi$ and the other new random variables:
\begin{itemize}
    \item a random variable~$p$ uniformly distributed in~$\bcal(z,r)$, independent of~$(t,u)$; 
    \item a random variable $a \sim \text{Unif}(0,1)$, independent of~$(t,z,r,u)$. 
\end{itemize}
We obtain what we will call an \textit{augmented Poisson point process}~$\Pi^{(aug)}$, defined on 
\begin{equation*}
\rmath \times \rd \times (0,\infty) \times (0,1] \times \rd \times (0,1)
\end{equation*}
and with points of the form~$(t,z,r,u,p,a)$. 

We now move on to the construction of the quenched~$(\gamma,\nu)$-EpiSLFV process. To do so, our strategy is to construct it as a density-valued process~$(\omega_{t})_{t \geq 0}$ that can then be converted into a measure-valued process. More precisely, for all~$z \in \rd$, we construct the process~$(\omega_{t}(z))_{t \geq 0}$, and we "glue" all these processes together to obtain a density defined over~$\rd$. 

\begin{definition}\label{defn:quenched_EpiSLFV} Assume that~$\nu$ satisfies \eqref{eqn:stricter_cond_nu}. Let $\omega^{0} : \rd \to [0,1]$ be measurable. The quenched~$(\gamma,\nu)$-EpiSLFV process~$(\omega_{t})_{t \geq 0}$ with initial condition~$\omega^{0}$ and constructed using the augmented Poisson point process~$\Pi^{(aug)}$ is the process such that for all~$z \in \rd$, $(\omega_{t}(z))_{t \geq 0}$ is defined as follows. 
\begin{itemize}
    \item Let $(T_{n},Z_{n},R_{n},U_{n},P_{n},A_{n})_{n \geq 1}$ be the ordered sequence of reproduction events in the augmented Poisson point process~$\Pi^{(aug)}$ that occur after time~$T_{0} = 0$ and affect location~$z$ (i.e., such that~$z \in \bcal(Z_{n},R_{n})$). 
    \item First we set~$\omega_{0}(z) = \omega^{0}(z)$. 
    \item For all $n \geq 0$, for all $t \in [T_{n},T_{n+1})$, we set
    \begin{equation*}
        \omega_{t}(z) = \omega_{T_{n}}(z) + \left(
1 - \omega_{T_{n}}(z)
        \right) \times \left(
1 - e^{-\gamma(t-T_{n})}
        \right)
    \end{equation*}
    and we conclude by setting
    \begin{equation*}
    \omega_{T_{n+1}}(z) = \left(
1 - U_{n+1}
    \right) \omega_{T_{n+1}-}(z) + U_{n} \omega_{T_{n+1}-}(z) \times \mathds{1}_{\{ 
A_{n} \leq \omega_{T_{n+1}-}(P_{n})
    \}}.
    \end{equation*}
\end{itemize}
\end{definition}

\begin{lemma}\label{lem:quenched_EpiSLFV_measurevalued} 
Under Condition~\eqref{eqn:stricter_cond_nu}, the process introduced in Definition~\ref{defn:quenched_EpiSLFV} is (almost surely) well-defined. For all~$t \geq 0$, the function~$z \in \rd \to \omega_{t}(z)$ is measurable and~$[0,1]$-valued. In particular, if for all~$t \geq 0$ we set
\begin{equation*}
M_{t}(dz, A) = \left(
\omega_{t}(z) \mathds{1}_{\{ 
0 \in A
\}} + \left(
1 - \omega_{t}(z)
\right) \mathds{1}_{\{ 
1 \in A
\}}
\right)dz
\end{equation*}
for all $z \in \rd$ and $A \subseteq \{0,1\}$, then for all~$t \geq 0$, $M_{t} \in \ml$. 
\end{lemma}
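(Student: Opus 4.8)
The plan is to construct $(\omega_t)_{t\geq 0}$ pointwise from the recursive rule of Definition~\ref{defn:quenched_EpiSLFV}, then to verify that the field $z\mapsto\omega_t(z)$ is measurable and $[0,1]$-valued, and finally to deduce $M_t\in\ml$ from the fact that its $\rd$-marginal is Lebesgue measure. The starting point is that, under the stricter Condition~\eqref{eqn:stricter_cond_nu}, a fixed location $z$ is covered by reproduction events at the finite rate $\int_0^1\int_0^\infty V_r\,\nu(dr,du)<+\infty$ (since $V_r$ is proportional to $r^d$). Hence, for each fixed $z$, the events $(T_n,Z_n,R_n,U_n,P_n,A_n)_{n\geq1}$ affecting $z$ do not accumulate, their times can be ordered with $T_n\to+\infty$ almost surely, and the inter-event dynamics of Definition~\ref{defn:quenched_EpiSLFV} make sense.

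The main obstacle is that the jump rule is \emph{not} self-contained at a single location: the update at time $T_{n+1}$ involves the value $\omega_{T_{n+1}-}(P_{n+1})$ of the field at the parent location $P_{n+1}\neq z$. To evaluate $\omega_t(z)$ one must therefore resolve a \emph{dependency tree} rooted at $(z,t)$, in which a node $(x,\tau)$ branches backwards in time to a node $(P,\tau')$ for each event affecting $x$ with jump time $\tau'<\tau$ and parent location $P$. I would prove that this tree is almost surely finite, whence the recursion terminates: the finitely many leaves are locations untouched by any event in the relevant window, where the value is determined by $\omega^0$ and the recovery flow, and propagating the indicators upwards returns $\omega_t(z)$ in finitely many steps. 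Finiteness is obtained exactly as in the proof of Lemma~\ref{lem:dual_process_well_defined}: the number of active nodes increases by at most one per covering event, each active node is covered at rate at most $\int_0^1\int_0^\infty V_r\,\nu(dr,du)$, so the node count is dominated by a non-explosive Yule process and stays finite on $[0,t]$ almost surely. A Tonelli argument on the product of the underlying probability space with $(\rd,\mathrm{Leb})$ then turns ``for each fixed $z$, almost surely well-defined'' into ``almost surely, for Lebesgue-almost every $z$, well-defined'', which is all that is needed since $\omega_t$ only matters up to a Lebesgue-null set; on the residual null set one sets $\omega_t(z):=0$.

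For measurability I would truncate to events of radius exceeding $1/n$: denoting this sub-process $\Pi^{(aug)}_n$, Condition~\eqref{eqn:stricter_cond_nu} gives $\int_{r>1/n}\int_0^1\nu(dr,du)\leq n^d\int_{r>1/n}\int_0^1 r^d\,\nu(dr,du)<+\infty$, so only finitely many events of $\Pi^{(aug)}_n$ affect any bounded region on $[0,t]$ and the resulting field $\omega^{(n)}_t$ is a finite, jointly measurable composition of the event data. Since the dependency tree of $(z,t)$ is almost surely finite it uses events of some strictly positive minimal radius, so $\omega^{(n)}_t(z)=\omega_t(z)$ for all large $n$, and $\omega_t=\lim_n\omega^{(n)}_t$ is measurable in $z$. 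That $\omega_t(z)\in[0,1]$ follows by induction along the events affecting $z$: one starts from $\omega^0(z)\in[0,1]$, the inter-event flow $\omega\mapsto 1-(1-\omega)e^{-\gamma s}$ ($s\geq 0$) maps $[0,1]$ into $[0,1]$, and each jump rescales the value by a factor in $[1-U_{n+1},1]\subseteq[0,1]$. With $\omega_t$ measurable and $[0,1]$-valued, the measure $M_t$ of the statement has $\rd$-marginal $M_t(dz,\{0,1\})=\big(\omega_t(z)+1-\omega_t(z)\big)\,dz=dz$, the Lebesgue measure, so $M_t\in\ml$ by definition of $\ml$.
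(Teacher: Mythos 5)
Your proposal is correct and follows essentially the same route as the paper's proof: both reduce well-definedness to the almost-sure finiteness of the backwards dependency structure (values of $\omega^{0}$ needed to evaluate parent locations) and dominate it by a Yule process branching at rate $\int_{0}^{1}\int_{0}^{\infty} V_{r}\,\nu(dr,du)$, which is finite under Condition~\eqref{eqn:stricter_cond_nu}. The additional steps you supply (the Tonelli argument passing from fixed $z$ to Lebesgue-a.e.\ $z$, the radius truncation for measurability, and the $[0,1]$-invariance check) are refinements of details the paper leaves implicit rather than a different method.
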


\begin{proof}
Notice that due to the Poisson point process-based construction, once we show that the process is well-defined, the rest will follow directly. Moreover, to show that the process is well-defined, again due to the underlying Poisson point process, it is sufficient to show that almost surely, for all~$t \geq 0$, for all~$(t',z',r',u',p',a') \in \Pi^{(aug)}$ such that~$0 \leq t' \leq t$, the value of~$\omega_{t'-}(p')$ only depends on the values of~$\omega^{0}$ at a finite number of locations. 

Let $t \geq 0$ and~$(t',z',r',u',p',a') \in \Pi^{(aug)}$ such that~$0 \leq t' \leq t$. Then, $\omega_{t'-}(p')$ depends on the values of~$\omega^{0}$ at a number of locations which is bounded from above by the number of particles at time~$t$ in a Yule process in which each particle branches in two at rate
\begin{equation*}
    \int_{0}^{1}\int_{0}^{\infty} V_{r} \nu(dr,du), 
\end{equation*}
which is finite by~\eqref{eqn:stricter_cond_nu}. This number of particles is almost surely finite, and we conclude using the fact that the number of points in~$\Pi^{(aug)}$ such that~$0 \leq t' \leq t$ is almost surely countable. 
\end{proof}

The above lemma shows that while the quenched process is defined as a density-valued process, it can be rephrased as a measure-valued process taking its values in~$\ml$. Moreover, we can show that as in the case of the process defined in Definition~\ref{defn:EpiSLFV}, this measure-valued process is càdlàg. 

\begin{lemma}\label{lem:quenched_EpiSLFV_cadlag} The measure-valued process~$(M_{t})_{t \geq 0}$ introduced in Lemma~\ref{lem:quenched_EpiSLFV_measurevalued} satisfies 
\begin{equation*}
(M_{t})_{t \geq 0} \in D_{\ml}[0,+\infty).
\end{equation*}
\end{lemma}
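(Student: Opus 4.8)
The plan is to reduce càdlàg-ness in the vague topology on $\ml$ to the càdlàg property of the real-valued maps $t \mapsto \langle f, \omega_{M_t}\rangle$, $f \in C_c(\rd)$, and then to establish the latter directly from the pointwise construction of Definition~\ref{defn:quenched_EpiSLFV} together with dominated convergence. First I would record that any test function $g \in C_c(\rd \times \{0,1\})$ splits as $g(\cdot,0)=g_0$, $g(\cdot,1)=g_1$ with $g_0,g_1 \in C_c(\rd)$, so that $\int g\,dM = \langle g_1,1\rangle + \langle g_0-g_1,\omega_M\rangle$. For a fixed $g$ the term $\langle g_1,1\rangle$ is a constant, whence $\int g\,dM_n \to \int g\,dM$ holds for all such $g$ if and only if $\langle f,\omega_{M_n}\rangle \to \langle f,\omega_M\rangle$ for all $f \in C_c(\rd)$. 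Since the vague topology on $\ml$ is metrizable, it therefore suffices to show that for each fixed $f \in C_c(\rd)$ the map $t \mapsto \langle f,\omega_{M_t}\rangle = \int_{\rd} f(z)\omega_t(z)\,dz$ is right-continuous with left limits, and that those left limits are attained within $\ml$.

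Next I would work at the level of a single site. For fixed $z \in \rd$, the rate at which reproduction events affect $z$ is $\int_0^1\int_0^\infty V_r\,\nu(dr,du)$, which is finite by Condition~\eqref{eqn:stricter_cond_nu}. Hence, almost surely, for each fixed $z$ the events affecting $z$ are locally finite in time, and by Fubini the same holds on one almost sure event for Lebesgue-almost every $z$ simultaneously. For such $z$, Definition~\ref{defn:quenched_EpiSLFV} presents $s \mapsto \omega_s(z)$ as the continuous recovery interpolation $\omega_{T_n}(z) + (1-\omega_{T_n}(z))(1-e^{-\gamma(s-T_n)})$ on each interval $[T_n,T_{n+1})$, with a jump at each event time. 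In particular $s \mapsto \omega_s(z)$ is a genuine càdlàg real path, right-continuous and admitting a left limit $\omega_{t-}(z) := \lim_{s\uparrow t}\omega_s(z)$ at every $t$; crucially the set of ``bad'' $z$ (where this fails) is a single null set independent of $t$.

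I would then integrate in $z$. For right-continuity, fix $t$ and a sequence $s_n \downarrow t$: for a.e.\ $z$ one has $\omega_{s_n}(z)\to\omega_t(z)$, and since $|f(z)\omega_{s_n}(z)| \leq \|f\|_{\infty}\mathds{1}_{\mathrm{Supp}(f)}(z)$ is integrable, dominated convergence gives $\langle f,\omega_{s_n}\rangle \to \langle f,\omega_t\rangle$; as this holds for every such sequence, $\langle f,\omega_s\rangle \to \langle f,\omega_t\rangle$ as $s\downarrow t$. For left limits, letting $s\uparrow t$, the same domination and pointwise convergence $\omega_s(z)\to\omega_{t-}(z)$ yield $\langle f,\omega_s\rangle \to \int_{\rd} f(z)\omega_{t-}(z)\,dz$. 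The map $z\mapsto\omega_{t-}(z)$ is a pointwise limit of the measurable, $[0,1]$-valued maps $\omega_s$ furnished by Lemma~\ref{lem:quenched_EpiSLFV_measurevalued}, hence is itself measurable and $[0,1]$-valued, so it defines an element $M_{t-}\in\ml$ with $M_s \to M_{t-}$ vaguely. Quantifying over all $f \in C_c(\rd)$ gives right-continuity of $(M_t)$ and existence of left limits in $\ml$, i.e.\ $(M_t)_{t\geq 0} \in D_{\ml}[0,+\infty)$.

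The hard part to get right, and the reason the pointwise viewpoint is the right device, is that the full process may undergo infinitely many (small) reproduction events intersecting $\mathrm{Supp}(f)$ in a bounded time window, so one cannot present $t\mapsto\langle f,\omega_t\rangle$ as a continuous piece plus finitely many jumps, nor argue via a finite total-variation bound on its jumps. Instead one exploits that at each individual location only finitely many events occur, making the per-site path manifestly càdlàg, and lets the integrated functional inherit the property purely through dominated convergence. The only genuinely delicate step is the Fubini argument upgrading ``for each fixed $z$, almost surely locally finitely many events'' to ``almost surely, for a.e.\ $z$'', which is exactly what produces a single almost sure event, with a $t$-independent exceptional null set of sites, on which the dominated convergence arguments are valid for all $t$ simultaneously.
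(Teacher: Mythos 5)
Your proof is correct, and it takes a genuinely different route from the paper's. The paper fixes a convergence determining class $(f_m)_{m\geq 0}$ and argues that, by Condition~\eqref{eqn:stricter_cond_nu}, the support of each $f_m$ is intersected by reproduction events at a finite rate; it can then bracket any time $t$ strictly between the last event hitting $\mathrm{Supp}(f_m)$ before $t$ and the next one after $t$, and on that interval $s\mapsto\langle \omega_s,f_m\rangle$ is the explicit exponential recovery flow, so right-continuity and left limits are read off directly. You instead take the single site as the basic unit: a fixed $z$ is covered by events at rate $\int_0^1\int_0^\infty V_r\,\nu(dr,du)<+\infty$ under~\eqref{eqn:stricter_cond_nu}, Fubini yields one almost sure event on which $s\mapsto\omega_s(z)$ is càdlàg for Lebesgue-a.e.\ $z$ simultaneously (with a $t$-independent null set), and dominated convergence transfers this to $\langle f,\omega_\cdot\rangle$ for every $f\in C_c(\rd)$ and identifies the left limit as an element of~$\ml$. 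The paper's argument is shorter and more explicit; yours is strictly more robust, and the obstruction you flag is real: finiteness of the rate at which events intersect a \emph{positive-volume} compact is equivalent to $\int_0^1\int_0^\infty(1\vee r^d)\,\nu(dr,du)<+\infty$, i.e.\ to $\nu$ being a finite measure in addition to~\eqref{eqn:stricter_cond_nu}, and this does not follow from~\eqref{eqn:stricter_cond_nu} alone (take $\nu(dr,du)=\sum_{n\geq 1}n^{-2}2^{nd}\,\delta_{2^{-n}}(dr)\,\delta_{1}(du)$, which satisfies~\eqref{eqn:stricter_cond_nu} but has infinite total mass, so every set of positive volume is hit at infinite rate). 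In that regime the bracketing times $T_{-}^{(m)}(t)$, $T_{+}^{(m)}(t)$ of the paper's proof do not exist, while your argument --- which only uses the per-site rate, the same quantity the paper itself invokes in the proof of Lemma~\ref{lem:quenched_EpiSLFV_measurevalued} --- goes through unchanged; in this sense your proof also repairs a gap in the paper's. One small caveat: your parenthetical claim that no argument via summability of jumps could work is too strong (under~\eqref{eqn:cond_nu} the expected total jump mass of $\langle f,\omega_\cdot\rangle$ over a bounded window is finite), but nothing in your proof depends on that remark.
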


\begin{proof}
We follow the structure of the proof below Definition~3.4 in~\cite[P.237]{louvet2023stochastic}, that we include for the sake of completeness. 

Let $(f_{m})_{m \geq 0}$ be a convergence determining class. We want to show that for all~$m \geq 0$ and~$n \geq 1$, almost surely for all~$t \in [0,n]$, 
\begin{equation*}
\lim\limits_{s \uparrow t} \langle 
\omega_{s}, f_{m}
\rangle \text{ and } \lim\limits_{s \downarrow t} \langle 
\omega_{s}, f_{m}
\rangle 
\text{ exist, }
\end{equation*}
and that the latter is equal to $\langle 
\omega_{t}, f_{m}
\rangle$. 

Let~$m \geq 0$ and~$n \geq 1$. By Condition~\eqref{eqn:stricter_cond_nu}, the support of~$f_{m}$ is intersected by reproduction events at a finite rate, so we can almost surely define the last time~$T_{-}^{(m)}(t)$ (\textit{resp.}, the next time~$T_{+}^{(m)}(t)$) strictly before time~$t$ (\textit{resp.}, strictly after time~$t$) at which the support of~$f_{m}$ is intersected by a reproduction event, and we almost surely have for all~$t \in [0,n]$, 
\begin{equation*}
T_{-}^{(m)}(t) < t < T_{+}^{(m)}(t). 
\end{equation*}
We have
\begin{align*}
\lim\limits_{s \uparrow t} \hspace{0.2cm} \langle 
\omega_{s}, f_{m}
\rangle &= \lim\limits_{s \uparrow t} \hspace{0.2cm} \left\langle 
\omega_{T_{-}^{(m)}(t)} + \left(
1 - \omega_{T_{-}^{(m)}(t)}
\right) \times \left(
1 - e^{-\gamma(s-T_{-}^{(m)}(t))}
\right), f_{m}
\right\rangle \\
&= \left\langle 
\omega_{T_{-}^{(m)}(t)} + \left(
1 - \omega_{T_{-}^{(m)}(t)}
\right) \times \left(
1 - e^{-\gamma(t-T_{-}^{(m)}(t))}
\right)
, f_{m}
\right\rangle. 
\end{align*}
For the second limit, we distinguish two cases:
\begin{itemize}
    \item If the support of~$f_{m}$ is intersected by a reproduction event at time~$t$ (which is bound to occur for some of the~$t \in [0,n]$ unless~$n$ is very small), as $t < T_{+}^{(m)}(t)$, we have
\begin{align*}
\lim\limits_{s \downarrow t} \hspace{0.2cm} \langle \omega_{s}, f_{m} \rangle &= \lim\limits_{s \downarrow t} \left\langle
\omega_{t} + (1-\omega_{t})\times \left(
1 - e^{-\gamma(s-t)}
\right)
, f_{m}
\right\rangle \\
&= \hspace{0.2cm}\langle \omega_{t}, f_{m} \rangle, 
\end{align*}
which is the desired result. 
    \item Otherwise we have
\begin{align*}
\lim\limits_{s \downarrow t} \hspace{0.2cm}\langle \omega_{s}, f_{m} \rangle &= \lim\limits_{s \downarrow t} \hspace{0.2cm} \left\langle 
\omega_{T_{-}^{(m)}(t)} + \left(
1 - \omega_{T_{-}^{(m)}(t)}
\right) \times \left(
1 - e^{-\gamma(s-T_{-}^{(m)}(t))}
\right)
, f_{m}
\right\rangle \\
&= \left\langle 
\omega_{T_{-}^{(m)}(t)} + \left(
1 - \omega_{T_{-}^{(m)}(t)}
\right) \times \left(
1 - e^{-\gamma(t-T_{-}^{(m)}(t))}
\right)
, f_{m}
\right\rangle \\
&= \langle \omega_{t}, f_{m} \rangle, 
\end{align*}
which allows us to conclude. \qedhere
\end{itemize}
\end{proof}

To conclude this first part, we show that the quenched~$(\gamma,\nu)$-EpiSLFV process is Markovian as a density-valued process. This will be a direct consequence of the underlying Poisson point process structure and of the deterministic exponential decrease of the number of infected individuals between reproduction events. 

\begin{lemma}\label{lem:exponential_decrease} Under the notation of Definition~\ref{defn:quenched_EpiSLFV}, for all~$z \in \rd$ and $0 \leq t < t'$, if~$z$ is not affected by a reproduction event over the time interval~$[t,t']$, then
\begin{equation*}
\omega_{t'}(z) = \omega_{t}(z) + \left(
1 - \omega_{t}(z)
\right) \times \left(
1 - e^{-\gamma(t'-t)}
\right). 
\end{equation*}
As a consequence, the process~$(\omega_{t})_{t \geq 0}$ is Markovian. 
\end{lemma}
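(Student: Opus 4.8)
The plan is to first rewrite the between-event dynamics in a more transparent form, and then deduce both assertions from it. Introduce the \emph{recovery flow} $\phi_\tau(x) := 1 - (1-x)e^{-\gamma\tau}$ for $\tau \geq 0$ and $x \in [0,1]$. With this notation, the interpolation rule in Definition~\ref{defn:quenched_EpiSLFV} reads $\omega_s(z) = \phi_{s-T_n}(\omega_{T_n}(z))$ for $s \in [T_n, T_{n+1})$, or equivalently $1 - \omega_s(z) = (1 - \omega_{T_n}(z))\,e^{-\gamma(s - T_n)}$: the local density of infected individuals decays exponentially at rate $\gamma$ between two consecutive events affecting $z$. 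The first thing I would record is that $\phi$ is a flow, i.e. $\phi_{\tau_1 + \tau_2} = \phi_{\tau_2} \circ \phi_{\tau_1}$, which is the one-line computation $\phi_{\tau_2}(\phi_{\tau_1}(x)) = 1 - (1 - \phi_{\tau_1}(x))e^{-\gamma\tau_2} = 1 - (1-x)e^{-\gamma(\tau_1 + \tau_2)}$.

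For the explicit formula, fix $z$ and $0 \leq t < t'$ such that no reproduction event affects $z$ at a time in $(t,t']$. Let $n$ be the index with $t \in [T_n, T_{n+1})$. Since the times $(T_m)_m$ are exactly the events affecting $z$ and none of them lies in $(t, t']$, the next such time $T_{n+1}$ must satisfy $T_{n+1} > t'$, so both $t$ and $t'$ belong to the same inter-event interval $[T_n, T_{n+1})$. Applying the interpolation rule at both times and then the flow property gives
\[
\omega_{t'}(z) = \phi_{t' - T_n}(\omega_{T_n}(z)) = \phi_{t' - t}\big(\phi_{t - T_n}(\omega_{T_n}(z))\big) = \phi_{t'-t}(\omega_t(z)),
\]
which is precisely the claimed identity after unfolding the definition of $\phi$.

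For the Markov property, the key point---and the reason it is a genuine consequence of the formula above---is that the flow property shows the evolution between events depends on the past only through the current value $\omega_t(z)$, and not on the hidden elapsed time since the last event affecting $z$. I would make this precise by observing that the construction of Definition~\ref{defn:quenched_EpiSLFV} is a deterministic measurable functional $\Psi$ of the initial density and of the augmented Poisson point process $\Pi^{(aug)}$, and that it is \emph{forward-compatible}: restarting the same rule at a fixed time $t$ from the density $\omega_t$, using only the points of $\Pi^{(aug)}$ with time coordinate in $(t, \infty)$, reproduces $(\omega_{t+s})_{s \geq 0}$. Indeed, at any location the exponential-decay identity lets us resume the recovery dynamics from $\omega_t(z)$ at time $t$ exactly as if $t$ were an event time, and each subsequent event update at a parental location $P$ requires only $\omega_{t}(P)$ together with the later events, which is legitimate by the finite look-back established in Lemma~\ref{lem:quenched_EpiSLFV_measurevalued}. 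Since $\Pi^{(aug)}$ restricted to $(t,\infty)$ is independent of its restriction to $(-\infty, t]$ and of $\omega^0$, by the independence property of Poisson point processes on disjoint regions, while $\omega_t$ is measurable with respect to the information up to time $t$, conditioning a bounded functional of the future path on the past reduces to a function of $\omega_t$ alone, giving $\ebold[\,\cdot\mid \fcal_t] = \ebold[\,\cdot\mid \omega_t]$.

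The main obstacle I anticipate is the Markov step rather than the formula: one must argue cleanly that the construction can be re-initialised at the deterministic time $t$ from $\omega_t$ using only the post-$t$ points, and here it is essential that the between-event dynamics carry no memory of the time since the last event---exactly the content of the first part. The measurability and almost-sure finiteness needed to make the restarted functional well-defined are inherited from Lemma~\ref{lem:quenched_EpiSLFV_measurevalued}, so no new estimate is required; the work is purely in organising the independence and forward-compatibility argument.
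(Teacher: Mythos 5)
Your proposal is correct and follows essentially the same route as the paper: the explicit formula is obtained by expressing both $\omega_t(z)$ and $\omega_{t'}(z)$ in terms of the value at the last event time before $t$ (your semigroup identity for $\phi_\tau$ is just a tidier packaging of the paper's direct algebraic verification), and the Markov property is deduced by re-initialising the deterministic construction at time $t$ from $\omega_t$ using only the post-$t$ points of $\Pi^{(aug)}$, with the finite look-back of Lemma~\ref{lem:quenched_EpiSLFV_measurevalued} making this legitimate. The only cosmetic difference is that you make the independence of the Poisson point process on disjoint time regions explicit, which the paper leaves implicit.
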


\begin{proof}
Let~$z \in \rd$ and $0 \leq t < t'$. 
Assume that there is no reproduction event in~$\Pi^{(aug)}$ intersecting~$z$ over the time interval~$[t,t']$, and let~$0 \leq T$ be the last time before time~$t$ at which~$z$ was affected by a reproduction event. We set~$T = 0$ if there is no such reproduction event. Then, 
\begin{align*}
\omega_{t'}(z) &= \omega_{T}(z) + \left(
1 - \omega_{T}(z)
\right) \times \left(
1 - e^{-\gamma(t'-T)}
\right) \\
\intertext{and}
\omega_{t}(z) &= \omega_{T}(z) + \left(
1 - \omega_{T}(z)
\right) \times \left(
1 - e^{-\gamma(t-T)}
\right), \\
\intertext{so}
\omega_{t'}(z) - \omega_{t}(z) &= \left(
1 - \omega_{T}(z)
\right) \times \left(
e^{-\gamma(t-T)} - e^{-\gamma(t'-T)}
\right). 
\end{align*}
As 
\begin{align*}
&\left(
1 - \omega_{t}(z)
\right) \times \left(
1 - e^{-\gamma(t'-t)}
\right)\\
&= \left(
1 - \omega_{T}(z) - \left(
1- \omega_{T}(z)
\right) \times \left(
1 - e^{-\gamma(t-T)}
\right)
\right) \times \left(
1 - e^{-\gamma(t'-t)}
\right) \\
&= \left(
1 - \omega_{T}(z)
\right) \times e^{-\gamma(t-T)} \times \left(
1 - e^{-\gamma(t'-t)}
\right) \\
&= \left(
1 - \omega_{T}(z)
\right) \times \left(
e^{-\gamma(t-T)} - e^{-\gamma(t'-T)}
\right), 
\end{align*}
we have
\begin{equation*}
\omega_{t'}(z) - \omega_{t}(z) = \left(
1 - \omega_{t}(z)
\right) \times \left(
1 - e^{-\gamma(t'-t)}
\right), 
\end{equation*}
which allows us to conclude the first part of the lemma. 

We now show that~($\omega_{t})_{t \geq 0}$ is Markovian. To do so, let~$0 \leq t < t'$. Our goal is to show that~$\omega_{t'}$ can be written as a function of~$\omega_{t}$ and the events in~$\Pi^{(aug)}$ occurring over the time interval~$[t,t']$, whose number is almost surely countable (and whose set will be denoted~$\Pi^{(aug)} \cap [t,t']$ in order to ease notation). 

By the first part of the lemma and by the same reasoning as in the proof of Lemma~\ref{lem:quenched_EpiSLFV_measurevalued}, for all~$(\tilde{t}, \tilde{z}, \tilde{r}, \tilde{u}, \tilde{p}, \tilde{a}) \in \Pi^{(aug)} \cap [t,t']$, the value~$\omega_{\tilde{t}-}(\tilde{p})$ depends on the values of~$\omega_{t}$ at a number of locations which is almost surely finite. Moreover, for all~$z \in \rd$, we distinguish two cases:
\begin{itemize}
    \item If~$z$ is not affected by a reproduction event over the time interval~$[t,t']$, we can apply the first part of the lemma and conclude. 
    \item Otherwise, $\omega_{t'}(z)$ depends on~$\omega_{t}(z)$ as well as on the value of~$\omega_{\tilde{t}-}(\tilde{p})$ for an almost surely finite number of reproduction events in~$\Pi^{(aug)} \cap [t,t']$. 
\end{itemize}
This allows us to conclude that the process is Markovian. 
\end{proof}

\subsection{Equivalence of the definitions of the \texorpdfstring{$(\gamma,\nu)$}{}-EpiSLFV process}\label{subsec:quenched_equiv_defn}
We now show that under Condition~\eqref{eqn:stricter_cond_nu}, the quenched~$(\gamma,\nu)$-EpiSLFV process that we just introduced in Definition~\ref{defn:quenched_EpiSLFV} is equal in distribution to the original~$(\gamma,\nu)$-EpiSLFV process from Definition~\ref{defn:EpiSLFV}. Since the original process is characterised as the unique solution to a well-posed martingale problem, this amounts to showing the following result.

\begin{proposition}\label{prop:quenched_EpiSLFV_OK} Assume that~$\nu$ satisfies~\eqref{eqn:stricter_cond_nu}. Let~$\omega^{0} : \rd \to [0,1]$ be measurable. Let~$(\omega_{t})_{t \geq 0}$ be the quenched~$(\gamma,\nu)$-EpiSLFV process with initial condition~$\omega^{0}$ constructed using~$\Pi^{(aug)}$, and let~$(M_{t})_{t \geq 0}$ be the associated measure-valued process. Then, $(M_{t})_{t \geq 0}$ is solution to the martingale problem~$(\gyv, \delta_{M_{0}})$. 
\end{proposition}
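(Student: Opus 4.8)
The plan is to verify directly that for every test function $\Psi_{F,f}$ with $F \in C^1(\rmath)$ and $f \in C_c(\rd)$, the process $\big(\Psi_{F,f}(M_t) - \Psi_{F,f}(M_0) - \int_0^t \gyv\Psi_{F,f}(M_s)\,ds\big)_{t \geq 0}$ is a martingale; since this is exactly the defining requirement of the martingale problem $(\gyv,\delta_{M_0})$, establishing it for all such $\Psi_{F,f}$ suffices. We already know from Lemma~\ref{lem:quenched_EpiSLFV_cadlag} that $(M_t)_{t\geq 0}$ is càdlàg and from Lemma~\ref{lem:exponential_decrease} that it is Markovian, so the strategy is to decompose the increment $\Psi_{F,f}(M_t) - \Psi_{F,f}(M_0)$ along the Poissonian construction into a deterministic drift part and a pure-jump part, and to match their contributions with the two terms of $\gyv\Psi_{F,f}$.

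First I would treat the drift. By Condition~\eqref{eqn:stricter_cond_nu} the support of $f$ is intersected by reproduction events at a finite rate, so between consecutive such events the density $\omega_t(z)$ for $z \in \mathrm{Supp}(f)$ obeys the recovery ODE $\tfrac{d}{dt}\omega_t(z) = \gamma(1 - \omega_t(z))$, as follows by differentiating the interpolation formula in Definition~\ref{defn:quenched_EpiSLFV}. Hence between events $\tfrac{d}{dt}\langle f, \omega_t\rangle = \gamma\langle f, 1 - \omega_t\rangle$, and the chain rule gives $\tfrac{d}{dt}\Psi_{F,f}(M_t) = \gamma\langle f, 1-\omega_t\rangle\, F'(\langle f,\omega_t\rangle)$, which is precisely the first term of $\gyv\Psi_{F,f}$.

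Next I would handle the jumps by a compensation argument for the Poisson point process $\Pi^{(aug)}$. At a point $(s,z,r,u,p,a) \in \Pi^{(aug)}$, reading off Definition~\ref{defn:quenched_EpiSLFV}, the density over $\mathrm{Supp}(f)$ is replaced by $\Theta_{z,r,u}(\omega_{s-})$ precisely when the sampled parent is infected, that is on the event $\{a > \omega_{s-}(p)\}$, so the jump of $\Psi_{F,f}$ equals $\mathds{1}_{\{a > \omega_{s-}(p)\}}\big[\Psi_{F,f}(\Theta_{z,r,u}(\omega_{s-})) - \Psi_{F,f}(\omega_{s-})\big]$. This integrand is predictable, since $\omega_{s-}$ is left-continuous and adapted while $p,a$ are the fresh marks carried by the point. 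Integrating $a$ over $(0,1)$ produces the factor $1 - \omega_{s-}(p)$, integrating $p$ uniformly over $\bcal(z,r)$ produces $\tfrac{1}{V_r}\int_{\bcal(z,r)}(1-\omega_{s-}(z'))\,dz'$, and integrating the base intensity $dz\,\nu(dr,du)$ over $s\in[0,t]$ then yields exactly the time-integral of the second term of $\gyv\Psi_{F,f}$. The compensation formula guarantees that the pure-jump part minus this compensator is a martingale.

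Finally I would combine the two parts and address integrability. Adding the drift identity to the compensated-jump martingale produces the asserted martingale, provided the compensator is integrable so that the local martingale is a true one; this follows from the uniform bound on $|\gyv\Psi_{F,f}(M)|$ obtained exactly as in Step~1 of the proof of Lemma~\ref{lem:extended_martingale} (equivalently, from the finite-variation estimates of Lemma~\ref{lem:tight_sequence_slfvs}), together with the finite event rate on $\mathrm{Supp}(f)$. The main obstacle is the rigorous justification of this compensation step: one must argue carefully that the jump integrand, which depends on the pre-jump value $\omega_{s-}(p)$ at the randomly sampled parent location $p$, is predictable for the filtration generated by $\Pi^{(aug)}$, and check the integrability needed to upgrade the local martingale to a genuine martingale. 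The deterministic-drift and jump-rate computations are then routine given the explicit construction.
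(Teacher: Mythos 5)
Your proposal is correct, but it follows a genuinely different route from the paper. The paper's proof first invokes the Markov property (Lemma~\ref{lem:exponential_decrease}) to reduce the claim to identifying the infinitesimal generator, i.e.\ to showing $\frac{d}{dt}\esp[\Psi_{F,f}(M_{t})]\big|_{t=0} = \gyv\Psi_{F,f}(M^{0})$, and then computes this derivative by conditioning on the number $\ncal(f,0,\delta)$ of events hitting $\mathrm{Supp}(f)$ in $[0,\delta]$ (Lemmas~\ref{lem:expectation_zero_events}, \ref{lem:expectation_two_events} and~\ref{lem:expectation_one_event}), with Taylor expansions and $o(\delta)$ bookkeeping; the delicate step there is replacing the conditional infection probability $1-\omega_{T-}(P)$ by $1-\omega^{0}(P)$ at cost $O(\delta)$. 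You instead perform a pathwise semimartingale decomposition: the recovery ODE $\frac{d}{dt}\omega_{t}(z)=\gamma(1-\omega_{t}(z))$ between events produces the drift term, and the jumps, written as $\mathds{1}_{\{a>\omega_{s-}(p)\}}\bigl[\Psi_{F,f}(\Theta_{z,r,u}(\omega_{s-}))-\Psi_{F,f}(\omega_{s-})\bigr]$ (which correctly matches Definition~\ref{defn:quenched_EpiSLFV}), are handled by the compensation formula for the marked Poisson point process $\Pi^{(aug)}$, integration over the marks $a$ and $p$ reproducing exactly the factors $1-\omega_{s-}(z')$ and $V_{r}^{-1}\int_{\bcal(z,r)}\cdot\,dz'$ in the second term of $\gyv$. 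Your route is arguably more efficient: it yields the martingale property in one stroke, with no asymptotic expansion and no separate treatment of the zero-, one- and multi-event cases, and it does not even need the Markov property as an intermediate reduction. What it costs is precisely what you flag as the main obstacle: one must rigorously justify that the integrand is predictable with respect to the filtration generated by $\Pi^{(aug)}$, that the path of $\Psi_{F,f}(M_{t})$ is of finite variation with finitely many jumps on compacts (which holds under Condition~\eqref{eqn:stricter_cond_nu}, exactly as in Lemma~\ref{lem:quenched_EpiSLFV_cadlag}), and the $L^{1}$ bound upgrading the compensated integral from a local to a true martingale; your appeal to the Step~1 estimate of Lemma~\ref{lem:extended_martingale} together with the finite event rate on $\mathrm{Supp}(f)$ settles this last point. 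The paper's approach avoids the machinery of predictable compensators entirely, at the price of lengthier elementary computations. Both proofs rely in the same way on the standing interpretation of Condition~\eqref{eqn:stricter_cond_nu} as a finite event rate on compacts, so no additional hypothesis is hidden in your argument.
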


To show this result, we will rely extensively on Lemma~\ref{lem:exponential_decrease}. As a first step, we rephrase the exponential decrease of the number of infected individuals in terms of test functions acting on the measure-valued process. 

\begin{lemma}\label{lem:test_function_exponential_decrease} Let $F \in C^{1}(\rmath)$ and $f \in C_{c}(\rd)$, and let $0 \leq t < t+\delta$. Assume that~$\nu$ satisfies~\eqref{eqn:stricter_cond_nu}. Let~$(\omega_{t})_{t \geq 0}$ be the quenched $(\gamma,\nu)$-EpiSLFV process with initial condition~$\omega^{0}$ and constructed using~$\Pi^{(aug)}$, and let~$(M_{t})_{t \geq 0}$ be the associated measure-valued process. Assume that the support~$\mathrm{Supp}(f)$ of~$f$ is not affected by reproduction events over the time interval~$[t,t+\delta]$. Then, 
\begin{equation*}
\Psi_{F,f}(M_{t+\delta}) - \Psi_{F,f}(M_{t}) = \delta \gamma \langle 
f, 1-\omega_{t}
\rangle F'\left(
\langle
f, \omega_{t}
\rangle \right) + o(\delta). 
\end{equation*}
\end{lemma}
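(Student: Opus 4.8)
The plan is to reduce everything to the pointwise exponential-decrease formula of \Cref{lem:exponential_decrease} and then propagate the resulting first-order expansion through the linear functional $\langle f, \cdot \rangle$ and the $C^1$ map $F$. The only non-algebraic ingredient is a careful control of the remainder terms, which is where I would focus the rigour.

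First I would fix $z \in \mathrm{Supp}(f)$. Since $\mathrm{Supp}(f)$ is not affected by reproduction events over $[t,t+\delta]$, in particular $z$ is not, so \Cref{lem:exponential_decrease} applies and gives
\begin{equation*}
\omega_{t+\delta}(z) - \omega_{t}(z) = \left(1 - \omega_{t}(z)\right)\left(1 - e^{-\gamma \delta}\right).
\end{equation*}
A Taylor expansion of $\delta \mapsto 1 - e^{-\gamma \delta}$ at $0$ yields $1 - e^{-\gamma \delta} = \gamma \delta + o(\delta)$, and since $\omega_{t}$ is $[0,1]$-valued the prefactor $1 - \omega_{t}(z)$ is bounded by $1$, so the expansion $\omega_{t+\delta}(z) - \omega_{t}(z) = \gamma \delta\, (1 - \omega_{t}(z)) + o(\delta)$ holds \emph{uniformly} in $z \in \mathrm{Supp}(f)$.

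Next I would integrate against $f$. Because $f$ is continuous with compact support, both $\max(|f|)$ and $\mathrm{Vol}(\mathrm{Supp}(f))$ are finite, so the uniform $o(\delta)$ error integrates to an $o(\delta)$ term and I obtain
\begin{equation*}
\langle f, \omega_{t+\delta} \rangle - \langle f, \omega_{t} \rangle = \gamma \delta \langle f, 1 - \omega_{t} \rangle + o(\delta).
\end{equation*}
Finally I would feed this into $F$. Writing $a := \langle f, \omega_{t} \rangle$ and $h := \langle f, \omega_{t+\delta} \rangle - \langle f, \omega_{t} \rangle$, the previous step gives $h = O(\delta)$, hence $h \to 0$ as $\delta \to 0$. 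Since $F \in C^{1}(\rmath)$, Taylor's theorem gives $F(a+h) - F(a) = F'(a) h + o(h) = F'(a) h + o(\delta)$. Substituting the expression for $h$ and using that $F'(a)$ is a fixed constant yields
\begin{equation*}
\Psi_{F,f}(M_{t+\delta}) - \Psi_{F,f}(M_{t}) = F'\left(\langle f, \omega_{t} \rangle\right)\left(\gamma \delta \langle f, 1 - \omega_{t} \rangle + o(\delta)\right) + o(\delta) = \delta \gamma \langle f, 1 - \omega_{t} \rangle F'\left(\langle f, \omega_{t} \rangle\right) + o(\delta),
\end{equation*}
which is the claim.

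The only point requiring genuine care---and the nearest thing to an obstacle---is the uniformity of the remainder terms: one must ensure that the $o(\delta)$ coming from the pointwise Taylor expansion does not degenerate when integrated over $\mathrm{Supp}(f)$, and that the $o(h)$ from the expansion of $F$ can legitimately be rewritten as $o(\delta)$. Both follow from the compact support of $f$ together with the boundedness of $\omega_{t}$, so beyond this bookkeeping there is no real analytic difficulty.
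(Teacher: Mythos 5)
Your proof is correct and follows essentially the same route as the paper: apply Lemma~\ref{lem:exponential_decrease} pointwise on $\mathrm{Supp}(f)$, integrate against $f$, and Taylor-expand. The only cosmetic difference is that the paper first performs the integration exactly, obtaining $\langle f,\omega_{t+\delta}\rangle = \langle f,\omega_{t}\rangle + \left(1-e^{-\gamma\delta}\right)\langle f,1-\omega_{t}\rangle$ with no error term, and then does a single Taylor expansion at the end, which sidesteps the uniformity-in-$z$ bookkeeping your version carries through.
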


\begin{proof}
As we assume that~$\mathrm{Supp(f)}$ is not affected by reproduction events over the time interval~$[t,t+\delta]$, we can apply Lemma~\ref{lem:exponential_decrease} to each~$z \in \mathrm{Supp}(f)$. Then, 
\begin{align*}
\Psi_{F,f}(M_{t+\delta}) &= F\left(
\langle
f, \omega_{t+\delta}
\rangle
\right) \\
&= F\left(
\int_{\rd} f(z) \omega_{t+\delta}(z)dz
\right) \\
&= F\left(
\int_{\mathrm{Supp}(f)} f(z) \omega_{t+\delta}(z) dz
\right) \\
&= F\left(
\int_{\mathrm{Supp}(f)} f(z) \left(
\omega_{t}(z) + \left(
1 - \omega_{t}(z)
\right) \times \left(
1 - e^{-\gamma \delta}
\right)
\right)dz
\right) \\
&= F\left(
\langle 
f, \omega_{t}
\rangle + \left(
1 - e^{-\gamma\delta}
\right) \times \langle 
f, 1-\omega_{t}
\rangle
\right). 
\end{align*}
When~$\delta$ is small, we can do a Taylor expansion and obtain
\begin{align*}
\Psi_{F,f}(M_{t+\delta}) &= F\left(
\langle f, \omega_{t} \rangle
\right) + \delta \gamma \langle 
f, 1-\omega_{t}
\rangle F'\left(
\langle f, \omega_{t} \rangle
\right) + o(\delta) \\
&= \Psi_{F,f}(M_{t}) + \delta \gamma \langle 
f, 1-\omega_{t}
\rangle F'\left(
\langle f, \omega_{t} \rangle 
\right) + o(\delta), 
\end{align*}
which allows us to conclude. 
\end{proof}

Let~$\ncal(f,t,t')$ be the number of reproduction events in~$\Pi^{(aug)}$ intersecting~$\mathrm{Supp(f)}$ over the time interval~$[t,t']$. By Lemma~\ref{lem:test_function_exponential_decrease}, we already know the evolution of the function~$s \to \Psi_{F,f}(M_{s})$ over a small time interval~$[t,t+\delta]$ conditionally on~$\{ \ncal(f,t,t+\delta) = 0\}$. 

\begin{lemma}\label{lem:expectation_zero_events} Under the notation of Lemma~\ref{lem:test_function_exponential_decrease}, we have
\begin{equation*}
\esp\left[ 
\mathds{1}_{\{\ncal(f,t,t+\delta) = 0\}} \times \left(
\Psi_{F,f}(M_{t+\delta}) - \Psi_{F,f}(M_{t})
\right) 
\right] = \delta \gamma \esp\left[ 
\langle f, 1-\omega_{t} \rangle F'\left(
\langle 
f, \omega_{t}
\rangle \right)
\right] + o(\delta). 
\end{equation*}
\end{lemma}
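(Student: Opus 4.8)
The plan is to exploit the \emph{exact} (rather than asymptotic) description of the increment on the event $\{\ncal(f,t,t+\delta)=0\}$ that is obtained inside the proof of Lemma~\ref{lem:test_function_exponential_decrease}. On that event $\mathrm{Supp}(f)$ is untouched by reproduction events over $[t,t+\delta]$, so Lemma~\ref{lem:exponential_decrease} applies pointwise to every $z\in\mathrm{Supp}(f)$ and yields, writing $a=\langle f,\omega_t\rangle$ and $b=\langle f,1-\omega_t\rangle$, the identity
\[
\mathds{1}_{\{\ncal(f,t,t+\delta)=0\}}\,\Psi_{F,f}(M_{t+\delta}) = \mathds{1}_{\{\ncal(f,t,t+\delta)=0\}}\, F\!\left(a + (1-e^{-\gamma\delta})\,b\right).
\]
First I would fix $K:=\max(|f|)\,\mathrm{Vol}(\mathrm{Supp}(f))$, so that $a$ and $b$ lie in $[-K,K]$ for \emph{every} realisation, and Taylor-expand $F$ around $a$ on the fixed compact interval $[-2K,2K]$. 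Using $1-e^{-\gamma\delta}=\gamma\delta+O(\delta^2)$, this produces, on $\{\ncal(f,t,t+\delta)=0\}$, a decomposition $\Psi_{F,f}(M_{t+\delta})-\Psi_{F,f}(M_t)=\gamma\delta\,b\,F'(a)+r(\delta)$ in which the remainder obeys a \emph{deterministic} bound $|r(\delta)|\le\varepsilon(\delta)\,\delta$ with $\varepsilon(\delta)\to0$; indeed $F'$ is uniformly continuous on $[-2K,2K]$ and $(a,b)$ ranges over a fixed bounded set, so the $o(\delta)$ appearing in Lemma~\ref{lem:test_function_exponential_decrease} is in fact uniform over $\omega_t$.

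The second ingredient is an independence observation. The random variable $\omega_t$ (hence $a$ and $b$) is a measurable function of the restriction of $\Pi^{(aug)}$ to times in $[0,t]$ --- by the finite-lookback argument of Lemma~\ref{lem:quenched_EpiSLFV_measurevalued} only finitely many such events are involved almost surely --- whereas $\{\ncal(f,t,t+\delta)=0\}$ is determined by the restriction of $\Pi^{(aug)}$ to times in $(t,t+\delta]$. Since a Poisson point process has independent restrictions to disjoint time-slabs, the indicator $\mathds{1}_{\{\ncal(f,t,t+\delta)=0\}}$ is independent of the pair $(a,b)$.

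I would then combine the two. Multiplying the decomposition by $\mathds{1}_{\{\ncal(f,t,t+\delta)=0\}}$, taking expectations and using independence gives
\[
\esp\big[\mathds{1}_{\{\ncal(f,t,t+\delta)=0\}}(\Psi_{F,f}(M_{t+\delta})-\Psi_{F,f}(M_t))\big] = \gamma\delta\,\proba(\ncal(f,t,t+\delta)=0)\,\esp\big[\langle f,1-\omega_t\rangle F'(\langle f,\omega_t\rangle)\big]+\esp\big[\mathds{1}_{\{\ncal(f,t,t+\delta)=0\}}\,r(\delta)\big].
\]
The remainder term is $o(\delta)$ because $|r(\delta)|\le\varepsilon(\delta)\delta$ deterministically and $\mathds{1}_{\{\ncal(f,t,t+\delta)=0\}}\le1$. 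For the main term, $\esp[\langle f,1-\omega_t\rangle F'(\langle f,\omega_t\rangle)]$ is finite (bounded by $K\sup_{[-K,K]}|F'|$), and under Condition~\eqref{eqn:stricter_cond_nu} $\mathrm{Supp}(f)$ is hit at a finite rate, so $\ncal(f,t,t+\delta)$ is Poisson-distributed and $\proba(\ncal(f,t,t+\delta)\ge1)\to0$ as $\delta\to0$; hence replacing $\proba(\ncal(f,t,t+\delta)=0)$ by $1$ costs only $\gamma\delta\cdot\proba(\ncal(f,t,t+\delta)\ge1)\cdot O(1)=\delta\cdot o(1)=o(\delta)$, which yields the stated identity.

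The main obstacle is the content of the first paragraph: one must upgrade the pointwise-in-$\omega$ expansion of Lemma~\ref{lem:test_function_exponential_decrease} to a remainder bound that is uniform over all realisations of $\omega_t$, since otherwise the expectation of $r(\delta)$ cannot be controlled --- and this is exactly what the a priori bounds $|a|,|b|\le K$ together with uniform continuity of $F'$ on a fixed compact interval deliver. The independence of $\{\ncal(f,t,t+\delta)=0\}$ from the history of $\Pi^{(aug)}$ up to time $t$ is the only other slightly delicate point, and it is precisely what lets the Poisson probability factor out of the main term.
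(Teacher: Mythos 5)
Your proposal is correct and follows essentially the same route as the paper: apply the no-event expansion (Lemma~\ref{lem:test_function_exponential_decrease}), use independence of $\{\ncal(f,t,t+\delta)=0\}$ from $\omega_{t}$ (disjoint time-slabs of the Poisson process), and absorb the factor $\proba(\ncal(f,t,t+\delta)=0)=1-O(\delta)$ into the error. The one refinement you add --- making the Taylor remainder uniform over realisations of $\omega_{t}$ via the deterministic bounds $|\langle f,\omega_{t}\rangle|, |\langle f,1-\omega_{t}\rangle|\le K$ and uniform continuity of $F'$ on a fixed compact interval --- is exactly what justifies pulling the $o(\delta)$ through the expectation, a step the paper's proof performs without comment.
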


\begin{proof}
By Lemma~\ref{lem:test_function_exponential_decrease}, we have
\begin{align*}
&\esp\left[ 
\mathds{1}_{\{\ncal(f,t,t+\delta) = 0\}} \times \left(
\Psi_{F,f}(M_{t+\delta}) - \Psi_{F,f}(M_{t})
\right)
\right]  \\ &= \esp\left[ 
\mathds{1}_{\{\ncal(f,t,t+\delta) = 0\}} \times \left(
\delta \gamma \langle f, 1-\omega_{t} \rangle F'\left(
\langle f, \omega_{t} \rangle 
\right)
+ o(\delta)
\right)
\right] \\
&= \esp\left[ 
\mathds{1}_{\{\ncal(f,t,t+\delta) = 0\}} \times \left(
\delta \gamma \langle f, 1-\omega_{t} \rangle F'\left(
\langle f, \omega_{t} \rangle \right)
\right)
\right] + o(\delta). 
\end{align*}
We conclude using the fact that the event~$\{ \ncal(f,t,t+\delta) = 0 \}$ is independent of the value of~$\omega_{t}$, and that
\begin{equation*}
\proba\left(
\ncal(f,t,t+\delta) = 0
\right) = 1 - O(\delta). \qedhere
\end{equation*} 
\end{proof}

As $\ncal(f,t,t')$ follows a Poisson distribution with rate proportional to~$t'-t$, the probability of the event~$\{\ncal(f,t,t+\delta) \geq 2\}$ is of order~$o(\delta)$ when~$\delta$ is small, which leads to the following result. 

\begin{lemma}\label{lem:expectation_two_events} Under the notation of Lemma~\ref{lem:test_function_exponential_decrease}, we have
\begin{equation*}
\esp\left[ 
\mathds{1}_{\{\ncal(f,t,t+\delta) \geq 2\}} \times \left(
\Psi_{F,f}(M_{t+\delta}) - \Psi_{F,f}(M_{t})
\right)
\right] = o(\delta). 
\end{equation*}
\end{lemma}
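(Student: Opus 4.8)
The plan is to bound the integrand uniformly over all realizations and then exploit that the event $\{\ncal(f,t,t+\delta) \geq 2\}$ has probability of order $\delta^2$. First I would observe that $\Psi_{F,f}$ is bounded on $\ml$: since $\omega_M$ is $[0,1]$-valued and $f \in C_c(\rd)$, the quantity $\langle f, \omega_M\rangle$ always lies in the compact interval $[-\max|f|\,\mathrm{Vol}(\mathrm{Supp}(f)), \max|f|\,\mathrm{Vol}(\mathrm{Supp}(f))]$, exactly as in the control of the continuous finite-variation term in the proof of \Cref{lem:tight_sequence_slfvs}. As $F \in C^1(\rmath)$ is in particular continuous, it is bounded on this interval by some constant $C_{F,f} < +\infty$ depending only on $F$ and $f$, so that $|\Psi_{F,f}(M)| \leq C_{F,f}$ for every $M \in \ml$. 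Consequently the increment obeys the deterministic bound $|\Psi_{F,f}(M_{t+\delta}) - \Psi_{F,f}(M_t)| \leq 2 C_{F,f}$ on every realization.

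Next I would identify the law of $\ncal(f,t,t+\delta)$. Since $\Pi^{(aug)}$ is a Poisson point process, the number of its points whose reproduction ball intersects $\mathrm{Supp}(f)$ during $[t,t+\delta]$ is, by the restriction and projection properties of Poisson point processes, a Poisson random variable with parameter $\Lambda_f \delta$, where
\[
\Lambda_f = \int_0^1 \int_0^\infty \mathrm{Vol}(\mathrm{Supp}(f,r))\, \nu(dr,du).
\]
By \eqref{eqn:vol_support} one has $\mathrm{Vol}(\mathrm{Supp}(f,r)) \leq C_2^{(f)}(r^d \vee 1)$, so $\Lambda_f < +\infty$ precisely because $\nu$ satisfies the stronger condition~\eqref{eqn:stricter_cond_nu}; this is the one point where \eqref{eqn:stricter_cond_nu} is genuinely used. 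A short expansion of $\proba(\ncal(f,t,t+\delta) \geq 2) = 1 - e^{-\Lambda_f\delta} - \Lambda_f\delta\, e^{-\Lambda_f\delta}$ then gives $\proba(\ncal(f,t,t+\delta) \geq 2) = \tfrac{1}{2}(\Lambda_f\delta)^2 + O(\delta^3) = O(\delta^2)$.

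Finally I would combine the two ingredients through the crude estimate
\[
\left|\esp\left[\mathds{1}_{\{\ncal(f,t,t+\delta) \geq 2\}}\left(\Psi_{F,f}(M_{t+\delta}) - \Psi_{F,f}(M_t)\right)\right]\right| \leq 2 C_{F,f}\, \proba\big(\ncal(f,t,t+\delta) \geq 2\big) = O(\delta^2) = o(\delta),
\]
which is exactly the claimed identity. There is no serious obstacle: the only step requiring attention is the verification that the intersection rate $\Lambda_f$ is finite, which relies on \eqref{eqn:stricter_cond_nu} and is the reason the quenched construction is restricted to that regime. Everything else is a bounded-increment-times-rare-event argument, with the boundedness of the increment coming from the compactly supported test function and the rarity of two or more events in a small window coming from the Poisson structure of $\Pi^{(aug)}$.
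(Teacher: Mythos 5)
Your proof is correct and takes essentially the same route as the paper's: the paper likewise observes that $\Psi_{F,f}(M_{t+\delta}) - \Psi_{F,f}(M_{t})$ is bounded because $F \in C^{1}(\rmath)$ and $f \in C_{c}(\rd)$, and then concludes from $\proba\left(\ncal(f,t,t+\delta) \geq 2\right) = o(\delta)$, which rests on the Poisson law of $\ncal(f,t,t+\delta)$ with rate proportional to $\delta$ noted just before the lemma. You simply make explicit the uniform bound on $\Psi_{F,f}$, the intensity $\Lambda_f$, and the role of Condition~\eqref{eqn:stricter_cond_nu} in its finiteness, all of which the paper leaves implicit.
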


\begin{proof}
As $F \in C^{1}(\rmath)$ and $f \in C_{c}(\rd)$, the term
\begin{equation*}
    \Psi_{F,f}(M_{t+\delta}) - \Psi_{F,f}(M_{t})
\end{equation*}
is bounded. The result is then a consequence of the fact that
\begin{equation*}
    \proba\left(
\ncal(f,t,t+\delta) \geq 2
    \right) = o(\delta). \qedhere
\end{equation*}
\end{proof}

We now want to control the corresponding term for the event~$\{\ncal(f,t,t+\delta) = 1\}$. As the probability of this event is of order~$O(\delta)$, the variation of~$s \to \Psi_{F,f}(M_{s})$ before and after the jump will be negligible, and the only quantity we need to control is the variation due to the potential impact of the reproduction event. In order to ease notation in the proof, we will focus on the case~$t = 0$, which is sufficient to conclude the proof of Proposition~\ref{prop:quenched_EpiSLFV_OK}, as~$(\omega_{s})_{s \geq 0}$ is Markovian (by Lemma~\ref{lem:exponential_decrease}). 

\begin{lemma}\label{lem:expectation_one_event} Under the notation of Lemma~\ref{lem:test_function_exponential_decrease}, we have
\begin{align*}
&\esp\left[ 
\mathds{1}_{\{\ncal(f,t,t+\delta) = 1\}} \times \left(
\Psi_{F,f}(M_{\delta}) - \Psi_{F,f}(M_{0})
\right)
\right] \\
&= \delta \int_{\rd}\int_{0}^{1} \int_{0}^{\infty} \frac{1}{V_{r}} \int_{\bcal(z,r)} \left(
1 - \omega^{0}(p)
\right) \times \left(
\Psi_{F,f}\left(
\Theta_{z,r,u}(\omega^{0})
\right) - \Psi_{F,f}(\omega^{0})
\right) dp\nu(dr,du)dz
\\
&\quad +o(\delta).
\end{align*}
\end{lemma}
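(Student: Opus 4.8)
\emph{The plan is} to condition on $\{\ncal(f,0,\delta)=1\}$, isolate the single infection jump carried by that event, and match it to the claimed integral through the Campbell formula for the augmented Poisson point process $\Pi^{(aug)}$, discarding everything else as $o(\delta)$. Recall from the discussion preceding Lemma~\ref{lem:expectation_two_events} that $\ncal(f,0,\delta)$ is Poisson with a mean of the form $\lambda_f\delta$, where $\lambda_f<\infty$ by~\eqref{eqn:stricter_cond_nu}; in particular $\proba(\ncal(f,0,\delta)=1)=O(\delta)$ and $\proba(\ncal(f,0,\delta)\ge 2)=o(\delta)$.

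\emph{First}, on $\{\ncal(f,0,\delta)=1\}$ I would write the unique reproduction event touching $\mathrm{Supp}(f)$ as $(T,Z,R,U,P,A)\in\Pi^{(aug)}$ with $T\in[0,\delta]$, and split $\Psi_{F,f}(M_\delta)-\Psi_{F,f}(M_0)$ into the two exponential-decay increments on $[0,T)$ and $(T,\delta]$ plus the jump at $T$. By Lemma~\ref{lem:exponential_decrease} each decay increment is $O(\delta)$, so once multiplied by the indicator of an event of probability $O(\delta)$ they contribute only $o(\delta)$ and only the jump survives at order $\delta$. By Definition~\ref{defn:quenched_EpiSLFV} this jump equals $\mathds{1}_{\{A>\omega_{T-}(P)\}}\big(\Psi_{F,f}(\Theta_{Z,R,U}(\omega_{T-}))-\Psi_{F,f}(\omega_{T-})\big)$.

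\emph{Second}, I would replace $\omega_{T-}$ by $\omega^0$. On $\mathrm{Supp}(f)$ no event has acted before $T$, so Lemma~\ref{lem:exponential_decrease} gives $\omega_{T-}=\omega^0+O(\delta)$ there; together with $F\in C^1(\rmath)$ and the bound~\eqref{eqn:control_variation} this changes the jump by $O(\delta)$, hence by $o(\delta)$ after multiplying by $\proba(\ncal(f,0,\delta)=1)=O(\delta)$. \emph{The delicate point, which I expect to be the main obstacle}, is the argument $\omega_{T-}(P)$: since $P\in\bcal(Z,R)$ may lie outside $\mathrm{Supp}(f)$, it can have been altered by reproduction events disjoint from $\mathrm{Supp}(f)$, which $\{\ncal(f,0,\delta)=1\}$ does not control. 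I would handle this by noting that a fixed location is covered by reproduction events at the finite rate $\int_0^1\int_0^\infty V_r\,\nu(dr,du)$ (finite by~\eqref{eqn:stricter_cond_nu}), so conditionally the probability that $P$ is touched on $[0,T]$ is $O(\delta)$; on its complement $\omega_{T-}(P)=\omega^0(P)+O(\delta)$, and the exceptional set, of conditional probability $O(\delta)$, contributes $o(\delta)$. Integrating out $A\sim\mathrm{Unif}(0,1)$ turns $\mathds{1}_{\{A>\omega^0(P)\}}$ into the factor $1-\omega^0(P)$, reducing the quantity to $\esp\big[\mathds{1}_{\{\ncal=1\}}(1-\omega^0(P))\big(\Psi_{F,f}(\Theta_{Z,R,U}(\omega^0))-\Psi_{F,f}(\omega^0)\big)\big]+o(\delta)$.

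\emph{Finally}, writing $G(z,r,u,p):=(1-\omega^0(p))\big(\Psi_{F,f}(\Theta_{z,r,u}(\omega^0))-\Psi_{F,f}(\omega^0)\big)$, I would compare the above with the first moment $\esp\big[\sum_{(t,z,r,u,p,a)\in\Pi^{(aug)},\,0\le t\le\delta}G(z,r,u,p)\big]$ of the same summand over all events in $[0,\delta]$. Events with $z\notin\mathrm{Supp}(f,r)$ contribute $0$ to this sum since $\Theta_{z,r,u}(\omega^0)$ then agrees with $\omega^0$ on $\mathrm{Supp}(f)$, so the two expressions differ only by the contribution of $\{\ncal\ge 2\}$; as each summand is uniformly bounded by~\eqref{eqn:control_variation} and $\esp[\ncal(f,0,\delta)\mathds{1}_{\{\ncal\ge2\}}]=\lambda_f\delta(1-e^{-\lambda_f\delta})=O(\delta^2)$, this difference is $o(\delta)$. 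Applying the Campbell (Mecke) formula with the intensity $dt\,dz\,\nu(dr,du)\otimes V_r^{-1}\mathds{1}_{\bcal(z,r)}(p)\,dp\otimes da$ of $\Pi^{(aug)}$, the trivial $a$-integral gives $1$, the $t$-integral gives the prefactor $\delta$, and the remainder is exactly $\int_{\rd}\int_0^1\int_0^\infty \frac1{V_r}\int_{\bcal(z,r)}(1-\omega^0(p))\big(\Psi_{F,f}(\Theta_{z,r,u}(\omega^0))-\Psi_{F,f}(\omega^0)\big)\,dp\,\nu(dr,du)\,dz$, yielding the stated identity up to $o(\delta)$.
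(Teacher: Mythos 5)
Your proposal is correct and follows essentially the same route as the paper's proof: condition on $\{\ncal(f,0,\delta)=1\}$, discard the exponential-decay increments and all $O(\delta)\times O(\delta)$ terms, reduce the jump to $\mathds{1}_{\{A>\omega_{T-}(P)\}}\bigl(\Psi_{F,f}(\Theta_{Z,R,U}(\omega^{0}))-\Psi_{F,f}(\omega^{0})\bigr)$, integrate out $A$, and convert the expectation over the event's characteristics into the intensity integral. Two of your refinements are worth noting. First, the point you flag as delicate is real: the paper's proof replaces $\omega_{T-}(P)$ by $\omega^{0}(P)$ by asserting that $\omega_{T-}(P)-\omega^{0}(P)$ is of order at most $\delta$ ``by Lemma~\ref{lem:exponential_decrease}'', but that lemma only applies if $P$ is not covered by any reproduction event on $[0,T)$, and the conditioning $\{\ncal(f,0,\delta)=1\}$ does not control events disjoint from $\mathrm{Supp}(f)$ that may cover $P\in\bcal(Z,R)\setminus\mathrm{Supp}(f)$; your fix --- such events cover a fixed location at the finite rate $\int_{0}^{1}\int_{0}^{\infty}V_{r}\,\nu(dr,du)$ by~\eqref{eqn:stricter_cond_nu}, so the exceptional set has conditional probability $O(\delta)$ and contributes $o(\delta)$ --- is exactly what is needed and makes the step airtight. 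Second, your final conversion via the Mecke/Campbell formula, with the error on $\{\ncal\geq 2\}$ bounded by $\esp[\ncal\,\mathds{1}_{\{\ncal\geq2\}}]=O(\delta^{2})$, is a more formal rendering of the paper's direct Poisson computation (the paper simply invokes that two or more events intersecting $\mathrm{Supp}(f)$ has probability $o(\delta)$); both are valid, yours is just more explicit about why the single-event expectation matches the full first moment at order $\delta$.
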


\begin{proof}
Conditionally on the event~$\{\ncal(f,0,\delta) = 1\}$, let~$(T,Z,R,U,P,A) \in \Pi^{(aug)}$ be the unique reproduction event intersecting~$\mathrm{Supp}(f)$ over the time interval~$[0,\delta]$. Then, 
\begin{align*}
&\esp\left[ 
\mathds{1}_{\{\ncal(f,t,t+\delta) = 1\}} \times \left(
\Psi_{F,f}(M_{\delta}) - \Psi_{F,f}(M_{0})
\right)
\right] \\
&= \esp\left[ 
\mathds{1}_{\{\ncal(f,t,t+\delta) = 1\}} \times \esp\left[\left. 
\Psi_{F,f}(M_{\delta}) - \Psi_{F,f}(M_{T})
\right| 
\left\{ 
\ncal(f,0,\delta) = 1
\right\}
\right]
\right] \\
&\quad + \esp\left[ 
\mathds{1}_{\{\ncal(f,t,t+\delta) = 1\}} \times \esp\left[\left. 
\Psi_{F,f}(M_{T}) - \Psi_{F,f}(M_{0})
\right|
\left\{ 
\ncal(f,0,\delta) = 1
\right\}
\right]
\right]. 
\end{align*}
By Lemma~\ref{lem:exponential_decrease}, 
\begin{equation*}
\Psi_{F,f}(M_{\delta}) - \Psi_{F,f}(M_{T}) = O(\delta-T) \leq O(\delta), 
\end{equation*}
so as the probability of the event~$\{\ncal(f,0,\delta) = 1\}$ is also of order~$\delta$, the first term in the decomposition of the expectation is of order~$o(\delta)$. 

We now want to control the second term. To do so, observe that
\begin{align*}
&\Psi_{F,f}(M_{T})\\
&= F\left(
\langle f, 
\left(
1 - \mathds{1}_{\bcal(Z,R)}(\cdot)
\right) \times \omega_{T-} + \mathds{1}_{\bcal(Z,R)}(\cdot) \left(
(1-U)\omega_{T-} + U\omega_{T-} \times \mathds{1}_{\{A \leq \omega_{T-}(P)\}}
\right)
\rangle
\right) \\
&= F\left(
\langle f, 
\omega_{T-} \times \left(
1 - \mathds{1}_{\bcal(Z,R)}(\cdot) U \left(
1 - \mathds{1}_{\{A \leq \omega_{T-}(P)\}}
\right)\right)
\rangle 
\right) \\
&= \mathds{1}_{\{A \leq \omega_{T-}(P)\}} \times F\left(
\langle f, \omega_{T-} \rangle 
\right) + \left(
1 - \mathds{1}_{\{A \leq \omega_{T-}(P)\}}
\right) \times F\left(
\langle 
f, 
\omega_{T-} \times \left( 
1 - \mathds{1}_{\bcal(Z,R)}(\cdot)U
\right)
\rangle 
\right). \\
\intertext{By Lemma~\ref{lem:exponential_decrease}, we obtain}
&\Psi_{F,f}(M_{T})\\
&= \mathds{1}_{\{A \leq \omega_{T-}(P)\}} \times F\left(
\langle 
f, \omega^{0} + (1-\omega^{0})\times\left(
1 - e^{-\gamma T}
\right)
\rangle
\right) \\
&\quad + \left(
1 - \mathds{1}_{\{A \leq \omega_{T-}(P)\}}
\right) \times F\left(
\langle 
f, \left(
\omega^{0} + (1-\omega^{0})\times \left(
1 - e^{-\gamma T}
\right)
\right) \times \left(
1 - \mathds{1}_{\bcal(Z,R)}(\cdot)U
\right)
\rangle
\right), \\
\intertext{and doing a Taylor expansion combined with the fact that~$0 \leq T \leq \delta$ yields}
&\Psi_{F,f}(M_{T}) \\
&= \mathds{1}_{\{A \leq \omega_{T-}(P)\}} \times F(\langle f, \omega^{0} \rangle)  \\
&\hspace{1cm} + \left(
1 - \mathds{1}_{\{A \leq \omega_{T-}(P)\}} 
\right) \times F \left(
\left\langle 
f, \omega^{0} \times \left(
1 - \mathds{1}_{\bcal(Z,R)}(\cdot)U
\right)
\right\rangle 
\right) + O(\delta) \\
&= \mathds{1}_{\{A \leq \omega_{T-}(P)\}} \times \Psi_{F,f}(M_{0}) + \left(
1 - \mathds{1}_{\{A \leq \omega_{T-}(P)\}}
\right) \times \Psi_{F,f}(\Theta_{Z,R,U}(\omega^{0})) + O(\delta)
\end{align*}
In particular, this means that
\begin{align*}
&\Psi_{F,f}(M_{T}) - \Psi_{F,f}(M_{0})\\
&= \left(
1 - \mathds{1}_{\{A \leq \omega_{T-}(P)\}}
\right) \times \left(
\Psi_{F,f}(\Theta_{Z,R,U}(\omega^{0})) - \Psi_{F,f}(\omega^{0})
\right) + O(\delta) \\
&= \left(
1 - \mathds{1}_{\{A \leq \omega^{0}(P)\}}
\right) \times \left(
\Psi_{F,f}(\Theta_{Z,R,U}(\omega^{0})) - \Psi_{F,f}(\omega^{0})
\right) \\
& \quad - \mathds{1}_{\{ 
\omega^{0}(P) < A \leq \omega_{T-}(P)
\}} \times \left(
\Psi_{F,f}(\Theta_{Z,R,U}(\omega^{0})) - \Psi_{F,f}(\omega^{0})
\right) + O(\delta). 
\end{align*}
As $A \sim \mathrm{Unif}([0,1])$, 
conditionally on~$P$ and~$T$, the event~$\{A \leq \omega^{0}(P)\}$ has probability~$\omega^{0}(P)$, and the event~$\{\omega^{0}(P) < A \leq \omega_{T-}(P)\}$ has probability~$\omega_{T-}(P) - \omega^{0}(P)$, which by Lemma~\ref{lem:exponential_decrease} is of order at most~$\delta$. Combining everything together, we obtain
\begin{align*}
&\esp\Big[ 
\mathds{1}_{\{\ncal(f,0,\delta) = 1\}} \times \esp\big[\left. 
\Psi_{F,f}(M_{T}) - \Psi_{F,f}(M_{0})
\right|
\{\ncal(f,0,\delta) = 1\}
\big]
\Big] \\
&\begin{aligned}
&= \esp\Big[ 
\mathds{1}_{\{\ncal(f,0,\delta) = 1\}} \times \left(1 - \omega^{0}(P)\right) \\
&\hspace{4.5cm} \times \esp\big[\left. 
\Psi_{F,f}\left(
\Theta_{Z,R,U}(\omega^{0})
\right) - \Psi_{F,f}(\omega^{0})
\right|
\{\ncal(f,0,\delta) = 1\}, P
\big]
\Big] \\
&\quad - \esp\Big[\mathds{1}_{\{\ncal(f,0,\delta) = 1\}} \times \left(
\omega_{T-}(P) - \omega^{0}(P)
\right)\\
&\hspace{4cm} \times \esp\big[\left. 
\Psi_{F,f}\left(
\Theta_{Z,R,U}(\omega^{0})
\right) - \Psi_{F,f}(\omega^{0})
\right|
\{\ncal(f,0,\delta) = 1\}, P
\big]\Big] + o(\delta)
\end{aligned}\\
&= \esp\left[ 
\mathds{1}_{\{\ncal(f,0,\delta) = 1\}} \times \left(
1 - \omega^{0}(P)
\right) \times \left(
\Psi_{F,f}\left(
\Theta_{Z,R,U}(\omega^{0})
\right) - \Psi_{F,f}(\omega^{0})
\right)\right] + o(\delta) \\
&\begin{aligned}
&= \int_{\mathrm{Supp}(f,r)} \int_{0}^{1} \int_{0}^{\infty} \int_{\bcal(z,r)} \delta \left(
1 - \omega^{0}(p)
\right) \\
&\hspace{5.5cm} \times \left(
\Psi_{F,f}\left(
\Theta_{z,r,u}(\omega^{0})
\right) - \Psi_{F,f}(\omega^{0})
\right)dp \nu(dr,du) dz + o(\delta)
\end{aligned}
\end{align*}
as the probability that at least two reproduction events intersect the support of~$f$ over the time interval~$[0,\delta]$ is of order~$o(\delta)$. Noticing that
\begin{equation*}
\Psi_{F,f}\left(
\Theta_{z,r,u}(\omega^{0})
\right) = \Psi_{F,f}(\omega^{0}) \text{ if } z \notin \mathrm{Supp}(f,r),
\end{equation*}
we obtain
\begin{align*}
&\esp\left[ 
\mathds{1}_{\{\ncal(f,0,\delta) = 1\}} \times \esp\left[\left. 
\Psi_{F,f}(M_{T}) - \Psi_{F,f}(M_{0})
\right| 
\{ 
\ncal(f,0,\delta) = 1
\}
\right]
\right] \\
&= \delta \int_{\rd}\int_{0}^{1}\int_{0}^{\infty} \frac{1}{V_{r}} \int_{\bcal(z,r)} \left(
1 - \omega^{0}(p)
\right) \times \left(
\Psi_{F,f}\left(
\Theta_{z,r,u}(\omega^{0})
\right) - \Psi_{F,f}(\omega^{0})
\right)dp\nu(dr,du)dz \\
&\quad + o(\delta),  
\end{align*}
from which we deduce the desired result. 
\end{proof}

We can now conclude with the proof of Proposition~\ref{prop:quenched_EpiSLFV_OK}. 

\begin{proof}[Proof of \Cref{prop:quenched_EpiSLFV_OK}] Since~$(\omega_{t})_{t \geq 0}$ is Markovian by Lemma~\ref{lem:exponential_decrease}, it is sufficient to show that
\begin{equation*}
\left. 
\frac{d}{dt}\esp\left[ 
\Psi_{F,f}(M_{t})
\right]
\right|_{t = 0} = \gyv \Psi_{F,f}\left(M^{0}\right). 
\end{equation*}
Therefore, let~$\delta > 0$. By Lemmas~\ref{lem:expectation_zero_events}, \ref{lem:expectation_two_events} and~\ref{lem:expectation_one_event}, we have 
\begin{align*}
&\esp\left[ 
\Psi_{F,f}(M_{\delta}) - \Psi_{F,f}(M_{0})
\right] \\ 
&= \esp\left[ 
\left(
\mathds{1}_{\{\ncal(f,0,\delta) = 0\}} + \mathds{1}_{\{\ncal(f,0,\delta) = 1\}} + \mathds{1}_{\{\ncal(f,0,\delta) \geq 2\}}
\right) \times \left(
\Psi_{F,f}(M_{\delta}) - \Psi_{F,f}(M_{0})
\right) 
\right] \\
&= \delta \gamma \langle 
f, 1-\omega^{0}
\rangle 
F'\left(
\langle 
f, \omega^{0}
\rangle 
\right) \\
&\quad + \delta \int_{\rd}\int_{0}^{1}\int_{0}^{\infty} \frac{1}{V_{r}} \int_{\bcal(z,r)} \left(
1 - \omega^{0}(p)
\right) \times \left(
\Psi_{F,f}\left(
\Theta_{z,r,u}(\omega^{0})
\right) - \Psi_{F,f}(\omega^{0})
\right) dp \nu(dr,du) dz \\
&\quad + o(\delta) \\ 
&= \delta \gyv \Psi_{F,f}(M_{0}) + o(\delta), 
\end{align*}
which allows us to conclude. 
\end{proof}

\subsection{Application to the monotonicity in the initial condition}\label{subsec:quenched_application_monotonicity}
We now focus on showing additional properties for the~$(\gamma,\nu)$-EpiSLFV process that can be deduced from the quenched construction. The first one is that the quenched process satisfies a clear monotonicity property: if one quenched process starts with more infected individuals than another, then this remains true forever, in the following sense. 
\begin{lemma}\label{lem:quenched_monotonicity} Assume that~$\nu$ satisfies~\eqref{eqn:stricter_cond_nu}. Let~$\omega^{1}, \omega^{2} : \rd \to [0,1]$ be two measurable functions such that for all~$z \in \rd$, $\omega^{1}(z) \leq \omega^{2}(z)$. Let $(\omega_{t}^{1})_{t \geq 0}$ (\textit{resp.}, $(\omega_{t}^{2})_{t \geq 0}$) be the quenched~$(\gamma,\nu)$-EpiSLFV with initial condition~$\omega^{1}$ (\textit{resp.}, $\omega^{2}$), both processes being constructed using~$\Pi^{(aug)}$. Then, for all~$t \geq 0$, for all~$z \in \rd$, $\omega_{t}^{1}(z) \leq \omega_{t}^{2}(z)$.
\end{lemma}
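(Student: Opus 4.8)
The plan is to track the two quenched processes through the \emph{same} driving point process $\Pi^{(aug)}$ and to show that the pointwise order $\omega^{1}_{t}(z) \le \omega^{2}_{t}(z)$ is preserved by each of the two elementary operations out of which \Cref{defn:quenched_EpiSLFV} builds the dynamics. First I would isolate these operations. Between two consecutive reproduction events affecting a location $z$, the value evolves by the deterministic recovery map $\rho_{\delta}(w) := w + (1-w)(1 - e^{-\gamma \delta}) = e^{-\gamma\delta} w + (1 - e^{-\gamma\delta})$, which is affine and strictly increasing in $w$, hence order-preserving; because both processes are driven by $\Pi^{(aug)}$, the relevant gap $\delta$ is the same for both, so the \emph{same} $\rho_{\delta}$ is applied. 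At a reproduction event with parameters $(Z,R,U,P,A)$ covering $z$, the update of \Cref{defn:quenched_EpiSLFV} can be rewritten as $\omega_{T}(z) = \omega_{T-}(z)\,\big(1 - U(1 - g)\big)$ with $g := \mathds{1}_{\{A \le \omega_{T-}(P)\}} \in \{0,1\}$; writing $\phi_{U}(w,g) := w\,(1 - U(1-g))$, I would check that $\phi_{U}$ is nondecreasing in $w \in [0,1]$ (its multiplier lies in $\{1-U,1\} \subseteq [0,\infty)$) and nondecreasing in $g$ for fixed $w \ge 0$, hence jointly nondecreasing.

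The key coupling observation is that the indicator $g$ is itself monotone in the parent's density. Since the same $A \sim \mathrm{Unif}(0,1)$ drives both processes, $\omega^{1}_{T-}(P) \le \omega^{2}_{T-}(P)$ forces $g^{1} = \mathds{1}_{\{A \le \omega^{1}_{T-}(P)\}} \le \mathds{1}_{\{A \le \omega^{2}_{T-}(P)\}} = g^{2}$. Combined with joint monotonicity of $\phi_{U}$ and with $\omega^{1}_{T-}(z) \le \omega^{2}_{T-}(z)$, this yields $\omega^{1}_{T}(z) = \phi_{U}(\omega^{1}_{T-}(z), g^{1}) \le \phi_{U}(\omega^{2}_{T-}(z), g^{2}) = \omega^{2}_{T}(z)$. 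Thus the order survives an event at $z$ \emph{provided} the order is already known at the sampled parent location $P$ at time $T-$.

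To turn these pointwise statements into a global proof despite the fact that reproduction events accumulate over $\rd$ (so that there is no global first event to start a naive time-induction from), I would exploit the finite-dependency structure already established in the proof of \Cref{lem:quenched_EpiSLFV_measurevalued}. Fixing $z \in \rd$ and $t \ge 0$, the value $\omega_{t}(z)$ is computed from $\omega^{0}$ through a finite, rooted collection of ancestral locations and reproduction events (bounded by a Yule process, hence a.s. finite); crucially, this collection is determined by $\Pi^{(aug)}$ alone—evaluating an update always requires the parent value, whatever its outcome—so the tree of consulted space–time nodes is \emph{identical} for the two processes. Listing the a.s.\ distinct event times occurring in this finite tree in increasing order $s_{1} < \dots < s_{m}$, I would induct on $k$: the base case is the hypothesis $\omega^{1} \le \omega^{2}$ everywhere at time $0$; at step $k$ every parent value entering the update at time $s_{k}$ is determined by events strictly earlier than $s_{k}$, hence already ordered by the inductive hypothesis together with order-preservation of $\rho$, so the event update preserves the order at the affected location by the computation above. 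A final recovery map up to time $t$ gives $\omega^{1}_{t}(z) \le \omega^{2}_{t}(z)$, and since $z$ and $t$ were arbitrary the lemma follows.

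The main obstacle I anticipate is precisely the interaction between the shared randomness and the spatial dependency: the event update at a point consults the current density at a \emph{different} point (the sampled parent $P$), so the induction cannot proceed "in time, over all of space at once". Organising it instead over the finite ancestral tree of \Cref{lem:quenched_EpiSLFV_measurevalued}, and verifying that within that tree each parent value is resolved strictly before the event that uses it, is what makes the argument rigorous; once that bookkeeping is in place, the monotonicity of $\rho_{\delta}$ and of $\phi_{U}$ is entirely routine.
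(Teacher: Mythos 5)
Your proof is correct and takes essentially the same approach as the paper: you couple both processes through the same $\Pi^{(aug)}$, use the same two monotonicity facts (the affine recovery map is order-preserving, and the event update $w \mapsto w\bigl(1-U(1-g)\bigr)$ together with the shared uniform $A$ gives $\mathds{1}_{\{A \le \omega^{1}_{T-}(P)\}} \le \mathds{1}_{\{A \le \omega^{2}_{T-}(P)\}}$), and run a well-founded induction over the a.s.\ finite ancestral structure provided by \Cref{lem:quenched_EpiSLFV_measurevalued} under Condition~\eqref{eqn:stricter_cond_nu}. The only difference is bookkeeping: the paper inducts on the number of ancestral locations of a space--time point $(z,s)$, whereas you time-order the events inside the fixed finite ancestral tree of $(z,t)$; the two inductions are equivalent ways of resolving the same difficulty, namely that parent values must be known strictly before the events that consult them.
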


\begin{proof}
Let $t \geq 0$. Our goal is to show that for all~$z \in \rd$ and $0 \leq s \leq t$, we have~$\omega_{s}^{1}(z) \leq \omega_{s}^{2}(z)$. To do so, we will work conditionally on~$\Pi^{(aug)}$ and proceed by induction. Before stating our induction hypothesis, we introduce some terminology. For~$z \in \rd$ and~$t \geq 0$, we say that~$p' \in \rd$ is a \textit{parental location} of~$(z,t)$ if there exists a reproduction event
\begin{equation*}
(t',z',r',u',p',a') \in \Pi^{(aug)}
\end{equation*}
with~$0 \leq t' \leq t$ such that $z \in \bcal(z',r')$, or in other words, such that~$z$ is affected by this reproduction event. Moreover, we say that~$p' \in \rd$ is an \textit{ancestral location} of~$(z,t)$ if there exists two sequences
\begin{align*}
p_{0} &= p', p_{1}, ..., p_{n} \in \rd \\
\text{and } 0 &\leq t_{1} \leq ... \leq t_{n} \leq t
\end{align*}
such that:
\begin{itemize}
    \item $p_{n}$ is a parental location for~$(z,t)$, and the corresponding reproduction event occurs at time~$t_{n}$, and
    \item for all $i \in \llbracket 1,n \rrbracket$, $p_{i-1}$ is a parental location for~$(p_{i},t_{i})$, and (for $i \neq 1$) the corresponding reproduction events occur at time~$t_{i-1}$ (we do not need to record when the event associated to~$p_{0}$ occurs).
\end{itemize}
Our induction hypothesis is then defined as follows:
\begin{align*}
\forall n \in \nmath, \pcal(n) : &\text{"For all } z \in \rd \text{ and } s \in [0,t]\text{, if } (z,s) \text{ has at most~$n$ ancestral locations,} \\
&\, \text{ then for all } 0 \leq s' \leq s \text{, } \omega_{s'}^{1}(z) \leq \omega_{s'}^{2}(z)\text{."}
\end{align*}
This induction property will be sufficient to conclude, as we saw in the proof of Lemma~\ref{lem:quenched_EpiSLFV_measurevalued} that almost surely, for all~$t \geq 0$ and $z \in \rd$, $(z,t)$ has a finite number of ancestral locations under Condition~\eqref{eqn:stricter_cond_nu}. 

\textsc{Initialisation} Let~$z \in \rd$ and~$s \in [0,t]$ be such that~$(z,s)$ has zero ancestral locations. This implies in particular that~$z$ is not affected by reproduction events over the time interval~$[0,s]$. By Lemma~\ref{lem:exponential_decrease}, 
this means that for all~$0 \leq s' \leq s$, 
\begin{align*}
\omega_{s'}^{1}(z) &= \omega^{1}(z) + (1 - \omega^{1}(z)) \times \left(
1 - e^{-\gamma s'}
\right) \\
&= 1 - e^{-\gamma s'} + \omega^{1}(z) e^{-\gamma s'} \\
&\leq 1 - e^{-\gamma s'} + \omega^{2}(z) e^{-\gamma s'} \\
&= \omega_{s'}^{2}(z), 
\end{align*}
which concludes the initialisation step. 

\textsc{Heredity} Let $N \in \nmath$, and assume that~$\pcal(n)$ is true for all~$0 \leq n \leq N$. Let~$z \in \rd$ and~$s \in [0,t]$ be such that~$(z,s)$ has exactly~$N+1$ ancestral locations, and let \[(T,Z,R,U,P,A) \in \Pi^{(aug)}\] be the last reproduction event to affect~$z$ before time~$s$ (notice that the fact that~$(z,s)$ has at least one ancestral location guarantees that~$T \geq 0$). For all $s' \in [T,s]$, we have
\begin{align}
\omega_{s'}^{1}(z) &= 1 - e^{-\gamma(s'-T)} + \omega_{T}^{1}(z) e^{-\gamma(s'-T)} \nonumber\\ 
&= 1 - e^{-\gamma(s'-T)} + \omega_{T-}^{1}(z) \left(
1 - U\left(
1 - \mathds{1}_{\{A \leq \omega_{T-}^{1}(P)\}}
\right)\right) e^{-\gamma (s'-T)}. \label{eqn:proof_lemma_4_10}
\end{align}
As all ancestral locations for~$(z,T-)$ and~$(P,T-)$ are also ancestral locations for~$(z,s)$, we deduce that~$(z,T-)$ and~$(P,T-)$ have at most~$N$ ancestral locations. 
Therefore, we can apply the induction hypothesis to~$(z,T-)$ and~$(P,T-)$ and obtain that for all $s' \in [0,T)$, $\omega_{s'}^{1}(z) \leq \omega_{s'}^{2}(z)$ and $\omega_{s'}^{1}(P) \leq \omega_{z'}^{2}(P)$. Combining this result with~\eqref{eqn:proof_lemma_4_10} yields
\begin{align*}
\omega_{s'}^{1}(z) &\leq 1 - e^{-\gamma(s'-T)} + \omega_{T-}^{2}(z) \left(
1 - U\left(
1 - \mathds{1}_{\{A \leq \omega_{T-}^{2}(P)\}}
\right)\right)e^{-\gamma(s'-T)} \\
&= \omega_{s'}^{2}(z). 
\end{align*}
This concludes the proof.
\end{proof}
This monotonicity property can be transferred to the measure-valued process in expectation, proceeding as follows. 

\begin{proposition}\label{prop:measure_monotonicity} Assume that~$\nu$ satisfies~\eqref{eqn:stricter_cond_nu}. Let~$M^{1}, M^{2} \in \ml$ be such that they admit two densities $\omega^{1}, \omega^{2}$ satisfying
\begin{equation*}
\forall z \in \rd, \omega^{1}(z) \leq \omega^{2}(z), 
\end{equation*}
and let $(M_{t}^{1})_{t \geq 0}$ (\textit{resp.}, $(M_{t}^{2})_{t \geq 0}$) be the~$(\gamma,\nu)$-EpiSLFV process with initial condition~$M^{1}$ (\textit{resp.}, $M^{2}$). Then, for all positive integrable function~$g : \rd \to \rmath_{+}$, for all $t \geq 0$, 
\begin{equation*}
\esp\left[ 
\int_{\rd} g(z)\omega_{M_{t}^{1}}(z)dz
\right] \leq \esp \left[ 
\int_{\rd} g(z)\omega_{M_{t}^{2}}(z)dz
\right]. 
\end{equation*}
\end{proposition}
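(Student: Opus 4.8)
The plan is to reduce this statement, which concerns the abstractly-defined $(\gamma,\nu)$-EpiSLFV processes, to the pathwise monotonicity of the quenched construction established in \Cref{lem:quenched_monotonicity}, and then to transfer the resulting inequality back to the original processes by means of the equivalence of the two constructions (\Cref{prop:quenched_EpiSLFV_OK}) together with well-posedness (\Cref{theo:martingale_pb_well_posed}).

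Concretely, I would first fix representatives $\omega^{1} \leq \omega^{2}$ of $M^{1}$ and $M^{2}$ as in the hypothesis, and build the two quenched processes $(\widetilde{\omega}_{t}^{1})_{t \geq 0}$ and $(\widetilde{\omega}_{t}^{2})_{t \geq 0}$ from the \emph{same} augmented Poisson point process $\Pi^{(aug)}$, with initial conditions $\omega^{1}$ and $\omega^{2}$ respectively; let $(\widetilde{M}_{t}^{1})_{t \geq 0}$ and $(\widetilde{M}_{t}^{2})_{t \geq 0}$ denote the associated measure-valued processes. Since $\nu$ satisfies \eqref{eqn:stricter_cond_nu}, \Cref{lem:quenched_monotonicity} applies and gives, almost surely, $\widetilde{\omega}_{t}^{1}(z) \leq \widetilde{\omega}_{t}^{2}(z)$ for every $z \in \rd$ and every $t \geq 0$. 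As $g \geq 0$ is integrable, integrating this pointwise inequality against $g$ yields, for each $t$ and on the event of full probability on which the monotonicity holds,
\begin{equation*}
\int_{\rd} g(z) \widetilde{\omega}_{t}^{1}(z)\, dz \leq \int_{\rd} g(z) \widetilde{\omega}_{t}^{2}(z)\, dz,
\end{equation*}
and taking expectations over the randomness of $\Pi^{(aug)}$ preserves the inequality.

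It then remains to identify these quenched expectations with the quantities in the statement. To do so I would introduce the functional $D_{g}(M) := \int_{\rd} g(z) \omega_{M}(z)\, dz$, which is well-defined (independent of the choice of density) and bounded measurable, since $0 \leq D_{g}(M) \leq \|g\|_{\mathbb{L}^{1}}$ because $\omega_{M}$ is $[0,1]$-valued and $g \geq 0$. By \Cref{prop:quenched_EpiSLFV_OK}, each $(\widetilde{M}_{t}^{i})_{t \geq 0}$ solves the martingale problem $(\gyv, \delta_{M^{i}})$, and by \Cref{theo:martingale_pb_well_posed} this martingale problem is well-posed; hence $\widetilde{M}_{t}^{i}$ and $M_{t}^{i}$ have the same law for every $t$ and $i \in \{1,2\}$. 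Applying this equality in law to the bounded measurable functional $D_{g}$ gives $\esp[D_{g}(M_{t}^{i})] = \esp[D_{g}(\widetilde{M}_{t}^{i})]$, and combining with the inequality of the previous step concludes.

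The only genuine subtlety — and the reason the argument must pass through well-posedness rather than being carried out purely pathwise — is that the two abstractly-defined processes $(M_{t}^{1})$ and $(M_{t}^{2})$ need not live on a common probability space, so a direct pathwise comparison between them is unavailable a priori. The coupling is meaningful only for the quenched construction driven by a shared $\Pi^{(aug)}$, which is precisely why monotonicity is obtained merely in expectation (equivalently, at the level of one-dimensional marginals): the identity in law furnished by \Cref{theo:martingale_pb_well_posed} is what transports the pathwise inequality between the coupled quenched processes onto the expectations of the original processes. All remaining points are routine, namely the finiteness and measurability of $D_{g}$ and the standing assumption \eqref{eqn:stricter_cond_nu} ensuring that the construction of \Cref{defn:quenched_EpiSLFV} is available.
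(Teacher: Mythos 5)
Your proposal is correct and follows essentially the same route as the paper's own proof: couple the two quenched processes via a common $\Pi^{(aug)}$, apply the pathwise monotonicity of \Cref{lem:quenched_monotonicity}, integrate against $g$, and transfer to $(M_t^1)$ and $(M_t^2)$ via \Cref{prop:quenched_EpiSLFV_OK} together with the well-posedness in \Cref{theo:martingale_pb_well_posed}. The additional remarks you make (boundedness of the functional $D_g$ and the reason the comparison can only be in expectation) are consistent with, and slightly more explicit than, the paper's argument.
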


\begin{proof}
Let $\Pi^{(aug)}$ be an augmented Poisson point process, and let~$(\omega_{t}^{1})_{t \geq 0}$ (\textit{resp.}, $(\omega_{t}^{2})_{t \geq 0}$) be the quenched $(\gamma,\nu)$-EpiSLFV process with initial condition~$\omega^{1}$ (\textit{resp.}, $\omega^{2}$) constructed using~$\Pi^{(aug)}$. 

By Lemma~\ref{lem:quenched_monotonicity}, we know that for all~$t \geq 0$ and $z \in \rd$, $\omega_{t}^{1}(z) \leq \omega_{t}^{2}(z)$. Therefore, for all positive integrable function~$g : \rd \to \rmath_{+}$ and for all~$t \geq 0$, 
\begin{equation*}
    \int_{\rd} g(z) \omega_{t}^{1}(z)dz \leq \int_{\rd} g(z) \omega_{t}^{2}(z)dz. 
\end{equation*}
We conclude using the fact that by Proposition~\ref{prop:quenched_EpiSLFV_OK} and Theorem~\ref{theo:martingale_pb_well_posed}, the measure-valued version of~$(\omega_{t}^{1})_{t \geq 0}$ (\textit{resp.}, $(\omega_{t}^{2})_{t \geq 0}$) is equal in distribution to~$(M_{t}^{1})_{t \geq 0}$ (\textit{resp.}, $(M_{t}^{2})_{t \geq 0}$). 
\end{proof}

\subsection{Coupling of the mass of infected individuals with a branching process}\label{subsec:coupling_branching}
Another possible application of the quenched construction is to provide a coupling between the mass of infected individuals in the quenched~$(\gamma,\nu)$-EpiSLFV process and the $\rmath_{+}$-valued stochastic jump process~$X = (X_{t})_{t \geq 0}$ with generator
\begin{equation*}
\lcal^{(sjp)} f(x) = \int_{0}^{1}\int_{0}^{\infty} x\left[ 
f(x+uV_{r}) - f(x)
\right]\nu(dr,du)
\end{equation*}
defined over test functions~$f \in C(\rmath)$, whose expectation at time~$t \geq 0$ is finite, as stated in the lemma below. This will provide an upper bound on the mass of infected individuals when starting from \textit{epidemic} (\textit{i.e.}, compact) initial conditions, that will be useful to study the evolution of the total mass in such a setting. 

\begin{lemma}\label{lem:sjp_finite_expectation} For all~$t \geq 0$,  
\begin{equation*}
\esp[X_{t}] = \esp[X_{0}] \exp\left(
t \int_{0}^{1}\int_{0}^{\infty} uV_{r} \nu(dr,du)
\right), 
\end{equation*}
which is finite by Condition~\eqref{eqn:stricter_cond_nu}. 
\end{lemma}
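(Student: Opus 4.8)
The plan is to apply the generator $\lcal^{(sjp)}$ to the identity function $\mathrm{id} : x \mapsto x$ and to solve the resulting integro-differential equation for $t \mapsto \esp[X_t]$. Writing $c := \int_0^1\int_0^\infty uV_r\,\nu(dr,du)$, which is finite already under \eqref{eqn:cond_nu} (and hence under \eqref{eqn:stricter_cond_nu}) since $uV_r = V_1\, u\, r^d$, a direct computation gives $\lcal^{(sjp)}\mathrm{id}(x) = \int_0^1\int_0^\infty x\,(uV_r)\,\nu(dr,du) = cx$. Since every jump increases $X$ by $uV_r > 0$ and there is no downward mechanism, $(X_t)_{t\geq 0}$ is non-decreasing, which is the structural feature that makes all the integrability manipulations below clean.

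First I would write the Dynkin martingale associated with $\lcal^{(sjp)}$ and $\mathrm{id}$: the process $M_t := X_t - X_0 - c\int_0^t X_{s-}\,ds$ is a local martingale, its predictable compensator being exactly $c\int_0^t X_{s-}\,ds$ (the rate of a jump of size $uV_r$ out of state $x$ being $x\,\nu(dr,du)$). Because $\mathrm{id}$ is unbounded I cannot take expectations directly; instead I would localise along the stopping times $\tau_N := \inf\{t \geq 0 : X_t > N\}$, $N \in \nmath$. On $[0,\tau_N)$ one has $X_{s-} \leq X_s \leq N$, so the stopped compensator is bounded by $Nt$, $M_{\cdot\wedge\tau_N}$ is a genuine martingale, and optional stopping yields
\begin{equation*}
\esp[X_{t\wedge\tau_N}] = \esp[X_0] + c\,\esp\!\left[\int_0^{t\wedge\tau_N} X_{s-}\,ds\right].
\end{equation*}

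Next I would bound the right-hand side by $\esp[X_0] + c\int_0^t \esp[X_{s\wedge\tau_N}]\,ds$ (using $X_{s-}\leq X_{s\wedge\tau_N}$ for $s\leq\tau_N$, by monotonicity) and apply Gronwall's lemma to $g_N(t) := \esp[X_{t\wedge\tau_N}]$, obtaining the uniform bound $g_N(t) \leq \esp[X_0]\,e^{ct}$. This bound has two consequences. It first rules out explosion: from $N\,\proba(\tau_N \leq t) \leq \esp[X_{t\wedge\tau_N}] \leq \esp[X_0]e^{ct}$ we get $\proba(\tau_N \leq t) \to 0$, so $\tau_N \uparrow \infty$ almost surely. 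Combined with the monotonicity of $X$, monotone convergence then gives $\esp[X_t] = \lim_N g_N(t) \leq \esp[X_0]e^{ct} < \infty$, which is the finiteness asserted in the statement.

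Finally, with finiteness in hand I would let $N \to \infty$ in the stopped identity itself: monotone convergence on both sides (all integrands being non-negative, and $\tau_N\uparrow\infty$) upgrades it to $\esp[X_t] = \esp[X_0] + c\int_0^t \esp[X_s]\,ds$, where I have used that $\esp[X_{s-}] = \esp[X_s]$ since $X$ has no fixed jump times. Setting $m(t) := \esp[X_t]$, this is the integral form of $m'(t) = c\,m(t)$ with $m(0) = \esp[X_0]$, whose unique solution is $m(t) = \esp[X_0]e^{ct}$, as claimed. The only genuine obstacle is the integrability bookkeeping forced by the unboundedness of $\mathrm{id}$; the localisation-plus-monotonicity scheme above handles it and, as a bonus, delivers non-explosion from the same Gronwall estimate rather than as a separate hypothesis.
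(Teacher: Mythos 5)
Your proposal is correct and follows essentially the same route as the paper: apply $\lcal^{(sjp)}$ to the identity function, deduce the integral equation $\esp[X_t] = \esp[X_0] + c\int_0^t \esp[X_s]\,ds$ with $c = \int_0^1\int_0^\infty uV_r\,\nu(dr,du)$, and solve it to get the exponential. The difference is one of rigour rather than of method: you justify the Dynkin step (applied to an unbounded function) via localisation at $\tau_N$, Gronwall, non-explosion, and monotone convergence, whereas the paper's three-line computation takes these integrability points for granted; your side remark that finiteness of $c$ already follows from the weaker condition~\eqref{eqn:cond_nu} is also accurate.
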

\begin{proof}
Let $t \geq 0$. By applying the generator~$\lcal^{(sjp)}$ to the identity function~$\mathrm{Id}: x\to x$, we obtain that
\begin{align*}
\esp[X_{t}] &= \esp[X_{0}] + \esp\left[ 
\int_{0}^{t} \lcal^{(sjp)} \mathrm{Id}(X_{s})ds
\right] \\
&= \esp[X_{0}] + \esp\left[
\int_{0}^{t}\int_{0}^{1}\int_{0}^{\infty} X_{s} \times \left(
X_{s} + uV_{r} - X_{s}
\right) \nu(dr,du) ds
\right] \\
&= \esp[X_{0}] + \left(
\int_{0}^{t} \esp[X_{s}]ds
\right) \times \left(
\int_{0}^{1}\int_{0}^{\infty} uV_{r} \nu(dr,du)
\right), 
\end{align*}
which allows us to conclude. 
\end{proof}

In order to construct the coupling, we now show that the rate at which events involving an infected parent occur in the quenched~$(\gamma,\nu)$-EpiSLFV process do not depend on the geographical distribution of infected individuals, and only depend on their total mass. 
\begin{lemma}\label{lem:quenched_jump_rate_t0} Assume that~$\nu$ satisfies~\eqref{eqn:stricter_cond_nu}. Let~$\omega^{0}$ be an epidemic initial condition, and let~$(\omega_{t})_{t \geq 0}$ be the quenched $(\gamma,\nu)$-EpiSLFV process with initial condition~$\omega^{0}$, constructed using~$\Pi^{(aug)}$. For all~$0 < \mathfrak{R}_{1} < \mathfrak{R}_{2}$ and $0 < u_{1} < u_{2} \leq 1$, at time~$0$, the rate at which a reproduction event with radius~$r \in [\mathfrak{R}_{1}, \mathfrak{R}_{2}]$ and impact parameter~$u \in [u_{1}, u_{2}]$ leads to the production of newly infected individuals is equal to
\begin{equation*}
\left(
\int_{\rd} (1-\omega^{0}(z))dz
\right) \times \left(
\int_{u_{1}}^{u_{2}}\int_{\mathfrak{R}_{1}}^{\mathfrak{R}_{2}} \nu(dr,du)
\right). 
\end{equation*}
\end{lemma}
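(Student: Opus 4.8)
The plan is to read off directly from the update rule in \Cref{defn:quenched_EpiSLFV} which reproduction events actually create new infected individuals, and then to integrate the known intensity of the augmented Poisson point process $\Pi^{(aug)}$ over the corresponding region. Recall that at a reproduction event $(T,Z,R,U,P,A)$ the density at an affected location $z$ is updated through the factor $\mathds{1}_{\{A \leq \omega_{T-}(P)\}}$: when the sampled parent is healthy (that is, $A \leq \omega_{T-}(P)$) the density $\omega$ is left unchanged, whereas when the parent is infected (that is, $A > \omega_{T-}(P)$) a fraction $U$ of the healthy individuals in $\bcal(Z,R)$ is converted, so that $\Theta_{Z,R,U}$ is genuinely applied. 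Hence, at time $0$, an event produces newly infected individuals precisely on the event $\{A > \omega^{0}(P)\}$.

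First I would make explicit the intensity of $\Pi^{(aug)}$: since $p$ is sampled uniformly in $\bcal(z,r)$ and $a$ uniformly in $(0,1)$, independently of everything else, the point process has intensity $dt \otimes dz \otimes \nu(dr,du) \otimes \tfrac{1}{V_{r}}\mathds{1}_{\bcal(z,r)}(p)\,dp \otimes \mathds{1}_{(0,1)}(a)\,da$. The rate at time $0$ of events with $r \in [\mathfrak{R}_{1},\mathfrak{R}_{2}]$, $u \in [u_{1},u_{2}]$ and satisfying $\{a > \omega^{0}(p)\}$ is therefore
\begin{equation*}
\int_{\rd}\int_{u_{1}}^{u_{2}}\int_{\mathfrak{R}_{1}}^{\mathfrak{R}_{2}}\frac{1}{V_{r}}\int_{\bcal(z,r)}\int_{0}^{1}\mathds{1}_{\{a > \omega^{0}(p)\}}\,da\,dp\,\nu(dr,du)\,dz,
\end{equation*}
and integrating out $a$ replaces the inner two integrals by $\tfrac{1}{V_{r}}\int_{\bcal(z,r)}(1-\omega^{0}(p))\,dp$.

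The remaining step is a change of the order of integration. Since the integrand is nonnegative, Tonelli's theorem applies without any further hypothesis, so I would swap the integrals over $z$ and $p$ and use the translation-invariance identity $\int_{\rd}\mathds{1}_{\bcal(z,r)}(p)\,dz = V_{r}$, valid for every $p \in \rd$ and $r > 0$. This cancels the factor $1/V_{r}$ and leaves $\int_{\rd}(1-\omega^{0}(p))\,dp$, whence the rate factorises as $\big(\int_{\rd}(1-\omega^{0}(z))\,dz\big)\big(\int_{u_{1}}^{u_{2}}\int_{\mathfrak{R}_{1}}^{\mathfrak{R}_{2}}\nu(dr,du)\big)$, as claimed. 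Both factors are finite: the first because $\omega^{0}$ is an epidemic initial condition, so $1-\omega^{0}$ has compact support and is bounded by $1$; the second because on $r \geq \mathfrak{R}_{1}$ one has $\int_{u_{1}}^{u_{2}}\int_{\mathfrak{R}_{1}}^{\mathfrak{R}_{2}}\nu(dr,du) \leq \mathfrak{R}_{1}^{-d}\int_{0}^{1}\int_{0}^{\infty}r^{d}\,\nu(dr,du)$, which is finite under \eqref{eqn:stricter_cond_nu}.

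The computation itself is routine; the only genuinely delicate point is the first paragraph, namely correctly translating the informal phrase ``leads to the production of newly infected individuals'' into the measurable event $\{A > \omega^{0}(P)\}$ dictated by the indicator in \Cref{defn:quenched_EpiSLFV}, and checking that the sampling of $(p,a)$ indeed contributes the product factor $\tfrac{1}{V_{r}}\mathds{1}_{\bcal(z,r)}(p)\,dp \otimes da$ to the intensity, so that the nonnegativity of the integrand makes Tonelli's theorem immediately applicable.
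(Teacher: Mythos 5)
Your proof is correct and follows essentially the same route as the paper's: both reduce the claim to the integral $\int_{\rd}\int_{u_{1}}^{u_{2}}\int_{\mathfrak{R}_{1}}^{\mathfrak{R}_{2}}\frac{1}{V_{r}}\int_{\bcal(z,r)}(1-\omega^{0}(p))\,dp\,\nu(dr,du)\,dz$ and then swap the $z$ and $p$ integrations (Tonelli) to use $\int_{\rd}\mathds{1}_{\bcal(z,r)}(p)\,dz = V_{r}$, which cancels the $1/V_{r}$ and yields the product form. The only difference is cosmetic: you derive this integrand explicitly from the intensity of $\Pi^{(aug)}$ by integrating out the auxiliary variable $a$ over the event $\{a > \omega^{0}(p)\}$, whereas the paper takes that expression as its starting point; your added finiteness check of both factors is a harmless bonus.
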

By the Markov property satisfied by the quenched~$(\gamma,\nu)$-EpiSLFV process, we can obtain a similar expression for any~$t \geq 0$, replacing~$\omega^{0}$ by~$\omega_{t}$. Notice however that contrary to the case of the stochastic jump process introduced earlier, the mass of infected individuals decreases exponentially between jumps, so the jump rate of the process decreases between jumps. 
\begin{proof}
Let $0 < \mathfrak{R}_{1} < \mathfrak{R}_{2}$ and $0 < u_{1} < u_{2} \leq 1$. At time~$0$, the rate at which a reproduction event with radius~$r \in [\mathfrak{R}_{1}, \mathfrak{R}_{2}]$ and impact parameter~$u \in [u_{1}, u_{2}]$ involving an infected parent occurs is given by
\begin{align*}
&\int_{u_{1}}^{u_{2}}\int_{\mathfrak{R}_{1}}^{\mathfrak{R}_{2}} \int_{\rd} \frac{1}{V_{r}} \int_{\bcal(z,r)} (1-\omega^{0}(p))dpdz\nu(dr,du) \\
&= \int_{u_{1}}^{u_{2}} \int_{\mathfrak{R}_{1}}^{\mathfrak{R}_{2}} \int_{\mathrm{Supp}(1-\omega^{0}, r)} \frac{1}{V_{r}} \int_{\bcal(z,r)} (1 - \omega^{0}(p)) dpdz\nu(dr,du) \\
&= \int_{u_{1}}^{u_{2}}\int_{\mathfrak{R}_{1}}^{\mathfrak{R}_{2}} \frac{1}{V_{r}}\int_{\rd}\int_{\rd} \mathds{1}_{\{ 
z \in \mathrm{Supp}(1-\omega^{0}, r)
\}} \mathds{1}_{\{
|p-z| \leq r
\}} (1-\omega^{0}(p)) dpdz\nu(dr,du) \\
&= \int_{u_{1}}^{u_{2}} \int_{\mathfrak{R}_{1}}^{\mathfrak{R}_{2}} \frac{1}{V_{r}} \int_{\rd} (1-\omega^{0}(p)) \times \left[ 
\int_{\rd}
\mathds{1}_{\{ 
z \in \mathrm{Supp}(1-\omega^{0}, r)
\}} \mathds{1}_{\{
|p-z| \leq r
\}}
dz
\right]dp\nu(dr,du) \\
&= \int_{u_{1}}^{u_{2}} \int_{\mathfrak{R}_{1}}^{\mathfrak{R}_{2}} \frac{1}{V_{r}} \int_{\rd} (1 - \omega^{0}(p)) \times \left[ 
\int_{\rd} \mathds{1}_{\{
|p-z| \leq r
\}} dz
\right]dp\nu(dr,du) \\
&= \int_{u_{1}}^{u_{2}} \int_{\mathfrak{R}_{1}}^{\mathfrak{R}_{2}} \int_{\rd} (1 - \omega^{0}(p))dp \nu(dr,du) \\
&= \left(
\int_{\rd}(1 - \omega^{0}(p)) dp
\right) \times \left(
\int_{u_{1}}^{u_{2}} \int_{\mathfrak{R}_{1}}^{\mathfrak{R}_{2}} \nu(dr,du)
\right), 
\end{align*}
which allows us to conclude. 
\end{proof}

As observed above, between reproduction events, the mass of infected individuals decreases in the quenched~$(\gamma,\nu)$-EpiSLFV process, while the mass of~$(X_{t})_{t \geq 0}$ is non-decreasing. Therefore, we have the following result. 
\begin{lemma}\label{lem:comparison_masses} Under the notation of Lemma~\ref{lem:quenched_jump_rate_t0}, let~$(X_{t})_{t \geq 0}$ be the stochastic jump process with initial condition~$X^{0} \in \rmath_{+}$ and with generator~$\lcal^{(sjp)}$, and let~$(T_{n})_{n \geq 1}$ be the jump times of~$(\omega_{t})_{t \geq 0}$. We also set~$T_{0} = 0$. Then, for all~$n \geq 0$, if
\begin{equation*}
\int_{\rd} (1 - \omega_{T_{n}}(z))dz \leq X_{T_{n}}, 
\end{equation*}
then for all $t \in [T_{n}, T_{n+1})$, 
\begin{equation*}
\int_{\rd} (1 - \omega_{t}(z)) dz \leq X_{t}. 
\end{equation*}
\end{lemma}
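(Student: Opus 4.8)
The plan is to establish the comparison $\int_{\rd}(1 - \omega_t(z))\,dz \leq X_t$ on $[T_n, T_{n+1})$ purely through a monotonicity argument, by showing that on this interval the mass of infected individuals can only decrease while $(X_t)_{t \geq 0}$ can only increase. Writing $m_t := \int_{\rd}(1 - \omega_t(z))\,dz$, I would prove the chain of inequalities
\begin{equation*}
m_t \leq m_{T_n} \leq X_{T_n} \leq X_t \qquad \text{for all } t \in [T_n, T_{n+1}),
\end{equation*}
in which the middle inequality is precisely the hypothesis of the lemma, while the two outer inequalities are the monotonicity statements to be shown.

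First I would treat $(X_t)_{t \geq 0}$. Reading off the generator $\lcal^{(sjp)}$, this is a pure-jump process whose only transitions are of the form $x \to x + uV_r$ with $u > 0$ and $V_r > 0$, and which has no continuous (drift) part. Hence every jump is a strictly positive increment and $t \mapsto X_t$ is non-decreasing; in particular $X_t \geq X_{T_n}$ for every $t \geq T_n$, which is the last inequality. Note that $X$ may well jump during $[T_n, T_{n+1})$, but this only reinforces the inequality.

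Next I would show that $t \mapsto m_t$ is non-increasing on $[T_n, T_{n+1})$. Since $T_{n+1}$ is by definition the next jump time of $(\omega_t)_{t \geq 0}$ after $T_n$, no reproduction event produces newly infected individuals on the open interval $[T_n, T_{n+1})$: following Definition~\ref{defn:quenched_EpiSLFV}, any reproduction event $(t,z,r,u,p,a) \in \Pi^{(aug)}$ in this interval samples a healthy parent (i.e.\ $a \leq \omega_{t-}(p)$) and leaves $\omega$ unchanged. Consequently each location $z \in \rd$ undergoes only exponential recovery on $[T_n, t]$; applying Lemma~\ref{lem:exponential_decrease} piecewise between the (non-effective) events affecting $z$ and using that the value of $\omega$ is continuous across them, one obtains $1 - \omega_t(z) = (1 - \omega_{T_n}(z))e^{-\gamma(t - T_n)}$ for every $z$. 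Integrating over $\rd$ — which is legitimate because, at most $n$ successful events having occurred before $T_{n+1}$, the support of $1 - \omega_{T_n}$ is contained in the union of the compact initial support and finitely many balls, so that $m_{T_n} < +\infty$ — yields $m_t = e^{-\gamma(t-T_n)}m_{T_n} \leq m_{T_n}$, which is the first inequality.

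Combining the three inequalities gives the claim. The argument is essentially bookkeeping, and the statements about $(X_t)_{t\geq 0}$ and the final chaining are immediate. The only genuine point requiring care is the second step: one must verify that between two consecutive successful infection events the integrated infected mass decays exactly by the factor $e^{-\gamma(t-T_n)}$. This is where the ignored, healthy-parent reproduction events enter — they are reproduction events in the sense of Lemma~\ref{lem:exponential_decrease} and yet leave the value of $\omega$ unchanged — and where one must justify interchanging the spatial integral with the pointwise exponential decay by invoking the finiteness of the support up to time $T_{n+1}$.
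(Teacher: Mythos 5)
Your proposal is correct and follows essentially the same route as the paper: the paper's proof simply observes that between jump times of $(\omega_t)_{t\geq 0}$ the infected mass decreases (by the exponential recovery of Lemma~\ref{lem:exponential_decrease}, which, as you note, concatenates across non-effective reproduction events since $\omega$ is unchanged there), while $(X_t)_{t \geq 0}$ is non-decreasing as a pure-jump process with positive jumps, and then chains these with the hypothesis exactly as in your display $m_t \leq m_{T_n} \leq X_{T_n} \leq X_t$. Your extra care about finiteness of $m_{T_n}$ is harmless but unnecessary, since the pointwise inequality $1-\omega_t(z) \leq 1-\omega_{T_n}(z)$ integrates directly regardless.
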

\begin{proof}
This is a direct consequence of the above discussion. 
\end{proof}

We now show that the two processes can be coupled. 
\begin{lemma}\label{lem:coupling_sjp_quenched} Under the notation of Lemma~\ref{lem:quenched_jump_rate_t0}, let~$(X_{t})_{t \geq 0}$ be the stochastic jump process with initial condition~$\int_{\mathbb{R}^d} (1 - \omega^{0}(z)) dz$ and with generator~$\lcal^{(sjp)}$. Then, it is possible to couple~$(\omega_{t})_{t \geq 0}$ and~$(X_{t})_{t \geq 0}$ in such a way that for all~$t \geq 0$, 
\begin{equation*}
\int_{\rd} (1 - \omega_{t}(z))dz \leq X_{t}. 
\end{equation*}
\end{lemma}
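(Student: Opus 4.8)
The plan is to construct the coupling on the same probability space as $\Pi^{(aug)}$, enlarged by an independent source of noise, and to maintain throughout the invariant
\[
M_t := \int_{\rd}(1-\omega_t(z))dz \leq X_t,
\]
which holds with equality at $t=0$ by the choice of initial condition for $X$. The idea is that every reproduction event of $(\omega_t)_{t\geq 0}$ involving an infected parent will trigger a \emph{matched} jump of $X$ with the same parameters $(r,u)$ and of size $uV_r$, while the remaining jumps of $X$ are supplied by independent randomness.

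Two domination facts drive the construction. First, at an infection event with centre $z$, radius $r$ and impact $u$, the infected mass created equals $\int_{\bcal(z,r)}u\,\omega_{t-}(z')dz'\leq uV_r$ since $\omega_{t-}\leq 1$, so the mass added to $M$ never exceeds the matched jump size $uV_r$ of $X$. Second, by Lemma~\ref{lem:quenched_jump_rate_t0} together with the Markov property from Lemma~\ref{lem:exponential_decrease}, at time $t$ the rate at which $(\omega_t)_{t\geq 0}$ undergoes an infection event with parameters in $dr\,du$ is exactly $M_t\,\nu(dr,du)$, which is then also the rate of the matched jumps of $X$. Whenever the invariant holds this is dominated by $X_t\,\nu(dr,du)$, the rate prescribed by $\lcal^{(sjp)}$. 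I therefore superimpose, using an independent Poisson point process on $\rmath\times(0,\infty)\times(0,1]\times\rmath_+$ with intensity $dt\otimes\nu(dr,du)\otimes dy$ and the predictable thinning rule ``accept $(t,r,u,y)$ iff $M_{t-}\leq y\leq X_{t-}$'', a family of \emph{extra} jumps of $X$ of size $uV_r$ occurring at rate $(X_t-M_t)\,\nu(dr,du)$. Matched and extra jumps together produce jumps of $X$ by $uV_r$ at total rate $X_t\,\nu(dr,du)$, so the constructed $X$ is indeed the stochastic jump process with generator $\lcal^{(sjp)}$, and the extra jumps affect neither $\omega$ nor $M$.

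It then remains to check that the invariant propagates, which I do by induction over the jump times $T_0=0<T_1<T_2<\cdots$ of $(\omega_t)_{t\geq 0}$. The base case is the equality $M_0=X_0$. Assuming $M_{T_n}\leq X_{T_n}$, Lemma~\ref{lem:comparison_masses} yields $M_t\leq X_t$ for every $t\in[T_n,T_{n+1})$, since $M$ decreases by recovery between jumps while $X$ is non-decreasing. At $T_{n+1}$ the infection event adds $\int_{\bcal(Z,R)}U\,\omega_{T_{n+1}-}\leq UV_R$ to $M$ and, by the matching, exactly $UV_R$ to $X$, so $M_{T_{n+1}}\leq M_{T_{n+1}-}+UV_R\leq X_{T_{n+1}-}+UV_R=X_{T_{n+1}}$, closing the induction and giving $M_t\leq X_t$ for all $t\geq 0$.

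The main point requiring care is the simultaneous validity of two requirements on $X$: that it stays coupled to the infection events of $\omega$ — whose rate, by Lemma~\ref{lem:quenched_jump_rate_t0}, depends on the state only through $M_t$, which is exactly what makes the matched contribution compatible with the form of $\lcal^{(sjp)}$ — and that the thinning of the extra jumps is well posed, for which one needs the gap $X_{t-}-M_{t-}\geq 0$, precisely the invariant being propagated. This apparent circularity is resolved by the inductive construction above, and the local finiteness of the jump times $(T_n)$ under Condition~\eqref{eqn:stricter_cond_nu}, already exploited in Lemmas~\ref{lem:quenched_EpiSLFV_measurevalued} and~\ref{lem:quenched_EpiSLFV_cadlag}, guarantees there is no accumulation of jumps.
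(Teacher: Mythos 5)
Your proposal is correct and takes essentially the same route as the paper's proof: the same induction over the jump times $(T_{n})_{n \geq 0}$ of $(\omega_{t})_{t \geq 0}$, the same use of Lemma~\ref{lem:quenched_jump_rate_t0} (combined with the Markov property) and Lemma~\ref{lem:comparison_masses} to dominate the matched-jump rate by $X_{t}\,\nu(dr,du)$, and the same decomposition of the jumps of $X$ into jumps matched to the infection events of $\omega$ plus independent extra jumps restoring the correct total rate --- your thinned auxiliary Poisson point process merely makes explicit what the paper phrases as ``adding potential other jumps to recover the correct jump rate''. The only (minor) divergence is the justification of non-accumulation of the $T_{n}$: the paper deduces $T_{n} \to +\infty$ from the observation that an accumulation would force $(X_{t})_{t \geq 0}$ to explode in finite time, contradicting Lemma~\ref{lem:sjp_finite_expectation}, whereas you appeal to the local-finiteness arguments of Lemmas~\ref{lem:quenched_EpiSLFV_measurevalued} and~\ref{lem:quenched_EpiSLFV_cadlag}; both settle the point at the same level of rigour.
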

\begin{proof}
Let~$T_{0} = 0$ and let~$(T_{n})_{n \geq 1}$ be the jump times of~$(\omega_{t})_{t \geq 0}$. Let~$(R_{n},U_{n})_{n \geq 1}$ be the radius and impact parameter of the corresponding reproduction events. We are going to construct the coupling over each time interval~$[T_{n},T_{n+1})$, $n \geq 0$, and we will conclude by showing that 
\begin{equation*}
T_{n} \xrightarrow[n \to + \infty]{} + \infty \quad \text{ a.s.}
\end{equation*}
We start with the case~$n = 0$. Let~$0 < \mathfrak{R}_{1} < \mathfrak{R}_{2}$ and~$0 < u_{1} < u_{2} \leq 1$. By Lemma~\ref{lem:quenched_jump_rate_t0} and the Markov property, we know that for each~$t \in [T_{0}, T_{1})$, the rate at which an event with radius~$r \in [\mathfrak{R}_{1}, \mathfrak{R}_{2}]$ and impact parameter~$u \in [u_{1}, u_{2}]$ leads to the production of newly infected individuals is equal to
\begin{equation*}
\left(
\int_{\rd} (1 - \omega_{t}(z))dz
\right) \times \left(
\int_{u_{1}}^{u_{2}} \int_{\mathfrak{R}_{1}}^{\mathfrak{R}_{2}} \nu(dr,du)
\right) \leq X_{t} \times \left(
\int_{u_{1}}^{u_{2}}\int_{\mathfrak{R}_{1}}^{\mathfrak{R}_{2}}
\nu(dr,du)
\right).
\end{equation*}
The bound follows from Lemma~\ref{lem:comparison_masses} and is the rate at which~$X_{t}$ increases by~$uV_{r}$ for $u \in [u_{1}, u_{2}]$ and~$r \in [\mathfrak{R}_{1}, \mathfrak{R}_{2}]$. Therefore, we can couple~$(X_{t})_{0 \leq t \leq T_{1}}$ to~$(\omega_{t})_{0 \leq t \leq T_{1}}$ as follows:
\begin{itemize}
    \item We use~$(T_{1}, R_{1}, U_{1})$ to "trigger" one jump of~$X$, and set $X_{T_{1}} = X_{T_{1}-} + U_{1}V_{R_{1}}$. 
    \item We add potential other jumps for~$X$ over the time interval~$[0,T_{1})$, in order to recover the correct jump rate for the stochastic jump process over the time interval~$[0,T_{1}]$. 
\end{itemize}
As 
\begin{align*}
\int_{\rd} \left(
1 - \omega_{T_{1}}(z)
\right) dz &\leq \int_{\rd} \left(
1 - \omega_{T_{1}-}(z)
\right) dz + U_{1} V_{R_{1}} \\ 
&\leq X_{T_{1}-} + U_{1}V_{R_{1}} \\
\intertext{by Lemma~\ref{lem:comparison_masses}, we have}
\int_{\rd} \left(
1 - \omega_{T_{1}}(z)
\right)dz &\leq X_{T_{1}}. 
\end{align*}
We can apply the same reasoning to the cases~$n \geq 1$, and obtain that
\begin{equation*}
\forall n \in \nmath, \forall t \in [T_{n}, T_{n+1}), \int_{\rd} \left(
1 - \omega_{t}(z)
\right)dz \leq X_{t}. 
\end{equation*}
If~$(T_{n})_{n \geq 0}$ did not diverge to~$+ \infty$, this would mean that there is an accumulation of jumps, and in particular that~$(X_{t})_{t \geq 0}$ explodes in finite time. By Lemma~\ref{lem:sjp_finite_expectation}, this is not the case, so~$T_{n}$ diverges to~$+ \infty$ when~$n \to + \infty$, which allows us to conclude. 
\end{proof}

While this upper bound does not have any direct consequence in terms of survival/extinction of the process (as the stochastic jump process cannot go extinct), it will allow us to apply the dominated convergence theorem to the quenched~$(\gamma,\nu)$-EpiSLFV process, and derive results on the long-term dynamics using the martingale problem formulation.

\subsection{Application to the study of the regimes \texorpdfstring{$\mathrm{R}_{0}(\gamma, \nu) < 1$}{} and \texorpdfstring{$\mathrm{R}_{0}(\gamma, \nu) > 1$}{}}\label{subsec:application_R0_inf_1}
The original definition of the~$(\gamma,\nu)$-EpiSLFV process is as the unique solution to a martingale problem defined over a certain family of test functions, and in Section~\ref{subsec:extension_mp_Dpsi}, we showed that such test functions include indicator functions of compact sets, which are the cornerstone of our definition of the extinction of the epidemic in the~$(\gamma,\nu)$-EpiSLFV process (see Definition~\ref{defn:extinction_process}). The reason why we considered compact sets is because in the general case, the mass of infected individuals is only locally bounded. However, in Lemmas~\ref{lem:coupling_sjp_quenched} and~\ref{lem:sjp_finite_expectation}, we showed that under Condition~\eqref{eqn:stricter_cond_nu} and when starting from an epidemic initial condition, we can control the total mass of infected individuals. This suggests that in such a setting, we can use "$\mathds{1}_{\rd}$" as a test function. To show this result, as a first step, we rephrase the martingale problem in terms of the evolution of the mass of infected individuals in a compact.
\begin{lemma}\label{lem:martingale_pb_infected}
Let~$M^{0} \in \ml$, and let~$(M_{t})_{t \geq 0}$ be the (unique) solution to the martingale problem~$(\gyv, \delta_{M^{0}})$. Then, for all compact~$A \subseteq \rd$ with positive volume, the process
\begin{align*}
\Big(
\langle \mathds{1}_{A}, &1 - \omega_{M_{t}} \rangle 
- \langle \mathds{1}_{A}, 1 - \omega_{M_{0}} \rangle  \\
&- \int_{0}^{t} \int_{0}^{1} \int_{0}^{\infty} \Big[ uV_{r} \times \langle 
\mathds{1}_{A}, (1 - (\mathrm{R}_{0}(\gamma, \nu))^{-1}) (1 - \omega_{M_{s}}) \\
&\hspace{5cm} + (1 - \overline{\overline{\omega}}_{M_{s}} (\cdot, r)) \omega_{M_{s}} - (1 - \omega_{M_{s}})
\rangle \Big] \nu(dr,du) 
\Big)_{t \geq 0}
\end{align*}
is a martingale, where the double bar denotes the iterated spatial average of the form
\begin{equation*}
\forall f : \rd \to \rmath \text{ measurable, } \forall (z,r) \in \rd \times (0,+\infty), \overline{\overline{f}}(z,r) = \frac{1}{V_{r}}
\int_{\bcal(z,r)} \frac{1}{V_{r}} \int_{\bcal(z',r)} f(y)dydz'. 
\end{equation*}
\end{lemma}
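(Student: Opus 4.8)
The plan is to obtain this as an immediate consequence of Lemma~\ref{lem:extended_martingale} applied to the test function $\psi = \mathds{1}_A$, once $\gyv D_{\mathds{1}_A}(M)$ has been computed in closed form. Since $A$ is compact it has finite volume, hence $\mathds{1}_A \in \mathbb{L}^1(\rd)$, and Lemma~\ref{lem:extended_martingale} (with $k=1$) guarantees that
\[
\Big(D_{\mathds{1}_A}(M_t) - D_{\mathds{1}_A}(M_0) - \int_0^t \gyv D_{\mathds{1}_A}(M_s)\,ds\Big)_{t\geq 0}
\]
is a martingale. Because $D_{\mathds{1}_A}(M) = \langle \mathds{1}_A, \omega_M\rangle = \mathrm{Vol}(A) - \langle \mathds{1}_A, 1-\omega_M\rangle$, the process in the statement is exactly $-1$ times this martingale, provided the displayed integrand coincides with $-\gyv D_{\mathds{1}_A}(M_s)$. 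So everything reduces to the pointwise identity $-\gyv D_{\mathds{1}_A}(M) = \int_0^1\int_0^\infty u V_r \langle \mathds{1}_A,\, (1 - \overline{\overline{\omega}}_M(\cdot,r))\omega_M - (\mathrm{R}_0(\gamma,\nu))^{-1}(1-\omega_M)\rangle\,\nu(dr,du)$, where I have first simplified the bracket in the statement via $(1 - \mathrm{R}_0(\gamma,\nu)^{-1})(1-\omega_M) - (1-\omega_M) = -\mathrm{R}_0(\gamma,\nu)^{-1}(1-\omega_M)$.

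First I would compute $\gyv D_{\mathds{1}_A}(M)$ from the explicit formula for $\gyv D_{\mathds{1}_A}$ recorded in Section~\ref{subsec:extension_mp_Dpsi}. For the jump term, since $\Theta_{z,r,u}(\omega_M) = \omega_M - \mathds{1}_{\bcal(z,r)}u\omega_M$, one has $\langle \mathds{1}_A, \Theta_{z,r,u}(\omega_M)\rangle - \langle \mathds{1}_A, \omega_M\rangle = -u\int_A \mathds{1}_{\bcal(z,r)}(y)\omega_M(y)\,dy$, which is independent of $z'$; the factor $\tfrac{1}{V_r}\int_{\bcal(z,r)}(1-\omega_M(z'))\,dz'$ then factors out as $1 - \overline{\omega}_M(z,r)$, the spatial average of $\omega_M$ over $\bcal(z,r)$. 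The recovery term contributes $\gamma\langle\mathds{1}_A,1-\omega_M\rangle$, and rewriting $\gamma = (\mathrm{R}_0(\gamma,\nu))^{-1}\int_0^1\int_0^\infty uV_r\,\nu(dr,du)$ (Definition~\ref{defn:reproduction_number}) lets me absorb it under the $\nu$-integral against $uV_r$, producing the term $-\mathrm{R}_0(\gamma,\nu)^{-1}(1-\omega_M)$.

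The key computational step, and the main obstacle, is the double spatial integral arising from the jump term. Its integrand $u\,(1-\overline{\omega}_M(z,r))\,\mathds{1}_{\bcal(z,r)}(y)\,\omega_M(y)$ is nonnegative, so Tonelli applies (legitimate under \eqref{eqn:cond_nu}, since $\int u r^d\,\nu < \infty$ forces $\int u V_r\,\nu < \infty$ and all spatial integrals run over compacts or balls). Swapping the $z$- and $y$-integrations and using $\mathds{1}_{\bcal(z,r)}(y) = \mathds{1}_{\bcal(y,r)}(z)$ yields
\[
\int_{\rd}\big(1-\overline{\omega}_M(z,r)\big)\,\mathds{1}_{\bcal(y,r)}(z)\,dz = V_r\big(1 - \overline{\overline{\omega}}_M(y,r)\big),
\]
where the iterated average $\overline{\overline{\omega}}_M(y,r) = \tfrac{1}{V_r}\int_{\bcal(y,r)}\overline{\omega}_M(z,r)\,dz$ is exactly the double-bar notation of the lemma, since it averages the single average $\overline{\omega}_M(\cdot,r)$ over $\bcal(y,r)$. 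The jump term therefore collapses to $-\int_0^1\int_0^\infty uV_r\langle\mathds{1}_A,(1-\overline{\overline{\omega}}_M(\cdot,r))\omega_M\rangle\,\nu(dr,du)$. Combining with the rewritten recovery term and negating reproduces the claimed integrand precisely, so the process in the statement equals the negative of the martingale of Lemma~\ref{lem:extended_martingale} and is thus a martingale.
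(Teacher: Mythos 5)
Your proof is correct and follows essentially the same route as the paper's: apply Lemma~\ref{lem:extended_martingale} to $\psi = \mathds{1}_{A}$, compute $\gyv D_{\mathds{1}_{A}}$ explicitly, swap the spatial integrals via $\mathds{1}_{\bcal(z,r)}(y) = \mathds{1}_{\bcal(y,r)}(z)$ to produce the iterated average $\overline{\overline{\omega}}_{M}(\cdot,r)$, rewrite $\gamma$ as $(\mathrm{R}_{0}(\gamma,\nu))^{-1}\int u V_{r}\,\nu(dr,du)$, and recognise the claimed process as $-1$ times the resulting martingale. The only cosmetic difference is that you collapse $(1-\mathrm{R}_{0}^{-1})(1-\omega_{M}) - (1-\omega_{M})$ to $-\mathrm{R}_{0}^{-1}(1-\omega_{M})$ at the outset, whereas the paper deliberately keeps those two terms separate to match the form used later in Lemma~\ref{lem:expression_expectation}.
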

We keep the two terms involving $(1 - \omega_{M_s})$ separated, as this subsection will lead to~\Cref{lem:expression_expectation}, which features the term $(1 - (\mathrm{R}_{0}(\gamma, \nu))^{-1}) (1 - \omega_{M_{s}})$.
\begin{proof}
Let~$A \subseteq \rd$ be a compact subset of~$\rd$ with positive volume. By Lemma~\ref{lem:extended_martingale}, we know that the process
\begin{equation*}
\left(
D_{\mathds{1}_{A}}(M_{t}) - D_{\mathds{1}_{A}}(M_{0}) - \int_{0}^{t} \gyv D_{\mathds{1}_{A}}(M_{s})ds
\right)_{t \geq 0}
\end{equation*}
is a martingale. Moreover, for all~$t \geq 0$, 
\begin{align*}
&D_{\mathds{1}_{A}}(M_{t}) - D_{\mathds{1}_{A}}(M_{0}) - \int_{0}^{t} \gyv D_{\mathds{1}_{A}}(M_{s})ds \\
&\begin{aligned}
&= \langle \mathds{1}_{A}, \omega_{M_{t}} \rangle - \langle \mathds{1}_{A}, \omega_{M_{0}} \rangle - \int_{0}^{t} \gamma \langle \mathds{1}_{A}, 1 - \omega_{M_{s}} \rangle ds \\
&\quad + \int_{0}^{t}\int_{\rd}\int_{0}^{1}\int_{0}^{\infty} \frac{1}{V_{r}} \int_{\bcal(z,r)} \left(
1 - \omega_{M_{s}}(z')
\right) \times \langle 
\mathds{1}_{A}, \mathds{1}_{\bcal(z,r)} u \omega_{M_{s}}
\rangle dz' \nu(dr,du)dzds 
\end{aligned}\\
&\begin{aligned}
&= - \langle \mathds{1}_{A}, 1 - \omega_{M_{t}} \rangle + \langle \mathds{1}_{A}, 1 - \omega_{M_{0}} \rangle - \int_{0}^{t} \gamma \langle \mathds{1}_{A}, 1 - \omega_{M_{s}} \rangle ds \\
&\quad + \int_{0}^{t} \int_{\rd} \int_{0}^{1} \int_{0}^{\infty} \int_{\rd} \int_{\rd} \frac{1}{V_{r}} \mathds{1}_{\bcal(z,r)}(z') \mathds{1}_{A}(y) \left(
1 - \omega_{M_{s}}(z')
\right) \mathds{1}_{\bcal(z,r)}(y) u\omega_{M_{s}}(y)\\
&\hspace{11.5cm} dy dz' \nu(dr,du) dz ds. 
\end{aligned}
\end{align*}
We recall that we denote the spatial average of a measurable function~$f : \rd \to \rmath$ over a ball~$\bcal(z,r)$, $z \in \rd$ and~$r > 0$, by a single bar:
\begin{equation*}
\overline{f}(z,r) = \frac{1}{V_{r}} \int_{\bcal(z,r)} f(z')dz'. 
\end{equation*}
In particular, notice that we have
\begin{equation*}
\overline{\overline{f}}(z,r) = \frac{1}{V_{r}} \int_{\bcal(z,r)} \overline{f}(z',r)dz'. 
\end{equation*}
For~$s \in [0,t]$, we can write
\begin{align*}
& - \gamma \langle \mathds{1}_{A}, 1 - \omega_{M_{s}} \rangle \\
&+ \int_{\rd}\int_{0}^{1} \int_{0}^{\infty} \int_{\rd} \int_{\rd} \frac{1}{V_{r}} \mathds{1}_{\bcal(z,r)}(z') \mathds{1}_{A}(y) \left(
1 - \omega_{M_{s}}(z')
\right) \mathds{1}_{\bcal(z,r)}(y) u \omega_{M_{s}}(y) dy dz' \nu(dr,du) dz \\
&= - \gamma \langle \mathds{1}_{A}, 1 - \omega_{M_{s}} \rangle \\ 
&\quad + \int_{\rd}\int_{0}^{1}\int_{0}^{\infty} \int_{\rd} \mathds{1}_{A}(y) \mathds{1}_{\bcal(z,r)}(y) u \omega_{M_{s}}(y) \times \left(
\frac{1}{V_{r}} \int_{\bcal(z,r)} (1 - \omega_{M_{s}}(z'))dz'
\right) dy \nu(dr,du)dz \\
&= - \gamma \langle \mathds{1}_{A}, 1 - \omega_{M_{s}} \rangle \\ 
&\quad \int_{\rd}\int_{0}^{1}\int_{0}^{\infty} \int_{\rd} \mathds{1}_{A}(y) \mathds{1}_{\bcal(y,r)}(z) u \omega_{M_{s}}(y) \left(
1 - \overline{\omega}_{M_{s}}(z,r)
\right) dz \nu(dr,du)dy \\
&= - \gamma \langle \mathds{1}_{A}, 1 - \omega_{M_{s}} \rangle \\  
&\quad + \int_{\rd}\int_{0}^{1}\int_{0}^{\infty} \mathds{1}_{A}(y) u \omega_{M_{s}}(y) V_{r} \times \left(
\frac{1}{V_{r}} \int_{\bcal(y,r)} (1 - \overline{\omega}_{M_{s}}(z,r)) dz
\right) \nu(dr,du) dy \\
&= - \gamma \langle \mathds{1}_{A}, 1 - \omega_{M_{s}} \rangle \\  
&\quad + \int_{0}^{1}\int_{0}^{\infty}\int_{\rd} \mathds{1}_{A}(y) uV_{r} \omega_{M_{s}}(y) \times \left(
1 - \overline{\overline{\omega}}_{M_{s}}(y,r)
\right) dy \nu(dr,du) \\
&= - (\mathrm{R}_{0}(\gamma, \nu))^{-1} \times \left(
\int_{0}^{1}\int_{0}^{\infty} uV_{r}\nu(dr,du)
\right) \times \left(
\int_{\rd} \mathds{1}_{A}(y) \left(
1 - \omega_{M_{s}}(y)
\right) dy
\right) \\
&\quad + \int_{0}^{1}\int_{0}^{\infty} uV_{r} \times \left(
\int_{\rd} \mathds{1}_{A}(y) \omega_{M_{s}}(y) \times \left(
1 - \overline{\overline{\omega}}_{M_{s}}(y,r)
\right) dy
\right) \nu(dr,du) \\
&= \int_{0}^{1}\int_{0}^{\infty} uV_{r} \langle 
\mathds{1}_{A}, \left(1 - \frac{1}{\mathrm{R}_{0}(\gamma, \nu)}\right) (1 - \omega_{M_{s}}) + \omega_{M_{s}}(1 - \overline{\overline{\omega}}_{M_{s}}(\cdot , r)) - (1 - \omega_{M_{s}})
\rangle \nu(dr,du). 
\end{align*}
Since this is true for all~$t \geq 0$, we deduce that 
\begin{align*}
&\left(D_{\mathds{1}_{A}}(M_{t}) - D_{\mathds{1}_{A}}(M_{0}) - \int_{0}^{t} \gyv D_{\mathds{1}_{A}}(M_{s})ds\right)_{t \geq 0} \\
&= - \Big(\langle \mathds{1}_{A}, 1 - \omega_{M_{t}} \rangle - \langle \mathds{1}_{A}, 1 - \omega_{M_{0}} \rangle \\
&\hspace{1cm} - \int_{0}^{t} \int_{0}^{1} \int_{0}^{\infty} uV_{r} \langle 
\mathds{1}_{A}, (1 - (\mathrm{R}_{0}(\gamma, \nu))^{-1}) (1 - \omega_{M_{s}}) + \omega_{M_{s}} \left(
1 - \overline{\overline{\omega}}_{M_{s}}(\cdot, r)
\right) \\
&\hspace{9.3cm}- (1 - \omega_{M_{s}})
\rangle \nu(dr,du)ds
\Big)_{t \geq 0}
\end{align*}
is a martingale, allowing us to conclude. 
\end{proof}

The upper bound with a stochastic jump process derived in the previous section enables us to write a similar result for~$\mathds{1}_{\rd}$ rather than~$\mathds{1}_{A}$ (by extending the notation $\langle \cdot, \cdot \rangle$ to~$\mathds{1}_{\rd}$ the natural way). 
\begin{lemma}\label{lem:martingale_pb_infected_complete_space} Assume that~$\nu$ satisfies~\eqref{eqn:stricter_cond_nu}. Let~$M^{0} \in \ml$ be an epidemic initial condition (in the sense of Definition~\ref{defn:extinction_process}), and let~$(M_{t})_{t \geq 0}$ be the (unique) solution to the martingale problem $(\gyv, \delta_{M^{0}})$. Then, the process
\begin{align*}
\Big(
\langle \mathds{1}_{\rd}, &1 - \omega_{M_{t}} \rangle - \langle \mathds{1}_{\rd}, 1 - \omega_{M_{0}} \rangle \\
&- \int_{0}^{t} \int_{0}^{1}\int_{0}^{\infty} \Big[ u V_{r} \langle 
\mathds{1}_{\rd}, (1 - (\mathrm{R}_{0}(\gamma, \nu))^{-1})(1 - \omega_{M_{s}}) \\
&\hspace{5cm} + \omega_{M_{s}} (1 - \overline{\overline{\omega}}_{M_{s}}(\cdot, r)) - (1 - \omega_{M_{s}})
\rangle \Big] \nu(dr,du) ds
\Big)_{t \geq 0}
\end{align*}
is a martingale. 
\end{lemma}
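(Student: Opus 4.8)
The plan is to obtain the statement as a limit of the compact-set version in Lemma~\ref{lem:martingale_pb_infected}, using the stochastic-jump-process domination from Section~\ref{subsec:coupling_branching} to control the total infected mass and to justify every limiting operation. Fix an exhausting sequence of compacts $A_n := \bcal(0,n)$, so that $A_n \uparrow \rd$. Writing
\[
G_A(s,r,u) := uV_r\,\Big\langle \mathds{1}_A,\ (1-(\mathrm{R}_0(\gamma,\nu))^{-1})(1-\omega_{M_s}) + \omega_{M_s}\big(1-\overline{\overline{\omega}}_{M_s}(\cdot,r)\big) - (1-\omega_{M_s})\Big\rangle,
\]
Lemma~\ref{lem:martingale_pb_infected} gives, for each $n$, that
\[
Z_t^{(n)} := \langle \mathds{1}_{A_n}, 1-\omega_{M_t}\rangle - \langle \mathds{1}_{A_n}, 1-\omega_{M_0}\rangle - \int_0^t\!\!\int_0^1\!\!\int_0^\infty G_{A_n}(s,r,u)\,\nu(dr,du)\,ds
\]
is a martingale. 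Let $Z_t^{(\infty)}$ denote the same expression with $\mathds{1}_{\rd}$ in place of $\mathds{1}_{A_n}$; this is exactly the process in the statement. It then suffices to prove that $Z_t^{(n)}\to Z_t^{(\infty)}$ in $L^1(\proba)$ for each fixed $t$: since conditional expectation is an $L^1$-contraction, the identity $\esp[Z_t^{(\infty)}\mid\fcal_s]=\lim_n \esp[Z_t^{(n)}\mid\fcal_s]=\lim_n Z_s^{(n)}=Z_s^{(\infty)}$ transfers the martingale property (adaptedness of $Z^{(\infty)}$ being inherited as an $L^1$-limit of adapted variables).

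I would establish the $L^1$ convergence term by term. For the boundary terms, $\langle \mathds{1}_{A_n}, 1-\omega_{M_t}\rangle \uparrow \langle \mathds{1}_{\rd}, 1-\omega_{M_t}\rangle$ almost surely by monotone convergence, the integrand $1-\omega_{M_t}$ being nonnegative; since the coupling of Lemma~\ref{lem:coupling_sjp_quenched} yields $\langle \mathds{1}_{\rd}, 1-\omega_{M_t}\rangle \le X_t$ and $\esp[X_t]<\infty$ by Lemma~\ref{lem:sjp_finite_expectation}, the monotone convergence theorem for expectations upgrades this to convergence in $L^1(\proba)$. For the compensator, the crucial estimate is the uniform pointwise bound
\[
|G_{A_n}(s,r,u)| \le uV_r\,\big(2+|1-(\mathrm{R}_0(\gamma,\nu))^{-1}|\big)\,\langle \mathds{1}_{\rd}, 1-\omega_{M_s}\rangle \le uV_r\,C\,X_s,
\]
where $C=2+|1-(\mathrm{R}_0(\gamma,\nu))^{-1}|$. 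Here the only nontrivial contribution is the iterated-average term: because $1-\overline{\overline{\omega}}_{M_s}(\cdot,r)=\overline{\overline{1-\omega_{M_s}}}(\cdot,r)\ge 0$ and double spatial averaging preserves the total integral over $\rd$ (by Tonelli), one has $\langle \mathds{1}_{\rd},\, \omega_{M_s}(1-\overline{\overline{\omega}}_{M_s}(\cdot,r))\rangle \le \langle \mathds{1}_{\rd}, 1-\overline{\overline{\omega}}_{M_s}(\cdot,r)\rangle = \langle \mathds{1}_{\rd}, 1-\omega_{M_s}\rangle$.

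The dominating function $uV_r\,C\,X_s$ is integrable on $[0,t]\times(0,1]\times(0,\infty)\times\Omega$ against $ds\otimes\nu(dr,du)\otimes d\proba$, since $\int_0^1\int_0^\infty uV_r\,\nu(dr,du)<\infty$ under Condition~\eqref{eqn:stricter_cond_nu} (as $V_r$ is proportional to $r^d$, this even follows from the weaker integrability, but \eqref{eqn:stricter_cond_nu} is what guarantees the whole quenched construction and hence the coupling) and $\int_0^t\esp[X_s]\,ds<\infty$. Each integrand $g_s$ appearing in $G_A$ lies in $L^1(\rd)$, so $G_{A_n}(s,r,u)\to G_{\rd}(s,r,u)$ for every fixed $(s,r,u)$, and dominated convergence on this product space delivers $L^1(\proba)$ convergence of the compensators. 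I expect the main obstacle to be precisely this uniform domination of the compensator, and in particular the verification that the nonlinear iterated-average term $\omega_{M_s}(1-\overline{\overline{\omega}}_{M_s}(\cdot,r))$ is globally integrable and controlled by the total infected mass; this is where the Tonelli mass-preservation identity and the coupling with $(X_t)_{t\ge 0}$ do the essential work, everything else being a routine interchange of limits justified by these bounds.
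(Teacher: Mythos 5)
Your proposal is correct and follows essentially the same route as the paper: exhaustion of $\rd$ by compacts, the compact-set martingale from Lemma~\ref{lem:martingale_pb_infected}, domination of the total infected mass by the coupled jump process $(X_t)_{t\geq 0}$ of Lemmas~\ref{lem:coupling_sjp_quenched} and~\ref{lem:sjp_finite_expectation} (including the same Tonelli argument for the iterated-average term), and dominated convergence to pass to the limit. The only addition worth noting is that you spell out explicitly the standard step the paper leaves implicit --- that $L^1$-convergence of the approximating martingales transfers the martingale property via the $L^1$-contractivity of conditional expectation --- and, like the paper, you correctly rely (implicitly) on the identification of the martingale-problem solution with the quenched construction, via Proposition~\ref{prop:quenched_EpiSLFV_OK} and Theorem~\ref{theo:martingale_pb_well_posed}, to invoke the coupling.
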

\begin{proof}
Let $(A_{n})_{n \geq 0}$ be an increasing sequence of compact subsets of~$\rd$, such that initially $\mathrm{Vol}(A_{0}) > 0$ and $A_{n} \to + \infty$ when~$n \to + \infty$. By Lemma~\ref{lem:martingale_pb_infected}, for all~$n \geq 0$, 
\begin{align*}
\Big(
\langle \mathds{1}_{A_{n}}, &1 - \omega_{M_{t}} \rangle - \langle \mathds{1}_{A_{n}}, 1 - \omega_{M_{0}} \rangle \\
&- \int_{0}^{t} \int_{0}^{1}\int_{0}^{\infty} \Big[ u V_{r} \langle 
\mathds{1}_{A_{n}}, (1 - (\mathrm{R}_{0}(\gamma, \nu))^{-1})(1 - \omega_{M_{s}}) \\
&\hspace{5cm} + \omega_{M_{s}} (1 - \overline{\overline{\omega}}_{M_{s}}(\cdot, r)) - (1 - \omega_{M_{s}})
\rangle \Big] \nu(dr,du) ds
\Big)_{t \geq 0}
\end{align*}
is a martingale. In order to use the upper bound with a coupled stochastic jump process from Lemma~\ref{lem:coupling_sjp_quenched}, by Proposition~\ref{prop:quenched_EpiSLFV_OK} and Theorem~\ref{theo:martingale_pb_well_posed}, we can take~$(M_{t})_{t \geq 0}$ to be the measure-valued version of the quenched~$(\gamma,\nu)$-EpiSLFV process~$(\omega_{t})_{t \geq 0}$ with initial condition~$\omega_{M_{0}}$ (for a given choice of the density~$\omega_{M_{0}}$). Then, it is possible to couple the stochastic jump process~$(X_{t})_{t \geq 0}$ with initial condition~$\int_{\mathbb{R}^d} (1 - \omega_{0}(z)) dz$ and generator~$\lcal^{(sjp)}$ to~$(\omega_{t})_{t \geq 0}$ in such a way that for all~$t \geq 0$, 
\begin{equation*}
\int_{\rd} (1 - \omega_{t}(z))dz \leq X_{t}.
\end{equation*}
This means that for all~$t \geq 0$ and~$n \in \nmath$, 
\begin{equation*}
\langle \mathds{1}_{A_{n}}, 1 - \omega_{M_{t}} \rangle \leq \int_{\rd} (1 - \omega_{t}(z))dz \leq X_{t}, 
\end{equation*}
which is integrable by Lemma~\ref{lem:sjp_finite_expectation}. Moreover, for all~$(r,u) \in (0,\infty) \times (0,1]$, 
\begin{align*}
&\left| 
\langle 
\mathds{1}_{A_{n}}, (1 - (\mathrm{R}_{0}(\gamma, \nu))^{-1})(1 - \omega_{M_{t}}) + \omega_{M_{t}} (1 - \overline{\overline{\omega}}_{M_{t}}(\cdot, r)) - (1 - \omega_{M_{t}})
\rangle
\right| \\
&\leq \langle 
\mathds{1}_{A_{n}}, (\mathrm{R}_{0}(\gamma, \nu))^{-1}(1 - \omega_{M_{t}})
\rangle + \langle 
\mathds{1}_{A_{n}}, \frac{1}{V_{r}} \int_{\bcal(\cdot, r)} \frac{1}{V_{r}} \int_{\bcal(z,r)} (1 - \omega_{M_{t}}(z'))dz'dz
\rangle \\
&\leq (\mathrm{R}_{0}(\gamma, \nu))^{-1} X_{t} + \frac{1}{V_{r}^{2}} \int_{\rd}\int_{\bcal(y,r)} \int_{\bcal(z,r)} (1 - \omega_{M_{t}}(z'))dz'dzdy \\
&= (\mathrm{R}_{0}(\gamma, \nu))^{-1} X_{t} + \frac{1}{V_{r}^{2}} \int_{\rd}\int_{\rd}\int_{\rd}(1 - \omega_{M_{t}}(z')) \mathds{1}_{\bcal(z',r)}(z) \mathds{1}_{\bcal(y,r)}(z) dzdydz' \\
&\leq (\mathrm{R}_{0}(\gamma, \nu))^{-1} X_{t} + \frac{V_{r}^{2}}{V_{r}^{2}} \int_{\rd} (1 - \omega_{M_{t}}(z'))dz' \\
&\leq (1 + (\mathrm{R}_{0}(\gamma, \nu))^{-1}) X_{t}, 
\end{align*}
which is again integrable by Lemma~\ref{lem:sjp_finite_expectation}. By Condition~\eqref{eqn:cond_nu}, we can then apply the dominated convergence theorem and conclude. 
\end{proof}

We can now show the results stated in the introduction regarding the two regimes $\mathrm{R}_{0}(\gamma, \nu) < 1$ and~$\mathrm{R}_{0}(\gamma, \nu) > 1$. The first one, Lemma~\ref{lem:expression_expectation}, is a fairly straightforward consequence of Lemma~\ref{lem:martingale_pb_infected_complete_space}. 
\begin{proof}[Proof of Lemma~\ref{lem:expression_expectation}] Let~$t \geq 0$. By Lemma~\ref{lem:martingale_pb_infected_complete_space}, we have
\begin{equation*}
\begin{aligned}
&\esp\left[ 
\langle \mathds{1}_{\rd}, 1 - \omega_{M_{t}} \rangle 
\right] \\
&= \esp\left[ 
\langle \mathds{1}_{\rd}, 1 - \omega_{M^{0}} \rangle 
\right] \\
&\quad + \esp\Bigg[ 
\int_{0}^{t}\int_{0}^{1}\int_{0}^{\infty} uV_{r} \langle 
\mathds{1}_{\rd}, (1 - (\mathrm{R}_{0}(\gamma, \nu))^{-1}) (1 - \omega_{M_{s}})\\
&\hspace{5cm} + \omega_{M_{s}} \left(
\overline{\overline{1 - \omega_{M_{s}}}}(\cdot, r)
\right) - (1 - \omega_{M_{s}})
\rangle \nu(dr,du) ds
\Bigg]. 
\end{aligned}
\end{equation*}
Moreover, using the fact that for all~$f : \rd \to \rmath$ measurable and for all~$r > 0$, 
\begin{equation*}
\int_{\rd} \left(\overline{\overline{f}}(z,r) - f(z)\right)dz = 0, 
\end{equation*}
we have for all~$r > 0$ and~$s \in [0,t]$, 
\begin{align*}
&\langle 
\mathds{1}_{\rd}, (1 - (\mathrm{R}_{0}(\gamma, \nu))^{-1}) (1 - \omega_{M_{s}}) + \omega_{M_{s}} \left(
\overline{\overline{1 - \omega_{M_{s}}}}(\cdot, r)
\right) - (1 - \omega_{M_{s}})
\rangle \\
&= \int_{\rd} \left((1 - (\mathrm{R}_{0}(\gamma, \nu))^{-1})(1 - \omega_{M_{s}}(z)) + \omega_{M_{s}}(z)\left(\overline{\overline{1 - \omega_{M_{s}}}}(z,r)\right) - \overline{\overline{1 - \omega_{M_{s}}}}(z,r)\right)dz \\
&= \int_{\rd} \left((1 - (\mathrm{R}_{0}(\gamma, \nu))^{-1})(1 - \omega_{M_{s}}(z)) - \left(
\overline{\overline{1 - \omega_{M_{s}}}}(z,r)
\right) \times (1 - \omega_{M_{s}}(z))\right) dz. 
\end{align*}
Then, observe that for all~$s \in [0,t]$, 
\begin{align*}
&\int_{0}^{1}\int_{0}^{\infty}\int_{\rd} uV_{r} \left(
\overline{\overline{1 - \omega_{M_{s}}}}(z,r)
\right) \times \left(
1 - \omega_{M_{s}}(z)
\right) dz \nu(dr,du) \\
&= \int_{0}^{1}\int_{0}^{\infty} \int_{\rd} \frac{u}{V_{r}} \times \left(
\int_{\bcal(z,r)}\int_{\bcal(z',r)} (1 - \omega_{M_{s}}(y)) dydz'
\right) \times (1 - \omega_{M_{s}}(z))dz \nu(dr,du) \\
&= \int_{0}^{1}\int_{0}^{\infty} \frac{u}{V_{r}} \times \int_{\rd}\int_{\rd}\int_{\rd} \mathds{1}_{\bcal(z',r)}(z) \mathds{1}_{\bcal(z',r)}(y) (1 - \omega_{M_{s}}(y)) (1 - \omega_{M_{s}}(z)) dydzdz' \nu(dr,du) \\
&= \int_{0}^{1} \int_{0}^{\infty} \int_{\rd} uV_{r} \left(
(\overline{1 - \omega_{M_{s}}})(z',r)
\right)^{2} dz' \nu(dr,du), 
\end{align*}
which allows us to conclude. 
\end{proof}
From this lemma, we can deduce that the total mass of infected individuals decreases to~$0$ in the case~$\mathrm{R}_{0}(\gamma, \nu) < 1$, as stated in Proposition~\ref{prop:extinction_R0_inf_1}. 
\begin{proof}[Proof of Proposition~\ref{prop:extinction_R0_inf_1}]
Let~$t \geq 0$. Then, by Lemma~\ref{lem:expression_expectation}, we have
\begin{align*}
&\esp\left[ 
\langle \mathds{1}_{\rd}, 1 - \omega_{M_{t}} \rangle 
\right] \\
&\leq \esp\left[ 
\langle 
\mathds{1}_{\rd}, 1 - \omega_{M_{0}}
\rangle 
\right] \\
&\quad + (1 - (\mathrm{R}_{0}(\gamma, \nu))^{-1}) \times \left(
\int_{0}^{1} \int_{0}^{\infty} uV_{r} \nu(dr,du)
\right) \times \int_{0}^{t} \esp\left[ 
\langle \mathds{1}_{\rd}, 1 - \omega_{M_{t}} \rangle 
\right] ds. 
\end{align*}
Therefore, we have by Gr\"onwall's inequality
\begin{align*}
\esp\left[ 
\langle \mathds{1}_{\rd}, 1 - \omega_{M_{t}} \rangle 
\right] &\leq \esp\left[ 
\langle \mathds{1}_{\rd}, 1 - \omega_{M_{0}} \rangle 
\right] \\
&\quad \times \exp\left(
t (1 - (\mathrm{R}_{0}(\gamma, \nu))^{-1}) \times \int_{0}^{1}\int_{0}^{\infty} uV_{r} \nu(dr,du) 
\right), 
\end{align*}
and we conclude using the fact that~$1 - (\mathrm{R}_{0}(\gamma, \nu))^{-1} < 0$. 
\end{proof}

When~$\mathrm{R}_{0}(\gamma, \nu) > 1$, the result stated in Proposition~\ref{prop:small_masses_grow} is less strong, and shows that an epidemic started from a very small mass of infected individuals tends to grow initially. 
\begin{proof}[Proof of Proposition~\ref{prop:small_masses_grow}]
By Lemma~\ref{lem:expression_expectation} and the Markov property, for all~$0 \leq s < t < \tau$, we have
\begin{align*}
&\esp\left[ 
\langle \mathds{1}_{\rd}, 1 - \omega_{M_{t}} \rangle 
\right] \\
&= \esp\left[ 
\langle \mathds{1}_{\rd}, 1 - \omega_{M_{s}} \rangle 
\right] \\
&\quad + \int_{s}^{t}\int_{0}^{1}\int_{0}^{\infty} uV_{r} \times \Big(
\esp\left[ 
\langle 
\mathds{1}_{\rd}, (1 - (\mathrm{R}_{0}(\gamma, \nu))^{-1})(1 - \omega_{M_{s'}})
\rangle 
\right]\\
&\hspace{5cm} - \esp\left[ 
\langle 
\mathds{1}_{\rd}, \left(
(\overline{1 - \omega_{M_{s'}}})(\cdot, r)
\right)^{2}
\rangle 
\right] 
\Big) \nu(dr,du)ds'. 
\end{align*}
Moreover, for all~$s' \in [s,t]$ and~$r \in (0,\infty)$, 
\begin{align*}
&\langle 
\mathds{1}_{\rd}, \left(
(\overline{1 - \omega_{M_{s'}}}(\cdot, r)
\right)^{2}
\rangle \\
&= \frac{1}{V_{r}^{2}} \int_{\rd} \int_{\bcal(z,r) \times \bcal(z,r)}  (1-\omega_{M_{s'}}(x)) (1 - \omega_{M_{s'}}(y)) dxdydz \\
&= \frac{1}{V_{r}^{2}} \int_{\rd}\int_{\rd} \int_{\rd} \mathds{1}_{|z-x| \leq r} \mathds{1}_{|z-y| \leq r} (1 - \omega_{M_{s'}}(x))(1 - \omega_{M_{s'}}(y)) dzdxdy \\
&\leq \frac{1}{V_{r}^{2}} \int_{\rd}\int_{\rd} V_{r} (1 - \omega_{M_{s'}}(x))(1 - \omega_{M_{s'}}(y)) dxdy \\
&= \frac{1}{V_{r}} \langle 
\mathds{1}_{\rd}, 1 - \omega_{M_{s'}}
\rangle^{2}
\end{align*}
and as~$s' < \tau$ by assumption, 
\begin{equation*}
\langle 
\mathds{1}_{\rd}, 1 - \omega_{M_{s'}}
\rangle < C(\nu). 
\end{equation*}
Therefore, 
\begin{align*}
&\esp\left[ 
\langle \mathds{1}_{\rd}, 1 - \omega_{M_{t}} \rangle 
\right] - \esp\left[ 
\langle \mathds{1}_{\rd}, 1 - \omega_{M_{s}} \rangle 
\right] \\
&\geq \int_{s}^{t} \int_{0}^{1} \int_{0}^{\infty} uV_{r} \esp\left[ 
(1 - (\mathrm{R}_{0}(\gamma, \nu))^{-1}) \langle \mathds{1}_{\rd}, 1 - \omega_{M_{s'}} \rangle - \frac{C(\nu)}{V_{r}} \langle \mathds{1}_{\rd}, 1 - \omega_{M_{s'}} \rangle 
\right] \nu(dr,du)ds' \\
&= \left(
\int_{s}^{t} \esp\left[
\langle \mathds{1}_{\rd}, 1 - \omega_{M_{s'}} \rangle 
\right]ds'
\right) \times \left(
\int_{0}^{1} \int_{0}^{\infty} (uV_{r} (1 - (\mathrm{R}_{0}(\gamma, \nu))^{-1}) - uC(\nu)) \nu(dr,du)
\right). 
\end{align*}
For all $u \in (0,1]$ and $r \in (0,\infty)$, we have
\begin{equation*}
\begin{aligned}
&\int_{0}^{1}\int_{0}^{\infty} (uV_{r} (1 - (\mathrm{R}_{0}(\gamma, \nu))^{-1}) - uC(\nu))\nu(dr,du)\\
&= \frac{\int_{0}^{1}\int_{0}^{\infty} uV_{r} \nu(dr,du) - \gamma}{\int_{0}^{1} \int_{0}^{\infty} uV_{r} \nu(dr,du)} \times \int_{0}^{1}\int_{0}^{\infty} uV_{r} \nu(dr,du) \\
&\quad - \int_{0}^{1}\int_{0}^{\infty} u\nu(dr,du) \times \frac{\int_{0}^{1}\int_{0}^{\infty} uV_{r}\nu(dr,du) - \gamma}{\int_{0}^{1}\int_{0}^{\infty} u\nu(dr,du)}\\
&= 0, 
\end{aligned}
\end{equation*}
which allows us to conclude. 
\end{proof}

\section{Partial equivalence of survival regimes and long-term dynamics of the \texorpdfstring{$(\gamma,\nu)$}{}-EpiSLFV process} \label{sec:partial_equivalence}
Throughout this section, we use the notation
\begin{equation*}
\forall t \geq 0, \Xi_{t} = \sum_{i = 1}^{N_{t}} \delta_{\xi_{t}^{i}}
\end{equation*}
for the~$(\gamma,\nu)$-ancestral process with initial condition~$\delta_{0}$. The goal of this section is to show the results stated in Section~\ref{subsec:survival_criteria}, regarding the equivalence of the different survival regimes described in that section, and how they are related to properties of the~$(\gamma,\nu)$-ancestral process by the duality relation stated in Section~\ref{sec:duality_relation}. 

\subsection{Equivalence of survival regimes in the endemic case}
As a first step, we focus on the endemic case, that is, when the initial proportion of infected individuals is uniformly bounded from below by some~$\varepsilon > 0$ over~$\rd$. First, we use 
the duality relation to rephrase the evolution of the density of infected individuals in terms of properties of the~$(\gamma,\nu)$-ancestral process starting from~$0$. 

\begin{lemma}\label{lem:duality_endemic_case} Under the notation of Proposition~\ref{prop:survival_endemic_case}, for all~$t \geq 0$, for all compact~$A \subseteq \rd$ with positive volume and for all~$\widetilde{N} \in \nmath \backslash \{0\}$, 
\begin{align*}
&\mathrm{Vol}(A) \times \left(
\mathbf{P}(N_{t} > 0) - (1-\varepsilon) \mathbf{P}(1 \leq N_{t} < \widetilde{N}) - (1-\varepsilon)^{\widetilde{N}} \mathbf{P}(N_{t} \geq \widetilde{N})
\right) \\
&\leq \esp\left[ 
\langle
\mathds{1}_{A}, 1 - \omega_{M_{t}}
\rangle 
\right]
\end{align*}
and 
\begin{align*}
\esp\left[ 
\langle
\mathds{1}_{A}, 1 - \omega_{M_{t}}
\rangle 
\right] \leq \mathrm{Vol}(A) \times \mathbf{P}(N_{t} > 0). 
\end{align*}
\end{lemma}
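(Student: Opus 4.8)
The plan is to apply the duality relation of Proposition~\ref{prop:duality_relation} with $k = 1$ and $\psi = \mathds{1}_{A}$, exactly as in the proof of Theorem~\ref{thm:survival_theorem}. This writes
\begin{equation*}
\esp\left[\langle \mathds{1}_{A}, 1 - \omega_{M_{t}}\rangle\right] = \mathrm{Vol}(A) - \int_{\rd} \mathds{1}_{A}(z)\, \mathbf{E}_{\Xi[z]}\left[\prod_{j = 1}^{N_{t}[z]}\omega_{M^{0}}(\xi_{t}^{j}[z])\right]dz,
\end{equation*}
and then, invoking invariance by translation of the driving Poisson point process (as already done in the proof of Theorem~\ref{thm:survival_theorem}), I would replace the ancestral process started from $\delta_{z}$ by the one started from $\delta_{0}$ acting on a translated copy of $\omega_{M^{0}}$. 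In particular the distribution of $N_{t}[z]$ equals that of $N_{t}$ and does not depend on $z$, so everything reduces to controlling $\mathbf{E}_{\delta_{0}}\big[\prod_{j=1}^{N_{t}} \omega_{M^{0}}(\cdot)\big]$ uniformly in the translation.

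Next I would extract the pointwise bound coming from the endemic assumption. Since $M^{0}$ is endemic, $\omega_{M^{0}} \leq 1 - \varepsilon$ almost everywhere, and by Lemma~\ref{lem:dual_process_absolutely_continuous} the locations of the atoms of the ancestral process are, conditionally on $N_{t} = j$, absolutely continuous with respect to Lebesgue measure; hence almost surely they avoid the Lebesgue-null exceptional set on which the bound could fail. Consequently, for each fixed translation, the random variable $\prod_{j=1}^{N_{t}} \omega_{M^{0}}(\cdot)$ is equal to $1$ on $\{N_{t} = 0\}$ (empty product) and lies in $[0, (1-\varepsilon)^{N_{t}}]$ on $\{N_{t} > 0\}$. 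This is the only place where the absolute-continuity statement is genuinely needed, and I expect it to be the one step requiring care rather than routine estimation — the rest is bookkeeping.

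The two inequalities then follow by taking expectations of these pointwise bounds. For the upper bound I would use only $\prod_{j=1}^{N_{t}} \omega_{M^{0}}(\cdot) \geq \mathds{1}_{\{N_{t} = 0\}}$, so that $\mathbf{E}_{\delta_{0}}[\,\cdot\,] \geq \mathbf{P}(N_{t} = 0)$; plugging this in and using $1 - \mathbf{P}(N_{t}=0) = \mathbf{P}(N_{t} > 0)$ yields $\esp[\langle \mathds{1}_{A}, 1-\omega_{M_{t}}\rangle] \leq \mathrm{Vol}(A)\,\mathbf{P}(N_{t} > 0)$. For the lower bound I would split the event $\{N_{t} > 0\}$ according to whether $1 \leq N_{t} < \widetilde{N}$ or $N_{t} \geq \widetilde{N}$, bounding the product by $(1-\varepsilon)^{N_{t}} \leq 1 - \varepsilon$ on the former (as $N_{t} \geq 1$) and by $(1-\varepsilon)^{\widetilde{N}}$ on the latter, to obtain
\begin{equation*}
\mathbf{E}_{\delta_{0}}\left[\prod_{j=1}^{N_{t}} \omega_{M^{0}}(\cdot)\right] \leq \mathbf{P}(N_{t} = 0) + (1-\varepsilon)\,\mathbf{P}(1 \leq N_{t} < \widetilde{N}) + (1-\varepsilon)^{\widetilde{N}}\,\mathbf{P}(N_{t} \geq \widetilde{N}).
\end{equation*}
Substituting this into the displayed identity and again using $1 - \mathbf{P}(N_{t} = 0) = \mathbf{P}(N_{t} > 0)$ produces precisely the claimed lower bound, completing the proof.
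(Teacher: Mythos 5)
Your proposal follows essentially the same route as the paper's proof: apply Proposition~\ref{prop:duality_relation} with $k=1$ and $\psi=\mathds{1}_{A}$, use translation invariance of the driving Poisson point process to reduce to the ancestral process started from $\delta_{0}$ acting on a translated copy of the initial density, then sandwich $\mathbf{E}_{\delta_{0}}\big[\prod_{j}\cdots\big]$ between $\mathbf{P}(N_{t}=0)$ and $\mathbf{P}(N_{t}=0)+(1-\varepsilon)\mathbf{P}(1\leq N_{t}<\widetilde{N})+(1-\varepsilon)^{\widetilde{N}}\mathbf{P}(N_{t}\geq \widetilde{N})$, and integrate over $z\in A$.

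The one step where your argument is imprecise is exactly the step you flagged as requiring care. Lemma~\ref{lem:dual_process_absolutely_continuous} assumes the initial condition of the ancestral process is distributed as $\mu_{\psi}$, i.e., has absolutely continuous atom locations. For a \emph{fixed} translation the process starts from the deterministic point mass $\delta_{0}$, and with positive probability the initial atom survives to time $t$; conditionally on $N_{t}=j$ the law of the atom locations then has an atomic component at the starting point and is \emph{not} absolutely continuous. So if the fixed starting point $z$ happens to lie in the Lebesgue-null set where $1-\omega_{M^{0}}>\varepsilon$ fails, your pointwise bound can fail with positive probability, and the lemma as stated does not rescue it. The conclusion survives, but for a different reason than the one you give: the set of such bad $z$ is Lebesgue-null, and the duality identity integrates over $z\in A$ against Lebesgue measure, so these $z$ contribute nothing (equivalently, one may view the outer integral as randomising the starting point with density $\mathds{1}_{A}/\mathrm{Vol}(A)$, in which case the lemma genuinely applies). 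The paper sidesteps the issue entirely with a cleaner device: since the density of $M^{0}$ is only defined up to a Lebesgue-null set, it fixes a version $\omega^{0}$ satisfying $1-\omega^{0}\geq\varepsilon$ \emph{everywhere}, after which the bound holds at every atom location and no absolute-continuity argument is needed. With either repair your proof is complete.
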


\begin{proof}
Let~$t \geq 0$, let~$A \subseteq \rd$ be a compact with positive volume, and let~$\widetilde{N} \in \nmath \backslash \{0\}$. Let~$\omega^{0}$ be a density of~$M^{0}$ such that~$1 - \omega^{0} \geq \varepsilon$ everywhere (rather than almost everywhere). By Proposition~\ref{prop:duality_relation}, 
\begin{align*}
\esp\left[ 
\langle 
\mathds{1}_{A}, 1 - \omega_{M_{t}}
\rangle \right] &= \mathrm{Vol}(A) - \esp\left[ 
\int_{\rd} \mathds{1}_{A}(z) \omega_{M_{t}}(z) dz
\right] \\
&= \mathrm{Vol}(A) - \int_{\rd} \mathds{1}_{A}(z) \mathbf{E}_{\Xi[z]} \left[ 
\prod_{j = 1}^{N_{t}[z]} \omega^{0}\left(
\xi_{t}^{j}[z]
\right)\right]dz. 
\end{align*}
By invariance by rotation of the distribution of the underlying Poisson point process, the second line can be rewritten as
\begin{equation*}
\esp\left[ 
\langle 
\mathds{1}_{A}, 1 - \omega_{M_{t}} \rangle 
\right] = \mathrm{Vol}(A) - \int_{\rd} \mathds{1}_{A}(z) \mathbf{E}_{\delta_{0}} \left[ 
\prod_{j = 1}^{N_{t}} Tr[\omega^{0}, 0, z](\xi_{t}^{j})
\right]dz, 
\end{equation*}
where~$Tr[\omega^{0},0,z]$ is the translation of~$\omega^{0}$ that moves $z$ to $0$. We now use the fact that~$\omega^{0}$ is uniformly bounded from above by~$1 - \varepsilon$. Indeed, observe that for all~$z \in A$, 
\begin{align*}
\mathbf{P}(N_{t} = 0) &\leq \mathbf{E}_{\delta_{0}} \left[ 
\prod_{j = 1}^{N_{t}} Tr[\omega^{0},0,z](\xi_{t}^{j})
\right] \\
&\leq \mathbf{P}(N_{t} = 0) + \mathbf{P}(1 \leq N_{t} < \widetilde{N}) \times (1-\varepsilon) + \mathbf{P}(N_{t} \geq \widetilde{N}) \times (1-\varepsilon)^{\widetilde{N}}, 
\end{align*}
so 
\begin{align*}
&\mathrm{Vol}(A) \\
&- \int_{\rd} \mathds{1}_{A}(z) \times \left(
\mathbf{P}(N_{t} = 0) + \mathbf{P}(1 \leq N_{t} < \widetilde{N}) \times (1-\varepsilon) + \mathbf{P}(N_{t} \geq \widetilde{N}) \times (1-\varepsilon)^{\widetilde{N}}
\right) dz \\
&\leq \esp\left[ 
\langle \mathds{1}_{A}, 1 - \omega_{M_{t}} \rangle 
\right] \\
&\leq \mathrm{Vol}(A) - \int_{\rd} \mathds{1}_{A}(z) \mathbf{P}(N_{t} = 0)dz, 
\end{align*}
which yields the desired result. 
\end{proof}

We now show that when~$t \to + \infty$, if the dual process does not go extinct, then the number of atoms has to grow unbounded. The dual process is not exactly a branching process, since the branching rate of each~"particle" (here, an atom) depends on whether neighbouring particles are present, but we can still build a comparison with a branching process to conclude. 
\begin{lemma}\label{lem:unbounded_dual}
For all $\widetilde{N} \in \nmath \backslash \{0\}$, 
\begin{equation*}
\mathbf{P}(1 \leq N_{t} < \widetilde{N}) \xrightarrow[t \to + \infty]{} 0. 
\end{equation*}
\end{lemma}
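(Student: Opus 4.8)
\emph{Plan.} The plan is to reduce the statement to the almost-sure dichotomy that, as $t \to +\infty$, either the $(\gamma,\nu)$-ancestral process goes extinct (i.e. $N_t = 0$ for all large $t$) or $N_t \to +\infty$. Since $0$ is an absorbing state for $(\Xi_t)_{t \geq 0}$ (no reproduction event can fire once there are no atoms), the events ``extinction'' and ``survival'' ($N_t \geq 1$ for all $t$) partition the sample space up to a null set. Granting the dichotomy, on the extinction event $\mathds{1}_{\{1 \leq N_t < \widetilde{N}\}}$ vanishes for all large $t$, and on the survival event it also vanishes for all large $t$ since $N_t \to +\infty$ eventually exceeds $\widetilde{N}$. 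Hence $\mathds{1}_{\{1 \leq N_t < \widetilde{N}\}} \to 0$ almost surely, and bounded convergence (along any sequence $t_n \to +\infty$) yields $\mathbf{P}(1 \leq N_t < \widetilde{N}) \to 0$.

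To prove the dichotomy, I would first establish a uniform lower bound on the probability of extinction within a fixed time from any bounded configuration. Fix $K \in \nmath$ and set $\beta := \int_{0}^{1}\int_{0}^{\infty} u V_{r}\,\nu(dr,du)$, which is finite by~\eqref{eqn:cond_nu}. The key comparison with a branching process is the bound on the total reproduction rate of a configuration $\Xi$ with $m \leq K$ atoms: using $1 - (1-u)^{\Xi(\bcal(z,r))} \leq u\,\Xi(\bcal(z,r))$ together with $\int_{\rd}\Xi(\bcal(z,r))\,dz = mV_{r}$, the total birth rate is at most $m\beta$, exactly as for $m$ independent particles branching at rate $\beta$. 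Consequently, whenever $1 \leq j \leq K$ atoms are present, the next jump is a death with probability at least $\gamma/(\gamma+\beta)$ (death rate $\gamma j$ versus birth rate $\leq j\beta$), and its holding time falls below $1/K$ with probability at least $1 - e^{-\gamma/K} > 0$, the total jump rate being at least $\gamma$. By the strong Markov property applied at successive jumps, the probability that the first $m$ jumps are all deaths, each with holding time below $1/K$, is bounded below by a constant $p_{K} := \bigl(\tfrac{\gamma}{\gamma+\beta}(1-e^{-\gamma/K})\bigr)^{K} > 0$ independent of the configuration; on this event the process reaches $0$ before time $1$. This gives $\mathbf{P}_{\Xi}(\text{extinction before time }1) \geq p_{K}$ for every $\Xi$ with $1 \leq \Xi(\rd) \leq K$.

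With this uniform bound in hand, I would rule out survival while remaining bounded. For fixed $K$, define stopping times $\sigma_{0} = 0$ and $\sigma_{n+1} = \inf\{t \geq \sigma_{n} + 1 : 1 \leq N_{t} \leq K\}$. On the event $\{\text{survival}\} \cap \{N_{t} \leq K \text{ for arbitrarily large }t\}$, all $\sigma_{n}$ are finite, and since $N_{t}$ has $\pm 1$ jumps and $0$ is absorbing, $N_{\sigma_{n}} \in \llbracket 1, K\rrbracket$. By the strong Markov property and the uniform bound, conditionally on $\mathbf{F}_{\sigma_{n}}$ the process goes extinct in $[\sigma_{n}, \sigma_{n}+1]$ with probability at least $p_{K}$; a conditional Borel--Cantelli argument then forces extinction almost surely on this event, contradicting survival. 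Thus $\mathbf{P}(\text{survival} \cap \{\liminf_{t} N_{t} \leq K\}) = 0$ for every $K$, and taking the union over $K \in \nmath$ shows that $N_{t} \to +\infty$ almost surely on survival, completing the dichotomy.

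The main obstacle is the uniform (configuration-independent) lower bound on the extinction probability: because the birth rate depends on the spatial positions of the atoms through the competition term $1 - (1-u)^{\Xi(\bcal(z,r))}$, it must be controlled uniformly, and the crucial observation is that competition only \emph{decreases} the birth rate, which is therefore dominated by the branching rate $m\beta$ of $m$ non-interacting particles. Once this comparison is set up, the remaining renewal and Borel--Cantelli bookkeeping is routine.
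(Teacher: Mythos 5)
Your proof is correct, and both arguments ultimately rest on the same key observation: competition only \emph{decreases} birth rates, so from any configuration with a bounded number of atoms the $(\gamma,\nu)$-ancestral process dies out within one unit of time with probability bounded below by a constant independent of the spatial configuration. However, the way you exploit this is genuinely different from the paper. The paper argues by contradiction using only the Markov property at deterministic times: if $\mathbf{P}(1 \leq N_{t_{n}} < \widetilde{N}) \geq \varepsilon$ along times separated by more than $1$, then each event $\{1 \leq N_{t_{n}} < \widetilde{N}\} \cap \{N_{t_{n}+1} = 0\}$ has probability at least $\varepsilon' > 0$ (via domination by a branching process started from $\widetilde{N}$ particles), and, crucially, these events are pairwise \emph{disjoint} because $0$ is absorbing; summing over $n$ would make $\mathbf{P}(N_{t} = 0)$ exceed $1$, a contradiction. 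You instead prove the stronger almost-sure dichotomy (extinction or $N_{t} \to +\infty$) via stopping times, the strong Markov property, and a conditional Borel--Cantelli argument, then deduce the lemma by bounded convergence. Your route needs slightly heavier machinery and one bookkeeping point worth stating carefully: extinction during $[\sigma_{n}, \sigma_{n}+1]$ makes all later $\sigma_{m}$ infinite, so rather than invoking Borel--Cantelli on events that are in fact incompatible with $\bigcap_{n}\{\sigma_{n} < \infty\}$, the cleanest formulation is the geometric decay $\mathbf{P}(\sigma_{n+1} < \infty) \leq (1-p_{K})\,\mathbf{P}(\sigma_{n} < \infty)$, which gives $\mathbf{P}\left(\bigcap_{n} \{\sigma_{n} < \infty\}\right) = 0$ directly. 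What your approach buys is a structural result the paper's contradiction argument never establishes — on the survival event the number of atoms diverges almost surely — which yields the lemma for all $\widetilde{N}$ simultaneously and is of independent interest for the study of the dual process; what the paper's approach buys is brevity, avoiding stopping-time technology entirely.
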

\begin{proof}
We argue by contradiction and assume that there exists some~$\widetilde{N} \in \nmath \backslash \{0\}$ and a sequence~$(t_{n})_{n \geq 1}$ of times such that there exists~$\varepsilon > 0$ satisfying
\begin{align*}
&\forall n \geq 1, \mathbf{P}\left(
1 \leq N_{t_{n}} < \widetilde{N}
\right) \geq \varepsilon \\
\text{and } &\forall n \geq 1, t_{n+1} - t_{n} > 1. 
\end{align*}
Since the number of atoms in the~$(\gamma,\nu)$-ancestral process is bounded from above by the number of particles in a branching process in which each particle dies at rate~$\gamma$ and splits in two at rate
\begin{equation*}
\int_{0}^{1}\int_{0}^{\infty} V_{r} u \nu(dr,du), 
\end{equation*}
for all~$n \geq 1$, 
\begin{equation*}
\proba\left(\left. 
N_{t_{n}+1} = 0
\right| 1 \leq N_{t_{n}} < \widetilde{N}\right)
\end{equation*}
is bounded from below by the probability that such a branching process started from~$\widetilde{N}$ particles dies before time~$1$. Therefore, there exists~$\varepsilon ' > 0$ such that
\begin{equation*}
\forall n \geq 1, \mathbf{P}\left(
\left\{ 
1 \leq N_{t_{n}} < \widetilde{N}
\right\} \cap \left\{ 
N_{t_{n}+1} = 0
\right\}
\right) \geq \varepsilon '. 
\end{equation*}
Then, let $t \geq t_{1} + 1$, and let~$I(t)$ be such that 
\begin{equation*}
t_{I(t)}+1 \leq t < t_{I(t)+1} + 1. 
\end{equation*}
We have
\begin{align*}
\mathbf{P}(N_{t} = 0) &\geq \mathbf{P}\left(
\bigcup_{i = 1}^{I(t)} \left(
\left\{ 
1 \leq N_{t_{i}} < \widetilde{N}
\right\} \cap \left\{ 
N_{t_{i}+1} = 0
\right\}
\right)
\right) \\
&= \sum_{i = 1}^{I(t)} \mathbf{P}\left(
\left\{ 
1 \leq N_{t_{i}} < \widetilde{N}
\right\} \cap \left\{ 
N_{t_{i}+1} = 0
\right\}
\right) \\
\end{align*}
as for all~$n \geq 1$, $t_{n+1} > t_{n} + 1$. Therefore, 
\begin{equation*}
\mathbf{P}(N_{t} = 0) \geq \varepsilon ' I(t) \xrightarrow[t \to + \infty]{} + \infty, 
\end{equation*}
which is a contradiction. 
\end{proof}

We can now show our main result on the endemic case. 
\begin{proof}[Proof of Proposition~\ref{prop:survival_endemic_case}] Let~$t \geq 0$, let~$A \subseteq \rd$ be a compact subset with positive volume, and let~$\widetilde{N} \in \nmath \backslash \{0\}$. By Lemma~\ref{lem:duality_endemic_case}, 
\begin{align*}
&\mathrm{Vol}(A) \times \left(
\mathbf{P}(N_{t} > 0) - (1-\varepsilon) \mathbf{P}(1 \leq N_{t} < \widetilde{N}) - (1-\varepsilon)^{\widetilde{N}} \mathbf{P}(N_{t} \geq \widetilde{N})
\right) \\
&\leq \esp\left[ 
\langle 
\mathds{1}_{A}, 1 - \omega_{M_{t}}
\rangle \right] \\
&\leq \mathrm{Vol}(A) \times \mathbf{P}(N_{t} > 0). 
\end{align*}
By Lemma~\ref{lem:unbounded_dual}, taking the limit~$t \to + \infty$ yields 
\begin{align*}
\mathrm{Vol}(A) \times \lim\limits_{t \to + \infty} \left(
\mathbf{P}(N_{t} > 0) - (1 - \varepsilon)^{\widetilde{N}} \mathbf{P}(N_{t} \geq \widetilde{N})
\right) 
&\leq \lim\limits_{t \to + \infty} \esp\left[ 
\langle 
\mathds{1}_{A}, 1 - \omega_{M_{t}}
\rangle \right] \\
&\leq \mathrm{Vol}(A) \times \lim\limits_{t \to + \infty} \mathbf{P}(N_{t} > 0). 
\end{align*}
We then take the limit~$\widetilde{N} \to + \infty$, allowing us to conclude. 
\end{proof}

\subsection{Equivalence of survival regimes in the pandemic case}
We now consider the pandemic case: initially, infected individuals occupy a half-plane~$H \subset \rd$, and the initial proportion of infected individuals over the half-plane~$H$ is uniformly bounded from below by some~$\varepsilon > 0$. 
We start with the following technical lemma.
\begin{lemma}\label{lem:equivalence_dual_pandemic}
Let~$H \subset \rd$ be a half-plane. Then, the three following properties are equivalent:
\begin{align*}
\text{\emph{(a)}}& \quad \lim\limits_{t \to + \infty} \mathbf{P}\left(
N_{t} > 0
\right) = 0, \\
\text{\emph{(b)}}& \quad \liminf\limits_{t \to + \infty} \mathbf{P}\left(
\Xi_{t}(H) > 0
\right) = 0 \\
\text{and \emph{(c)}}& \quad \limsup\limits_{t \to + \infty} \mathbf{P}\left(
\Xi_{t}(H) > 0
\right) = 0. 
\end{align*}
\end{lemma}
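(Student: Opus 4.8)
The plan is to establish the cycle $\text{(a)} \Rightarrow \text{(c)} \Rightarrow \text{(b)} \Rightarrow \text{(a)}$, with only the last implication requiring real work. First I would record a monotonicity fact: in the $(\gamma,\nu)$-ancestral process of Definition~\ref{defn:ancestral_process} the empty configuration is absorbing, since a reproduction event can add an atom only when $\Xi_{t-}(\mathcal{B}(z,r)) > 0$ while deaths only remove atoms. Hence the events $\{N_t = 0\}$ are non-decreasing in $t$, the map $p(t) := \mathbf{P}(N_t > 0)$ is non-increasing, and $\ell := \lim_{t \to +\infty} p(t)$ exists; thus (a) is equivalent to $\ell = 0$. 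Because $\Xi_t(H) \le N_t$ we have $\mathbf{P}(\Xi_t(H) > 0) \le p(t)$, which yields (a) $\Rightarrow$ (c) at once, and (c) $\Rightarrow$ (b) is immediate since the quantities are non-negative. The degenerate case $\nu = 0$ is trivial, as then $p(t) = e^{-\gamma t} \to 0$ and all three conditions hold; so I assume $\nu \neq 0$ below.

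For (b) $\Rightarrow$ (a) I would argue by contraposition, assuming $\ell > 0$ and showing $\liminf_{t \to +\infty} \mathbf{P}(\Xi_t(H) > 0) > 0$. Write $H = \{x \in \rd : \langle x, e \rangle \ge c\}$ for a unit vector $e$ and $c \in \rmath$, and let $H_0 = \{x : \langle x, e \rangle \ge 0\}$ be the parallel half-space through the origin. I first treat $H_0$. The reflection $\sigma$ across $\partial H_0$ fixes the origin, so the law of $(\Xi_t)$ started from $\delta_0$ is $\sigma$-invariant; moreover, descendants created at reproduction events are absolutely continuous (Lemma~\ref{lem:dual_process_absolutely_continuous}), so only the initial atom can sit on $\partial H_0$. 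Consequently $\{N_t > 0\}$ is, up to the event $\{N_t = 1\}$, the union of $\{\Xi_t(\{\langle \cdot, e\rangle > 0\}) > 0\}$ and its $\sigma$-image, and symmetry gives
\[
\mathbf{P}(\Xi_t(H_0) > 0) \ge \tfrac12\big(p(t) - \mathbf{P}(N_t = 1)\big).
\]
Since $\mathbf{P}(N_t = 1) \to 0$ by Lemma~\ref{lem:unbounded_dual} (applied with $\widetilde{N} = 2$), this yields $\liminf_t \mathbf{P}(\Xi_t(H_0) > 0) \ge \ell/2 > 0$. If $c \le 0$ then $H \supseteq H_0$ and the proof is complete; the remaining case is $c > 0$.

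The main obstacle is passing from $H_0$ to the shifted half-space $H$, which I would handle through a uniform hitting bound: there is $\rho > 0$ such that $\mathbf{P}_{\delta_x}(\Xi_1(H) > 0) \ge \rho$ for every $x$ with $\langle x, e\rangle \ge 0$. By invariance of the driving Poisson point process under translations parallel to $\partial H$, this probability depends on $x$ only through $\mathrm{dist}(x,H) = \max(0, c - \langle x, e\rangle) \in [0,c]$; a shift-coupling translating the whole realization by $s e$ with $s \ge 0$ (which maps $H$ into itself) shows it is non-decreasing as $x$ moves toward $H$, so the infimum over $\{\langle x, e\rangle \ge 0\}$ equals $\mathbf{P}_{\delta_0}(\Xi_1(H) > 0)$. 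This last quantity is strictly positive because, with $\nu \neq 0$, a bounded chain of reproduction events in $[0,\tfrac12]$ pushing a lineage a distance at most $c$ in the direction $e$ occurs with positive probability, and since atoms do not move, the resulting atom stays in $H$ and remains alive at time $1$ with probability at least $e^{-\gamma/2}$. Granting this bound, the Markov property at time $t$ gives $\mathbf{P}(\Xi_{t+1}(H) > 0) \ge \rho\,\mathbf{P}(\Xi_t(H_0) > 0)$ — on $\{\Xi_t(H_0) > 0\}$ a designated atom's favorable chain is a positive-probability event for the Poisson point process, unaffected by other atoms (which, through the success probability $1 - (1-u)^{\#}$, can only help). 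Hence $\liminf_t \mathbf{P}(\Xi_t(H) > 0) \ge \rho\,\ell/2 > 0$, contradicting (b) and closing the cycle.
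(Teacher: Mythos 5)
Your proof is correct, and while it shares the paper's two structural ingredients --- reflection symmetry for a half-space through the origin, and translation invariance combined with monotonicity in the initial configuration (``extra atoms only help'') --- it implements the hard implication (b) $\Rightarrow$ (a) by a genuinely different mechanism. The paper covers $\rd$ by the closed half-space and its mirror image across the parallel hyperplane through the origin, obtaining $\mathbf{P}(N_t>0)\le 2\,\mathbf{P}(\Xi_t(H)>0)$ with no correction term; for a half-space not containing the origin it then stops at the hitting time $T=\min\{t\ge 0:\Xi_t(H)>0\}$, restarts from the freshly created atom $P\in H$, and applies the symmetric bound to the recentred half-space $Tr[H,0,P]$, so that only the qualitative fact $\mathbf{P}(T<\infty)>0$ is needed. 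You instead work at deterministic times: you first get $\liminf_t \mathbf{P}(\Xi_t(H_0)>0)\ge \ell/2$ for the parallel half-space $H_0$ through the origin (your open-half-space version, with the $\{N_t=1\}$ correction removed via Lemma~\ref{lem:unbounded_dual}, is a slightly more roundabout route to what the paper gets directly from closed half-spaces), and then chain this with a uniform one-time-unit push estimate $\rho=\mathbf{P}_{\delta_0}(\Xi_1(H)>0)>0$, valid from any atom in $H_0$, which you justify by a shift-coupling monotonicity in the distance to $H$ plus an event-chain irreducibility construction. What each buys: the paper's hitting-time trick avoids any quantitative uniform estimate (the uniformity it needs, over all half-spaces containing the origin, falls out of the symmetry step for free), at the price of a strong-Markov restart at a random time; your version avoids random restart times but requires the extra monotone-coupling lemma and the explicit irreducibility argument. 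Both restart steps ultimately rest on the same domination of the full process by the sub-process generated by a single designated atom, which the paper itself uses (there at time $T$, here at time $t$, where memorylessness of the residual death clock is what makes your deterministic-time restart legitimate). Two minor points: your citation of Lemma~\ref{lem:dual_process_absolutely_continuous} is slightly off, since that lemma is stated for initial conditions of the form $\mu_\psi$ rather than $\delta_0$; the fact you actually need (atoms created at reproduction events have absolutely continuous locations, hence a.s.\ avoid a fixed hyperplane) is immediate from the construction, and is avoidable altogether by using closed half-spaces as the paper does. Your explicit treatment of $\nu=0$ is a small bonus: the paper's case-two argument is simply vacuous there, since not-(a) cannot hold.
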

\begin{proof}
First we assume that~$0 \in H$. The implications (a)~$\Longrightarrow$~(b) and (a)~$\Longrightarrow$~(c) are clear, as $\Xi_{t}(H) \leq N_{t}$ by definition. Then, let~$z \in H$ be the point on the border which is the closest to~$0$, let~$B$ be the border of~$H$, and let $\widetilde{H}$ be the symmetric of~$H$ with respect to~$Tr[B,0,z]$ (which is the translation of~$B$ so that it goes through the origin). We have
\[ 
H \cup \widetilde{H} = \rd
\]
so for all $t \geq 0$, 
\begin{align*}
\mathbf{P}(N_{t} > 0) &\leq \mathbf{P}(\Xi_{t}(H) > 0) + \mathbf{P}(\Xi_{t}(\widetilde{H} > 0) \\
&= 2 \mathbf{P}(\Xi_{t}(H) > 0)
\end{align*}
by invariance by translation and rotation of the distribution of the underlying Poisson point process. This allows us to conclude in the case~$0 \in H$. 

We now assume $0 \notin H$. The implications (a)~$\Longrightarrow$~(b) and (a)~$\Longrightarrow$~(c) are again clear, so we assume
\begin{equation*}
\lim\limits_{t \to + \infty} \mathbf{P}(N_{t} > 0) > 0. 
\end{equation*}
This implies in particular that there exists~$\varepsilon > 0$ such that for all~$t \geq 0$, $\mathbf{P}(N_{t} > 0) \geq \varepsilon$. Moreover, by the first part of the proof, we deduce that for all half-plane $H' \subset \rd$ containing the origin and for all~$t \geq 0$, 
\begin{equation}\label{eqn:lower_bound_half_plane}
\mathbf{P}(\Xi_{t}(H') > 0) \geq \mathbf{P}(N_{t} > 0)/2 \geq \varepsilon/2,
\end{equation}
this lower bound being independent of the choice of~$H'$ and~$t$. 

The process~$(\Xi_{t})_{t \geq 0}$ has a non-zero probability of reaching~$H$ in finite time. Therefore, let
\begin{equation*}
T := \min\{t \geq 0 : \Xi_{t}(H) > 0\}
\end{equation*}
be the hitting time of~$H$ by~$(\Xi_{t})_{t \geq 0}$, and if~$T < +\infty$, let~$P$ be the location of the (almost surely) unique atom of~$\Xi_{T}$ in~$H$. Conditionally on~$T < + \infty$, we have that $(\Xi_{t+T}(H))_{t \geq 0}$ is bounded from below by the number of particles in~$H$ for the $(\gamma,\nu)$-ancestral process started from~$P$ at time~$T$, which is equal in distribution (working conditionally on~$P$ and~$T$) to the number of particles in~$Tr[H,0,P]$ for the $(\gamma,\nu)$-ancestral process started from~$0$ at time~$0$ (of which~$\Xi$ is a realization). Therefore, if we denote as~$\widetilde{\Xi}$ an independent realization of~$\Xi$, we have that for all~$t \geq 0$, 
\begin{equation*}
\mathbf{P}(\Xi_{t}(H) > 0) \geq \mathbf{P}(T < t) \times \mathbf{P}(\widetilde{\Xi}_{t-T}(Tr[H,0,P]) > 0 | T < t). 
\end{equation*}
As $0 \in R[H,0,P]$, we can apply~\eqref{eqn:lower_bound_half_plane} and obtain
\begin{equation*}
\mathbf{P}(\Xi_{t}(H) > 0) \geq \mathbf{P}(T < t) \times \varepsilon /2, 
\end{equation*}
so
\begin{equation*}
\liminf\limits_{t \to + \infty} \mathbf{P}(\Xi_{t}(H) > 0) \geq \lim\limits_{t \to + \infty} \mathbf{P}(T < t) \times \varepsilon /2 > 0
\end{equation*}
as $(\Xi_{t})_{t \geq 0}$ reaches~$H$ in finite time with non-zero probability, allowing us to conclude. 
\end{proof}

Proposition~\ref{prop:survival_pandemic_case} will then be a consequence of the following result. 
\begin{lemma}\label{lem:bound_pandemic} Under the notation of Proposition~\ref{prop:survival_pandemic_case}, for all~$t \geq 0$, 
\begin{equation*}
\varepsilon \times \int_{A} \mathbf{P}\left(
\Xi_{t}(Tr[H,0,z]) \geq 1
\right) dz \leq \esp\left[ \langle 
\mathds{1}_{A}, 1 - \omega_{M_{t}}
\rangle \right] \leq \int_{A}\mathbf{P}\left(
\Xi_{t}(Tr[H,0,z]) \geq 1
\right)dz.
\end{equation*}
\end{lemma}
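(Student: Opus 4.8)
The plan is to reproduce the argument of Lemma~\ref{lem:duality_endemic_case} almost verbatim, the only new feature being that the initial density is supported on the half-plane $H$ rather than bounded below everywhere. First I would apply Proposition~\ref{prop:duality_relation} with $k = 1$ and $\psi = \mathds{1}_A$. Writing $\langle \mathds{1}_A, 1 - \omega_{M_t} \rangle = \mathrm{Vol}(A) - \langle \mathds{1}_A, \omega_{M_t} \rangle$, the duality relation gives
\[
\esp\left[ \langle \mathds{1}_A, 1 - \omega_{M_t} \rangle \right] = \mathrm{Vol}(A) - \int_{\rd} \mathds{1}_A(z)\, \mathbf{E}_{\Xi[z]}\left[ \prod_{j = 1}^{N_t[z]} \omega_{M^0}\left( \xi_t^j[z] \right) \right] dz.
\]
By invariance by translation of the distribution of the underlying Poisson point process (exactly as in the proofs of Lemma~\ref{lem:duality_endemic_case} and Theorem~\ref{thm:survival_theorem}), the $(\gamma,\nu)$-ancestral process started from $\delta_z$ is the translate by $z$ of the one started from $\delta_0$, so the inner expectation equals $\mathbf{E}_{\delta_0}\big[ \prod_{j=1}^{N_t} Tr[\omega_{M^0},0,z](\xi_t^j) \big]$, where $N_t$ and $\xi_t^j$ now refer to the process started from $\delta_0$.

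Next I would fix a convenient representative of the density. Since $M^0$ is a pandemic initial condition, $\omega_{M^0} = 1$ outside $H$ and $1 - \omega_{M^0} > \varepsilon$ inside $H$, both up to a Lebesgue-null set. As in Lemma~\ref{lem:duality_endemic_case}, I would choose a density $\omega^0$ for which these hold everywhere, i.e. $\omega^0 \equiv 1$ on $H^c$ and $\omega^0 \le 1 - \varepsilon$ on $H$; modifying $\omega_{M^0}$ on a null set is harmless because, by Lemma~\ref{lem:dual_process_absolutely_continuous}, the law of $(\xi_t^1, \dots, \xi_t^{N_t})$ is absolutely continuous with respect to Lebesgue measure, so the expectation is unchanged. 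With this choice each factor $Tr[\omega^0,0,z](\xi_t^j) = \omega^0(\xi_t^j + z)$ equals $1$ when $\xi_t^j + z \notin H$ and lies in $[0, 1-\varepsilon]$ when $\xi_t^j + z \in H$, and $\xi_t^j + z \in H$ is equivalent to $\xi_t^j \in Tr[H,0,z]$.

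The core of the argument is then a two-sided bound on the product, split according to whether $\Xi_t$ charges the translated half-plane. On the event $\{ \Xi_t(Tr[H,0,z]) = 0 \}$ every factor equals $1$, so the product equals $1$; on the complementary event at least one factor is $\le 1 - \varepsilon$ while all factors are $\le 1$, so the product is $\le 1 - \varepsilon$, and it is always $\ge 0$. Taking expectations yields
\[
\mathbf{P}\left( \Xi_t(Tr[H,0,z]) = 0 \right) \le \mathbf{E}_{\delta_0}\left[ \prod_{j=1}^{N_t} Tr[\omega^0,0,z](\xi_t^j) \right] \le 1 - \varepsilon\, \mathbf{P}\left( \Xi_t(Tr[H,0,z]) \ge 1 \right).
\]
Substituting the lower bound into the duality identity and integrating $\mathds{1}_A(z)$ over $z$ produces the claimed inequality $\esp[\langle \mathds{1}_A, 1 - \omega_{M_t} \rangle] \le \int_A \mathbf{P}(\Xi_t(Tr[H,0,z]) \ge 1)\, dz$, while substituting the upper bound produces $\esp[\langle \mathds{1}_A, 1 - \omega_{M_t} \rangle] \ge \varepsilon \int_A \mathbf{P}(\Xi_t(Tr[H,0,z]) \ge 1)\, dz$. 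I expect no genuinely hard step here; the only points requiring care are keeping the translation conventions consistent, so that ``$\xi_t^j + z \in H$'' corresponds precisely to the event $\{\Xi_t(Tr[H,0,z]) \ge 1\}$, and justifying the everywhere-valid choice of representative through the absolute continuity of the atom locations.
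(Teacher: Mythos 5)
Your proposal is correct and follows essentially the same route as the paper: the duality relation from Proposition~\ref{prop:duality_relation} combined with translation invariance to reduce to the ancestral process started from $\delta_{0}$, followed by the two-sided bound on the product according to whether $\Xi_{t}$ charges $Tr[H,0,z]$ (product $=1$ if not, $\le 1-\varepsilon$ and $\ge 0$ if so), and substitution back into the identity $\esp[\langle \mathds{1}_{A},1-\omega_{M_{t}}\rangle]=\mathrm{Vol}(A)-\int_{A}\mathbf{E}_{\delta_{0}}[\prod_{j}Tr[\omega^{0},0,z](\xi_{t}^{j})]\,dz$. Your explicit justification of the everywhere-valid choice of density representative via Lemma~\ref{lem:dual_process_absolutely_continuous} is a point the paper handles only implicitly, but it is the same argument.
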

\begin{proof}
Let~$t \geq 0$. We saw earlier in the proof of Lemma~\ref{lem:duality_endemic_case} that
\begin{equation*}
\esp\left[ 
\langle \mathds{1}_{A}, 1 - \omega_{M_{t}} \rangle 
\right] = \mathrm{Vol}(A) - \int_{\rd} \mathds{1}_{A}(z) \mathbf{E}_{\delta_{0}} \left[ 
\prod_{j = 1}^{N_{t}} Tr[\omega^{0},0,z](\xi_{t}^{j})
\right]dz.
\end{equation*}
Moreover, observe that for all~$z \in \rd$, 
\begin{align*}
\mathbf{E}_{\delta_{0}} \left[ 
\prod_{j = 1}^{N_{t}} Tr[\omega^{0},0,z](\xi_{t}^{j})
\right] &\geq \mathbf{P}\left(
\Xi_{t}(Tr[H,0,z]) = 0
\right), \\
\mathbf{E}_{\delta_{0}} \left[ 
\prod_{j = 1}^{N_{t}} Tr[\omega^{0},0,z](\xi_{t}^{j})
\right] &\leq \mathbf{P}\left(
\Xi_{t}(Tr[H,0,z]) = 0
\right) + (1-\varepsilon) \mathbf{P}\left(
\Xi_{t}(Tr[H,0,z]) \geq 1
\right),
\end{align*}
where we extended the translation $Tr$ to subsets $A \subset \rd$. The first inequality implies that
\begin{align*}
\esp\left[ 
\langle \mathds{1}_{A}, 1 - \omega_{M_{t}} \rangle 
\right] &\leq \mathrm{Vol}(A) - \int_{A} \mathbf{P}\left(
\Xi_{t}(Tr[H,0,z]) = 0
\right) dz \\
&= \int_{A} \mathbf{P}\left(
\Xi_{t}(Tr[H,0,z]) \geq 1
\right)dz, 
\intertext{and the second one that}
\esp\left[ 
\langle \mathds{1}_{A}, 1 - \omega_{M_{t}} \rangle 
\right] &\geq \mathrm{Vol}(A) - \int_{A} \left(
1 - \varepsilon \mathbf{P}\left(
\Xi_{t}(Tr[H,0,z]) \geq 1
\right)
\right) dz \\
&= \varepsilon \int_{A} \mathbf{P}\left(
\Xi_{t}(Tr[H,0,z]) \geq 1
\right)dz. \qedhere
\end{align*}
\end{proof}

We can now show Proposition~\ref{prop:survival_pandemic_case}. 
\begin{proof}[Proof of Proposition~\ref{prop:survival_pandemic_case}] Let~$A \subseteq \rd$ be a compact subset of~$\rd$ with positive volume. First we assume~(iii). By Lemma~\ref{lem:equivalence_dual_pandemic}, for all~$z \in A$, as~$Tr[H,0,z]$ is a half-plane, 
\begin{equation*}
\lim\limits_{t \to + \infty} \mathbf{P}\left( 
\Xi_{t}(Tr[H,0,z]) \geq 1
\right) = 0, 
\end{equation*}
so by Lemma~\ref{lem:bound_pandemic} and the dominated convergence theorem (which we can apply as~$A$ has finite volume), 
\begin{equation*}
\lim\limits_{t \to + \infty} \esp\left[ 
\langle \mathds{1}_{A}, 1 - \omega_{M_{t}} \rangle 
\right] = 0. 
\end{equation*}
This shows (iii)~$\Longrightarrow$~(i) and (iii)~$\Longrightarrow$~(ii). 

We now assume that~(iii) is false. Again by Lemma~\ref{lem:equivalence_dual_pandemic}, we have that for all~$z \in A$, 
\begin{equation*}
\liminf\limits_{t \to + \infty} \mathbf{P}\left(
\Xi_{t}(Tr[H,0,z]) > 0
\right) > 0. 
\end{equation*}
By Lemma~\ref{lem:bound_pandemic} and by Fatou's lemma, 
\begin{align*}
\liminf\limits_{t \to + \infty} \esp\left[ 
\langle \mathds{1}_{A}, 1 - \omega_{M_{t}} \rangle 
\right] &\geq \varepsilon \int_{A} \left(
\liminf\limits_{t \to + \infty} \mathbf{P}\left(
\Xi_{t}(Tr[H,0,z]) > 0
\right)\right) dz \\
&> 0, 
\end{align*}
which allows us to conclude~(i)~$\Longrightarrow$~(iii) and~(ii)~$\Longrightarrow$~(iii). 
\end{proof}

\subsection{Partial equivalence of survival regimes in the epidemic case}
In this last part, we focus on the epidemic case, and assume that infected individuals are initially located in some compact set~$E \subseteq \rd$ with positive Lebesgue measure, with a minimal density of~$\varepsilon > 0$. This time, we are not able to show equivalence of the four survival criteria, but only equivalence of the local and global survival criteria in the transient or permanent cases. We also relate the two resulting survival criteria to density properties of the dual process. We leave it as an open question to show whether these two properties are equal and equivalent to survival of the dual process. 

\begin{lemma}\label{lem:equivalence_dual_epidemic} For all compact~$A \subseteq \rd$ with positive volume, (i) we have
\begin{equation*}
\lim\limits_{t \to + \infty} \mathbf{P}\left(
\Xi_{t}(A) > 0
\right) = 0
\end{equation*}
if, and only if for all~$n \in \nmath$, 
\begin{equation*}
\lim\limits_{t \to + \infty} \mathbf{P}\left(
\Xi_{t}(\bcal(0,n)) > 0
\right) = 0, 
\end{equation*}
\noindent and (ii) we have
\begin{equation*}
\liminf\limits_{t \to + \infty} \mathbf{P}\left(
\Xi_{t}(A) > 0
\right) = 0
\end{equation*}
if, and only if for all~$n \in \nmath$, 
\begin{equation*}
\liminf\limits_{t \to + \infty} \mathbf{P}\left(
\Xi_{t}(\bcal(0,n)) > 0
\right) = 0. 
\end{equation*}
\end{lemma}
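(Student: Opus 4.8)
The plan is to derive both equivalences from one comparison estimate, combined with the trivial monotonicity $\Xi_{t}(A) \leq \Xi_{t}(\bcal(0,n))$ whenever $A \subseteq \bcal(0,n)$. For the easy implications, given a compact $A$ with positive volume fix $n_{A} \in \nmath$ with $A \subseteq \bcal(0,n_{A})$, so that $\mathbf{P}(\Xi_{t}(A) > 0) \leq \mathbf{P}(\Xi_{t}(\bcal(0,n_{A})) > 0)$ for all $t$. Hence, in both (i) and (ii), if the ball-criterion holds for every $n$ it holds for $A$, which gives the reverse implications directly.

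The core is the following estimate: for every $n \in \nmath$ there exist $\tau_{n} > 0$ and $c_{n} > 0$ such that, for all $t \geq 0$,
\[
\mathbf{P}(\Xi_{t+\tau_{n}}(A) > 0) \geq c_{n}\, \mathbf{P}(\Xi_{t}(\bcal(0,n)) > 0).
\]
Granting this displayed inequality, the forward implications follow by passing to the limit: since $\limsup_{t}f(t+\tau_{n}) = \limsup_{t} f(t)$ for bounded $f$, and likewise for $\liminf$, the inequality yields $\limsup_{t}\mathbf{P}(\Xi_{t}(A) > 0) \geq c_{n}\limsup_{t}\mathbf{P}(\Xi_{t}(\bcal(0,n)) > 0)$ and the same with $\liminf$. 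Because $\mathbf{P}(\Xi_{t}(A) > 0) \geq 0$, the condition $\lim_{t}\mathbf{P}(\Xi_{t}(A) > 0) = 0$ is equivalent to $\limsup_{t}\mathbf{P}(\Xi_{t}(A) > 0) = 0$, and so forces $\lim_{t}\mathbf{P}(\Xi_{t}(\bcal(0,n)) > 0) = 0$ for every $n$, proving (i); analogously, $\liminf_{t}\mathbf{P}(\Xi_{t}(A) > 0) = 0$ forces $\liminf_{t}\mathbf{P}(\Xi_{t}(\bcal(0,n)) > 0) = 0$ for every $n$, proving (ii).

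To establish the displayed inequality I would condition on $\mathbf{F}_{t}$ and use the Markov property. On the event $\{\Xi_{t}(\bcal(0,n)) > 0\}$ there is (by measurable selection) an atom of $\Xi_{t}$ at some location $y \in \bcal(0,n)$, and it then suffices to bound from below the probability that the lineage of this single atom deposits a descendant in $A$ that is still alive at time $\tau_{n}$. Since the success probability $1 - (1-u)^{k}$ of a reproduction event is increasing in the number $k$ of atoms it covers, the remaining atoms of $\Xi_{t}$ only increase this probability; equivalently, one may invoke monotonicity of the $(\gamma,\nu)$-ancestral process in its initial condition, obtained from a nesting coupling in the spirit of Lemma~\ref{lem:coupling_ancestral_process}, to reduce to the single-atom quantity $h(y) := \mathbf{P}_{\delta_{y}}(\Xi_{\tau_{n}}(A) > 0)$. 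It thus remains to show $c_{n} := \inf_{y \in \bcal(0,n)} h(y) > 0$ for a suitable $\tau_{n}$.

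The positivity of $c_{n}$ is the main obstacle. Excluding the degenerate case $\nu = 0$ (for which $\mathbf{P}(N_{t} > 0) = e^{-\gamma t} \to 0$ and both criteria hold trivially), choose $0 < a < b$ and $u_{-} \in (0,1]$ with $\nu([a,b]\times[u_{-},1]) \in (0,\infty)$. Using only events with radius in $[a,b]$ and impact in $[u_{-},1]$, a single such event covering the current particle succeeds with probability at least $u_{-}$ and deposits the offspring uniformly in a ball of radius at least $a$; since $\bcal(0,n) \cup A$ has bounded diameter, a bounded number $K = K(n,A)$ of such steps transports a lineage from any $y \in \bcal(0,n)$ into $A$. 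Prescribing a concrete chain of $K$ events firing in $[0,\tau_{n}]$, each in a fixed region, each succeeding, each depositing its offspring in a prescribed sub-ball, with the last offspring landing in $A$ and surviving the remaining short time, has strictly positive probability; and by compactness of $\bcal(0,n)$ one covers it by finitely many small balls and attaches to each a single chain valid for all starting points therein, yielding a uniform lower bound $c_{n} > 0$. Making this uniform-irreducibility-in-bounded-time argument precise, together with the monotone coupling, is the technical heart of the proof, whereas the limit manipulations are routine.
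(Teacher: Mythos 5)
Your proposal is correct, and although it rests on the same underlying irreducibility idea as the paper (atoms in a bounded region produce atoms in any other bounded region, within bounded time, with probability bounded below), it is organized in a genuinely different way. The paper first passes from a small ball $\bcal(z,r_{1}) \subseteq A$ to the full annulus $\bcal(0,\|z\|+r_{1}) \setminus \bcal(0,\|z\|-r_{1})$ using rotational invariance of the law of $\Xi$ started from $\delta_{0}$ together with a finite union bound, and then argues contrapositively in a single sentence (``an atom in $\bcal(0,n)$ would, directly or in several steps, produce an atom in the target region at a rate bounded away from zero''), treating (i) and (ii) by parallel but separate arguments --- for (ii) the propagation step is run along a sequence of times with gaps at least $2$, shifting by $1$ and then $2$. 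You instead prove one quantitative comparison inequality, $\mathbf{P}(\Xi_{t+\tau_{n}}(A)>0) \geq c_{n}\,\mathbf{P}(\Xi_{t}(\bcal(0,n))>0)$ for all $t$, via the Markov property, monotonicity in the initial condition, and an explicit chain-of-events construction; both (i) and (ii) then follow from shift-invariance of $\limsup$ and $\liminf$. Your route makes explicit and quantitative exactly the step the paper compresses into one assertion, handles both parts with a single estimate, and dispenses with the rotational-invariance/annulus device altogether; the paper's route buys shorter geometric bookkeeping at the price of leaving the propagation estimate implicit. Two points you should spell out when writing this up: (a) the nesting coupling you invoke is not literally \Cref{lem:coupling_ancestral_process} (which couples processes with the same initial condition but different measures $\nu$); monotonicity in the initial condition needs its own short proof, obtained by letting shared atoms share death clocks and layering the larger process's extra birth successes (using that $1-(1-u)^{k}$ is increasing in $k$) on top of the smaller one's; (b) the existence of a rectangle $[a,b]\times[u_{-},1]$ with $\nu([a,b]\times[u_{-},1]) \in (0,\infty)$ follows from $\sigma$-finiteness and continuity from below for positivity, and from Condition~\eqref{eqn:cond_nu} for finiteness, so your argument stays within the standing assumptions of \Cref{sec:partial_equivalence}, which is important since the stricter Condition~\eqref{eqn:stricter_cond_nu} is not assumed there.
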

\begin{proof}
Let~$A \subseteq \rd$ be a compact subset with positive volume. In order to show the reverse implication, we just take~$n$ large enough so that~$A$ is included in~$\bcal(0,n)$. Then, in order to show the implication from left to right, we proceed slightly differently for cases~(i) and~(ii). 

(i) Assume that
\begin{equation*}
\lim\limits_{t \to + \infty} \mathbf{P}\left(
\Xi_{t}(A) > 0
\right) = 0. 
\end{equation*}
As~$A$ is a compact with positive volume, there exists $(z,r_{1}) \in \rd \times (0,||z||)$ such that $\bcal(z,r_{1}) \subseteq A$, so the above result also holds for $\Xi_{t}(\bcal(z,r_{1}))$. As $(\Xi_{t})_{t \geq 0}$ is starting from the origin and by invariance of the distribution of the underlying Poisson point process by rotation around the origin, the distribution of~$(\Xi_{t})_{t \geq 0}$ is also invariant by such a rotation, and we obtain that
\begin{equation*}
\lim\limits_{t \to + \infty} \mathbf{P}\left(
\Xi_{t}\left(
\bcal(0,||z||+r_{1})
\right) \backslash
\bcal\left(0,||z||-r_{1}
\right)\right) = 0. 
\end{equation*}
As an atom in~$\bcal(0,||z||-r_{1})$ would (directly or in several steps) produce an atom in $\bcal(0,||z||+r_{1}) \backslash \bcal(0,||z||-r_{1})$ at a rate bounded away from zero, we must have
\begin{equation*}
\lim\limits_{t \to + \infty} \mathbf{P}\left(
\Xi_{t}(\bcal(0,||z||-r_{1})) > 0
\right) = 0.
\end{equation*}
Then, for all~$n \in \nmath$, as an atom in~$\bcal(0,n)$ would also produce an atom in~$\bcal(0,||z||-r_{1})$ at a non-zero rate, again we must have
\begin{equation*}
\lim\limits_{t \to + \infty} \mathbf{P}\left(
\Xi_{t}(\bcal(0,n)) > 0
\right) = 0, 
\end{equation*}
which concludes the proof for~(i). 

(ii) Let~$\varepsilon > 0$, and assume that there exists~$(t_{m})_{m \geq 0}$ such that~$t_{0} \geq 2$, $t_{m+1}-t_{m} \geq 2$ for all~$m \geq 0$ and
\begin{equation*}
\lim\limits_{m \to + \infty} \mathbf{P}\left(
\Xi_{t_{m}}(A) > 0
\right) = 0. 
\end{equation*}
Similarly as before, we can find $(z,r_{1}) \in \rd \times (0,||z||)$ such that
\begin{align*}
\lim\limits_{m \to + \infty} \mathbf{P}\left(
\Xi_{t_{m}}\left(
\bcal\left(
0, ||z||+r_{1}
\right) \backslash \bcal\left(
0, ||z||-r_{1}
\right) 
\right) > 0
\right) &= 0, \\
\intertext{from which we deduce}
\lim\limits_{m \to +\infty} \mathbf{P}\left(
\Xi_{t_{m}-1}\left(
\bcal\left(
0,||z||-r_{1}
\right)
\right) > 0
\right) &= 0 \\
\intertext{and that for all $n \in \nmath$,}
\lim\limits_{m \to + \infty} \mathbf{P}\left(
\Xi_{t_{m}-2}\left(
\bcal(0,n)
\right) > 0
\right) &= 0, 
\end{align*}
which allows us to conclude. 
\end{proof}

We can now show Proposition~\ref{prop:survival_epidemic_case}. 
\begin{proof}[Proof of Proposition~\ref{prop:survival_epidemic_case}] Let~$A \subseteq \rd$ be a compact subset of~$\rd$ with positive volume. By the same reasoning as in the proof of Lemma~\ref{lem:bound_pandemic}, we can show that for all~$t \geq 0$, 
\begin{align*}
\varepsilon \times \int_{A} \mathbf{P}\left(
\Xi_{t}(Tr[E,0,z]) \geq 1
\right) dz &\leq \esp\left[ 
\langle \mathds{1}_{A}, 1 - \omega_{M_{t}} \rangle 
\right] \\
&\leq \int_{A} \mathbf{P}\left(
\Xi_{t}(Tr[E,0,z]) \geq 1
\right)dz. 
\end{align*}
We first show~(i). Assume that there exists~$n_{0} \in \nmath$ such that
\begin{equation*}
\liminf\limits_{t \to + \infty} \mathbf{P}\left(
\Xi_{t}(\bcal(0,n_{0})) > 0
\right) > 0. 
\end{equation*}
By Lemma~\ref{lem:equivalence_dual_epidemic}, for all~$z \in A$, 
\begin{equation*}
\liminf\limits_{t \to + \infty} \mathbf{P}\left(
\Xi_{t}(Tr[E,0,z]) \geq 1
\right) > 0, 
\end{equation*}
so by Fatou's lemma, 
\begin{align*}
\liminf\limits_{t \to + \infty} \esp\left[ 
\langle \mathds{1}_{A}, 1 - \omega_{M_{t}} \rangle 
\right] &\geq \varepsilon \times \int_{A} \left(
\liminf\limits_{t \to + \infty} \mathbf{P}\left(
\Xi_{t}(Tr[E,0,z]) > 0
\right)
\right) dz \\
&> 0. 
\end{align*}
We now assume that for all~$n \in \nmath$, 
\begin{equation*}
\liminf\limits_{t \to + \infty} \mathbf{P}\left(
\Xi_{t}(\bcal(0,n)) > 0
\right) = 0, 
\end{equation*}
and we choose~$n_{A}$ large enough so that for all~$z \in A$, 
\begin{equation*}
Tr[E,0,z] \subseteq \bcal(0,n_{A}). 
\end{equation*}
Then, 
\begin{align*}
\esp\left[ 
\langle \mathds{1}_{A}, 1 - \omega_{M_{t}} \rangle 
\right] &\leq \int_{A} \mathbf{P}\left(
\Xi_{t}(Tr[E,0,z]) \geq 1
\right)dz \\
&\leq \int_{A} \mathbf{P}\left(
\Xi_{t}(\bcal(0,n_{A})) \geq 1
\right)dz \\
&= \mathrm{Vol}(A) \mathbf{P}\left(
\Xi_{t}(\bcal(0,n_{A}))
\right), 
\end{align*}
and taking the liminf when~$t \to + \infty$ allows us to conclude the first part of the proof. We proceed similarly to show~(ii), using the dominated convergence theorem rather than Fatou's lemma (which we can use as~$A$ has finite volume). 
\end{proof}


\begin{thebibliography}{10}
\expandafter\ifx\csname url\endcsname\relax
  \def\url#1{\texttt{#1}}\fi
\expandafter\ifx\csname urlprefix\endcsname\relax\def\urlprefix{URL }\fi
\expandafter\ifx\csname href\endcsname\relax
  \def\href#1#2{#2} \def\path#1{#1}\fi

\bibitem{JohnSnow}
T.~Tulchinsky, John Snow, cholera, the broad street pump; waterborne diseases then and now, Case Studies in Public Health (2018) 77--99.

\bibitem{britton2019stochastic}
T.~Britton, E.~Pardoux, F.~Ball, C.~Laredo, D.~Sirl, V.-C. Tran, Stochastic epidemic models with inference, Vol. 2255, Springer, 2019.

\bibitem{Network}
V.~Isham, J.~Kaczmarska, M.~Nekovee, Spread of information and infection on finite random networks, Phys. Rev. E 83 (2011) 046128.

\bibitem{montagnon2019stochastic}
P.~Montagnon, A stochastic {SIR} model on a graph with epidemiological and population dynamics occurring over the same time scale, Journal of {M}athematical {B}iology 79 (2019) 31--62.

\bibitem{qi2019neighbourhood}
L.~Qi, G.~Beaun{\'e}e, S.~Arnoux, B.~Dutta, A.~Joly, E.~Vergu, P.~Ezanno, Neighbourhood contacts and trade movements drive the regional spread of bovine viral diarrhoea virus ({BVDV}), Veterinary {R}esearch 50~(1) (2019) 30.

\bibitem{lattice}
C.~{von Csefalvay}, 8 - spatial dynamics of epidemics: Epidemics in discrete and continuous space, in: C.~{von Csefalvay} (Ed.), Computational Modeling of Infectious Disease, Academic Press, 2023, pp. 257--303.

\bibitem{delmas2023individual}
J.-F. Delmas, P.~Frasca, F.~Garin, V.~Tran, A.~Velleret, P.-A. Zitt, Individual based sis models on (not so) dense large random networks, ALEA, Latin American Journal of Probability and Statistics (2024) 1375--1405.

\bibitem{bansaye2024branching}
V.~Bansaye, M.~Salvi, Branching processes and homogenization for epidemics on spatial random graphs, Electronic {J}ournal of {P}robability 29 (2024) 1--37.

\bibitem{fivechallenges}
S.~Riley, K.~Eames, V.~Isham, D.~Mollison, P.~Trapman, Five challenges for spatial epidemic models, Epidemics 10 (2015) 68--71, challenges in Modelling Infectious Disease Dynamics.

\bibitem{hairer2012triviality}
M.~Hairer, M.~Ryser, H.~Weber, Triviality of the {2D} stochastic {A}llen-{C}ahn equation, Electronic Journal of Probability 39 (2012) 1--14.

\bibitem{ryser2012well}
M.~Ryser, N.~Nigam, P.~Tupper, On the well-posedness of the stochastic {A}llen--{C}ahn equation in two dimensions, Journal of Computational Physics 231~(6) (2012) 2537--2550.

\bibitem{felsenstein1975pain}
J.~Felsenstein, A pain in the torus: some difficulties with models of isolation by distance, The {A}merican {N}aturalist 109~(967) (1975) 359--368.

\bibitem{PDE1}
Q.~Zhuang, J.~Wang, A spatial epidemic model with a moving boundary, Infectious Disease Modelling 6 (2021) 1046--1060.

\bibitem{PDE2}
J.~Wu, Spatial structure: Partial differential equations models, in: F.~Brauer, P.~van~den Driessche, J.~Wu (Eds.), Mathematical Epidemiology, Springer Berlin Heidelberg, Berlin, Heidelberg, 2008, pp. 191--203.

\bibitem{Eth08}
A.~Etheridge, Drift, draft and structure: some mathematical moodels of evolution, Banach Center Publications 80 (2008) 121 -- 144.

\bibitem{anewmodel}
N.~Barton, A.~Etheridge, A.~V{\'e}ber, A new model for evolution in a spatial continuum, Electronic Journal of Probability 15 (2010) 162 -- 216.

\bibitem{fluctuatingselection}
N.~Biswas, A.~Etheridge, A.~Klimek, {The spatial Lambda-Fleming-Viot process with fluctuating selection}, Electronic Journal of Probability 26 (2021) 1 -- 51.

\bibitem{hybridzones}
A.~Etheridge, N.~Freeman, S.~Penington, {Branching Brownian motion, mean curvature flow and the motion of hybrid zones}, Electronic Journal of Probability 22 (2017) 1 -- 40.

\bibitem{FW22}
R.~Forien, B.~Wiederhold, Central limit theorems describing isolation by distance under various forms of power-law dispersal, ESAIM Probability and Statistics (2025) to appear.

\bibitem{louvet2023stochastic}
A.~Louvet, Stochastic measure-valued models for populations expanding in a continuum, {ESAIM}: {P}robability and {S}tatistics 27 (2023) 221--277.

\bibitem{lenz2015looking}
U.~Lenz, S.~Kluth, E.~Baake, A.~Wakolbinger, Looking down in the ancestral selection graph: A probabilistic approach to the common ancestor type distribution, Theoretical Population Biology 103 (2015) 27--37.

\bibitem{forien2022stochasticepidemicmodelsvarying}
R.~Forien, G.~Pang, E.~Pardoux, A.~B. Zotsa-Ngoufack, Stochastic epidemic models with varying infectivity and susceptibility, arXiv preprint arXiv:2210.04667 (2022).

\bibitem{etheridge2020rescaling}
A.~Etheridge, A.~V{\'e}ber, F.~Yu, Rescaling limits of the spatial {L}ambda-{F}leming-{V}iot process with selection, Electronic {J}ournal of {P}robability 25 (2020) 1 -- 89.

\bibitem{louvet2024asymptotics}
A.~Louvet, M.~Roberts, Asymptotics for the growth of the infinite-parent spatial lambda-fleming-viot model, arXiv preprint arXiv:2402.00601 (2024).

\bibitem{etheridge2000introduction}
A.~Etheridge, An introduction to superprocesses, Vol.~20, American Mathematical Society Providence, 2000.

\bibitem{ethier1986markov}
S.~Ethier, T.~Kurtz, Markov processes: characterization and convergence, John {W}iley and {S}ons, 1986.

\bibitem{VW15}
A.~V{\'e}ber, A.~Wakolbinger, {The spatial Lambda-Fleming–Viot process: An event-based construction and a lookdown representation}, Annales de l'Institut Henri Poincaré, Probabilités et Statistiques 51~(2) (2015) 570 -- 598.

\bibitem{harris1974contact}
T.~Harris, Contact interactions on a lattice, The Annals of Probability 2~(6) (1974) 969--988.

\bibitem{landsberg}
J.~M.~Landsberg, \emph{Tensors: Geometry and Applications}, Graduate Studies in Mathematics, American Mathematical Society, 2024.


\end{thebibliography}
\end{document}